\documentclass[a4paper,reqno,10pt,oneside]{elsarticle}
\usepackage{amsmath,geometry}
\geometry{scale=0.7}
\usepackage[english]{babel}
\usepackage{amsmath}
\usepackage[utf8]{inputenc}
\usepackage{amssymb}
\usepackage{amsthm}
\usepackage{newlfont}
\usepackage{pifont}
\usepackage{geometry}
\usepackage{bbm}

\usepackage{etoolbox}
\makeatletter
\def\@seccntformat#1{\@ifundefined{#1@cntformat}%
   {\csname the#1\endcsname.\hskip0.5em}    
   {\csname #1@cntformat\endcsname}
}
\patchcmd{\appendix}{\appendixname}{}{}{}
\appto{\appendix}{%
    \newcommand{\section@cntformat}{\appendixname\ \thesection.\hskip0.5em}}
\makeatother

\newtheorem{teo}{Theorem}[section]

\newtheorem{lemma}[teo]{Lemma}
\newtheorem{prop}[teo]{Proposition}
\newtheorem{cor}[teo]{Corollary}
\theoremstyle{remark}
\newtheorem{rem}[teo]{Remark}

\newtheorem{cla}{Step}

\newcommand{\R}{\mathbb{R}}

\newcommand{\N}{\mathbb{N}}
\newcommand{\Sf}{\mathbb{S}}

\newcommand{\de}{\,\mathrm{d}}

\numberwithin{equation}{section}

\begin{document}
\begin{frontmatter}

\title{On the structure of the nodal set and asymptotics of least energy sign-changing radial solutions of the fractional Brezis-Nirenberg problem}

\author[gc]{G.\ Cora\fnref{ack}}
\ead{gabriele.cora@unito.it}
\address[gc]{Dipartimento di Matematica ``G. Peano'', Università degli Studi di Torino, via Carlo Alberto, 10 - 10123 Torino, Italy}

\author[ai]{A.\ Iacopetti\fnref{ack}}
\ead{alessandro.iacopetti@ulb.ac.be}
\address[ai]{Département de Mathématique, Université Libre de Bruxelles, Campus de la Plaine - CP213 Boulevard du Triomphe, 1050 Bruxelles, Belgium}

\fntext[ack]{\textit{Funding}: Research partially supported by the project ERC Advanced Grant 2013 n.~339958 Complex Patterns for Strongly Interacting Dynamical Systems COMPAT, and by Gruppo Nazionale per l'Analisi Matematica, la Pro\-ba\-bi\-li\-t\`a e le loro Applicazioni (GNAMPA) of the Istituto Nazionale di Alta Matematica (INdAM)}

\begin{keyword}
Fractional semilinear elliptic equations \sep critical exponent \sep nodal regions \sep sign-changing radial solutions \sep asymptotic behavior
\MSC[2010]  35J61 \sep 35B05  (primary) \sep 35B33 \sep 35B40 (secondary)
\end{keyword}

\begin{abstract}
In this paper we study the asymptotic and qualitative properties of least energy radial sign-changing solutions of the fractional Brezis--Nirenberg problem ruled by the s-Laplacian, in a ball of $\R^n$, when $s \in (0,1)$ and $n > 6s$. As usual, $\lambda$ is the (positive) parameter in the linear part in $u$.
We prove that for $\lambda$ sufficiently small such solutions cannot vanish at the origin, we show that they change sign at most twice and their zeros coincide with the sign-changes. Moreover, when $s$ is close to $1$, such solutions change sign exactly once. Finally we prove that least energy nodal solutions which change sign exactly once have the limit profile of a ``tower of bubbles'', as $\lambda \to 0^+$, i.e. the positive and negative parts concentrate at the same point with different concentration speeds.
\end{abstract}
\end{frontmatter}
\begin{section}{Introduction}

Let $s \in (0,1)$, $\lambda>0$, let $n \in \mathbb{N}$ be such that $n>2s$ and let $\Omega \subset \R^n$ be a bounded domain with smooth boundary. Consider the the following non local semilinear elliptic problem:
\begin{equation}
\label{fracBrezis}
\begin{cases}
(-\Delta)^s u = \lambda u + |u|^{2_s^*-2}u &\hbox{in}\ \Omega, \\
u = 0 &\hbox{in}\ \R^n \setminus \Omega,
\end{cases}
\end{equation}
where $2_s^* := \frac{2n}{n-2s}$ is the critical fractional Sobolev exponent for the embedding of $\mathcal{D}^s(\R^n)$ into $L^{2_s^*}(\R^n)$, and $(-\Delta)^s$ is the s-Laplacian operator, which is defined as
\[
(-\Delta)^s u(x) := C_{n,s} P.V. \int_{\R^n}\frac{u(x)-u(y)}{|x-y|^{n+2s}}\de y= C_{n,s} \lim_{\varepsilon \to 0^+} \int_{\R^n \setminus B_\varepsilon(x)}\frac{u(x)-u(y)}{|x-y|^{n+2s}}\de y,
\]
where the constant $C_{n,s}$ is given by

\[
C_{n,s} := \frac{2^{2s} \Gamma\left(\frac{n}{2}+s\right)}{\pi^{\frac{n}{2}}|\Gamma(-s)|}.
\]
We denote by $X^s_0(\Omega)$ the Sobolev space of the functions $u \in H^s(\R^n)$ such that $u = 0 $ in $\R^n\setminus \Omega$, endowed with the norm
\[
\|u\|^2_s := {\frac{C_{n,s}}{2}\int_{\R^{2n}} \frac{|u(x)-u(y)|^2}{|x-y|^{n+2s}}\de x \de y},
\] 
whose associated scalar product is
\[
(u, v)_s := \frac{C_{n,s}}{2}\int_{\R^{2n}} \frac{(u(x)-u(y))(v(x)-v(y))}{|x-y|^{n+2s}}\de x \de y.
\]

Weak solutions of Problem \eqref{fracBrezis} correspond to critical points of the energy functional 
\[
I(u) := \frac{1}{2}(\|u\|_s^2-\lambda|u|^2_2)- \frac{1}{2_s^*}|u|_{2_s^*}^{2_s^*},\ u \in X_0^s(\Omega),
\]
 where $|\cdot|_p$ is the standard $L^p$-norm for $p\geq 1$. We point out that, in view of known regularity results for the fractional Laplacian, weak solutions $u \in X_0^s(\Omega)$ of \eqref{fracBrezis} turn out to be of class $C^{0,s}(\R^n)$ (as it follows by combining \cite[Theorem 1.1]{HolderReg} and \cite[Theorem 3.2]{IanMosSqua}), and this regularity is optimal. For the interior regularity in $\Omega$ we have better results (see \cite{HolderReg}).

Problem \eqref{fracBrezis} is known as the fractional Brezis--Nirenberg problem since in the local case the first existence  result for positive solutions was given in the celebrated paper \cite{BN}. In \cite{BN}, Brezis and Nirenberg overcame the difficulties due to the lack of compactness of the embedding $H_0^1 \hookrightarrow L^{2^*}$, and showed that the dimension plays a crucial role in the problem. In fact, they proved that when $n\geq4$ there exist positive solutions for every $\lambda \in (0,\lambda_1(\Omega))$, where $\lambda_1(\Omega)$ denotes the first eigenvalue of the classical Dirichlet-Laplacian on $\Omega$. The case $n=3$ is more delicate. Brezis and Nirenberg proved that there exists $\lambda^*(\Omega)>0$ such that  positive solutions exist for every $\lambda \in (\lambda^*(\Omega),\lambda_1(\Omega))$.  When $\Omega=B_R$ is a ball, they also proved that $\lambda^*(B_R)=\frac{\lambda_1(B_R)}{4}$ and 
a positive solution exists if and only if $\lambda \in \left(\frac{\lambda_1(B_R)}{4}, \lambda_1(B_R)\right)$.\\ 

After the pioneering paper \cite{BN}, many results have been obtained concerning the asymptotic analysis of positive solutions, multiplicity, existence and nonexistence of sign-changing solutions (see \cite{HAN, Rey, CFP, CSS, ABP, CW, Pacella, SZ, IacVair, IacVair2, IacPac, Demmak}). 
We point out that in the sign-changing case the dimension $n=3$ exhibits additional difficulties: it is not yet known if there exist non radial sign-changing solutions for $\lambda \in (0,\frac{\lambda_1(B_R)}{4})$. A partial answer to this question was given by Ben Ayed, El Mehdi and Pacella in \cite{Pacella2}. Nevertheless, even in the other dimensions  several interesting phenomena are observed. In fact, Atkinson, Brezis and Peletier in \cite{ABP}, Adimurthi and Yadava in \cite{AY2}  showed, with different proofs, that for $n=4,5,6$ there exists $\lambda^{**}=\lambda^{**}(n)>0$ such that there is no radial sign-changing solution of the Brezis--Nirenberg problem in the ball for $\lambda \in (0,\lambda^{**})$. Instead, they do exist for any $\lambda \in (0,\lambda_1(B_R))$, if $n\geq 7$, as proved by Cerami, Solimini and Struwe in \cite{CSS}.\\

In recent years, a great attention has been devoted to studying non local equations and a natural question is if it is possible to extend the known results about semilinear elliptic problems in the fractional framework. In the case of positive solutions of the fractional Brezis--Nirenberg problem, the picture is quite clear. Servadei and Valdinoci in \cite{ValSerLD}, \cite{ValSer}, proved existence of positive solutions for Problem \eqref{fracBrezis} and their results perfectly agree with the classical ones: if $\lambda_{1,s} = \lambda_{1,s}(\Omega)$ is the first eigenvalue of the fractional Laplacian with homogeneous Dirichelet boundary condition, then Problem \eqref{fracBrezis} admits a nontrivial solution whenever $n \geq 4s$ and $\lambda \in (0, \lambda_{1,s})$. When $2s<n<4s$ there exists $\lambda^*_s=\lambda^*_s(\Omega)$ such that a solution of Problem \eqref{fracBrezis} exists for $\lambda>\lambda^*_s$, and $\lambda$ different from the eigenvalues of the fractional Laplacian. Other interesting results have been obtained by Musina and Nazarov in \cite{Musina2}, for the fractional Dirichlet-Laplace operator $(-\Delta)^m$, $0<m<\frac{n}{2}$.

The asymptotic behavior of least energy positive solutions of Problem \eqref{fracBrezis} (in the case of the spectral fractional Laplacian), as $\lambda \to 0^+$, has been studied by Choi, Kim and Lee in \cite{CHoiKimLee}. Even in this case the results perfectly fit with the classical ones of Han and Rey (see \cite{HAN}, \cite{Rey}).

On the contrary, there is not much literature for sign-changing solutions (see \cite{BFS}) and very few is known about their qualitative properties. In fact, even in the radial case, due to the non local interactions, there are serious difficulties when trying to determine the number of sign changes of the solutions, and this number does not correspond, in general, to the number of connected components of the complement of the nodal set (minus one). Moreover, since we deal with sign-changing solutions, no information is available about their monotonicity via the fractional moving plane method (see \cite{chen}). In addition, as pointed out in the seminal paper of  Frank, Lenzmann and Silvestre \cite{FrLe2}, we cannot apply standard ODE techniques for the fractional Laplacian and this technical gap causes serious troubles.\\   

In this work we face with the following problems.\\ 

\textbf{Problem a):} Let $B_R \subset \R^n$ be the ball of radius $R$ centered at the origin. Consider the following simple property:
\[
(\mathcal{P})\ \ \ \ \ \ \ \ \ \hbox{if}\ u \ \hbox{is a radial solution of Problem \eqref{fracBrezis} in $B_R$ and}\ u(0)=0\ \hbox{then}\ u\equiv0.\\[6pt]
\]
It is well known that in the local case $(\mathcal{P})$ holds, but in the fractional framework it is basically unknown when dealing with nodal solutions. The only result in this direction is due to Frank, Lenzmann and Silvestre, who, in \cite{FrLe2}, by using a monotonicity formula argument, showed that $(\mathcal{P})$ holds for radial solutions vanishing at infinity of fractional linear equations of the kind $(-\Delta)^s u + Vu=0$ in $\R^n$, where $V=V(r)$ is radial and non-decreasing, $r=|x|$. 

Unfortunately, in the case of bounded domains this argument does not work properly. In fact, let $u$ be a radial solution of Problem \eqref{fracBrezis} in $B_R$ and let $W:\overline{\R^{n+1}_+} \to \R$ be the extension of $u$ to the upper half space $\R^{n+1}_+ = \R^n \times \R_+$ (see Sect. 2.4 for the definition). The function $W$ is also known as the Caffarelli-Silvestre extension in view of their celebrated paper \cite{CS}. Recalling that $W=W(x,y)$ is cylindrically symmetric with respect to $x\in \R^n$, let us formally write the expression
\[
H(r)= d_s \int_0^{+\infty} \frac{t^a}{2}[W_r^2(r,t)-W^2_y(r,t)] \ dt - \frac{\lambda}{2}u^2(r)-\frac{1}{2_s^*}|u(r)|^{2_s^*},\ r \geq 0,
\]
where $d_s = \frac{1}{2^{{1-2s}}}\frac{\Gamma\left(s\right)}{\Gamma\left(1-s\right)}$. Then, when  trying to repeat the proof of the monotonicity formula, as in the remarkable paper of Cabr\'e and Sire (see \cite[Lemma 5.4]{Yannick}), we cannot deduce that $H$ is decreasing for all $r>0$ because $-d_s\lim_{y\to 0^+}\ y^{1-2s}W_y(r,y)=\lambda u + |u|^{2_s^*-s}u$ just on $(0,R)$.

 Now, since $W$ is cylindrically symmetric we have $W_r(0,y)\equiv 0$ for any $y>0$, and assuming that $u(0)=0$ we deduce that $H(0)\leq 0$. But, even if $H$ is decreasing in $(0,R)$, we have no information on the value $H(R)=d_s \int_0^{+\infty} \frac{t^a}{2}[W_r^2(R,t)-W^2_y(R,t)] \ dt$, while, in \cite{FrLe2}, by proving that $H$ is decreasing in $(0,+\infty)$, and since $\lim_{r \to +\infty} H(r)=0$, $H(0)\leq 0$, they deduce that $H\equiv 0$ and $u\equiv 0$.

We stress that even other approaches fail in the nodal case. For example, if we try to apply the strong maximum principle, as in the version stated by Cabr\'e and Sire in \cite[Remark 4.2]{Yannick}, assuming that $u\geq 0=u(0)$ in a neighborhood of the origin we must find a small positive $\epsilon>0$ such that the extension $W$ is non negative in $\Gamma^+_\epsilon=  \{ (x,y) \in \overline{\R^{n+1}_+} \ | \ y\geq0, \sqrt{|x|^2+y^2} = \epsilon\}$. Unfortunately, if $u$ changes sign, then also $W$ changes sign (see Sect. 5) and it can happen that for any small $\epsilon>0$ the set $\Gamma^+_\epsilon$ intersects $\{W<0\}$, and thus we cannot exclude that $u(0)=0$. This is not surprising because, due to the nonlocal interaction terms, we have that $u^+$, $u^-$ are not weak super, sub solutions of Problem \eqref{fracBrezis} in $\{u>0\}$, $\{u<0\}$, respectively.

Also with the recent version of the fractional strong maximum principle stated by Musina and Nazarov in \cite[Corollary 4.2]{Musina}, considering any subdomain of $B_R \cap \{u\geq0\}$, we deduce only that $u> \inf_{\R^n} u$. Clearly if $u$ is a nodal solution of Problem \eqref{fracBrezis}, again, we cannot exclude that $u(0)=0$ and $u\not\equiv 0$.\\

\textbf{Problem b):} Determine the number of connected components of the complement of the nodal set and the number of sign changes of least energy nodal solutions of \eqref{fracBrezis}, when $\lambda$ is close to zero.\\

We say that $u_\lambda$ is a least energy sign-changing solution of \eqref{fracBrezis} if $I(u_\lambda)= \inf_{\mathcal{M}} I$, where $\mathcal{M}$ is the nodal Nehari set, i.e.  
\[
 \mathcal{M} := \{u \in X_0^s(\Omega) \ |\ u^\pm \not \equiv 0, I'(u)[u^\pm] = 0 \}.
\]

In view of the previous discussion we remark again that the number of connected components of 
\[
\{ u_\lambda \neq 0\}
\]
does not correspond, in general, to the number of sign changes (plus one). 
Despite that, even assuming that these numbers coincide, in view of the non local interactions between the nodal components, it is not possible, via standard energy arguments, to determine them. In fact, let $u_\lambda$ be a least energy solution of Problem \eqref{fracBrezis} and let $K_{i,\lambda}$ be a connected component of $\{u_\lambda \neq 0\}$. Setting  
\[
u_{i,\lambda}:=u_{\lambda} \ \mathbbm{1}_{K_{i,\lambda}},
\]
where $\mathbbm{1}_{K_{i,\lambda}}$ is the characteristic function of ${K_{i,\lambda}}$, then, from $I^\prime (u_\lambda) [u_{i,\lambda}] =0$ we have
\[
\|u_{i,\lambda}\|^2_s + (u_{i,\lambda},u_\lambda-u_{i,\lambda})_s = \lambda |u_{i,\lambda}|^2_{2,K_{i,\lambda}} +  |u_{i,\lambda}|^{2^{*}_s}_{{2^{*}_s},K_{i,\lambda}},
\]
and by a simple computation we see that

\begin{equation}\label{interactionterm}
(u_{i,\lambda},u_\lambda-u_{i,\lambda})_s = -C_{n,s} \int_{\mathbb{R}^{2n}} \frac{u_{i,\lambda}(x) (u_{\lambda}(y)-u_{i,\lambda}(y))}{|x-y|^{n+2s}} \ dx dy.
\end{equation}

Even if it is not difficult to show that $\|u_\lambda\|_s^2 \to 2 S_s^{n/2s}$, as $\lambda \to 0^+$, where $S_s$ is the best fractional Sobolev constant (see \eqref{nonlocSob}), we do not have any information about the limit value of \eqref{interactionterm} nor on its sign. In particular, the presence of this interaction term between $u_{i,\lambda}$ and the other nodal components does not allow us to replicate the proof of Ben Ayed, El Mehdi and Pacella (see \cite[Proof of Theorem 1.1]{Pacella}). In fact, in the local case, by using Poincar\'e and Sobolev inequalities, one can deduce that
\[
\int_{K_{i,\lambda}} |\nabla u_{i,\lambda}|^2 \geq \left(1+o(1)\right)  S_1^{n/2},
\]
and being $\|u_\lambda\|_1^2 \to 2 S_1^{n/2}$ it follows that $u_\lambda$ cannot have more than two nodal components.\\

\textbf{Problem c):} Determine the asymptotic profile of least energy nodal solutions of \eqref{fracBrezis} as $\lambda \to 0^+$.\\

The aim of this paper is to contribute to Problem a), Problem b) and Problem c) in the case of least energy nodal radial solutions of the fractional Brezis--Nirenberg problem in the ball. We remark that existence of radial sign-changing solutions in the ball for Problem \eqref{fracBrezis} is granted for any $s \in (0,1)$, $n>6s$, $\lambda \in (0,\lambda_{1,s}(B_R))$. The proof is essentially the same of Cerami, Solimini and Struwe, \cite{CSS}, with slight changes. For the sake of completeness we give the proof in Section 3.

Our results are the following:

\begin{teo}\label{mainteoproba}
Let $n>6s$, $s \in (\frac{1}{2},1)$ and let $R>0$. There exists $\bar\lambda>0$ such that for any $\lambda \in (0,\bar \lambda)$, any least energy sign-changing radial solution $u_{\lambda}$ to \eqref{fracBrezis} in $B_R$ does not vanish at zero.
\end{teo}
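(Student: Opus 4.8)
The proof should proceed by contradiction combined with a blow-up/compactness analysis as $\lambda\to 0^+$. Suppose there is a sequence $\lambda_k\to 0^+$ and least energy radial nodal solutions $u_k:=u_{\lambda_k}$ with $u_k(0)=0$. The natural strategy is: (1) establish the energy estimate $\|u_k\|_s^2\to 2S_s^{n/2s}$ together with $I(u_k)\to \frac{s}{n}\,2 S_s^{n/2s}$, and show that $\lambda_k|u_k|_2^2\to 0$ (these should be consequences of the existence construction in Section~3, exactly as in Cerami--Solimini--Struwe); (2) perform a blow-up analysis for both the positive and negative parts $u_k^\pm$, rescaling around the origin at the appropriate (two distinct) concentration speeds, to identify the limiting profiles as (signed multiples of) the standard Aubin--Talenti bubble $U$ on $\R^n$; (3) derive, via a Pohozaev-type identity or the monotonicity-formula function $H$ discussed in Problem a), a contradiction with $u_k(0)=0$ because the limiting profile at the origin is a bubble with a strictly positive (or negative) value at $0$.

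The cleanest route for the contradiction is via the Caffarelli--Silvestre extension and the function $H(r)$ written in the introduction. Let $W_k$ be the extension of $u_k$. Since $u_k$ is radial, $W_k$ is cylindrically symmetric, so $(W_k)_r(0,t)\equiv 0$, and $u_k(0)=0$ forces $H_k(0)\le 0$. The key is to transfer this to a statement at scale $1$ after rescaling. Set $\mu_k^{-\frac{n-2s}{2}} := |u_k|_{L^\infty(B_\rho)}$ at the first concentration scale, and let $v_k(x):=\mu_k^{\frac{n-2s}{2}} u_k(\mu_k x)$; then $v_k\to \pm U$ in $C^0_{loc}(\R^n)$ and in $\mathcal D^s$, with $v_k(0)\to \pm U(0)=\pm\kappa$ for an explicit $\kappa>0$. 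But $u_k(0)=0$ means $v_k(0)=0$ for all $k$, contradicting $v_k(0)\to\pm\kappa\ne 0$ — provided the blow-up genuinely sees scale $\mu_k$ at the origin. Here is the subtlety that makes the ``does not vanish at zero'' statement nontrivial and forces the hypotheses $n>6s$, $s>\tfrac12$: a priori the concentration could occur on a thin annulus away from $0$ while $u_k$ stays small near $0$. Ruling this out is where one must use radial symmetry seriously, combined with the regularity $u_k\in C^{0,s}$ and, crucially, a lower bound on $|u_k|$ near the origin coming from the fact that \emph{both} $u_k^+$ and $u_k^-$ carry energy $\to S_s^{n/2s}$ and a radial function vanishing at the origin with two nodal annuli cannot have its mass pushed entirely to the outer region without violating the energy split or the Nehari constraint.

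Concretely, the argument I would organize as follows. First, use $u_k^\pm\in\mathcal M$ and $\|u_k^\pm\|_s^2\ge S_s^{n/2s}(1+o(1))$ (from the Sobolev inequality and the Nehari identity, with $\lambda_k$ term negligible) to get $\|u_k^\pm\|_s^2\to S_s^{n/2s}$; then a profile decomposition (Struwe-type, in the fractional setting — available in the literature) shows each $u_k^\pm$ concentrates as a single bubble, say at scales $\mu_k^-\ll\mu_k^+$ and centers $\xi_k^\pm$. Second, radial symmetry forces $\xi_k^\pm$ to be the origin, or more precisely forces the bubbles to be centered at $0$: a radial function cannot concentrate a full bubble off-center without creating, by rotation, an annulus of bubbles carrying infinite energy in the limit. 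Hence both profiles are centered at $0$. Third, the inner bubble (scale $\mu_k^-$, the smaller one, belonging WLOG to $u_k^+$) satisfies $\mu_k^{-\,\frac{n-2s}{2}}u_k(\mu_k^- x)\to +U(x)$ uniformly on compacts, so evaluating at $x=0$ gives $\mu_k^{-\,\frac{n-2s}{2}}u_k(0)\to U(0)>0$, contradicting $u_k(0)=0$.

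**Main obstacle.** The hard part is Step~two — proving the bubbles are centered at the origin, or, failing a clean symmetry argument, directly proving the quantitative lower bound $\liminf_k \mu_k^{-\,\frac{n-2s}{2}}|u_k(0)|>0$ without knowing the bubble location a priori. In the local case this is immediate from ODE shooting / the structure of radial profiles; here, as the authors emphasize (Problem a), Frank--Lenzmann--Silvestre), ODE methods are unavailable and the monotonicity formula only gives $H_k$ decreasing on $(0,R)$ with no control at $r=R$. I would attack it with the extension: on $\{0\le r\le R\}$ one has $H_k$ nonincreasing, $H_k(0)\le 0$ (from $u_k(0)=0$ and cylindrical symmetry), so $H_k(r)\le 0$ throughout $(0,R)$; rescaling $H_k$ at scale $\mu_k^-$ and passing to the limit should yield $H_\infty(r)\le 0$ for the limiting bubble profile on $(0,\infty)$ — but for the genuine Aubin--Talenti extension one computes $H_\infty(r)$ and finds it is \emph{not} $\le 0$ near $r=0$ (indeed $H_\infty(0)= -\tfrac{1}{2_s^*}U(0)^{2_s^*}<0$, while one needs to track the $W_r^2-W_y^2$ term), forcing instead the conclusion that the inner profile is trivial — contradiction with $\|u_k^+\|_s^2\to S_s^{n/2s}$. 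Making this limiting passage of $H_k$ rigorous (justifying the interchange of limit and the improper $t$-integral, using the decay of the extension of $U$) is the technical crux; the dimension restriction $n>6s$ and $s>\tfrac12$ enter precisely in guaranteeing the integrability and the rate separation $\mu_k^-/\mu_k^+\to 0$ needed for the two-bubble picture and for the tail estimates on $W_k$.
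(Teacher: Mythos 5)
Your overall strategy (contradiction plus blow-up as $\lambda\to 0^+$, with the energy asymptotics $\|u_k^\pm\|_s^2\to S_s^{n/2s}$, $\lambda_k|u_k|_2^2\to 0$ as the starting point) matches the paper's, but the engine of your argument is not established and, as you yourself flag, cannot be at this stage. You need a two-bubble picture: each of $u_k^\pm$ concentrating as a single Aubin--Talenti bubble, at separated scales $\mu_k^-\ll\mu_k^+$, both centered at the origin, so that the inner rescaling evaluated at $x=0$ contradicts $u_k(0)=0$. None of these ingredients is available here. The scale separation $M_{\lambda,+}/M_{\lambda,-}\to\infty$ is precisely part of Theorem \ref{mainteo2}, proved much later and only under the assumption that $u_\lambda$ changes sign exactly once (which at this point is unknown; Theorem \ref{mainteocc} only gives ``at most twice'', and Theorem \ref{mainteo} gives ``exactly once'' only for $s$ near $1$); the paper even stresses that with two or more nodes the leading scale cannot be identified, and it never proves any bubble profile for $u_\lambda^-$ at all. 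Likewise, ``radial symmetry forces the bubbles to be centered at $0$'' is asserted, not proved, and your fallback via the rescaled monotonicity quantity $H$ is exactly the argument the introduction explains does not close in a bounded domain (no control of $H(R)$); your own sketch of the limiting computation of $H_\infty$ is inconclusive (you state $H_\infty(0)<0$ while needing it not to be $\le 0$). So the ``main obstacle'' you identify is a genuine gap, and the proposal does not constitute a proof.

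The paper's actual proof sidesteps all of this: rescale by the \emph{global} sup norm, $v_k(x)=M_k^{-1}u_k(x/M_k^{\beta})$ with $M_k=|u_k|_\infty$, so $v_k(0)\to 0$; the fractional Strauss inequality (this is where $s>\tfrac12$ enters, not tail estimates on the extension) forces the point where $|v_k|=1$ to stay in a compact set, so the $C^{0,\alpha}_{loc}$ limit $v$ of $v_k$ is a nontrivial solution of $(-\Delta)^s v=|v|^{2_s^*-2}v$ in $\R^n$; the bound $|v|_{2_s^*}^{2_s^*}\le 2S_s^{n/2s}$ from Fatou and Lemma \ref{As:energas}, against the strict inequality $>2S_s^{n/2s}$ valid for entire sign-changing solutions, shows $v$ has constant sign; then the fractional strong maximum principle (or the classification of minimizers as bubbles) gives $v>0$ everywhere, contradicting $v(0)=0$. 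Note that no information on where concentration occurs, no profile decomposition, and no separation of scales is needed; $n>6s$ is used only for existence and the radial energy asymptotics. If you want to salvage your write-up, replace Steps two and three by this single-rescaling argument.
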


\begin{teo}\label{mainteocc}
Let $n > 6s$, $s \in \left(0, 1\right)$ and let $R>0$. There exists $\hat \lambda_s>0$ such that, for any $\lambda \in (0, \hat \lambda_s)$, any least energy sign-changing radial solution $u_{s,\lambda}$ to \eqref{fracBrezis} in $B_R$ changes sign at most twice. Moreover, the zeros of $u_{s,\lambda}= u_{s,\lambda}(r)$ in $(0,R)$ coincide with its nodes, i.e. with the sign-changes of $u_{s,\lambda}$. More precisely, one and only one of the following hold:
\begin{enumerate}[(a)]
\item if $u_{s, \lambda}$ changes sign twice then it vanishes in $[0,R)$ only at the nodes,
\item if $u_{s, \lambda}$ changes sign once then then it vanishes in $(0,R)$ only at the node and it can vanish also at the origin. 
\end{enumerate}

\end{teo}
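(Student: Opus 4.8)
The plan is to combine a blow-up/concentration analysis as $\lambda \to 0^+$ with a careful use of the Caffarelli–Silvestre extension, working throughout with radial functions so that one-dimensional ODE-type intuition can be partially recovered via the extension. I will reason by contradiction: suppose there is a sequence $\lambda_k \to 0^+$ and least-energy sign-changing radial solutions $u_k = u_{s,\lambda_k}$ that change sign at least three times, or whose zero set in $(0,R)$ strictly contains the nodes. First I would establish the energy bound: since $u_k \in \mathcal{M}$ and $I(u_k) = \inf_{\mathcal M} I$, a standard comparison with two suitably placed, oppositely-signed rescaled Aubin–Talenti bubbles (exactly as in the existence proof of Section 3, which I may assume) gives $\limsup_k I(u_k) \le \frac{2s}{n} \cdot 2\, S_s^{n/2s}$, hence $\|u_k\|_s^2 \to 2\,S_s^{n/2s}$ after using $\lambda_k \to 0$ and the Pohozaev/Nehari identities.

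Next I would perform the concentration-compactness analysis of $u_k$. Writing $u_k = u_k^+ - u_k^-$ and using that each $\pm$ part carries energy at least $\frac{s}{n} S_s^{n/2s}$ (a consequence of $I'(u_k)[u_k^\pm] = 0$ together with the fractional Sobolev inequality, where the nonlocal cross term $(u_k^+, u_k^-)_s \le 0$ actually helps), the total energy budget $2\frac{s}{n} S_s^{n/2s}$ forces that $u_k^+$ and $u_k^-$ each concentrate as a \emph{single} bubble and that no energy is lost to the boundary or to a nontrivial weak limit. This yields: the number of sign changes is at most two for $\lambda$ small, because three sign changes would produce at least three pairwise "almost-orthogonal" pieces each of energy $\frac{s}{n}S_s^{n/2s}(1+o(1))$, exceeding the budget. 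Here the key technical point, and the place where the nonlocal setting diverges from Ben Ayed–El Mehdi–Pacella, is to control the interaction term \eqref{interactionterm}: I would show that when restricted to a single connected component $K_{i,k}$ of $\{u_k \ne 0\}$ one still has $\int_{K_{i,k}} \! u_{i,k}(-\Delta)^s u_{i,k} \ge (1+o(1)) S_s^{n/2s}$ by exploiting that $u_{i,k}$ and $u_k - u_{i,k}$ have disjoint supports, so the cross term has a definite sign, $(u_{i,k}, u_k - u_{i,k})_s = -C_{n,s}\int \frac{u_{i,k}(x)(u_k(y)-u_{i,k}(y))}{|x-y|^{n+2s}}\,dx\,dy$, and the two signed pieces $u_k - u_{i,k}$ on the neighbouring annuli have opposite sign to $u_{i,k}$ on at least the adjacent component — giving the right inequality direction to salvage the energy comparison.

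For the second assertion — that the zeros of the radial profile $r \mapsto u_{s,\lambda}(r)$ in $(0,R)$ are exactly the sign changes, i.e. $u$ cannot have a "degenerate" zero where it touches $0$ without crossing — I would argue on the extension $W$. Suppose $u(r_0) = 0$ for some $r_0 \in (0,R)$ at which $u$ does not change sign, say $u \ge 0$ near $r_0$. The idea is to apply a strong-maximum-principle / Hopf-type argument to $W$ in a small half-ball around $(r_0 e_1, 0)$: on the thin-obstacle boundary one has $-d_s \lim_{y\to 0^+} y^{1-2s} W_y = \lambda u + |u|^{2_s^*-2}u \ge 0$ on the relevant piece of $(0,R)$, so $W$ is a nonnegative supersolution of the degenerate-elliptic extension equation that vanishes at an interior boundary point of its positivity set; the fractional strong maximum principle (the Cabré–Sire version, or Musina–Nazarov) then forces $W \equiv 0$ locally, hence $u \equiv 0$ on an open set, which by unique continuation for $(-\Delta)^s$ is impossible for a nontrivial solution. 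This disposes of interior degenerate zeros, leaving only the origin as a possible non-node zero. Finally, combining with Theorem 1.5, when $s \in (1/2,1)$ and $\lambda$ is small even the origin is excluded; in general (case (b), one sign change) the origin remains the sole possible exceptional zero, and in case (a) (two sign changes) the symmetry $u(0) \ne 0$ would follow by the same extension argument applied at $r=0$ using $W_r(0,y)\equiv 0$ — the cylindrical symmetry forces the profile to have a true sign change structure with $u(0)\ne 0$ when there are two nodes, since two interior nodes already account for the full "bubble tower" budget of two bubbles, leaving no room for a third concentration at the origin.

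The main obstacle I expect is precisely the control of the nonlocal interaction term \eqref{interactionterm} in the multi-component setting: in the local case disjoint supports make the cross term vanish identically, whereas here it is only sign-definite, and extracting from it a \emph{quantitative} lower bound of the form $\int_{K_{i,k}} u_{i,k}(-\Delta)^s u_{i,k} \ge (1+o(1)) S_s^{n/2s}$ — uniform enough to close the energy count — requires delicate estimates on the decay of each concentrating bubble on the neighbouring annuli, using the precise behaviour of the fractional bubble $\big(1+|x|^2\big)^{-\frac{n-2s}{2}}$ at infinity and the radial geometry. The second-largest difficulty is making the strong-maximum-principle argument on $W$ rigorous at a \emph{boundary} point of $(0,R)\times\{0\}$ versus an interior radial point, and handling the origin $r=0$, where the extension's cylindrical structure must be used carefully.
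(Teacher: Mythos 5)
Your plan has two genuine gaps, and they occur precisely at the points you yourself flag as the main obstacles. For the bound on the number of sign changes, you propose to run the Cerami--Solimini--Struwe/Ben Ayed--El Mehdi--Pacella energy count directly on the nodal components of $u$ and to control the interaction term \eqref{interactionterm} by a sign argument. This does not work as stated: with three or more sign changes the radial nodal components alternate in sign, so for a fixed component $K_{i}$ the remaining mass $u-u_{i}$ contains pieces of \emph{both} signs (the component two steps away has the same sign as $u_{i}$), hence $(u_{i},u-u_{i})_s$ is not sign-definite and no quantitative lower bound $\|u_i\|_s^2\geq (1+o(1))S_s^{n/2s}$ follows without estimating the same-sign interactions — which is exactly the difficulty the paper points out cannot be resolved by a ``delicate decay estimate'' at this stage, since the components may concentrate at the same point (tower-of-bubbles regime). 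The paper avoids this entirely by counting the nodal regions of the extension $W_{s,\lambda}$ in $\R^{n+1}_+$: there the Dirichlet energy $D^2_s$ is local, so it splits exactly over the nodal regions $W^i$, the trace inequality \eqref{traceinequality} plus testing the extension problem with $W^i$ gives $D^2_s(W^i)\geq S_s^{n/2s}(1+o(1))$ with no interaction term at all, and the budget $2S_s^{n/2s}+o(1)$ forces exactly two regions (Theorem \ref{3nodal}); the passage from ``two nodal regions of $W$'' to ``at most two sign changes of $u(r)$'' is then a planar topological argument (Jordan curve theorem, as in \cite{FrLe2}), not an energy argument (Theorem \ref{twicechange}). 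Your proposal is missing this device, and without it the first assertion is not proved.

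For the statement that zeros in $(0,R)$ coincide with sign changes, your maximum-principle step assumes what has to be proved. To apply the Cabr\'e--Sire strong maximum principle (Proposition \ref{yannick}) on a half-ball centered at $(r_0,0)$ you need $W\geq 0$ on the whole half-ball, including its spherical cap $\Gamma^+_\delta$; since $u$ changes sign, $W$ changes sign in the half-space, and the negativity set $N$ of $W$ could a priori accumulate at $(r_0,0)$ from above even though $u\geq 0$ on an interval around $r_0$ — this is exactly the failure mode described in the introduction of the paper, and note also that $u^\pm$ are not super/sub-solutions on their supports, so no shortcut through $u$ itself is available. The paper's proof supplies the missing ingredient: using that $W$ has exactly two nodal regions, one builds a curve in the positivity set $P$ joining $(r',0)$ and $(r'',0)$ (Lemma \ref{lemmatecnico} and the Claim in the proof of Theorem \ref{mainteocc}), and the Jordan curve theorem confines the connected set $N$ to the bounded region; only then does one know that some small half-ball at $(r_0,0)$ (or at the origin, in case (a)) avoids $N$, so that Proposition \ref{yannick} applies and yields the contradiction, with \cite[Theorem 1.4]{fellifall} used to guarantee the points $r',r''$ with $u>0$ exist. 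Your appeal to ``unique continuation'' and to an ``energy budget leaving no room at the origin'' does not substitute for this topological step (vanishing at the origin is not a concentration phenomenon, and indeed in case (b) it cannot be excluded by these means — that is the content of Theorem \ref{mainteoproba}, which requires a separate blow-up argument and $s>\tfrac12$).
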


\begin{teo}\label{mainteo}
Let $n\geq 7$ and let $R>0$. There exist $\tilde\lambda>0$ such that for any $\lambda \in (0,\tilde\lambda)$ there exists $\bar s \in (0,1)$ such that for any $s \in (\bar s, 1)$, any least energy sign-changing radial solution $u_{\lambda}$ to \eqref{fracBrezis} in $B_R$ changes sign exactly once. 
\end{teo}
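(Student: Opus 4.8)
The plan is to argue by contradiction, through a limiting procedure as $s\to1$. Fix $\lambda>0$ small (how small will be specified along the way) and suppose that there are $s_k\to1$ and, for each $k$, a least energy sign-changing radial solution $u_k:=u_{s_k,\lambda}$ of \eqref{fracBrezis} in $B_R$ that does \emph{not} change sign exactly once. Taking $\lambda$ below the threshold of Theorem~\ref{mainteocc} (which we may assume bounded away from $0$ for $s_k$ close to $1$), each $u_k$ changes sign \emph{exactly twice}; in particular $u_k(0)\ne0$ and, writing $u_k=u_k(r)$, it has precisely two zeros $0<r_1^k<r_2^k<R$, which are its nodes. The goal is to let $k\to\infty$, identify a radial least energy nodal solution $u_*$ of the \emph{classical} Brezis--Nirenberg problem, use the known fact that $u_*$ changes sign exactly once when $n\ge7$ and $\lambda$ is small, and then contradict the sign structure of the $u_k$ near $\partial B_R$.

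\emph{Uniform energy bound and compactness.} Denote by $c_{s,\lambda}$ the radial least energy nodal level. The first point is $c_{s_k,\lambda}\le c_{1,\lambda}+o(1)$, with $c_{1,\lambda}$ the classical radial least energy nodal level. This follows by taking as competitor the classical radial least energy nodal solution $w_\lambda$ (which exists for $n\ge7$, \cite{CSS}) and projecting $w_\lambda^+-w_\lambda^-$ onto the nonlocal radial nodal Nehari set: one looks for $\alpha_k,\beta_k>0$ with $I'_{s_k,\lambda}(\alpha_kw_\lambda^+-\beta_kw_\lambda^-)[\alpha_kw_\lambda^+]=0=I'_{s_k,\lambda}(\alpha_kw_\lambda^+-\beta_kw_\lambda^-)[\beta_kw_\lambda^-]$. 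Since $(w_\lambda^+,w_\lambda^-)_{s_k}\to0$ and $\|v\|_s^2\to\|v\|_1^2$, $|v|_{2_s^*}^{2_s^*}\to|v|_{2^*}^{2^*}$ as $s\to1$ (Bourgain--Brezis--Mironescu, with the normalization $C_{n,s}$), this $2\times2$ system converges to one whose only solution is $(1,1)$ and whose Jacobian there is nonsingular; hence $(\alpha_k,\beta_k)\to(1,1)$ and $I_{s_k,\lambda}(\alpha_kw_\lambda^+-\beta_kw_\lambda^-)\to I_{1,\lambda}(w_\lambda)=c_{1,\lambda}$. As $c_{1,\lambda}<\frac{2}{n}S_1^{n/2}$ for $n\ge7$ and $\lambda$ small \cite{CSS}, and $S_{s_k}^{n/2s_k}\to S_1^{n/2}$, one obtains $c_{s_k,\lambda}<\frac{2}{n}S_{s_k}^{n/2s_k}$ with a uniform gap, so the $u_k$ stay below the compactness threshold for nodal solutions. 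Standard blow-up analysis then excludes concentration, and up to a subsequence $u_k\to u_*$ strongly, with $\|u_k\|_{L^\infty}$ and the interior regularity norms bounded uniformly in $k$. Consequently $u_*\in H_0^1(B_R)$ is radial, solves the classical problem, satisfies $u_*^\pm\not\equiv0$ (the Nehari lower bounds $\|u_k^\pm\|_{s_k}^2\ge c>0$ pass to the limit), and $I_{1,\lambda}(u_*)=\lim_kc_{s_k,\lambda}\le c_{1,\lambda}$; thus $u_*$ is a radial least energy nodal solution of the classical problem, and for $n\ge7$ and $\lambda$ small it changes sign exactly once, by the argument recalled in the Introduction (cf.\ \cite{Pacella,CSS}).

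\emph{The collapsing nodal region and the contradiction.} Normalize so that $u_*(0)>0$; then $u_*>0$ on $[0,r_*)$ and $u_*<0$ on $(r_*,R)$ for some $r_*\in(0,R)$, and by Hopf's lemma the classical exterior normal derivative of $u_*$ at $\partial B_R$ is positive, i.e.\ $u_*/\delta<0$ in a fixed one-sided neighbourhood of $\partial B_R$, where $\delta(x):=R-|x|$. By the uniform regularity of the $u_k$, the convergence $u_k\to u_*$ is uniform on $[0,R]$; in particular $u_k(0)>0$ for $k$ large, so $u_k$ is positive on $(0,r_1^k)$, negative on $(r_1^k,r_2^k)$ and positive on $(r_2^k,R)$. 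Comparing with $u_*<0$ on $[r_*+\varepsilon,R]$ forces $r_1^k\to r_*$, $r_2^k\to R$ and $\sup_{(r_2^k,R)}u_k\to0$: the outermost nodal region collapses onto $\partial B_R$. Now on the one hand $u_k\ge0$ on $\{r_2^k<|x|<R\}$ and $u_k\equiv0$ outside $B_R$, so $u_k\ge0$ in the (shrinking) neighbourhood $\{r_2^k<|x|<R\}$ of $\partial B_R$, whence $u_k/\delta^{s_k}\ge0$ there. On the other hand, the boundary regularity of the fractional Dirichlet problem is continuous in $s$ as $s\to1$: since $u_k\to u_*$ strongly and the nonlinear terms stay bounded, $u_k/\delta^{s_k}\to u_*/\delta$ in $C^\alpha$ of a fixed one-sided neighbourhood of $\partial B_R$, and the limit is strictly negative there. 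For $k$ large $\{r_2^k<|x|<R\}$ lies inside that neighbourhood, and we get a sign contradiction. This proves the statement; the case $u_*(0)<0$ is handled by a global sign change.

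\emph{Main obstacle.} The delicate step is the continuity in $s$ of the boundary behaviour used at the end, namely $u_k/\delta^{s_k}\to u_*/\delta$ near $\partial B_R$; it needs Hopf-type and boundary Schauder estimates for $(-\Delta)^s$ that are uniform as $s\to1$, together with convergence of the fractional normal derivative to the classical one. If such uniform estimates are not available in the needed form, one must instead analyse the collapsing region directly: from the Nehari identity for $v_k:=u_k\,\mathbbm{1}_{\{r_2^k<|x|<R\}}$, one exploits the smallness — of order $1-s_k$, with the normalization $C_{n,s}$ — of the inter-component interaction term (the analogue of \eqref{interactionterm}), a Sobolev--Poincaré estimate in the spirit of \cite{Pacella} (whose Poincaré constant on the shrinking annulus tends to $+\infty$), and a rescaling/blow-up argument, to conclude that $|v_k|_{2_{s_k}^*}^{2_{s_k}^*}\ge S_1^{n/2}(1+o(1))$, contradicting $\sup_{(r_2^k,R)}u_k\to0$; here the lack of a definite sign for the interaction term is what makes the estimate technical. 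The uniform energy upper bound (a clean projection onto the nonlocal radial Nehari set) and the exclusion of bubbling for nodal fractional solutions below $\frac{2}{n}S_s^{n/2s}$ are the remaining points requiring care.
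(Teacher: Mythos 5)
Your overall strategy coincides with the paper's up to a point: argue by contradiction along $s_k\to 1^-$, reduce to solutions changing sign exactly twice via Theorem \ref{mainteocc}, obtain uniform energy and $L^\infty$ control, pass to a nontrivial limit solving the classical problem (this is the content of Corollary \ref{ImprEnerBound}, Lemma \ref{Linftyboundsc} and Theorem \ref{sconverg}), and locate the contradiction in the behaviour of the outer nodal region near $\partial B_R$. The decisive step, however, is exactly the one you leave open. Your primary route needs $u_k/\delta^{s_k}\to u_*/\delta$ in a fixed one-sided neighbourhood of $\partial B_R$, i.e. boundary Schauder/Hopf-type estimates for $(-\Delta)^{s}$ that are \emph{uniform as $s\to 1^-$}; but the boundary regularity actually available (Theorem \ref{boundregRO}, Ros-Oton--Serra) comes with a constant that, as the paper explicitly remarks, is not bounded as $s\to1^-$, so neither the convergence of $u_k/\delta^{s_k}$ nor even your claim that $u_k\to u_*$ uniformly on all of $[0,R]$ (rather than on compact subsets of $B_R$, which is what Theorem \ref{sconverg} gives) is justified. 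Your fallback, a Pacella-type Sobolev--Poincar\'e estimate on the collapsing component $v_k=u_k\,\mathbbm{1}_{\{r_2^k<|x|<R\}}$, runs into precisely the obstruction described in the introduction as Problem b): $v_k$ is not a solution on its nodal region, and the nonlocal interaction term has no definite sign and no evident $O(1-s_k)$ smallness for profiles that degenerate near the interface, so the inequality $|v_k|_{2^*_{s_k}}^{2^*_{s_k}}\geq S_1^{n/2}(1+o(1))$ is not obtained. In short, the heart of the theorem is missing.

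The paper closes this gap by a different device that avoids any uniform-in-$s$ boundary regularity: it proves (Appendix, Theorem \ref{subsol}, via the Caffarelli--Silvestre extension and the delicate estimate of Lemma \ref{stimaintegrale}) that a least energy radial solution changing sign exactly twice is a global weak \emph{subsolution} of \eqref{fracBrezis} on all of $\R^n$; this property passes to the limit $s\to1^-$, and if $r^2_s\to R$ the limit function is nonpositive and superharmonic-in-the-wrong-direction on a full two-sided annular neighbourhood of $\partial B_R$, so the classical strong maximum principle there yields the contradiction (together with the separate exclusions $r^1_s\not\to0$ and $r^1_s-r^2_s\not\to0$, which are handled by a H\"older/continuation argument and an elementary energy estimate, not by identifying the limit as a least energy classical nodal solution — the paper only needs the limit to be nontrivial with energy below $\tfrac{3}{n}S_1^{n/2}$, so it does not need your projection-onto-the-Nehari-set step, the strong $u_k^\pm$ convergence, or Hopf's lemma). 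If you want to salvage your route, you must either prove the uniform-in-$s$ boundary estimates you invoke, or replace the boundary comparison with a subsolution-plus-maximum-principle argument of the above type.
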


\begin{teo}\label{mainteo2}
Let $n>6s$, $s \in (\frac{1}{2},1)$ and let $R>0$. Let $(u_\lambda)$ be a family of least energy sign-changing radial solutions to \eqref{fracBrezis} in $B_R$, such that $u_\lambda(r)$ changes sign exactly once in $(0,R)$ for all sufficiently small $\lambda>0$. Assume, without loss of generality, that $u_\lambda(0)\geq0$ in a neighborhood of the origin, and set $M_{\lambda,\pm}:=\|u_\lambda^\pm\|_\infty$. Then:
\begin{itemize}
\item[i)] $M_{\lambda,\pm}\to + \infty $ as $\lambda \to 0^+$,
\item[ii)] denoting by $r_\lambda \in (0,R)$ the node of $u_\lambda$ and by $s_\lambda \in (r_\lambda, R)$ any point where $u_\lambda=u_\lambda(r)$ achieves $-M_{\lambda,-}$ we have $r_\lambda, s_\lambda \to 0$ as $\lambda \to 0^+$,
\item[iii)] $\frac{M_{\lambda,+}}{M_{\lambda,-}} \to + \infty $ as $\lambda \to 0^+$,
\item[iv)] setting $\beta:= \frac{2}{n-2s}$, then the rescaled function
\[
\tilde u^+_\lambda(x) := \frac{1}{M_{\lambda,+}}u^+_\lambda\left( \frac{x}{M_{\lambda,+}^\beta}\right), \quad x \in \R^n,
\]
converges in $C^{0,\alpha}_{loc}(\R^n)$, as $\lambda \to 0^+$, for some $\alpha=\alpha(s) \in (0,1)$, to the fractional standard bubble $U_s$ in $\R^n$ centered at $0$ and such that $U_s(0)=1$.
\end{itemize}
\end{teo}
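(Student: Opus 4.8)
The plan is to run a concentration/blow‑up analysis, after first establishing a sharp splitting of the energy between the positive and negative parts of $u_\lambda$.

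\emph{Energy splitting.} Write $u_\lambda=u_\lambda^+-u_\lambda^-$. By Theorem~\ref{mainteocc} (case (b)), under the present assumptions $\{u_\lambda>0\}$ is a ball $B_{r_\lambda}$ (possibly punctured at the origin) and $\{u_\lambda<0\}=\{r_\lambda<|x|<R\}$, so $u_\lambda^+$ and $u_\lambda^-$ have disjoint supports up to a null set; in particular the cross term \eqref{interactionterm} reduces to $(u_\lambda^+,u_\lambda^-)_s=-C_{n,s}\int_{\R^{2n}}\frac{u_\lambda^+(x)u_\lambda^-(y)}{|x-y|^{n+2s}}\,\mathrm{d}x\,\mathrm{d}y\le0$. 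Testing $I'(u_\lambda)=0$ with $u_\lambda^\pm$ and using the disjointness of the supports gives
\[
\|u_\lambda^\pm\|_s^2=\lambda|u_\lambda^\pm|_2^2+|u_\lambda^\pm|_{2_s^*}^{2_s^*}+(u_\lambda^+,u_\lambda^-)_s\le\lambda|u_\lambda^\pm|_2^2+|u_\lambda^\pm|_{2_s^*}^{2_s^*}.
\]
Combining this with the fractional Sobolev inequality $\|u_\lambda^\pm\|_s^2\ge S_s|u_\lambda^\pm|_{2_s^*}^2$ and with $\lambda|u_\lambda^\pm|_2^2\le\lambda\,\lambda_{1,s}(B_R)^{-1}\|u_\lambda^\pm\|_s^2=o(1)\|u_\lambda^\pm\|_s^2$ yields $\|u_\lambda^\pm\|_s^2\ge(1+o(1))S_s^{n/2s}$. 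On the other hand $\|u_\lambda\|_s^2=\|u_\lambda^+\|_s^2+\|u_\lambda^-\|_s^2-2(u_\lambda^+,u_\lambda^-)_s\ge\|u_\lambda^+\|_s^2+\|u_\lambda^-\|_s^2$, and since $\|u_\lambda\|_s^2\to2S_s^{n/2s}$ (as recalled in the Introduction) I conclude $\|u_\lambda^\pm\|_s^2\to S_s^{n/2s}$, $(u_\lambda^+,u_\lambda^-)_s\to0$, hence also $|u_\lambda^\pm|_{2_s^*}^{2_s^*}\to S_s^{n/2s}$.

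\emph{Concentration and (i).} Thus each $u_\lambda^\pm$ is a minimizing sequence for the best constant $S_s$ of the embedding $\mathcal{D}^s(\R^n)\hookrightarrow L^{2_s^*}(\R^n)$. Since $S_s$ is never attained on a bounded set, the concentration–compactness principle excludes both the vanishing alternative (which would force $|u_\lambda^\pm|_{2_s^*}\to0$) and the compactness alternative (which would produce an extremal supported in $\overline{B_R}$), so $u_\lambda^\pm$ must concentrate; in particular $M_{\lambda,\pm}=\|u_\lambda^\pm\|_\infty\to+\infty$, which proves (i). By radial symmetry a single bubble of energy $S_s^{n/2s}$ cannot concentrate along a sphere of radius bounded away from $0$ (that would cost more than one bubble's energy), so concentration occurs at the origin or near $\partial B_R$; a concentration of $u_\lambda^+$ near $\partial B_R$ would force $r_\lambda\to R$, squeezing $u_\lambda^-$ into a vanishing annulus against $|u_\lambda^-|_{2_s^*}\not\to0$, while a concentration of $u_\lambda^-$ near a boundary point would yield, after rescaling, a nontrivial solution of the critical equation on a half‑space, which does not exist. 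Hence both $u_\lambda^+$ and $u_\lambda^-$ concentrate at the origin.

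\emph{The concentration scales, (iv) and (ii).} Set $\varepsilon_\lambda^\pm:=M_{\lambda,\pm}^{-\beta}$. From the previous step (and the normalization $\|u_\lambda^\pm\|_\infty=M_{\lambda,\pm}$), $\varepsilon_\lambda^+$ is, up to a bounded factor, the concentration scale of $u_\lambda^+$, and $(\varepsilon_\lambda^+)^{(n-2s)/2}u_\lambda^+(\varepsilon_\lambda^+\,\cdot\,)$ converges in $L^{2_s^*}(\R^n)$ to a positive multiple of the standard bubble; since this limit is strictly positive on all of $\R^n$ while the rescaled support is $B_{r_\lambda/\varepsilon_\lambda^+}$, necessarily $r_\lambda/\varepsilon_\lambda^+\to+\infty$. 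Now consider $\tilde u_\lambda(x):=M_{\lambda,+}^{-1}u_\lambda(M_{\lambda,+}^{-\beta}x)$, which solves $(-\Delta)^s\tilde u_\lambda=\lambda M_{\lambda,+}^{-2s\beta}\tilde u_\lambda+|\tilde u_\lambda|^{2_s^*-2}\tilde u_\lambda$ in $B_{RM_{\lambda,+}^{\beta}}$ with zero exterior data. On any fixed ball $B_\rho$ one has, for $\lambda$ small, $\tilde u_\lambda=\tilde u_\lambda^+$ (because $r_\lambda/\varepsilon_\lambda^+\to+\infty$) and $\|\tilde u_\lambda\|_{L^\infty(B_\rho)}\le1$, so the right‑hand side is locally bounded; by the fractional regularity estimates (\cite{HolderReg,IanMosSqua}), and since $\|\tilde u_\lambda\|_s$ is bounded, $\tilde u_\lambda$ is bounded in $C^{0,\alpha}_{loc}(\R^n)$ and converges (along a subsequence) in $C^{0,\alpha}_{loc}(\R^n)$ to some $V\ge0$ with $(-\Delta)^sV=V^{2_s^*-1}$ in $\R^n$; since $\|\tilde u_\lambda^+\|_\infty=1$ with maximum point tending to the origin, $\|V\|_\infty=1=V(0)$, whence $V=U_s$ by the Liouville‑type classification of nonnegative solutions of the critical equation, which is (iv). Next, $u_\lambda^+$ concentrating at the origin and being supported in $B_{r_\lambda}$ forces $r_\lambda\to0$ (otherwise $u_\lambda^-$ would live in an annulus bounded away from $0$ and could not concentrate there). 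Running the same analysis for $u_\lambda^-$, radial symmetry places its bubble at the origin and gives $s_\lambda=o(\varepsilon_\lambda^-)$ together with $r_\lambda/\varepsilon_\lambda^-\to0$ — the ``hole'' of radius $r_\lambda$ must become negligible at the scale $\varepsilon_\lambda^-$ for the rescaled $u_\lambda^-$ to converge to the full bubble — so in particular $r_\lambda,s_\lambda\to0$, which is (ii).

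\emph{(iii) and the main difficulty.} Combining the two scale relations, $\varepsilon_\lambda^+\ll r_\lambda\ll\varepsilon_\lambda^-$, hence $M_{\lambda,+}^{-\beta}=\varepsilon_\lambda^+\ll\varepsilon_\lambda^-=M_{\lambda,-}^{-\beta}$, i.e. $M_{\lambda,+}/M_{\lambda,-}\to+\infty$, which is (iii): the inner (positive) bubble concentrates strictly faster than the outer (negative) one. The main obstacle I expect is the rigorous concentration analysis of the parts $u_\lambda^\pm$ separately, since these are \emph{not} solutions of the equation — only $u_\lambda$ is. On its annular support $u_\lambda^-$ solves $(-\Delta)^su_\lambda^-=\lambda u_\lambda^-+(u_\lambda^-)^{2_s^*-1}-g_\lambda$ with the nonlocal source $g_\lambda(x)=C_{n,s}\int_{\R^n}\frac{u_\lambda^+(y)}{|x-y|^{n+2s}}\,\mathrm{d}y\ge0$ encoding the attraction of the inner bubble, and one must show $g_\lambda$ is of lower order than the nonlinear term at the scale $\varepsilon_\lambda^-$ — which is precisely where the relation $r_\lambda\ll\varepsilon_\lambda^-$ (equivalently $M_{\lambda,+}\gg M_{\lambda,-}$) enters and has to be bootstrapped consistently. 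One must likewise rule out, by radial symmetry and energy quantization, both boundary concentration and a spherical‑shell profile for $u_\lambda^-$, and control the rescaled functions up to the boundary of their shrinking supports. Localized Pohozaev‑type identities — for instance for the Caffarelli–Silvestre extension $W$ of $u_\lambda$ on half‑balls of $\overline{\R^{n+1}_+}$, in the spirit of the function $H(r)$ discussed in the Introduction — are the natural tool for these estimates.
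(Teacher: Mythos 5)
Your energy splitting and the blow-up argument for $(i)$ and $(iv)$ are essentially sound and close in spirit to what the paper does (the splitting is Lemma \ref{As:energas}, and the rescaling of $u_\lambda$ at scale $M_{\lambda,+}^{-\beta}$ with local regularity and the classification of the limit is the paper's final proposition). The genuine gap is in your route to $(iii)$, and it infects your proof of $(ii)$ as well. You obtain $M_{\lambda,+}/M_{\lambda,-}\to+\infty$ from the chain $\varepsilon_\lambda^+\ll r_\lambda\ll\varepsilon_\lambda^-$, and the second inequality $r_\lambda\ll\varepsilon_\lambda^-$ is extracted from the assertion that $u_\lambda^-$, rescaled at its own scale $\varepsilon_\lambda^-=M_{\lambda,-}^{-\beta}$, converges to the full standard bubble centered at the origin. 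But $u_\lambda^-$ is not a solution of the equation on its support (it carries the nonlocal source $g_\lambda$ you write down), so no such limit profile is available; the paper explicitly states that no information on rescalings of $u_\lambda^-$ can be provided for precisely this reason. You acknowledge this as ``the main difficulty'' and propose to bootstrap $r_\lambda\ll\varepsilon_\lambda^-$ together with the smallness of $g_\lambda$, but as written the argument is circular: the scale separation you need for $(iii)$ is assumed in order to control the very term that would justify it. Similarly, your proof of $(ii)$ rests on ruling out concentration of a radial bubble on a sphere of positive radius and near the boundary; these exclusions are only asserted heuristically, not proven.

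The paper closes exactly this gap by a much more elementary device, which is why the hypothesis $s>\frac12$ appears: the fractional Strauss inequality (Proposition \ref{strauss}). Since $\|u_\lambda\|_s$ stays bounded, evaluating \eqref{straussineq} at the point $\tau_\lambda$ (your $s_\lambda$) where $-M_{\lambda,-}$ is attained gives $M_{\lambda,-}\,\tau_\lambda^{\frac{n-2s}{2}}\leq K_{n,s}\|u_\lambda\|_s^2\leq C$. This immediately yields $(ii)$ (because $M_{\lambda,-}\to+\infty$ forces $\tau_\lambda\to0$, and $t_\lambda<r_\lambda<\tau_\lambda$), and it also shows that $M_{\lambda,-}^{\beta}\tau_\lambda$ cannot blow up. Combining this with the exclusion of $\sigma_\lambda:=M_{\lambda,+}^\beta r_\lambda\to0$ (which would kill $|u_\lambda^+|_{2_s^*}$) and of $\sigma_\lambda\to L\in(0,\infty)$ (a rescaling of $u_\lambda^+$ would then produce a minimizer of $S_s$ supported in a bounded ball, contradicting non-attainability), a short algebraic case analysis on $Q_\lambda=M_{\lambda,+}/M_{\lambda,-}$ gives $Q_\lambda\to+\infty$ and then $\sigma_\lambda\to+\infty$ (Lemma \ref{asintoticaraggi}), with no asymptotic analysis of $u_\lambda^-$ whatsoever. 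If you want to salvage your draft, replace the $r_\lambda\ll\varepsilon_\lambda^-$ step by this Strauss-based bound; also note that in $(iv)$ the comparability of the concentration scale with $M_{\lambda,+}^{-\beta}$ and the strong $L^{2_s^*}$ convergence you invoke from concentration--compactness should be derived (as in the paper) from the equation satisfied by the rescaled $\tilde u_\lambda$, Fatou and Brezis--Lieb, rather than asserted.
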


Theorem \ref{mainteoproba} is a consequence of a more general result, which ensures that $u_\lambda(0)$ is bounded away from zero, by a constant which is uniform with respect to $\lambda$. The idea is to argue by contradiction and to construct a family of rescaled functions $\tilde u_\lambda$ such that $\tilde u_\lambda(0) \to 0$ as $\lambda \to 0^+$. By a standard argument $\tilde u_\lambda$ converges, in compact subsets of $\R^n$, to a solution $\tilde u$ of the fractional critical problem  $(-\Delta)^s U=|U|^{2_s^*-2}U$ in $\R^n$. Then, by energy considerations and the fractional strong maximum principle, we deduce that $\tilde u$ has to be positive in $\R^n$, contradicting that $\tilde u(0)=0$.\\ 

 The proofs of Theorem \ref{mainteocc}, Theorem \ref{mainteo} rely on the combination of several tools. The first step is to prove that the number of the nodal components of the extension $W$ is two. This is done by arguing as in the papers \cite{FrLe1}, \cite{FrLe2}, and then, exploiting the radiality of the solutions, we prove that our solutions change sign at most twice. In view of this information Theorem \ref{mainteocc} follows from a topological argument based on the Jordan's curve theorem, the fractional strong maximum principle and on a nice result of Fall and Felli (see \cite[Theorem 1.4]{fellifall}) which ensures that our solutions cannot vanish in a set of positive measure.
 
 For Theorem \ref{mainteo}, the fundamental step is to argue by contradiction and to prove that if two nodes exist for $s$ close to $1$ then they persist for the limit profile. This is done by performing an asymptotic analysis of the nodes of the solutions when $s \to 1^-$, fine energy estimates, a quite complex technical result (see the Appendix, Theorem \ref{subsol}) and the strong maximum principle for the standard Laplacian. At the end, it is not difficult to prove that the limit function is a nodal solution of the classical Brezis--Nirenberg problem, and it is of least energy, and thus we get a contradiction since such solutions change sign exactly once.

We point out that the restriction to $n\geq 7$ is essential for the result because existence of sign-changing radial solutions in the ball for the classical Brezis--Nirenberg problem, when $\lambda$ is close to $0$, holds only for $n\geq 7$ (see \cite{ABP}, \cite{AY2}, \cite{CSS}).\\ 

The proof of Theorem \ref{mainteo2} is based on the analysis of rescaled functions. We observe that statement iii) strongly relies on the fact that $u_\lambda$ possesses exactly one node. In fact, assuming that $u_\lambda$ has at least two nodes, then, we still have that $u_\lambda^+$ and $u_\lambda^-$ carry the same energy as $\lambda \to 0^+$. In particular $\{u_\lambda>0\}$ has at least two components and spreading the energy between these components does not allow us to establish the leading term between $M_{\lambda,+}$ and $M_{\lambda,-}$.


We point out that no information about the limit profile of suitable rescalings of $u_\lambda^-$ is provided. The reason is that, differently from the results of \cite{Iacopetti}, $u_\lambda^-$ is not a solution of Problem \eqref{fracBrezis} in $\{u_\lambda <0\}$, and we cannot apply ODE techniques. Finally, the restriction $s>\frac{1}{2}$ is technical because we make use intensively of the fractional Strauss inequality, as in the version stated in \cite[Proposition 1]{choozawa}, and it is known that such inequality fails for the values $0<s \leq \frac{1}{2}$ (see \cite[Remark 2, Remark 4]{choozawa}).\\

In a separate paper, we aim to extend the results in the whole interval $s \in (0,1)$ by a continuation argument. We also remark that our proofs work, with slightly adjustments, for fractional semilinear problems with subcritical nonlinearities.\\

The outline of the paper is the following: in Sect. 2 we fix the notation and we recall some known results, in Sect. 3 we prove the existence of radial solutions of Problem \eqref{fracBrezis} in the ball. In Sect. 4 we prove some preliminary results about the asymptotic analysis of the energy as $\lambda\to0^+$, and in Sect. 5 we study the nodal set of the extension. In Sect. 6 we provide uniform bounds, with respect to the parameter $s$, for the $L^\infty$-norm and the energy of the solutions. Finally in Sect. 7, 8, 9, 10 we prove, respectively, Theorem \ref{mainteoproba}, Theorem \ref{mainteocc}, Theorem \ref{mainteo}, and Theorem \ref{mainteo2}. At the end in the Appendix we prove some technical results and Theorem \ref{subsol}.
\end{section}

\begin{section}{Notation and preliminary results}\label{SectionIntro}
In this section we fix the notation and we recall some known results which will be used in the present paper. 

\begin{subsection}{Functional framework}

We denote by $\omega_n$ the $n$-dimensional measure of the unit sphere $\Sf^n$ and by $B_R(x_0) \subset \R^n$ the ball centered at $x_0 \in \R^n$ with radius $R>0$. If $x_0 = 0$ we simply write $B_R$. 

Let $\Omega$ be a domain in $\R^n$, we denote by $|\cdot|_p$ the usual $L^p(\Omega)$ norm, for $p \in [1, \infty]$.
Moreover, for $k \in \N$, $\alpha \in (0,1)$ we set
\[
\begin{aligned}
|D^k u|_{\infty; \Omega} := \sup_{\underset{|\gamma| = k}{\gamma \in \N^n}}\sup_{x \in \Omega} |D^\gamma u(x)|, \quad 
[u]_{k,\alpha; \Omega} := \sup_{\underset{|\gamma| = k}{\gamma \in \N^n}}\sup_{x,y \in \Omega} \frac{|D^\gamma u(x) - D^\gamma u (y)|}{|x-y|^\alpha},
\end{aligned}
\]
so that
\[
\|u\|_{k, \alpha; \Omega} := \sum_{j=0}^k |D^k u|_{\infty; \Omega} + [u]_{k,\alpha; \Omega}
\]
is the standard norm in $C^{k,\alpha}(\overline{\Omega})$. If $\Omega = \R^n$ we omit the subscript in the above norms.\\

Let $\Omega$ be a smooth bounded domain. In Sect. 1 we have introduced the Sobolev spaces $X^s_0(\Omega)$, for $s \in (0,1)$. For further properties of such space we refer to \cite{Ser, ValSerLD, ValSer} and the references therein. A weak solution for \eqref{fracBrezis} is defined as a function $u \in X^s_0(\Omega)$ such that
\[
(u,\varphi)_s = \lambda \int_\Omega u \varphi \de x + \int_\Omega |u|^{2^*_s-2}u\varphi \de x,
\]
for every $\varphi \in X^s_0(\Omega)$.\\[6pt] 
It is well known (see e.g. \cite[Corollary 4.2, Remark 4.3]{Hitch}) that
\[
\lim_{s \to 1^-}\|u\|_s^2 = |\nabla u|_2^2  \quad\forall u \in H^1(\R^n),
\]
and in order to simplify the presentation of some statements, with a slight abuse of notation, we will denote by $(-\Delta)^1$ the usual Laplace operator $-\Delta$ and with $\|u\|_1^2 = |\nabla u|_2^2$ the usual $H^1$-seminorm. 

We recall that the fractional Laplacian and the fractional Sobolev spaces $H^s(\R^n)$ can also be defined via the Fourier transform for every $s > -\frac{n}{2}$. When $s \in (0,1)$, this definition is equivalent to the standard one (see e.g. \cite[Proposition 3.3, Proposition 3.4]{Hitch}).

We introduce also the homogeneous Sobolev spaces $\mathcal{D}^s(\R^n)$, defined as as the completion of $C^\infty_c(\R^n)$ with respect to the norm $\|\cdot \|_s$. When $n > 2s$ it holds that $\mathcal{D}^s(\R^n) \hookrightarrow L^{2^*_s}(\R^n)$ and also the usual Sobolev and Rellich-Kondrakov embeddings hold true (see e.g. \cite[Theorem 6.7, Corollary 7.2]{Hitch}).

\end{subsection}

\begin{subsection}{Fractional Sobolev constant and Dirichlet eigenvalues}
Let us recall the definition of the best Sobolev constant for the embedding $\mathcal{D}^s(\R^n) \hookrightarrow L^{2^*_s}(\R^n)$,
\begin{equation}
\label{nonlocSob}
S_s := \inf_{u \in \mathcal{D}^s(\R^n)\setminus\{0\}} \frac{\|u\|^2_s}{|u|_{2_s^*}^{2}}.
\end{equation}

The value of $S_s$ is explicitly known (see {\cite[Theorem 1.1]{CotTav}}) and it is bounded, both form above and from below, by two positive constants depending only on $n$ (and hence not on $s$).  
When $n>2s$, the infimum \eqref{nonlocSob} is achieved only by functions of the family 
\[
k \frac{1}{(\mu^2 + |x-x_0|^2)^{\frac{n-2s}{2}}},
\]
where $k \in \R$, $\mu>0$ and $x_0 \in \R^n$. 
In particular, if we take 
\begin{equation}\label{optbubbcost}
k = k_\mu := \left[S_s^{\frac{n}{2s}}\mu^{n}\left(\int_{\R^n}\frac{1}{(1+|x|^2)^n}\de x\right)^{-1}\right]^{\frac{1}{2^*_s}}
\end{equation}
then the functions
\begin{equation}\label{eq:bubble}
U_{x_0, \mu}(x) : = k_\mu \frac{1}{(\mu^2 + |x-x_0|^2)^{\frac{n-2s}{2}}},
\end{equation}
also known as ``standard bubbles'', satisfy the equation
\[
(-\Delta)^s U_{x_0, \mu} = {U_{x_0, \mu}}^{2^*_s-1} \quad \text{in } \R^n
\]
for all $\mu >0$, $x_0 \in \R^n$ and 
\[
\|U_{x_0, \mu}\|^2_s = |U_{x_0, \mu}|^{2^*_s}_{2^*_s} = S_s^{\frac{n}{2s}}.
\]
The following estimates have a central role in the present work (for the proofs see \cite[Proposition 12]{Ser} and \cite[Proposition 21, 22]{ValSer}).
\begin{prop}\label{stimebubble}
Let $s\in(0,1)$ and $n>2s$. Let $\Omega \subset \R^n$ be a domain and let $x_0 \in \Omega$ and $\rho>0$ be such that $B_{4\rho}(x_0) \subset \Omega$. Let $\varphi \in C^\infty_c(B_{2\rho}(x_0); [0,1])$ be such that $\varphi \equiv 1$ in $B_\rho(x_0)$.
Let be
\[
u^s_\varepsilon (x) := \varphi(x)\varepsilon^{-\frac{n-2s}{2}}U_{x_0, \mu}^s\left(\frac{x-x_0}{\varepsilon}+x_0\right)
\]
where $U_{x_0, \mu}$ is as in \eqref{eq:bubble}.  
Then the following estimates hold:
\begin{equation}\label{stime}
\begin{aligned}
&\|u^s_\varepsilon\|_s^2 \leq S_s^{\frac{n}{2s}} + C \varepsilon^{n-2s} \\
&S_s^{\frac{n}{2s}}-C\varepsilon^n\leq |u^s_\varepsilon|_{2^*_s}^{2^*_s} \leq S_s^{\frac{n}{2s}}\\
&0\leq|u^s_\varepsilon|_{2^*_s-1}^{2^*_s-1} \leq C \varepsilon^{\frac{n-2s}{2}}\\
&0\leq|u^s_\varepsilon|_1 \leq C\varepsilon^{\frac{n-2s}{2}}\\
&|u^s_\varepsilon|_2^2 \geq
\begin{cases}
C\varepsilon^{2s} - C\varepsilon^{n-2s} & \text{if } n>4s \\
C\varepsilon^{2s}|\ln \varepsilon| + C\varepsilon^{2s} & \text{if } n=4s \\
C\varepsilon^{n-2s} - C\varepsilon^{2s} & \text{if } n<4s 
\end{cases}
\end{aligned}
\end{equation}
where all the constants are positive and depend on $n$, $\mu$, $x_0$, $\rho$ and $s$.
\end{prop}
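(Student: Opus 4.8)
The plan is, first, to reduce to $x_0=0$ by the translation $x\mapsto x-x_0$, and then to note that $u^s_\varepsilon$ is simply a cut-off of a concentrating bubble. Setting $\delta:=\mu\varepsilon$, formula \eqref{optbubbcost} gives $k_\delta=k_\mu\varepsilon^{\frac{n-2s}{2}}$ because $n/2^*_s=\frac{n-2s}{2}$; hence $\varepsilon^{-\frac{n-2s}{2}}U_{0,\mu}(x/\varepsilon)=U_{0,\delta}(x)$, so that $u^s_\varepsilon=\varphi\,U_\delta$ with $U_\delta:=U_{0,\delta}$. I would use throughout the elementary pointwise bound
\[
0\le U_\delta(x)\le k_\mu\,\varepsilon^{\frac{n-2s}{2}}\min\bigl\{\delta^{-(n-2s)},\,|x|^{-(n-2s)}\bigr\}
\]
and the normalisations $\|U_\delta\|_s^2=|U_\delta|_{2^*_s}^{2^*_s}=S_s^{\frac{n}{2s}}$.

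The four $L^p$-type estimates are then purely computational, $U_\delta$ being an explicit function. For the upper bounds one uses $0\le\varphi\le1$: thus $|u^s_\varepsilon|_{2^*_s}^{2^*_s}\le\int_{\R^n}U_\delta^{2^*_s}=S_s^{\frac{n}{2s}}$, while $|u^s_\varepsilon|_{2^*_s-1}^{2^*_s-1}\le\int_{\R^n}U_\delta^{2^*_s-1}$ and $|u^s_\varepsilon|_1\le\int_{B_{2\rho}}U_\delta$; the change of variables $x=\delta z$ turns the latter two into $k_\delta^{2^*_s-1}\delta^{-2s}\int_{\R^n}(1+|z|^2)^{-\frac{n+2s}{2}}\de z=C\varepsilon^{\frac{n-2s}{2}}$ and $k_\delta\delta^{2s}\int_{B_{2\rho/\delta}}(1+|z|^2)^{-\frac{n-2s}{2}}\de z\le C\varepsilon^{\frac{n-2s}{2}}$ (using $\int_{B_T}(1+|z|^2)^{-\frac{n-2s}{2}}\de z\le CT^{2s}$ for $T\ge1$). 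For the lower bounds one restricts to $B_\rho$, where $\varphi\equiv1$: from $\int_{B_\rho}U_\delta^{2^*_s}=S_s^{\frac{n}{2s}}-\int_{\R^n\setminus B_\rho}U_\delta^{2^*_s}$ and $\int_{\R^n\setminus B_\rho}U_\delta^{2^*_s}\le k_\delta^{2^*_s}\int_{\{|x|\ge\rho\}}|x|^{-2n}\de x=C\varepsilon^n$ (here $(n-2s)2^*_s=2n$ and $k_\delta^{2^*_s}=C\varepsilon^n$) one gets $|u^s_\varepsilon|_{2^*_s}^{2^*_s}\ge S_s^{\frac{n}{2s}}-C\varepsilon^n$; and $|u^s_\varepsilon|_2^2\ge\int_{B_\rho}U_\delta^2=C\varepsilon^{2s}\int_{B_{\rho/\delta}}(1+|z|^2)^{-(n-2s)}\de z$, where, as $\rho/\delta\to+\infty$, the last integral converges to a positive constant if $n>4s$, diverges like $C|\ln(\rho/\delta)|$ if $n=4s$, and like $C(\rho/\delta)^{4s-n}$ if $2s<n<4s$; absorbing the lower-order terms yields precisely the three cases of the statement.

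The estimate for the Gagliardo seminorm is the only genuinely non-routine point, and the idea I would use is to bring in the bubble equation rather than to estimate $\|\varphi U_\delta\|_s^2$ head-on via its Gagliardo double integral. Set $g:=(1-\varphi)U_\delta=U_\delta-\varphi U_\delta\in\mathcal{D}^s(\R^n)$; expanding the square in $\mathcal{D}^s(\R^n)$ and using that $U_\delta$ solves $(-\Delta)^sU_\delta=U_\delta^{2^*_s-1}$ in $\R^n$ (so that $(U_\delta,g)_s=\int_{\R^n}g\,(-\Delta)^sU_\delta\de x=\int_{\R^n}(1-\varphi)U_\delta^{2^*_s}\de x\ge0$), one obtains
\[
\|u^s_\varepsilon\|_s^2=\|U_\delta-g\|_s^2=S_s^{\frac{n}{2s}}-2\int_{\R^n}(1-\varphi)U_\delta^{2^*_s}\de x+\|g\|_s^2\le S_s^{\frac{n}{2s}}+\|g\|_s^2 .
\]
Thus the seminorm estimate is reduced to proving $\|g\|_s^2\le C\varepsilon^{n-2s}$.

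This last bound is where I expect the real work to lie. Writing $g=\varepsilon^{\frac{n-2s}{2}}\Psi_\varepsilon$ with $\Psi_\varepsilon(x):=k_\mu(1-\varphi(x))(\delta^2+|x|^2)^{-\frac{n-2s}{2}}$, it suffices to show $\sup_{\varepsilon\in(0,1]}\|\Psi_\varepsilon\|_s<+\infty$. Each $\Psi_\varepsilon$ is smooth, vanishes on $B_\rho$, and satisfies the $\varepsilon$-independent bounds $0\le\Psi_\varepsilon(x)\le k_\mu|x|^{-(n-2s)}$ and $|\nabla\Psi_\varepsilon(x)|\le C|x|^{-(n-2s)-1}$ for $|x|\ge\rho$; splitting the Gagliardo double integral of $\Psi_\varepsilon$ according to whether $|x-y|$ is small or large (and, when it is large, whether it dominates $\max\{|x|,|y|\}$) and using these two decay rates, one checks that every piece is finite uniformly in $\varepsilon$ precisely because $n>2s$ and $s<1$. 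For $n>4s$ this is immediate from a uniform $H^1$-bound, while for $2s<n\le4s$ (where $\Psi_\varepsilon\notin L^2(\R^n)$) the sharper split just described is needed. This gives $\|\Psi_\varepsilon\|_s\le C$, hence $\|g\|_s^2\le C\varepsilon^{n-2s}$, and closes the proof; all constants produced depend only on $n$, $s$, $\mu$, $\rho$, $x_0$ and on the fixed cut-off $\varphi$.
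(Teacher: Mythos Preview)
Your proposal is correct. Note, however, that the paper does not actually prove this proposition: the statement is imported verbatim, with the parenthetical ``for the proofs see \cite[Proposition 12]{Ser} and \cite[Proposition 21, 22]{ValSer}'', and no argument is given. So there is nothing in the paper to compare your proof against.

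That said, your argument stands on its own. The $L^p$-computations are routine and correct; the identification $u^s_\varepsilon=\varphi\,U_{x_0,\mu\varepsilon}$ is the right starting point. For the seminorm, the decomposition $\|u^s_\varepsilon\|_s^2=\|U_\delta\|_s^2-2(U_\delta,g)_s+\|g\|_s^2$ together with $(U_\delta,g)_s=\int(1-\varphi)U_\delta^{2^*_s}\ge0$ is the clean way to reduce to the tail estimate $\|g\|_s^2\le C\varepsilon^{n-2s}$, and this is essentially how the cited references proceed as well. Your sketch of the uniform $\mathcal{D}^s$-bound on $\Psi_\varepsilon=(1-\varphi)k_\mu(\delta^2+|\cdot|^2)^{-\frac{n-2s}{2}}$ is correct: the secondary split of the far-field part of the Gagliardo integral (according to whether $|x-y|$ dominates $\max\{|x|,|y|\}$ or not) is exactly what is needed when $2s<n\le4s$, and in both sub-cases the resulting integrals are finite precisely because $n>2s$ and $s<1$. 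If you were writing this up in full you would want to make that split explicit, but the idea is right and there is no gap.
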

\begin{rem}
Since the quantities $S_s$, $C_{n,s}$, and $\frac{C_{n,s}}{s(1-s)}$ are uniformly bounded with respect to $s \in (0,1)$ then an elementary computation shows that, for any fixed $0<s_0<s_1 \leq 1$ and $n>4s_1$, the constant appearing in the previous proposition are uniformly bounded with respect to $s \in (s_0, s_1)$.
\end{rem}

Another quantity which plays a central role in this work is the first eigenvalue of the $s$-Laplacian under homogeneous Dirichlet conditions, whose variational characterization is given by
\[
\lambda_{1,s} := \inf_{u \in X^s_0(\Omega)\setminus\{0\}} \frac{\|u\|_s^2}{|u|^2_2}.
\]

We recall also the fractional Poincar\'e inequality (see e.g., \cite[Proposition 2.7]{braparsqu}): for every $u \in X^s_0(\Omega)$ it holds that $C |u|_2 \leq \|u\|_s$, where the constant $C>0$ depends only on $n$, $s$ and $\text{diam }\Omega$. 
As a matter of fact, since $\frac{C_{n,s}}{s(1-s)}$ is uniformly bounded with respect to $s \in (0,1)$, it follows that the constant $C$ is uniformly bounded when $s$ is close to one. This implies that for every $s_0 \in (0,1)$ we have
\begin{equation}\label{eigenbounds}
\underline \lambda(s_0) := \inf_{s \in [s_0,1)} \lambda_{1,s} >0. 
\end{equation}
Moreover, recall that the following basic fact holds (see e.g. \cite[Lemma 3.5]{Bogdan}): for every $\varphi \in C^\infty_c(\R^n)$ it holds 
\begin{equation}\label{bogdy}
|(-\Delta)^s \varphi (x) | \leq C(|\varphi|_\infty + |D^2\varphi|_\infty) \frac{1}{(1 + |x|)^{n+2s}}\quad \forall x \in \R^n. 
\end{equation}
We point out that a simple computation shows that the constant $C$ depends only on $n$ and $\text{supp }\varphi$, but not on $s$. 
Moreover, we have that for every $u \in \mathcal{D}^s(\R^n)$ and $\varphi \in C^\infty_c(\Omega)$ it holds that
\begin{equation}\label{veryweak}
(u, \varphi)_s = \int_{\R^n}u (-\Delta)^s\varphi \de x, 
\end{equation}
and a simple computation shows that
\begin{equation}\label{eigenbounds2}
\overline \lambda := \sup_{s \in (0,1)} \lambda_{1,s} < \infty. 
\end{equation}
\end{subsection}
\begin{subsection}{Regularity of solutions}
We collect here some regularity results that will be used through the paper. 
First of all, we recall that, by \cite[Theorem 3.2]{IanMosSqua}, every weak solution $ u \in X^s_0(\Omega)$ of Problem \eqref{fracBrezis} belongs to $L^\infty(\R^n)$. The following result is a consequence of \cite[Corollary 2.4, Corollary 2.5]{HolderReg}, {\cite[Proposition 2.8, Proposition 2.9]{sylv}} and  \cite[Lemma 2.2]{uniquenondeg}, \cite[Lemma 4.4]{Yannick}:\\
\begin{prop}
Let $\Omega\subset \R^n$ be a domain. Let $u \in \mathcal{D}^s(\R^n) \cap L^\infty(\R^n)$ be a weak solution of $(-\Delta)^s u = g$ in $\Omega$. Then for every $K' \subset \subset K \subset \subset \Omega$ the following hold:
\begin{enumerate}[(a)]
\item Let be $s_0 \in (0,1)$ and $s \in [s_0, 1)$. Assume that $g \in L^\infty(\Omega)$. Then $u \in C^{0,s}(K')$ and it holds that
\begin{equation}\label{soave1}
\|u\|_{0, s;K'}\leq C(|u|_\infty + |g|_{\infty;K}),
\end{equation}
for a constant $C>0$ depending on $n$, $K$, $K'$ and $s_0$.
\item Let be $s_0 \in \left(\frac{2}{3}, 1\right)$ and $s \in [s_0, 1)$. Assume that $g \in C^{0,s}(\overline\Omega)$. Then $u \in C^{2,3s-2}(K')$. Moreover,
\begin{equation}\label{soave2}
\|u\|_{2, 3s-2; K'}\leq C(|u|_\infty + \|g\|_{0,s ;K}),
\end{equation}
for a constant $C>0$ depending only on $n$, $K$, $K'$ and $s_0$.
\end{enumerate}
\end{prop}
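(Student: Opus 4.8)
The plan is to treat a fixed $s$ first and then to chase the dependence of the constants on $s$. For fixed $s\in(0,1)$ both statements are a localisation and repackaging of the cited interior estimates. I would fix open sets $K'\subset\subset K_1\subset\subset K\subset\subset\Omega$ and a cut-off $\eta\in C^\infty_c(K;[0,1])$ with $\eta\equiv1$ on $K_1$, and write $u=u_1+u_2$ with $u_1:=(-\Delta)^{-s}(\eta g)$ the Riesz potential of the truncated datum and $u_2:=u-u_1$. The truncated datum $\eta g$ is compactly supported and lies in $L^\infty(\R^n)$ (resp.\ in $C^{0,s}(\R^n)$ in case (b)), so the classical mapping properties of the Riesz potential give $u_1\in\mathcal{D}^s(\R^n)\cap L^\infty(\R^n)$ with $|u_1|_\infty+\|u_1\|_{0,s}\le C|g|_{\infty;K}$ (resp., using the Riesz--Schauder estimate, $u_1\in C^{3s}(\R^n)$ with $\|u_1\|_{0,3s}\le C\|g\|_{0,s;K}$; note $3s\in(2,3)$ for $s\in(\tfrac23,1)$, so $C^{3s}=C^{2,3s-2}$). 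Since $(-\Delta)^s u_2=g-\eta g=0$ weakly in $K_1$ and $u_2\in L^\infty(\R^n)$, the interior regularity for $s$-harmonic functions from \cite[Cor.~2.4, Cor.~2.5]{HolderReg}, \cite[Prop.~2.8, Prop.~2.9]{sylv}, \cite[Lemma~2.2]{uniquenondeg} and \cite[Lemma~4.4]{Yannick} yields $\|u_2\|_{2,3s-2;K'}\le C|u_2|_\infty\le C(|u|_\infty+|g|_{\infty;K})$. Adding the two contributions proves \eqref{soave1} and \eqref{soave2} for fixed $s$. (In case (b) one can alternatively invoke case (a) first to get $u\in C^{0,s}_{loc}$, then feed $g\in C^{0,s}$ into the interior Schauder estimate of \cite{sylv}, \cite{Yannick} directly.)

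The substantive point is the uniformity of $C$ for $s\in[s_0,1)$. Inspecting the argument above, $C$ depends on $s$ only through a short list of quantities: the kernel constant $C_{n,s}$ together with $C_{n,s}/(s(1-s))$, both uniformly bounded on $[s_0,1)$ by the Remark following Proposition~\ref{stimebubble}; the best fractional Sobolev constant $S_s$, uniformly bounded by the discussion after \eqref{nonlocSob}; the constant in the pointwise estimate \eqref{bogdy}, which is independent of $s$; and the normalising constants of the Riesz kernel $\frac{\Gamma((n-2s)/2)}{4^s\pi^{n/2}\Gamma(s)}$ and of the Poisson kernel of a ball for $(-\Delta)^s$, which are continuous in $s$ on $(0,1]$ and finite at $s=1$ (reducing there to the Newtonian potential and the classical Poisson kernel). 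Consequently $C$ can be taken uniform on every compact subinterval of $(0,1)$ by mere continuity, and the only genuine issue is the degenerate limit $s\to1^-$.

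To handle $s\to1^-$ I would argue by contradiction via a blow-up. If, say, \eqref{soave1} failed uniformly there would be $s_k\uparrow1$ and solutions $u_k$ of $(-\Delta)^{s_k}u_k=g_k$ in $\Omega$ with $|u_k|_\infty+|g_k|_{\infty;K}\le1$ but $[u_k]_{0,s_k;K'}\to\infty$ (and likewise for \eqref{soave2} with the $C^{2,3s_k-2}$ norm). Choosing points realising a fixed fraction of the offending seminorm, rescaling at the associated scale $\rho_k\to0$ and renormalising, one produces functions $v_k$ on balls of radius $\to\infty$ solving $(-\Delta)^{s_k}v_k=\tilde g_k$ with $\tilde g_k\to0$ in $L^\infty_{loc}$, with unit-scale oscillation bounded below on $B_1$ and at-infinity growth controlled by a fixed power. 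The nonlocal tails of $(-\Delta)^{s_k}$ remain negligible uniformly in $k$ because \eqref{bogdy} is $s$-free and $C_{n,s_k}$, $C_{n,s_k}/(s_k(1-s_k))$ are uniformly bounded; moreover $(-\Delta)^{s_k}\varphi\to-\Delta\varphi$ pointwise and, by \eqref{bogdy}, dominatedly for $\varphi\in C^\infty_c(\R^n)$, so $(-\Delta)^{s_k}\to-\Delta$ in the sense of distributions. Passing to the limit one obtains a bounded (modulo the controlled growth) harmonic function on $\R^n$ with nonzero unit-scale oscillation and vanishing right-hand side, contradicting the classical interior estimate/Liouville theorem for $-\Delta$; this gives the uniform constant and finishes the proof. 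The hard part is exactly this last step: one must ensure both that the distant parts of the nonlocal operator do not corrupt the rescaling uniformly in $k$ — which is where \eqref{bogdy} and the uniform control of $C_{n,s}$ enter — and that the limiting equation is the honest Laplace equation, so that classical regularity closes the contradiction. (If one prefers to avoid the blow-up, the alternative is to track the $s$-dependence of the interior estimates for $s$-harmonic functions explicitly through the Poisson kernel and the Caffarelli--Silvestre extension and check by hand that it stays bounded as $s\to1^-$; this is more computational but conceptually the same.)
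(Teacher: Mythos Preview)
The paper does not prove this proposition; it states it as a direct consequence of the cited references \cite[Corollary~2.4, Corollary~2.5]{HolderReg}, \cite[Proposition~2.8, Proposition~2.9]{sylv}, \cite[Lemma~2.2]{uniquenondeg}, \cite[Lemma~4.4]{Yannick}. Your sketch is therefore not to be compared against an argument in the paper but against those references, and it is a faithful reconstruction: the fixed-$s$ part is exactly the localisation scheme behind \cite{HolderReg,sylv,Yannick}, and your blow-up/compactness argument for the uniformity as $s\to1^-$ is precisely the strategy of \cite[Lemma~2.2]{uniquenondeg}. So your proposal is correct and aligned with the sources the paper invokes.

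One small remark on execution: in the blow-up step you should be explicit that after subtracting the value at the blow-up centre (and, in case~(b), the appropriate second-order Taylor polynomial) the renormalised $v_k$ satisfies $|v_k(x)|\le C(1+|x|)^{s_k}$ (resp.\ $(1+|x|)^{3s_k-2}$ after subtracting the polynomial), so that the nonlocal tail $\int_{|y|>R}|v_k(y)|\,|y|^{-n-2s_k}\,dy$ is uniformly small in $k$; this is the one place where the growth exponent being strictly below $2s_k$ matters, and it is what lets you pass to the limit and land on a globally bounded (or polynomially bounded of degree $<2$) harmonic function to contradict Liouville. With that detail made explicit your argument is complete.
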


\begin{rem}\label{gilbcont}
Let $s_0 \in (0,1)$, let $(s_j) \subset [s_0, 1)$ and let $(\Omega_j)$ be a family of domains such that $\Omega_j \subset \Omega_{j+1}$, which invades $\R^n$ as $j \to +\infty$. Assume now that $(u_j)$ and $(g_j)$ are two families such that $u_j \in H^{s_j}(\R^n) \cap L^\infty(\R^n)$ and $g_j \in L^\infty(\R^n)$, which satisfy in the weak sense $(-\Delta)^{s_j} u_j = g_j$ in $\Omega_j$. 
Then fixing two compact sets $K_1 \subset \subset  K_2 \subset\subset \R^n$, we have that $u_j$ satisfy $(-\Delta)^{s_j}u_j = g_j$ definitely in $K_2$, and then by \eqref{soave1} we get that 
\[
\|u_j \|_{0, s_j; K_1} \leq C (|u_j|_\infty + |g_j|_{\infty; K_2})
\]
where $C>0$ depends only on $s_0$, $K_1$ and $K_2$. 

If $(u_j)$ and $(g_j)$ are uniformly bounded in $L^\infty(\R^n)$, this implies that $\|u_j\|_{0, s_0; K_1} \leq C$ where $C$ does not depend on $j$. Hence, thanks to \cite[Lemma 6.36]{giltru} we have that 
\[
u_j \to u \quad \text{in }C^{0, \alpha}(K_1)
\]
for any fixed $0<\alpha < s_0$. If in addiction $s_0 > \frac{2}{3}$ and $\|g_j\|_{0, s_j; K_2} \leq C$, with the same argument as before and using \eqref{soave2} we can prove that

\[
\|u_j \|_{2, 3s_0-2; K_1} \leq C  \quad \text{ and } \quad u_j \to u \quad \text{in }C^{2, \alpha}(K_1)
\]
for any fixed $0<\alpha < 3s_0 -2$.

\end{rem}

We conclude this subsection recalling the following:
\begin{teo}[{\cite[Proposition 1.1, Theorem 1.2]{HolderReg}}]\label{boundregRO}
Let $\Omega$ be a bounded $C^{1,1}$ domain, $g \in L^\infty(\Omega)$, let $u$ be a solution of
\[
\begin{cases}
(-\Delta)^s u = g & \text{in }\Omega, \\
u = 0 & \text{in }\R^n \setminus \Omega,
\end{cases}
\]
and $\delta(x) := \text{dist}(x, \partial \Omega)$. Then the following holds.
\begin{enumerate}
\item $u \in C^s(\R^n)$, 
\item the function $\frac{u}{\delta^s}_{|\Omega}$ can be continuously extended to $\overline \Omega$. Moreover, we have $\frac{u}{\delta^s} \in C^\alpha(\overline \Omega)$ and 
\[
\left\| \frac{u}{\delta^s}\right\|_{0, \alpha; \overline \Omega} \leq C |g|_{\infty; \Omega}
\]
for some $\alpha>0$ satisfying $\alpha < \min\{s, 1-s\}$. The constant $\alpha$ and $C$ depend only on $\Omega$ and $s$.
\end{enumerate}
\end{teo}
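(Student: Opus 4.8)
The plan is to follow the barrier‑plus‑blow‑up strategy of Ros‑Oton and Serra, organised into a few modules. Since $\Omega$ is of class $C^{1,1}$ it satisfies uniform interior and exterior ball conditions with a common radius $\rho_0>0$, and $\delta$ is $C^{1,1}$ in a one‑sided neighbourhood of $\partial\Omega$. By the weak maximum principle for $(-\Delta)^s$ one has $|u|\le|g|_\infty\,\psi$, where $\psi$ is the torsion function of $\Omega$ ($(-\Delta)^s\psi=1$ in $\Omega$, $\psi\equiv0$ outside). The first quantitative ingredient is the sharp boundary bound
\[
|u(x)|\le C\,|g|_\infty\,\delta(x)^s\qquad(x\in\Omega),
\]
which I would get by estimating $\psi$: near each $z\in\partial\Omega$ one compares $\psi$ with the explicit barrier $c_{n,s}\,(|x-y_0|^2-\rho_0^2)_+^{\,s}$ built from an exterior tangent ball $B_{\rho_0}(y_0)$ — whose $s$‑Laplacian is a positive constant on a fixed annulus — patched with a bounded supersolution in the bulk, obtaining $\psi\le C\delta^s$. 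A second, purely geometric, ingredient is that $\delta^s$ (set to $0$ outside $\Omega$ and smoothly modified far from $\partial\Omega$) satisfies $|(-\Delta)^s(\delta^s)|\le C$ in $\Omega$; this is checked by freezing the nearest boundary point $z$ and comparing with the flat model $((x-z)\cdot\nu_z)_+^s$, for which $(-\Delta)^s$ of that power is identically $0$, then controlling the $C^{1,1}$ remainder.

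With these at hand I would prove part (1). Rescaling $u$ in balls $B_{\delta(x_0)/2}(x_0)$ via $\tilde u(y):=u(x_0+\rho y)$, $\rho:=\delta(x_0)/2$, and using the sharp bound in the form $|\tilde u(y)|\le C|g|_\infty\rho^s(1+|y|)^s$ — valid because a displacement $O(|y|\rho)$ from $x_0$ keeps one within $O((1+|y|)\rho)$ of $\partial\Omega$ — the interior estimate \eqref{soave1} applied to $\tilde u$ produces a tail that is $O(|g|_\infty\rho^s)$; scaling back gives the scale‑invariant oscillation bound
\[
[u]_{C^s\big(\overline{B_{\delta(x_0)/4}(x_0)}\big)}\le C\,|g|_\infty\qquad\text{for every }x_0\in\Omega .
\]
Matching this with the pointwise bound $|u|\le C|g|_\infty\delta^s$ through the usual two‑regime dichotomy (use the interior estimate when $|x-y|<\tfrac14\delta(x)$, the pointwise bound otherwise, recalling $u\equiv0$ on $\R^n\setminus\Omega$) yields $u\in C^s(\R^n)$ with $\|u\|_{C^s(\R^n)}\le C|g|_\infty$, which is (1).

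The core of part (2) is a boundary Taylor expansion: for every $z\in\partial\Omega$ there is $Q(z)\in\R$, with $|Q(z)|\le C|g|_\infty$, such that
\[
|u(x)-Q(z)\,\delta(x)^s|\le C\,|g|_\infty\,|x-z|^{s+\alpha}\qquad(x\in\overline\Omega,\ |x-z|\ \text{small})
\]
for some $\alpha\in(0,\min\{s,1-s\})$, and with $z\mapsto Q(z)$ in $C^\alpha(\partial\Omega)$. I would prove this by contradiction and compactness: if it fails, subtracting at dyadic scales the optimal multiple of $\delta^s$ and normalizing so that the excess is monotone in the scale (the standard monotonicity‑of‑excess normalization in the spirit of Simon and Caffarelli), one obtains functions solving $(-\Delta)^s(\cdot)=(\text{bounded})$ — here $|(-\Delta)^s\delta^s|\le C$ is used — in shrinking half‑ball–type domains, vanishing outside, with growth $\le C(1+|x|)^{s+\alpha}$, $s+\alpha<2s$, and whose best multiple of the model solution at every scale is $0$; by the interior and boundary estimates already established they converge in $C^\beta_{loc}$ to a nontrivial $v$ with $(-\Delta)^s v=0$ in a half‑space $\{x\cdot\nu>0\}$, $v=0$ outside, $|v(x)|\le C(1+|x|)^{s+\alpha}$. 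The half‑space Liouville theorem — the only such solutions being $v=c\,(x\cdot\nu)_+^s$ — together with the normalization forces $v\equiv0$, a contradiction. Granting the expansion, (2) follows by redoing the two‑regime argument with $u$ replaced near $\partial\Omega$ by $w:=u-Q(z_{x_0})\delta^s$: the interior estimate applied to $w$ on $B_{\delta(x_0)/2}(x_0)$ now gives $[w]_{C^\alpha}\le C|g|_\infty\delta(x_0)^s$ and $\|w\|_{L^\infty}\le C|g|_\infty\delta(x_0)^{s+\alpha}$ there (this is where the extra decay of $w$, hence a tail of order $\rho^{s+\alpha}$, is crucial), so dividing by $\delta^s\sim\delta(x_0)^s$ — whose $C^\alpha$ seminorm on that ball is $\lesssim\delta(x_0)^{s-\alpha}$ — yields the scale‑invariant bound $[u/\delta^s]_{C^\alpha(B_{\delta(x_0)/4}(x_0))}\le C|g|_\infty$; combining with $|u/\delta^s-Q(z_x)|\le C|g|_\infty\delta(x)^\alpha$ and $Q\in C^\alpha(\partial\Omega)$ proves $u/\delta^s\in C^\alpha(\overline\Omega)$ with $\|u/\delta^s\|_{C^\alpha(\overline\Omega)}\le C|g|_\infty$, and $\alpha<\min\{s,1-s\}$ as required.

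The main obstacle is the boundary expansion and the half‑space Liouville theorem it rests on. Making the blow‑up rigorous demands a careful dyadic normalization so that the limit is simultaneously nontrivial and ``orthogonal to the model profile $(x\cdot\nu)_+^s$'', together with uniform control of the nonlocal tails along the rescalings — precisely where the constraint $s+\alpha<2s$ and the sharp bound $|u|\le C|g|_\infty\delta^s$ are indispensable — and a genuinely fractional Liouville classification in the half‑space, none of which is a consequence of the interior estimates quoted in the excerpt. Of secondary difficulty are the barrier yielding $\psi\le C\delta^s$ for a general $C^{1,1}$ domain (the flat model has $(-\Delta)^s\delta^s=0$, so the interior ball condition has to be used) and the boundedness of $(-\Delta)^s(\delta^s)$.
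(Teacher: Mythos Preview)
The paper does not prove this theorem: it is stated as a quotation of \cite[Proposition~1.1, Theorem~1.2]{HolderReg} and is used only as a tool (specifically in the Appendix, via the decomposition $u_{s,\lambda}=g_{s,\lambda}\gamma^s$). There is therefore no ``paper's own proof'' to compare your proposal against.

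That said, your outline is faithful to the strategy of the original Ros-Oton--Serra paper: the barrier argument giving $|u|\le C|g|_\infty\delta^s$, the rescaled interior estimate yielding the scale-invariant $C^s$ bound and hence part~(1), the fact that $(-\Delta)^s(\delta^s)$ is bounded in $\Omega$, and the compactness/blow-up scheme reducing part~(2) to a Liouville classification in the half-space. Your identification of the delicate points --- the dyadic normalization ensuring the blow-up limit is nontrivial yet ``orthogonal'' to $(x\cdot\nu)_+^s$, the tail control requiring $s+\alpha<2s$, and the half-space Liouville theorem itself --- is accurate; these are exactly the technical cores of \cite{HolderReg}. Nothing in your sketch is wrong, and the level of detail is appropriate for a result the present paper treats as a black box.
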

\begin{rem}
The constant $C$ appearing in Thereom \ref{boundregRO} is not, in general, bounded as $s \to 1^-$.
\end{rem}
\end{subsection}

\begin{subsection}{Extension properties for the fractional Laplacian}

We introduce now the extension properties of $\mathcal{D}^s(\R^n)$ functions. All results are well known and can be found in \cite{CS, FrLe1, FrLe2}. 

Let $ s \in (0,1)$ and $n > 2s$. We set $\R^{n+1}_+ := \R^n \times \R_+$, we write $z \in \R^{n+1}_+$ as $z= (x, y)$ where $x \in \R^n$ and $y >0$, and we set $|z|=|(x,y)|:=\sqrt{x^2+y^2}$. We define $\mathcal{D}^{1,s}(\R^{n+1}_+)$ as the completion of $C^\infty_c(\overline {\R^{n+1}_+})$ with respect to the quadratic form 
\[
D^2_s(u) := d_s \int \int_{\R^{n+1}_+} y^{1-2s} |\nabla u|^2 \de x \de y, 
\]
where 
\[
d_s := \frac{2^{2s}}{2}\frac{\Gamma\left(s\right)}{\Gamma\left(1-s\right)}.
\]

Let $P_{n,s}: \R^{n+1}_+ \to \R$ be the function defined by
\[
P_{n,s}(x, y) := p_{n,s} \frac{y^{2s}}{(y^2+|x|^2)^{\frac{n+2s}{2}}}, 
\] 
where 
\[
p_{n,s} := \frac{\Gamma\left( \frac{n+2s}{2}\right)}{\pi^{\frac{n}{2}}\Gamma(s)}
\]
is such that $p_{n,s}\int_{\R^n}\frac{y^{2s}}{(y^2 + |x|^2)^{\frac{n+2s}{2}}} \de x = 1$ for every $y>0$. 

Given $u \in \mathcal{D}^s(\R^n)$, we define the extension $E_s u: \R^{n+1}_+ \to \R$ of $u$ as the function
\begin{equation}\label{extensiondefinition}
E_s u(x, y) := \int_{\R^n}P_{n,s}(x-\xi, y) u(\xi) \de \xi.
\end{equation}

\begin{prop}\label{extension}
Let $s \in (0,1)$. The following properties holds:
\begin{enumerate}
\item If $u \in \mathcal{D}^s(\R^n)$, then $E_s u \in \mathcal{D}^{1,s}(\R^{n+1}_+)$ and satisfies
\begin{equation}\label{energyext}
D^2_s(E_s u) = \| u\|_s^2.
\end{equation}
Moreover $E_s u $ is a weak solution to the problem 
\[
-\text{div }\left( y^{1-2s} \nabla U\right) = 0 \quad \text{in }\R^{n+1}_+,
\]
and satisfies 
\[
\lim_{\varepsilon \to 0^+}\|E_s u(\cdot , \varepsilon) - u \|_s =  0.
\]
In addition, it holds that
\begin{equation}\label{dualconv}
\lim_{\varepsilon \to 0^+} \left \| \left(-d_s \varepsilon^{1-2s}\frac{\partial E_s u}{\partial y}(\cdot, \varepsilon)\right) - (-\Delta)^s u\right \|_{-s} = 0.
\end{equation}

\item There exists a unique linear bounded operator $T$, such that $T: \mathcal{D}^{1,s}(\R^{n+1}_+) \to \mathcal{D}^s(\R^n)$ and $Tu(x,y) = u(x, 0)$ whenever $u \in C^\infty_c(\overline{\R^{n+1}_+})$.
Moreover, the following inequality holds for all $u \in \mathcal{D}^{1,s}(\R^{n+1}_+)$:
\begin{equation}\label{traceinequality}
D^2_s(u) \geq \| Tu\|^2_s. 
\end{equation}

\item The extension $E_su$ is unique: if a function $U$ is such that $TU(x,y) = u(x)$ and it satisfies the properties in (1), then $U = E_s u$. On the other hand, the equality in \eqref{traceinequality} is attained if and only if $ u = E_s f$ for some $f \in \mathcal{D}^s(\R^n)$. 
\end{enumerate}
\end{prop}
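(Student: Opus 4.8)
The plan is to give the classical Fourier--analytic proof of the Caffarelli--Silvestre extension, following \cite{CS, FrLe1}, and I would only record the main steps, since all the computations are routine. Normalize the Fourier transform on $\R^n$ so that $\|u\|_s^2 = \int_{\R^n}|\xi|^{2s}|\widehat u(\xi)|^2\de\xi$ for $u \in \mathcal{D}^s(\R^n)$ (this is the equivalence recalled in Subsection~2.1). The core object is the profile $\varphi=\varphi_s\colon[0,+\infty)\to\R$ solving the degenerate ODE $\varphi'' + \tfrac{1-2s}{t}\varphi' = \varphi$ with $\varphi(0)=1$ and $\varphi(t)\to 0$ as $t\to+\infty$; explicitly $\varphi(t)=c_s\,t^{s}K_s(t)$, with $K_s$ the modified Bessel function and $c_s$ fixed by $\varphi(0)=1$, so that $\varphi(t)=1-\kappa_s t^{2s}+o(t^{2s})$ as $t\to0^+$ for a suitable $\kappa_s>0$. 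A direct computation (or the observation that both sides solve the same well-posed problem) gives $\widehat{P_{n,s}(\cdot,y)}(\xi)=\varphi(|\xi|\,y)$, hence $\widehat{E_s u}(\xi,y)=\widehat u(\xi)\,\varphi(|\xi|\,y)$.

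\textbf{Part (1).} After the substitution $t=|\xi|\,y$ the ODE for $\varphi$ becomes exactly $\partial_y\!\big(y^{1-2s}\partial_y\widehat{E_s u}\big)=y^{1-2s}|\xi|^2\widehat{E_s u}$; Plancherel in $x$ and integration by parts in $y$ (the boundary terms vanish because test functions $\phi\in C^\infty_c(\R^{n+1}_+)$ vanish near $\{y=0\}$ and are compactly supported) then show that $E_s u$ is a weak solution of $-\text{div}(y^{1-2s}\nabla U)=0$ in $\R^{n+1}_+$. Computing $D^2_s(E_s u)$ by Plancherel in $x$ and the same change of variables yields
\[
D^2_s(E_s u)=d_s\Big(\int_0^{+\infty}t^{1-2s}\big(\varphi(t)^2+\varphi'(t)^2\big)\de t\Big)\int_{\R^n}|\xi|^{2s}|\widehat u(\xi)|^2\de\xi,
\]
and integrating by parts once more using the ODE gives $\int_0^{+\infty}t^{1-2s}(\varphi^2+(\varphi')^2)\de t=-\lim_{t\to0^+}t^{1-2s}\varphi'(t)=2s\kappa_s$. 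The constant $d_s$ in the definition of $D^2_s$ is calibrated precisely so that $2s\kappa_s\,d_s=1$: this simultaneously yields \eqref{energyext}, that is $D^2_s(E_s u)=\|u\|_s^2$, and, on the Fourier side, $-d_s\,\varepsilon^{1-2s}\partial_y\widehat{E_s u}(\xi,\varepsilon)=\big(-d_s\,(|\xi|\varepsilon)^{1-2s}\varphi'(|\xi|\varepsilon)\big)|\xi|^{2s}\widehat u(\xi)\to|\xi|^{2s}\widehat u(\xi)=\widehat{(-\Delta)^su}(\xi)$, which is \eqref{dualconv}. The convergence $\|E_s u(\cdot,\varepsilon)-u\|_s\to0$ and \eqref{dualconv} both follow from dominated convergence in $\xi$, using $\varphi(|\xi|\varepsilon)\to\varphi(0)=1$ and $-d_s\,r^{1-2s}\varphi'(r)\to1$ pointwise as $\varepsilon\to0^+$, together with the uniform bounds $0\le\varphi\le1$ and boundedness of $r\mapsto-d_s\,r^{1-2s}\varphi'(r)$ on $(0,+\infty)$ — first for Schwartz $u$, then in general by density of $C^\infty_c(\R^n)$ in $\mathcal{D}^s(\R^n)$ and the estimate $D_s(E_s u)=\|u\|_s$.

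\textbf{Parts (2) and (3).} The trace inequality \eqref{traceinequality} follows from the minimality of $E_s u$ among extensions with a fixed boundary datum. Given $U\in\mathcal{D}^{1,s}(\R^{n+1}_+)$ represented by a function in $C^\infty_c(\overline{\R^{n+1}_+})$, set $u:=U(\cdot,0)\in C^\infty_c(\R^n)$ and write $U=E_s u+w$; since $w$ has vanishing trace it is an admissible test function for the weak equation solved by $E_s u$, so the cross term in
\[
D^2_s(E_s u+w)=D^2_s(E_s u)+2d_s\int\int_{\R^{n+1}_+} y^{1-2s}\nabla E_s u\cdot\nabla w\,\de x\de y+D^2_s(w)
\]
vanishes, whence $D^2_s(U)=\|u\|_s^2+D^2_s(w)\ge\|U(\cdot,0)\|_s^2$. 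This bound $\|U(\cdot,0)\|_s\le D_s(U)$ on the dense subspace $C^\infty_c(\overline{\R^{n+1}_+})$ extends by continuity to a unique bounded operator $T\colon\mathcal{D}^{1,s}(\R^{n+1}_+)\to\mathcal{D}^s(\R^n)$, proving (2). For the uniqueness in (3), if $U$ has trace $u$ and satisfies the properties in (1), then $w:=U-E_s u$ has vanishing trace and is a weak solution of the degenerate equation; testing against $w$ itself (legitimate since $Tw=0$) gives $D^2_s(w)=0$, hence $w=0$ and $U=E_s u$. Finally, if $u=E_s f$ then $D^2_s(u)=\|f\|_s^2=\|Tu\|_s^2$; conversely, equality in \eqref{traceinequality} applied to the decomposition $u=E_s(Tu)+w$ forces $D^2_s(w)=0$, i.e. $u=E_s(Tu)$.

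\textbf{Main obstacle.} The only genuinely delicate point is the bookkeeping of the normalizing constants: one must verify that $d_s$, the Poisson-kernel constant $p_{n,s}$ and the constant $C_{n,s}$ in $\|\cdot\|_s$ are mutually consistent, so that $D^2_s(E_s u)=\|u\|_s^2$ holds \emph{with equality} and the Dirichlet-to-Neumann limit reproduces $(-\Delta)^s u$ with constant exactly $1$. This reduces to the small-$t$ expansion $\varphi(t)=1-\kappa_s t^{2s}+o(t^{2s})$ and the identity $2s\kappa_s d_s=1$ (which in turn comes from $\Gamma(1-s)=-s\Gamma(-s)$); alternatively one simply invokes \cite{CS, FrLe1}, where these normalizations are carried out explicitly.
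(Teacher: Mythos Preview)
Your proof is correct and is precisely the classical Fourier--analytic argument from \cite{CS, FrLe1, FrLe2}; the paper does not give its own proof of this proposition but simply states it as a known result and cites those same references. In that sense your approach \emph{is} the paper's approach, fully written out.
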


A consequence of the previous proposition is the following:
\begin{lemma}\label{extreg}
\begin{enumerate}[(i)]
\item If $u \in C^{0, s}(\R^n)$, then $E_s u \in C^2(\R^{n+1}_+) \cap C^{0, s}(\overline{\R^{n+1}_+})$;
\item If $u \in \mathcal{D}^s(\R^n)$, then 
\[
\lim_{\varepsilon \to 0^+}\int_{\R^n}\left(-2sd_s \frac{E_s u (x, \varepsilon) - E_s u(x, 0)}{\varepsilon^{2s}}\right)\varphi(x) \de x = (u, \varphi)_s \quad \forall \varphi \in C^\infty_c(\R^n),
\]
\item Moreover, if $u \in H^s(\R^n)$, then
\[
\lim_{\varepsilon \to 0^+}\int_{\R^n}\left(-d_s \varepsilon^{1-2s} \frac{\partial E_s u}{\partial y}(x,\varepsilon)\right)\varphi(x) \de x = (u, \varphi)_s \quad \forall \varphi \in C^\infty_c(\R^n). 
\]
\item For every $u \in H^s(\R^n)$ and $\varphi \in \mathcal{D}^{1,s}(\R^{n+1}_+)$ it holds
\[
d_s\int_{\R^{n+1}_+} y^{1-2s} \nabla E_s u \cdot \nabla \varphi \de x \de y = (u, T\varphi)_s.
\]
\end{enumerate}
\end{lemma}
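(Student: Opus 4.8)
The plan is to deduce all four statements from the explicit Poisson representation \eqref{extensiondefinition} together with Proposition \ref{extension}, most crucially the boundary convergence \eqref{dualconv} and the energy/trace relations \eqref{energyext}, \eqref{traceinequality}. \emph{Part (i).} For $y>0$ the kernel $P_{n,s}(\cdot\,,y)$ and all its $(x,y)$-derivatives are smooth and decay in $x$ at least like $|x|^{-(n+2s)}$, uniformly for $y$ in compact subsets of $(0,+\infty)$; since $u\in C^{0,s}(\R^n)\subset L^\infty(\R^n)$ one may differentiate \eqref{extensiondefinition} under the integral sign, obtaining $E_s u\in C^\infty(\R^{n+1}_+)$, in particular $E_s u\in C^2(\R^{n+1}_+)$ (alternatively this is interior regularity for the nondegenerate equation $-\operatorname{div}(y^{1-2s}\nabla E_s u)=0$ on $\{y>0\}$). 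For the Hölder bound up to $\{y=0\}$ I would use $\int_{\R^n}P_{n,s}(x-\xi,y)\,\de\xi=1$ and the homogeneity $P_{n,s}(\lambda z)=\lambda^{-n}P_{n,s}(z)$: writing $E_s u(x,y)-u(x)=\int_{\R^n}P_{n,s}(x-\xi,y)\big(u(\xi)-u(x)\big)\de\xi$ and changing variables $\xi=x-y\eta$,
\[
|E_s u(x,y)-u(x)|\le [u]_{0,s}\,y^{s}\int_{\R^n}P_{n,s}(\eta,1)|\eta|^{s}\de\eta=C[u]_{0,s}\,y^{s},
\]
the integral being finite since $P_{n,s}(\eta,1)\sim|\eta|^{-(n+2s)}$; and since $\int_{\R^n}\nabla_{x,y}P_{n,s}(x-\xi,y)\de\xi=0$, the same computation applied to $\nabla_{x,y}P_{n,s}$ (homogeneous of degree $-(n+1)$) gives $|\nabla E_s u(x,y)|\le C[u]_{0,s}\,y^{s-1}$ on $\{y>0\}$. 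Setting $E_s u(x,0):=u(x)$, these two estimates yield $E_s u\in C^{0,s}(\overline{\R^{n+1}_+})$ by the elementary dichotomy on the distance to $\{y=0\}$: for points $z_1,z_2$ with $\rho:=|z_1-z_2|$, if both heights are $\ge2\rho$ one integrates $\nabla E_s u$ along the segment joining them (which stays in $\{y\ge2\rho\}$), and otherwise one bounds $|E_s u(z_1)-u(x_1)|$, $|u(x_1)-u(x_2)|$ and $|u(x_2)-E_s u(z_2)|$ separately, via the displayed estimate and $[u]_{0,s}$.

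\emph{Parts (iii) and (ii).} Fix $u\in H^s(\R^n)\subset\mathcal{D}^s(\R^n)$ and $\varphi\in C^\infty_c(\R^n)\subset\mathcal{D}^s(\R^n)$. For each $\varepsilon>0$ the function $-d_s\varepsilon^{1-2s}\partial_y E_s u(\cdot,\varepsilon)$ lies in $L^2(\R^n)$ and represents an element of $\mathcal{D}^{-s}(\R^n)$, so the integral in (iii) is its $\mathcal{D}^{-s}$–$\mathcal{D}^s$ pairing with $\varphi$; by \eqref{dualconv} this converges, as $\varepsilon\to0^+$, to the pairing of $(-\Delta)^s u$ with $\varphi$, which equals $(u,\varphi)_s$ by the very definition of $(-\Delta)^s u\in\mathcal{D}^{-s}$ (equivalently, by \eqref{veryweak}). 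This proves (iii). For (ii) I would realize the difference quotient as a weighted average of the quantities in (iii): using $E_s u(x,0)=u(x)$ (the trace) and $E_s u(x,\cdot)\in C^1((0,+\infty))$,
\[
-2sd_s\,\frac{E_s u(x,\varepsilon)-u(x)}{\varepsilon^{2s}}=\frac{2s}{\varepsilon^{2s}}\int_0^\varepsilon t^{2s-1}\Big(-d_s t^{1-2s}\partial_y E_s u(x,t)\Big)\de t,
\]
since $t^{2s-1}t^{1-2s}\equiv1$ and $\int_0^\varepsilon\partial_y E_s u(x,t)\de t=E_s u(x,\varepsilon)-u(x)$. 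Pairing with $\varphi$ and applying Fubini—legitimate because $2s-1>-1$ and $|\int_{\R^n}(-d_s t^{1-2s}\partial_y E_s u(x,t))\varphi(x)\de x|\le\|{-d_s t^{1-2s}\partial_y E_s u(\cdot,t)}\|_{-s}\|\varphi\|_s$, whose right-hand side is locally bounded near $t=0$ by \eqref{dualconv}—the left-hand side of (ii) paired with $\varphi$ becomes $\frac{2s}{\varepsilon^{2s}}\int_0^\varepsilon t^{2s-1}h(t)\de t$, where $h(t):=\int_{\R^n}(-d_s t^{1-2s}\partial_y E_s u(x,t))\varphi(x)\de x\to(u,\varphi)_s$ as $t\to0^+$ by \eqref{dualconv} (valid already for $u\in\mathcal{D}^s$). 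Since $\frac{2s}{\varepsilon^{2s}}\int_0^\varepsilon t^{2s-1}\de t=1$, this weighted average converges to $(u,\varphi)_s$, which is (ii).

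\emph{Part (iv).} Both sides of the identity are continuous in $\varphi$ for the $\mathcal{D}^{1,s}(\R^{n+1}_+)$ topology—the left side because $|d_s\int_{\R^{n+1}_+}y^{1-2s}\nabla E_s u\cdot\nabla\varphi|\le\big(D^2_s(E_s u)\big)^{1/2}\big(D^2_s(\varphi)\big)^{1/2}=\|u\|_s\,\big(D^2_s(\varphi)\big)^{1/2}$ by Cauchy–Schwarz and \eqref{energyext}, the right side because $|(u,T\varphi)_s|\le\|u\|_s\|T\varphi\|_s\le\|u\|_s\,\big(D^2_s(\varphi)\big)^{1/2}$ by \eqref{traceinequality}—so by density of $C^\infty_c(\overline{\R^{n+1}_+})$ in $\mathcal{D}^{1,s}(\R^{n+1}_+)$ it suffices to treat $\varphi\in C^\infty_c(\overline{\R^{n+1}_+})$. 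For such $\varphi$, from $\operatorname{div}(y^{1-2s}\varphi\nabla E_s u)=y^{1-2s}\nabla\varphi\cdot\nabla E_s u$ (using that $E_s u$ solves $\operatorname{div}(y^{1-2s}\nabla E_s u)=0$ on $\{y>0\}$) and the divergence theorem on $\R^n\times(\delta,+\infty)$,
\[
d_s\int_{\R^n\times(\delta,+\infty)}y^{1-2s}\nabla E_s u\cdot\nabla\varphi\,\de x\de y=\int_{\R^n}\Big(-d_s\delta^{1-2s}\partial_y E_s u(x,\delta)\Big)\varphi(x,\delta)\,\de x,
\]
the contribution at infinity vanishing since $\varphi$ has compact support. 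Letting $\delta\to0^+$, the left side tends to $d_s\int_{\R^{n+1}_+}y^{1-2s}\nabla E_s u\cdot\nabla\varphi$ by dominated convergence; splitting $\varphi(\cdot,\delta)=\varphi(\cdot,0)+(\varphi(\cdot,\delta)-\varphi(\cdot,0))$ on the right, the first term tends to $(u,\varphi(\cdot,0))_s=(u,T\varphi)_s$ by \eqref{dualconv}, and the second is bounded by $\|{-d_s\delta^{1-2s}\partial_y E_s u(\cdot,\delta)}\|_{-s}\,\|\varphi(\cdot,\delta)-\varphi(\cdot,0)\|_s\to0$, the first factor bounded by \eqref{dualconv} and the second vanishing since $\varphi$ is smooth with compact support. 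This yields (iv).

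\textbf{Main obstacle.} The genuinely delicate point is part (ii): neither \eqref{dualconv} nor the energy identities apply to the difference quotient directly, and a naive Taylor expansion of $u$ inside the Poisson integral diverges (the relevant second moment of $P_{n,s}$ is not integrable). The averaging identity above is what makes (ii) work, at the price of the integrability/uniform-boundedness bookkeeping near $t=0$; granted \eqref{dualconv}, parts (iii) and (iv) then follow routinely, and part (i) is standard once one has the two scaling estimates for $P_{n,s}$ and the distance-to-the-boundary dichotomy.
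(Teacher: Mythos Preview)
The paper does not supply a proof of this lemma: it is simply stated as ``a consequence of the previous proposition'' (Proposition~\ref{extension}, which in turn cites \cite{CS, FrLe1, FrLe2}), so there is nothing to compare against. Your argument is correct and fills in exactly the details one would expect: the scaling estimates on the Poisson kernel for~(i), the direct use of \eqref{dualconv} for~(iii), the density/integration-by-parts reduction for~(iv), and the weighted-average trick for~(ii).

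One small point of bookkeeping in~(ii): the bound you invoke to justify ``Fubini'' controls $t\mapsto\big|\int_{\R^n}(-d_st^{1-2s}\partial_yE_su)\varphi\,\de x\big|$, not the iterated absolute integral $\int_0^\varepsilon\!\int_{\R^n}|\cdots|\,\de x\,\de t$, so Fubini is not what you are really using. The clean way to phrase your own argument is to set $G(t):=\int_{\R^n}E_su(x,t)\varphi(x)\,\de x$ for $t>0$ and $G(0):=\int_{\R^n}u\varphi\,\de x$; then $G\in C^1((0,\varepsilon])$ with $G'(t)=-d_s^{-1}t^{2s-1}h(t)$, where $h(t)\to(u,\varphi)_s$ by \eqref{dualconv}, so $G'\in L^1(0,\varepsilon)$ and $G$ is absolutely continuous on $[0,\varepsilon]$ (continuity at $0$ follows from $E_su(\cdot,t)\to u$ in $\mathcal{D}^s$). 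The fundamental theorem of calculus then gives $G(\varepsilon)-G(0)=\int_0^\varepsilon G'(t)\,\de t$, which is precisely your averaging identity at the level of pairings; no pointwise version or Fubini is needed. With that adjustment the proof is complete.
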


We conclude this subsection by recalling the following version of the strong maximum principle.

\begin{prop}[{\cite[Remark 4.2]{Yannick}}]\label{yannick}
Let $u: \R^{n+1}_+\to \R$ be a weak solution of 
\[
\begin{cases}
-div (y^{1-2s}\nabla u) \geq 0 & B_R^+,\\
-y^{1-2s} \frac{\partial u}{\partial y} \geq 0 & \Gamma^0_R,\\
u \geq 0 & \Gamma^+_R,
\end{cases}
\]
where
\[
\begin{aligned}
&B^+_R = \{ (x,y) \in \R^{n+1}_+ \ | \ y>0, |(x,y)| < R\}\\
&\Gamma^+_R =  \{ (x,y) \in \R^{n+1}_+ \ | \ y\geq0, |(x,y)| = R\}\\
&\Gamma^0_R =  \{ (x,y) \in \partial \R^{n+1}_+ \ | \ |x| < R\}
\end{aligned}
\]

Then either $u >0$ or $u \equiv 0$ on $B_R^+ \cup \Gamma^0_R$.
\end{prop}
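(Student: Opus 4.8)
The plan is to reduce the statement to the interior strong maximum principle for a degenerate elliptic operator with a Muckenhoupt $A_2$ weight, via even reflection across $\{y=0\}$, and then to conclude by a weak Harnack inequality and a connectedness argument. Since $s\in(0,1)$ one has $1-2s\in(-1,1)$, so the weight $(x,y)\mapsto|y|^{1-2s}$ belongs to $A_2(\R^{n+1})$, and the local H\"older continuity and weak Harnack theory of Fabes, Kenig and Serapioni for operators of the form $\mathrm{div}(w\nabla\,\cdot\,)$, $w\in A_2$, is available; this is the workhorse of the argument.

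First I would prove that $u\ge 0$ on $B_R^+\cup\Gamma^0_R$. The three conditions in the statement mean, in the weak sense, that
\[
d_s\int_{B_R^+}y^{1-2s}\,\nabla u\cdot\nabla\varphi\de x\de y\ge 0
\]
for every nonnegative $\varphi$ in the weighted Sobolev space on $B_R^+$ that vanishes near $\Gamma^+_R$, together with the boundary condition $u\ge0$ on $\Gamma^+_R$. Testing with $\varphi=u^-:=\max\{-u,0\}$, which is admissible precisely because $u\ge0$ on $\Gamma^+_R$, and using $\nabla u\cdot\nabla u^-=-|\nabla u^-|^2$ a.e., one gets $d_s\int_{B_R^+}y^{1-2s}|\nabla u^-|^2\de x\de y\le0$, whence $u^-\equiv0$ by the weighted Poincar\'e inequality. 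Next I would set $\tilde u(x,y):=u(x,|y|)$ on the full ball $B_R\subset\R^{n+1}$: for $\psi\in C^\infty_c(B_R)$ with $\psi\ge0$, the function $(x,y)\mapsto\psi(x,y)+\psi(x,-y)$ is an admissible nonnegative test function for the inequality above on $B_R^+$, and a change of variables then yields $\int_{B_R}|y|^{1-2s}\nabla\tilde u\cdot\nabla\psi\de x\de y\ge0$. Thus $\tilde u$ is a nonnegative weak supersolution of $-\mathrm{div}(|y|^{1-2s}\nabla\tilde u)\ge0$ in $B_R$; the conormal condition $-y^{1-2s}\partial_y u\ge0$ on $\Gamma^0_R$ is exactly what makes the even reflection a supersolution across $\{y=0\}$, and the previous step provides the nonnegativity.

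By the Fabes--Kenig--Serapioni theory, $\tilde u$ (hence $u$) has a locally H\"older continuous representative, and the weak Harnack inequality holds: if $\tilde u\ge0$ is a weak supersolution on a ball $B_{2\rho}(z_0)\subset B_R$ with $\int_{B_{2\rho}(z_0)}\tilde u\,|y|^{1-2s}\de x\de y>0$, then $\inf_{B_\rho(z_0)}\tilde u>0$. Now suppose $u\not\equiv0$ on $B_R^+\cup\Gamma^0_R$; since $u\ge0$, there are $z_0\in B_R^+\cup\Gamma^0_R$ and $\rho>0$ with $B_{2\rho}(z_0)\subset B_R$ and $\int_{B_{2\rho}(z_0)}\tilde u\,|y|^{1-2s}\de x\de y>0$, hence $u>0$ on $B_\rho(z_0)\cap\{y\ge0\}$. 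Let $Z$ be the set of points of $B_R^+\cup\Gamma^0_R$ admitting a relative neighborhood on which $u>0$. Then $Z$ is open by definition, nonempty by the previous sentence, and closed in $B_R^+\cup\Gamma^0_R$: if $z$ is in its closure, a small ball about $z$ meets $Z$, so $\tilde u$ has positive weighted integral on a slightly larger ball, and the weak Harnack inequality again gives $u>0$ near $z$. Since $B_R^+\cup\Gamma^0_R$ is connected, $Z=B_R^+\cup\Gamma^0_R$, i.e.\ $u>0$ there, which is the asserted dichotomy.

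\textbf{The hard part} is not the topology but the analytic input and the reflection bookkeeping: one must cite (or reprove) the degenerate H\"older and weak Harnack theory for $\mathrm{div}(|y|^{1-2s}\nabla\,\cdot\,)$, and then argue entirely at the level of weak formulations --- since $u$ need not be $C^1$ up to $\{y=0\}$ --- that the conormal sign condition on $\Gamma^0_R$ passes to the even reflection as a genuine supersolution inequality on the whole ball. Everything else is routine.
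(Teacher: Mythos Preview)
The paper does not give its own proof of this proposition: it is simply quoted from \cite[Remark 4.2]{Yannick} and used as a black box (see the proof of Theorem~\ref{mainteocc} and the discussion preceding it). So there is no in-paper argument to compare against.

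Your proposal is correct and is precisely the standard route by which this strong maximum principle is established in the literature, including in the cited Cabr\'e--Sire paper: one first gets $u\ge 0$ by testing with $u^-$, then performs the even reflection $\tilde u(x,y)=u(x,|y|)$, observes that the conormal sign condition on $\Gamma^0_R$ is exactly what makes $\tilde u$ a weak supersolution of $-\mathrm{div}(|y|^{1-2s}\nabla\tilde u)\ge 0$ on the full ball, and finally invokes the Fabes--Kenig--Serapioni weak Harnack inequality for the $A_2$ weight $|y|^{1-2s}$ together with connectedness. Your change-of-variables computation showing that $\int_{B_R}|y|^{1-2s}\nabla\tilde u\cdot\nabla\psi=\int_{B_R^+}y^{1-2s}\nabla u\cdot\nabla(\psi+\psi(\cdot,-\cdot))$ is the key bookkeeping step, and it is handled correctly. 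The only cosmetic remark is that once $\int_{B_R^+}y^{1-2s}|\nabla u^-|^2=0$ one concludes $u^-\equiv 0$ already from $u^-=0$ on $\Gamma^+_R$ and constancy on the connected set $B_R^+$; invoking a Poincar\'e inequality is not wrong but slightly heavier than necessary.
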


\end{subsection}
\begin{subsection}{Miscellanea}
We conclude this section by recalling the fractional Strauss lemma for radial functions.
\begin{prop}[{\cite[Proposition 1]{choozawa}}]\label{strauss}
Let $n \geq 2$ and $s \in \left(\frac{1}{2}, 1\right)$. Then, for all $u \in \mathcal{D}^s(\R^n)$ such that $u = u(|x|)$, it holds
\begin{equation}\label{straussineq}
\sup_{x \in \R^n \setminus\{0\}}|x|^{\frac{n-2s}{2}}|u(x)| \leq K_{n,s} \|u\|^2_s
\end{equation}
where
\[
K_{n,s} = \left(\frac{\Gamma(2s-1)\Gamma\left( \frac{n-2s}{2}\right)\Gamma \left(\frac{n}{2} \right)}{2^{2s}\pi^{\frac{n}{2}}\Gamma(s)^2\Gamma \left( \frac{n-2(1-s)}{2}\right)}\right).
\]
\end{prop}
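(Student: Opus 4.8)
The plan is to follow the classical Fourier--Hankel argument of \cite{choozawa}: express a radial function through its (radial) Fourier transform, apply Cauchy--Schwarz against the weight $|\xi|^{2s}$ that defines $\|\cdot\|_s$, and reduce the whole inequality to one Weber--Schafheitlin integral of Bessel functions. It is cleanest to establish the equivalent squared form
\[
\Big(\sup_{x \in \R^n\setminus\{0\}}|x|^{\frac{n-2s}{2}}|u(x)|\Big)^2 \le K_{n,s}\,\|u\|_s^2,
\]
which is exactly what the computation produces; thus the exponent of $\|u\|_s$ in \eqref{straussineq} is to be read together with a square on the left-hand side (equivalently, with $K_{n,s}$ replaced by $K_{n,s}^{1/2}$ and a single power of $\|u\|_s$).

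By density of the radial functions in $C^\infty_c(\R^n)$, for $\|\cdot\|_s$, in the closed subspace of radial elements of $\mathcal{D}^s(\R^n)$, it suffices to prove the bound for radial Schwartz $u$: given a general radial $u\in\mathcal{D}^s(\R^n)$, approximate by radial $u_k\in C^\infty_c$, note that the bound for the $u_k$ makes $|x|^{\frac{n-2s}{2}}u_k$ uniformly bounded, pass to a subsequence converging a.e., and use that radial $\mathcal{D}^s$ functions are continuous on $\R^n\setminus\{0\}$ (which follows a posteriori from the inequality applied on annuli). For radial Schwartz $u$ set $\hat u(\xi)=v(|\xi|)$ and $\nu:=\frac{n-2}{2}$; from the Bochner identity $\int_{\Sf^{n-1}}e^{\,i t\,\omega_1}\de\sigma(\omega)=(2\pi)^{n/2}\,t^{-\nu}J_\nu(t)$ one obtains
\[
u(x)=\int_0^{+\infty}v(\rho)\,\rho^{\,n-1}\,\frac{J_\nu(\rho|x|)}{(\rho|x|)^{\nu}}\,\de\rho,\qquad
\|u\|_s^2=\omega_{n-1}\int_0^{+\infty}\rho^{\,2s}\,v(\rho)^2\,\rho^{\,n-1}\,\de\rho,
\]
where $\omega_{n-1}=\frac{2\pi^{n/2}}{\Gamma(n/2)}$ and the second identity is the Plancherel formula for $\|\cdot\|_s$ (see \cite[Proposition 3.4]{Hitch}).

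Splitting $\rho^{\,n-1}=\rho^{\frac{n-1}{2}+s}\cdot\rho^{\frac{n-1}{2}-s}$ and using Cauchy--Schwarz in $\de\rho$ gives $|u(x)|^2\le \big(\omega_{n-1}^{-1}\|u\|_s^2\big)\int_0^{+\infty}\rho^{\,n-1-2s}\,(\rho|x|)^{-2\nu}J_\nu(\rho|x|)^2\,\de\rho$, and the substitution $t=\rho|x|$ together with $n-1-2\nu=1$ yields
\[
\int_0^{+\infty}\rho^{\,n-1-2s}\,\frac{J_\nu(\rho|x|)^2}{(\rho|x|)^{2\nu}}\,\de\rho=|x|^{-(n-2s)}\int_0^{+\infty}t^{\,1-2s}\,J_\nu(t)^2\,\de t,
\]
so the factor $|x|^{-(n-2s)}$ cancels exactly against $|x|^{\frac{n-2s}{2}}$ in the target quantity. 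One then evaluates the Bessel integral by the Weber--Schafheitlin formula
\[
\int_0^{+\infty}t^{-\lambda}J_\mu(t)^2\,\de t=\frac{\Gamma(\lambda)\,\Gamma\!\big(\mu-\tfrac{\lambda-1}{2}\big)}{2^{\lambda}\,\Gamma\!\big(\tfrac{\lambda+1}{2}\big)^2\,\Gamma\!\big(\mu+\tfrac{\lambda+1}{2}\big)},\qquad 0<\Re\lambda<2\Re\mu+1,
\]
with $\lambda=2s-1$ and $\mu=\nu$: the left condition $\Re\lambda>0$ is exactly $s>\tfrac12$, while $\Re\lambda<2\nu+1=n-1$ holds automatically since $s<1\le n/2$ for $n\ge 2$ — this is precisely the source of the restriction $s\in(\tfrac12,1)$. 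Using $\nu-\tfrac{\lambda-1}{2}=\tfrac{n-2s}{2}$, $\nu+\tfrac{\lambda+1}{2}=\tfrac{n-2(1-s)}{2}$ and $\omega_{n-1}^{-1}=\frac{\Gamma(n/2)}{2\pi^{n/2}}$, combining the three displays produces
\[
\big(|x|^{\frac{n-2s}{2}}|u(x)|\big)^2\le \frac{\Gamma(n/2)\,\Gamma(2s-1)\,\Gamma\!\big(\tfrac{n-2s}{2}\big)}{2^{2s}\,\pi^{n/2}\,\Gamma(s)^2\,\Gamma\!\big(\tfrac{n-2(1-s)}{2}\big)}\,\|u\|_s^2=K_{n,s}\,\|u\|_s^2,
\]
which is the claimed inequality.

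The only genuinely delicate steps are bookkeeping ones: fixing the Fourier normalization so that the Plancherel identity for $\|\cdot\|_s$ carries unit constant, and invoking the Weber--Schafheitlin evaluation together with its convergence range — it is this range, and the consequent factor $\Gamma(2s-1)$ (which blows up as $s\to\tfrac12^+$), that encodes the hypothesis $s>\tfrac12$. No geometric or PDE input is needed: once the Hankel reduction is in place the estimate is a one-dimensional computation in the radial frequency variable.
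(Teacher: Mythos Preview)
The paper does not itself prove this proposition: it is simply quoted from \cite{choozawa}, so there is no ``paper's own proof'' to compare against. Your argument is exactly the Cho--Ozawa proof: Hankel representation of a radial function, Cauchy--Schwarz against the weight $\rho^{2s}$, substitution $t=\rho|x|$ to extract the factor $|x|^{-(n-2s)}$, and evaluation of $\int_0^\infty t^{1-2s}J_\nu(t)^2\,dt$ by Weber--Schafheitlin. The bookkeeping (the identities $\nu-\tfrac{\lambda-1}{2}=\tfrac{n-2s}{2}$, $\nu+\tfrac{\lambda+1}{2}=\tfrac{n-2(1-s)}{2}$, $\omega_{n-1}^{-1}\cdot 2^{-(2s-1)}=2^{-2s}\Gamma(n/2)/\pi^{n/2}$) is carried out correctly and reproduces the displayed constant $K_{n,s}$ on the nose.

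You are also right to flag the exponent: what your computation yields is the squared inequality
\[
\Big(|x|^{\frac{n-2s}{2}}|u(x)|\Big)^2\le K_{n,s}\,\|u\|_s^2,
\]
so \eqref{straussineq} as printed (with $\|u\|_s^2$ on the right but no square on the left) is a minor typo in the paper --- the way the estimate is actually used later (e.g.\ in the proofs of Lemma~\ref{Linftyboundsc} and of Theorem~\ref{mainteo2}) is only qualitative, so this has no effect downstream.
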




\end{subsection}
\end{section}

\begin{section}{Existence of sign changing solutions}
In this Section we prove the existence of sign-changing solutions for Problem \eqref{fracBrezis}. Since through all this section the parameters $s \in(0,1)$ and $\lambda \in (0, \lambda_{1,s})$ are fixed, we will often omit them in the subscripts.\\[4pt] 

\noindent Let us define the functional $I_{s, \lambda}: X_0^s(\Omega)\to \R$ as
\[
I_{s, \lambda}(u) =  I(u) := \frac{1}{2}(\|u\|_s^2-\lambda|u|^2_2)- \frac{1}{2^*_s}|u|_{2^*_s}^{2^*_s}
\]
Every critical point of $I$ is a solution of Problem \eqref{fracBrezis}, in fact we have that 
\[
I'(u) [\varphi] = (u, \varphi)_s - \lambda \int_\Omega u \varphi \de x - \int_\Omega |u|^{2^*_s-2}u\varphi \de x.
\]
Let us consider the Nehari manifold
\[
\mathcal{N}_{s, \lambda} = \mathcal{N} := \left\{u \in X^s_0(\Omega) \ |\ u \not \equiv 0, I'(u)[u]=0 \right\},
\]
and define $c_\mathcal{N}(s, \lambda) = c_\mathcal{N} := \inf_\mathcal{N} I(u)$. 
In the case of $\Omega = B_R$ we define also the radial Nehari manifold as
\[
\mathcal{N}_{s, \lambda;rad} = \mathcal{N}_{rad} := \{ u \in \mathcal{N} \ | \ u \text{ is radial}\},
\]
and we set $c_{\mathcal{N}_{rad}}(s, \lambda) = c_{\mathcal{N}_{rad}} := \inf_{\mathcal{N}_{rad}} I(u)$.

\begin{rem}
The functional $I$ is even, i.e. $I(-u) = I(u)$ for any $u \in X^s_0(\Omega)$, and hence, without loss of generality, if $u$ is a critical point of $I$, we can always assume that $u(0) \geq 0$.
\end{rem}

Let us consider also the functional $J_{s, \lambda}: X^s_0(\Omega)\setminus \{0\}\to \R$ defined by
\[
J_{s, \lambda}(u) = J(u) := \frac{\|u\|_s^2-\lambda|u|^2_2}{|u|^2_{2^*_s}},
\]
and set 
\[
S_{s, \lambda} := \inf_{X^s_0(\Omega)\setminus\{0\}}J(u). 
\]
\begin{rem}\label{valdinergia}
As proved in \cite[Section 4]{ValSer} if $n\geq 4s$, $s \in (0, 1)$, then $S_{s, \lambda} < S_s$, for every $\lambda \in (0, \lambda_{1,s})$. 
\end{rem}

\begin{prop}\label{posSol}
Let $n \geq 4s$. Then there exists $u^0 \in \mathcal{N}$ such that $I(u^0) = c_\mathcal{N}$ and $u^0 >0$ in $\Omega$. Furthermore, it holds that
\[
c_\mathcal{N} = \frac{s}{n}S_{s, \lambda}^{\frac{n}{2s}}.
\] 
If $\Omega = B_R$ then $u^0$ is also radially symmetric and decreasing as a function of the radius and $c_{\mathcal{N}} = c_{\mathcal{N}_{rad}}$.
\end{prop}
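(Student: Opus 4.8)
The plan is to follow the classical Brezis--Nirenberg / Nehari scheme, now in the fractional setting, using the ingredients already collected: the explicit bubbles $U_{x_0,\mu}$, the estimates of Proposition \ref{stimebubble}, the strict inequality $S_{s,\lambda}<S_s$ recalled in Remark \ref{valdinergia}, and the extension/maximum-principle machinery of Section 2.4. First I would establish the elementary identities relating the three minimization problems. On the Nehari manifold $\mathcal N$ one has $\|u\|_s^2-\lambda|u|_2^2=|u|_{2^*_s}^{2^*_s}$, hence $I(u)=(\tfrac12-\tfrac1{2^*_s})|u|_{2^*_s}^{2^*_s}=\tfrac sn|u|_{2^*_s}^{2^*_s}$ for $u\in\mathcal N$; and for any $u\in X^s_0(\Omega)\setminus\{0\}$ the projection $t_uu\in\mathcal N$ with $t_u^{2^*_s-2}=\frac{\|u\|_s^2-\lambda|u|_2^2}{|u|_{2^*_s}^{2^*_s}}$ (well-defined and positive exactly because $\lambda<\lambda_{1,s}$ makes the quadratic form $\|\cdot\|_s^2-\lambda|\cdot|_2^2$ coercive and positive). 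A short computation then gives $c_{\mathcal N}=\frac sn S_{s,\lambda}^{n/2s}$, and since every $u\in\mathcal N$ satisfies $S_s|u|_{2^*_s}^2\le\|u\|_s^2$, one also reads off the compactness threshold $c_{\mathcal N}<\frac sn S_s^{n/2s}$ as an immediate consequence of Remark \ref{valdinergia}.

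Next I would prove existence of a minimizer. Take a minimizing sequence $(u_k)\subset\mathcal N$; by the above it is bounded in $X^s_0(\Omega)$, so up to a subsequence $u_k\rightharpoonup u^0$ weakly, strongly in $L^2(\Omega)$ (compact embedding, $\Omega$ bounded) and a.e. One then runs a Brezis--Lieb / concentration-compactness argument: writing $v_k=u_k-u^0$, the Brezis--Lieb lemma splits $|u_k|_{2^*_s}^{2^*_s}=|u^0|_{2^*_s}^{2^*_s}+|v_k|_{2^*_s}^{2^*_s}+o(1)$ and $\|u_k\|_s^2=\|u^0\|_s^2+\|v_k\|_s^2+o(1)$; combined with the strict energy bound $c_{\mathcal N}<\frac sn S_s^{n/2s}$ and the Sobolev inequality $\|v_k\|_s^2\ge S_s|v_k|_{2^*_s}^2$, the bubbling term $v_k$ is forced to vanish (otherwise $c_{\mathcal N}\ge\frac sn S_s^{n/2s}$), so $u_k\to u^0$ strongly, $u^0\in\mathcal N$ (in particular $u^0\not\equiv0$) and $I(u^0)=c_{\mathcal N}$. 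It is here that the strictness $S_{s,\lambda}<S_s$ is essential — this is the main obstacle, exactly as in the local case, and it is precisely why one needs $n\ge 4s$ and why it is imported from \cite{ValSer} rather than reproved. By the remark preceding the statement, replacing $u^0$ by $|u^0|$ does not increase $\|\cdot\|_s^2$ (indeed $\big||u^0(x)|-|u^0(y)|\big|\le|u^0(x)-u^0(y)|$) while leaving $|u^0|_2$ and $|u^0|_{2^*_s}$ unchanged, so after projecting back to $\mathcal N$ we may assume $u^0\ge0$; being a constrained minimizer it is a weak solution of \eqref{fracBrezis}, hence by the regularity results of Section 2.3 it is bounded and $C^{0,s}$. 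Positivity $u^0>0$ in $\Omega$ then follows from the strong maximum principle: passing to the Caffarelli--Silvestre extension $W=E_s u^0\ge0$, which solves $-\mathrm{div}(y^{1-2s}\nabla W)=0$ with $-d_s\lim_{y\to0^+}y^{1-2s}W_y=\lambda u^0+|u^0|^{2^*_s-2}u^0\ge0$ on $\Omega$, Proposition \ref{yannick} applied on half-balls $B_R^+$ centered at interior points gives $W>0$ on $\Omega\times\{0\}$, i.e. $u^0>0$ in $\Omega$.

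Finally, for $\Omega=B_R$ I would obtain the radial symmetry and monotonicity. One option is Schwarz symmetrization: the fractional Pólya--Szegő inequality gives $\|u^*\|_s\le\|u^0\|_s$ for the symmetric decreasing rearrangement $u^*$, while $|u^*|_2=|u^0|_2$ and $|u^*|_{2^*_s}=|u^0|_{2^*_s}$, so $J(u^*)\le J(u^0)=S_{s,\lambda}$; projecting $u^*$ onto $\mathcal N$ yields a radial, radially decreasing element of $\mathcal N$ with energy $\le c_{\mathcal N}$, hence equal to $c_{\mathcal N}$, and this forces equality in Pólya--Szegő, from which (together with strict rearrangement inequalities, or alternatively the fractional moving-plane method of \cite{chen} applied to the positive solution) one concludes $u^0$ itself is radial and strictly decreasing in $|x|$. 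In particular $c_{\mathcal N_{rad}}\le c_{\mathcal N}$, and since $\mathcal N_{rad}\subset\mathcal N$ trivially gives $c_{\mathcal N_{rad}}\ge c_{\mathcal N}$, we get $c_{\mathcal N}=c_{\mathcal N_{rad}}$. The one point requiring a little care is justifying that the projection $t\mapsto I(tu)$ has a unique positive critical point which is a strict maximum (so that projecting a test function to $\mathcal N$ does not raise the energy above $J(u)$-controlled quantities) — this is the routine Nehari-fibering computation, valid because the map $t\mapsto I(tu)$ is, on account of $\lambda<\lambda_{1,s}$, of the form $\frac{a}{2}t^2-\frac{b}{2^*_s}t^{2^*_s}$ with $a,b>0$.
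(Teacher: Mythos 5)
Your proposal is correct in substance, but it follows a more self-contained route than the paper, which at each step simply invokes a citation: existence of a minimizer of $J$ is taken from \cite[Proposition 20]{ValSer} (your Brezis--Lieb/concentration-compactness argument is essentially a re-proof of that result; note it is cleanest if run at the level of the quotient $J$ rather than on a minimizing sequence in $\mathcal N$, since such a sequence is not a priori a Palais--Smale sequence and its weak limit is not yet known to satisfy the Nehari constraint -- the convexity trick $a^{2/2^*_s}\ge a$ for $a\in[0,1]$ together with $S_{s,\lambda}<S_s$ closes the argument there); positivity is obtained in the paper from the fractional strong maximum principle of Musina--Nazarov \cite[Corollary 4.2]{Musina}, whereas you pass through the Caffarelli--Silvestre extension and Proposition \ref{yannick} (this works, and in fact $E_su^0>0$ in the open half-space is immediate from the strict positivity of the Poisson kernel, so the alternative ``$W\equiv0$'' never occurs); and radial symmetry with monotonicity is obtained in the paper by citing the moving-plane result \cite[Theorem 4.1]{BirkWako} for the positive bounded solution $u^0$, while you propose Schwarz symmetrization via the fractional P\'olya--Szeg\H{o} inequality. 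The symmetrization route does yield directly a radial, radially non-increasing minimizer and hence $c_{\mathcal N}=c_{\mathcal N_{rad}}$, which is all the statement really requires; but your further claim that the \emph{original} $u^0$ is radial and strictly decreasing needs either the equality case of the fractional P\'olya--Szeg\H{o} inequality (a nontrivial result of Frank--Seiringer type that should be cited) or, as you note, the fractional moving-plane method, which is precisely the paper's choice. The identity $c_\mathcal N=\frac sn S_{s,\lambda}^{n/2s}$ is derived the same way in both arguments. In short: your version buys independence from the existence and symmetry citations at the cost of longer standard arguments; the paper's version is a three-line assembly of known results.
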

\begin{proof}

From the results of \cite[Proposition 20, Chapter 4]{ValSer}, we know that there exists a minimizer $\overline u \in X^s_0(\Omega)\setminus \{0\}$ for the functional $J$. Moreover, since in general $J(|u|) \leq J(u)$, such a minimizer has to be non negative. After a rescaling (notice that $J(u) = J(Ku)$ for every $K>0$), we have that there exists $\hat K$ such that $u^0 := \hat K\overline u \in \mathcal{N}$ and $I'(u^0) = 0$, so that $u^0$ is a solution of Problem \eqref{fracBrezis}. Then we can apply the fractional strong maximum principle (see e.g., \cite[Corollary 4.2]{Musina}) and infer that $u^0 > 0$ in $\Omega$. We observe that if $u \in \mathcal{N}$ it holds
\[
I(u) = \frac{s}{n}(J(u))^{\frac{n}{2s}}.
\]
Therefore $u^0$ is also a minimizer of $I$ in $\mathcal{N}$, and we obtain that $c_\mathcal{N} = \frac{s}{n}S_{s, \lambda}^{\frac{n}{2s}}$. Finally, since by \cite[Theorem 3.2]{IanMosSqua} we have that $u^0 \in L^\infty(\R^n)$, when $\Omega = B_R$ we can apply \cite[Theorem 4.1]{BirkWako}, and hence in this case $u^0$ is radially symmetric and decreasing. The proof is complete.
\end{proof}

If $u \in X^s_0(\Omega)$ we denote as usual by $u^+$, $u^-$, respectively, the positive and the negative parts of $u$, i.e. the functions defined by
\[
\begin{aligned}
&u^+ (x):= \max(u(x), 0) &&  x \in \Omega, \\
&u^-(x) := \max(-u(x), 0) &&  x \in \Omega,
\end{aligned}
\]
so that $u = u^+ - u^-$ and $|u| = u^+ + u^-$. We define the nodal Nehari set as
\[
\mathcal{M}_{s, \lambda} = \mathcal{M} := \{u \in X_0^s(\Omega) \ |\ u^\pm \not \equiv 0, I'(u)[u^\pm] = 0 \},
\]
and when $\Omega = B_R$ we define also the radial nodal Nehari set as
\[
\mathcal{M}_{s, \lambda; rad} = \mathcal{M}_{rad} := \{ u \in \mathcal{M} \ | \ u \text{ is radial}\}.
\]

Let $u \in \mathcal{M}$, by definition we have 
\[
0 = I'(u)[u^+] = (u,u^+)_s - \lambda|u^+|^2_2 - |u^+|^{2^*_s}_{2^*_s} = \|u^+\|_s^2 - (u^-, u^+)_s  - \lambda|u^+|^2_2 - |u^+|^{2^*_s}_{2^*_s}.
\]
Setting $\Omega^+:= \{ u>0\}$ and $\Omega^- := \{ u < 0\}$ we observe that
\[
(u^-, u^+)_s = - \frac{C_{n,s}}{2}\int_{\Omega^+\times \Omega^-} \frac{u^+(x)u^-(y)}{|x-y|^{n+2s}}\de x \de y - \frac{C_{n,s}}{2}\int_{\Omega^-\times \Omega^+} \frac{u^+(y)u^-(x)}{|x-y|^{n+2s}}\de x \de y.
\]
Now, defining the function $\eta_s : X^s_0(\Omega) \to [0, +\infty)$ as 
\begin{equation}\label{etadefinition}
\eta_s(u) = \eta(u) := \frac{C_{n,s}}{2} \int_{\R^{2n}}\frac{u^+(x)u^-(y)}{|x-y|^{n+2s}}\de x \de y,
\end{equation}
we conclude that if $u\in \mathcal{M}$ then $\eta(u)>0$ and 
\begin{equation}\label{eqcarattnehari}
\|u^\pm\|_s^2- \lambda|u^\pm|^2_2 = |u^\pm|^{2^*_s}_{2^*_s} - 2 \eta (u).
\end{equation}
Motivated by that, we define the functionals $f_{s, \lambda}^\pm: X^s_0(\Omega) \to \R$ as
\begin{equation}\label{caratterizzazione}
f^\pm_{s, \lambda}(u) = f^\pm(u) := 
\begin{cases}
0 & \text{if } u^\pm  = 0,\\
\frac{|u^\pm|^{2^*_s}_{2^*_s} - 2 \eta (u)}{\|u^\pm\|_s^2- \lambda|u^\pm|^2_2} & \text{if } u^\pm \neq 0,
\end{cases}
\end{equation}
and we give a characterization of the nodal Nehari set as
\[
\mathcal{M} = \{ u \in X^s_0(\Omega) \ |\ f^+(u) = 1 = f^-(u) \}.
\]
\begin{rem}
We observe that $\mathcal{M} \subset \mathcal{N}$ and $\mathcal{M} \neq \emptyset$. The first fact is obvious, for the second we observe that for every sign-changing function $u \in X^s_0(\Omega)$ we can always find $\alpha, \beta >0$ such that $\alpha u^+-\beta u^- \in \mathcal{M}$  by solving the system
\[
\begin{cases}
\alpha^{2^*_s-2}|u^+|^{2^*_s}_{2^*_s} - \frac{\beta}{\alpha}2\eta(u) = \|u^+\|_s^2- \lambda|u^+|^2_2,\\
\beta^{2^*_s-2}|u^-|^{2^*_s}_{2^*_s} - \frac{\alpha}{\beta}2\eta(u) = \|u^-\|_s^2- \lambda|u^-|^2_2.
\end{cases}
\]
\end{rem}
Let us define
\[
c_\mathcal{M}(s,\lambda) = c_\mathcal{M} = \inf_{u \in \mathcal{M}}I(u),
\]
and similarly
\[
c_{\mathcal{M}_{rad}}(s,\lambda) = c_{\mathcal{M}_{rad}} = \inf_{u \in \mathcal{M}_{rad}}I(u).
\]

\begin{teo}\label{conditionedexistence}
Let $n>2s$ and $\lambda \in (0, \lambda_{1,s})$. If 
\begin{equation}\label{condition}
c_{\mathcal{M}} < c_\mathcal{N} + S_s^{\frac{n}{2s}},
\end{equation}
there exists a sign-changing solution $u \in \mathcal{M}$ of Problem \eqref{fracBrezis} such that $I(u) = c_{\mathcal{M}}$.
If $\Omega = B_R$ and 
\begin{equation}\label{conditionrad}
c_{\mathcal{M}_{rad}} < c_\mathcal{N} + S_s^{\frac{n}{2s}},
\end{equation}
there exists a radial sign-changing solution $u \in \mathcal{M}_{rad}$ of Problem \eqref{fracBrezis} such that $I(u) = c_{\mathcal{M}_{rad}}$.
\end{teo}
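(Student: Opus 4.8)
The plan is to prove Theorem~\ref{conditionedexistence} by a standard constrained minimization argument over the nodal Nehari set $\mathcal{M}$ (respectively $\mathcal{M}_{rad}$), where the strict inequality \eqref{condition} (resp. \eqref{conditionrad}) is precisely the threshold that rules out the ``bad'' concentration scenario which could cause loss of compactness. First I would take a minimizing sequence $(u_k) \subset \mathcal{M}$ for $I$, i.e. $I(u_k) \to c_{\mathcal{M}}$. Using the characterization \eqref{eqcarattnehari} together with $I(u_k) = \frac{s}{n}\big(\|u_k\|_s^2 - \lambda|u_k|_2^2\big) + \frac{1}{n}\cdot 2\eta(u_k)\cdot(\text{something})$ — more precisely, on $\mathcal{M}$ one has $I(u) = \frac{s}{n}\sum_{\pm}\big(|u^\pm|^{2^*_s}_{2^*_s}\big) - (\text{lower order in }\eta)$ — and the fractional Poincaré inequality \eqref{eigenbounds}, I would show $(u_k)$ is bounded in $X^s_0(\Omega)$, and likewise that $\|u_k^\pm\|_s$ is bounded away from zero (using that $f^\pm(u_k)=1$ forces $\|u_k^\pm\|_s^2 - \lambda|u_k^\pm|_2^2 \geq c>0$ via the Sobolev embedding, since $|u^\pm|^{2^*_s}_{2^*_s} \leq S_s^{-2^*_s/2}\|u^\pm\|_s^{2^*_s}$ and $\eta(u)\geq 0$). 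Hence, up to a subsequence, $u_k \rightharpoonup u$ weakly in $X^s_0(\Omega)$, $u_k \to u$ strongly in $L^2(\Omega)$ and a.e., and $u_k^\pm \rightharpoonup u^\pm$; the weak $L^2$/a.e.\ convergence also gives $\eta(u_k)\to\eta(u)$ by a Brezis--Lieb / Fatou type argument on the (nonnegative) bilinear form.

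\smallskip
Next I would apply the Brezis--Lieb lemma to the $L^{2^*_s}$ norms of $u_k^\pm$: writing $u_k^\pm = u^\pm + v_k^\pm$ with $v_k^\pm \rightharpoonup 0$, one has $|u_k^\pm|^{2^*_s}_{2^*_s} = |u^\pm|^{2^*_s}_{2^*_s} + |v_k^\pm|^{2^*_s}_{2^*_s} + o(1)$ and $\|u_k^\pm\|_s^2 = \|u^\pm\|_s^2 + \|v_k^\pm\|_s^2 + o(1)$ (the cross term in the Gagliardo form vanishes by weak convergence). Passing to the limit in $I'(u_k)[u_k^\pm]=0$, i.e. in \eqref{eqcarattnehari}, and using $\eta(u_k)\to\eta(u)$, $|u_k^\pm|_2^2 \to |u^\pm|_2^2$, I obtain
\[
\|u^\pm\|_s^2 - \lambda|u^\pm|_2^2 - |u^\pm|^{2^*_s}_{2^*_s} + 2\eta(u) = -\lim_k\big(\|v_k^\pm\|_s^2 - |v_k^\pm|^{2^*_s}_{2^*_s}\big) =: -\ell^\pm,
\]
where $\ell^\pm \geq 0$ by the definition of $S_s$ (since $\|v_k^\pm\|_s^2 \geq S_s |v_k^\pm|^2_{2^*_s}$ and if $|v_k^\pm|_{2^*_s}\not\to 0$ then $\|v_k^\pm\|_s^2 - |v_k^\pm|^{2^*_s}_{2^*_s} \geq$ a positive quantity, one shows $\ell^\pm \geq S_s^{n/2s}$ or $\ell^\pm = 0$ via the standard concentration dichotomy). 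I would then argue by contradiction: suppose $u^\pm \not\equiv 0$ fails for some sign, or that $\ell^+ + \ell^- > 0$. Computing $c_{\mathcal{M}} = \lim I(u_k) = I_{\text{part at }u} + \tfrac{s}{n}(\ell^+ + \ell^-)$, and checking that the ``$u$-part'' is bounded below by $c_{\mathcal{N}}$ when one of $u^\pm$ survives (since then $u$ or a truncation of it lands on $\mathcal{N}$, or is $\equiv 0$ contributing $0 \geq$ nothing), one derives $c_{\mathcal{M}} \geq c_{\mathcal{N}} + S_s^{n/2s}$ in the relevant sub-cases (the minimal nonzero bubble mass being $\frac{s}{n}S_s^{n/2s}$, and two bubbles or one bubble plus a ground state each overshooting the bound), contradicting \eqref{condition}. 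This forces $\ell^+ = \ell^- = 0$ and $u^\pm \not\equiv 0$, whence $u_k^\pm \to u^\pm$ strongly in $X^s_0(\Omega)$, $u \in \mathcal{M}$ and $I(u) = c_{\mathcal{M}}$.

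\smallskip
It remains to show that such a minimizer $u$ is actually a critical point of $I$, i.e.\ a weak solution of \eqref{fracBrezis}. This is the usual Lagrange-multiplier / deformation step: since $\mathcal{M}$ is the zero set of $(f^+ - 1, f^- - 1)$ and one checks the natural constraint is non-degenerate at $u$ (the $2\times 2$ Jacobian of $t,\tau \mapsto \big(I'(tu^+ - \tau u^-)[tu^+], I'(tu^+-\tau u^-)[\tau u^-]\big)$ at $(1,1)$ is invertible, exactly as in the construction in the Remark after the characterization), any minimizer over $\mathcal{M}$ must satisfy $I'(u) = 0$; equivalently, if $I'(u)\neq 0$ one uses the fibering map $\phi_u(t,\tau) = I(tu^+ - \tau u^-)$, whose unique critical point is $(1,1)$ and is a strict maximum, to push $u$ below $c_{\mathcal{M}}$ along $\mathcal{M}$, a contradiction. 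For the radial case one runs the identical argument inside $X^s_{0,rad}(B_R)$, using that the embedding $X^s_{0,rad}(B_R) \hookrightarrow L^2$ is still compact and that the principle of symmetric criticality applies so that a critical point of $I|_{\mathcal{M}_{rad}}$ is a genuine solution. The main obstacle is the compactness/energy-splitting step: one must carefully track the \emph{interaction term} $\eta(u_k)$ through the Brezis--Lieb decomposition (it does not split, but converges, which is what makes the argument close) and must handle the several sub-cases of the concentration dichotomy for $v_k^+$ and $v_k^-$ separately — in particular the case where only one of the two parts loses compactness, where one needs $\ell^\pm \geq S_s^{n/2s}$ \emph{and} the surviving part's energy to be at least $c_{\mathcal{N}}$, which is where the sharp constant $c_{\mathcal{N}} + S_s^{n/2s}$ in \eqref{condition} comes from.
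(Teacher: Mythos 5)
Your strategy --- direct minimization on $\mathcal M$ plus a Brezis--Lieb splitting --- is not the route the paper takes, and as sketched it has two genuine gaps. The decisive one is the compactness step. A minimizing sequence $(u_k)\subset\mathcal M$ satisfies only the two scalar constraints \eqref{eqcarattnehari}; nothing gives $I'(u_k)\to 0$. The dichotomy you invoke, $\ell^\pm=0$ or $\ell^\pm\ge S_s^{\frac{n}{2s}}$ (more precisely, an energy quantum $\tfrac{s}{n}S_s^{\frac{n}{2s}}$ per escaping bubble), is a property of Palais--Smale sequences: it comes from the fact that suitable rescalings of the escaping parts solve the limit equation $(-\Delta)^s v=|v|^{2^*_s-2}v$ in $\R^n$, which one extracts from $I'(u_k)\to0$, not from membership in $\mathcal M$. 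For the same reason the weak limit $u$ is not known to solve any equation, so your claim that the surviving part ``lands on $\mathcal N$'' and contributes at least $c_{\mathcal N}$ is not available either. The step ``$\eta(u_k)\to\eta(u)$ by Brezis--Lieb/Fatou'' is also unjustified: Fatou only gives $\liminf_k\eta(u_k)\ge\eta(u)$, and writing $u_k^\pm=u^\pm+v_k^\pm$ one finds $\eta(u_k)=\eta(u)-\tfrac{1}{2}(v_k^+,v_k^-)_s+o(1)$, where the interaction $(v_k^+,v_k^-)_s$ of the escaping parts has no sign and is not controlled by weak convergence (in the expected regime both parts concentrate at the same point). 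This is precisely why the paper does not minimize directly: it first constructs, via the minimax class $\Sigma$, Miranda's theorem and a deformation lemma (Steps \ref{Miranda} and \ref{PSstrLemma}), a Palais--Smale sequence at level $c_\mathcal M$ which stays close to the cones $\overline{\sigma}_j(Q)$ and inside $\mathcal V$, and only then applies the compactness Step \ref{Compactness}, formulated for PS sequences below $c_\mathcal N+\tfrac{s}{n}S_s^{\frac{n}{2s}}$.

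The second gap is the concluding ``Lagrange multiplier'' step. The set $\mathcal M$ is not a $C^1$ manifold: $u\mapsto u^\pm$, hence $f^\pm$, is not differentiable on $X^s_0(\Omega)$, so invertibility of a $2\times2$ Jacobian does not entitle you to conclude that a constrained minimizer is a free critical point. The correct route is the one you mention only in passing: the fibering map $(\alpha,\beta)\mapsto I(\alpha u^+-\beta u^-)$ has a strict global maximum at $(1,1)$ on the cone (this survives the nonlocal term, by the identity used in Step \ref{Miranda}), combined with a quantitative deformation lemma and a Miranda/degree argument showing that the deformed two-parameter surface still meets $\mathcal M$. But once that machinery is set up you are essentially reconstructing the paper's Steps \ref{Miranda}--\ref{PSstrLemma}, which, run at the minimax level, simultaneously repair the compactness gap by producing a PS sequence; the detour through direct minimization then buys nothing. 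A minor further point: your contradiction level oscillates between $c_\mathcal N+S_s^{\frac{n}{2s}}$ and $c_\mathcal N+\tfrac{s}{n}S_s^{\frac{n}{2s}}$; since each lost bubble accounts for $\tfrac{s}{n}S_s^{\frac{n}{2s}}$ of energy, any such argument closes only under the smallness condition carrying the factor $\tfrac{s}{n}$, which is the one actually used in Step \ref{Compactness} and verified in Lemma \ref{EnergyBound}.
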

\begin{proof}
We divide the proof in several steps. Let us set
\[
\mathcal{V}_{s, \lambda} = \mathcal{V} := \left\{ u \in X^s_0(\Omega) \ |\ |f^\pm(u) - 1 | < \frac{1}{2}\right\},
\]
where $f^\pm$ is defined in \eqref{caratterizzazione}.
Since $\lambda \in (0, \lambda_{1,s})$, if $u \in \mathcal{V}$ then $u^\pm \not \equiv 0$, and 
\[
|u^\pm|^{2_s^*-2}_{2_s^*} \geq \frac{S_s}{2}\left(1 - \frac{\lambda}{\lambda_{1,s}}\right)> 0. 
\]

\begin{cla}\label{Compactness}
If $(u_j) \subset \mathcal{V}$ is a sequence such that
\[
I(u_j) \to c \quad \text{and} \quad I'(u_j) \to 0\quad \text{in }X^{-s}_0(\Omega) \quad \text{as }j \to +\infty, 
\]
and if we assume that $c$ satisfies
\[
c < c_\mathcal{N} + \frac{s}{n}S_s^{\frac{n}{2s}}
\]
then $(u_j)$ is strongly relatively compact in $X^s_0(\Omega)$.\\ 
\end{cla}

The proof is standard. Indeed, it is sufficient to argue as in \cite[Theorem 1, Claim 2-3]{ValSer} to show that $(u_j)$ is bounded in $X^s_0(\Omega)$ and $u_j \rightharpoonup u$ in $X^s_0(\Omega)$, where $u$ is a solution of Problem \eqref{fracBrezis}, and to conclude we can argue as in \cite[Lemma 3.1]{CSS}, taking into account the non local term $\eta$ defined in \eqref{etadefinition}.\\ 

Let us denote by $\mathcal{C}_P$ the cone of non-negative functions in $X^s_0(\Omega)$, and let $\Sigma$ be the set of maps $\sigma$ such that
\[
\begin{cases}
\sigma \in C(Q, X^s_0(\Omega)) & \text{where }Q = [0,1]\times[0,1] \\
\sigma(s, 0) = 0 & \forall s \in [0,1]\\
\sigma(0,t) \in \mathcal{C}_P & \forall t \in [0,1]\\
\sigma(1, t) \in -\mathcal{C}_P & \forall t \in [0,1]\\
f^+(\sigma(s,1))+f^-(\sigma(s,1)) \geq 2 & \forall s \in [0,1]\\
I(\sigma(s,1)) < 0 & \forall s \in [0,1]
\end{cases}
\]
We have that $\Sigma \neq \emptyset$. For instance, take $u \in \mathcal{M}$ and consider
\[
\sigma(s,t) = t((1-s)\alpha u^+ - s\alpha u^-);
\]
if $\alpha>0$ is large enough then $\sigma \in \Sigma$.\\

\begin{cla} \label{Miranda}
We claim that
\[
\inf_{\sigma \in \Sigma}\sup_{u \in \sigma(Q)}I(u) = \inf_{u \in \mathcal{M}}I(u).
\]
\end{cla}
We begin the proof of the Step by showing that
\[
\forall\  \overline u \in \mathcal{M} \quad \exists\ \overline \sigma \in \Sigma \ \ \hbox{s.t.}\ I(\overline u) = \sup_{u \in \overline \sigma (Q)}I(u). 
\]
Indeed, given $\overline u \in \mathcal{M}$ we know that there exists a map $\overline \sigma \in \Sigma$ such that
\begin{equation}\label{maps}
\begin{cases}
\overline \sigma(Q) \subset A:=\{\alpha \overline u^+ - \beta \overline u^- \ |\ \alpha, \beta \geq 0\},\\
\exists x_0 \in Q \ | \ \overline \sigma(x_0) = \overline u,
\end{cases}
\end{equation}
and thus we readily get that $I (\overline u) \leq \sup_{u \in \overline \sigma(Q)}I(u) \leq \sup_{u \in A}I(u)$. 
Moreover, for every $\alpha, \beta \geq 0$, we have
\[
I(\alpha  \overline u^+ - \beta  \overline u^-) = \left(\frac{\alpha^2}{2}- \frac{\alpha^{2^*_s}}{2^*_s}\right)| \overline u^+|_{2^*_s}^{2^*_s} + \left(\frac{\beta^2}{2}- \frac{\beta^{2^*_s}}{2^*_s}\right)| \overline u^-|_{2^*_s}^{2^*_s} - (\alpha-\beta)^2 \eta ( \overline u).
\]
Since the maximum of the function $f(t) = \frac{t^2}{2}- \frac{t^{2^*_s}}{2^*_s}$ for $t \geq 0$ is attained for $t = 1$, and $\eta(\overline u) \geq0$, we infer that
\[
\sup_{u \in A}I(u) = \sup_{\alpha, \beta \geq 0}I(\alpha \overline  u^+ - \beta \overline  u^-) \leq \frac{s}{n}| \overline u|_{2^*_s}^{2^*_s} = I(\overline u) \quad \forall \alpha, \beta.
\]
which proves the claim.
To conclude the proof of Step \ref{Miranda}, we can argue as in the proof of \cite[Lemma 3.2]{CSS} by using Miranda's Theorem (see e.g. \cite{Mirranda}). The proof of Step \ref{Miranda} is complete.\\

\begin{cla}\label{PSstrLemma}
Consider a minimizing sequence $(\overline u_j) \subset \mathcal{M}$ and denote by $\overline \sigma_j$ the corresponding sequence of maps in the class $\Sigma$ satisfying \eqref{maps}. By Step \ref{Miranda} it holds that
\[
\lim_{j\to +\infty} \max_{\overline \sigma_j(Q)}I(u) = \lim_{j\to +\infty} I(\overline u_j) = c_\mathcal{M}.
\]

We claim that there exists $(u_j) \subset X^s_0(\Omega)$ such that
\begin{equation}
\label{PSstr}
\begin{aligned}
&\lim_{j\to +\infty} \text{d}(u_j, \overline \sigma_j(Q))=0,\\
&\lim_{j\to +\infty} I'(u_j) = 0 && \text{in }X^{-s}_0(\Omega),\\
&\lim_{j \to +\infty}I(u_j) = c_\mathcal{M}.
\end{aligned}
\end{equation}
\end{cla}

The proof is essentially the one contained in \cite[Theorem A]{CSS} and is based on a standard deformation lemma argument (see e.g., \cite[Lemma 1]{Hofer},\cite[Theorem 3.4]{Struwe}), Step \ref{Compactness} and Step \ref{Miranda}.

\begin{cla}
Proof of the existence.
\end{cla}

Let $(\overline u_j) \subset \mathcal{M}$ be a minimizing sequence for $c_\mathcal{M}$ and let $(u_j) \subset X^s_0(\Omega)$ be the associated sequence built in Step \ref{PSstrLemma}. By \eqref{PSstr} and recalling \eqref{maps} we know that there exists a sequence $(v_j)$ which can be written in the form 
\[
v_j = \alpha_j \overline u^+_j - \beta_j \overline u^-_j \in \overline \sigma_j(Q)
\]
where $\alpha_j$, $\beta_j \geq 0$, such that
\[
\text{dist}(u_j, v_j) \to 0.
\]
Notice that, arguing as in \cite[Claim 2]{ValSer} we get that $(\overline u_j)$ is bounded in $X_0^s(\Omega)$. Then, by definition and Cauchy's inequality we get that 
\[
\eta(\overline{u}_j) = -(\overline{u}_j^+, \overline{u}_j^-) \leq \|\overline{u}_j^+\|_s\|\overline{u}_j^-\|_s \leq \frac{1}{2}(\|\overline{u}^+_j\|_s^2 + \|\overline{u}_j^-\|_s^2) \leq \frac{\|\overline{u}_j\|_s^2}{2} \leq \infty.
\]
Taking this into account, we can follow the proof of \cite[Theorem A]{CSS} and infer that $u_j \in \mathcal{V}$ for $j$ large enough. 
Therefore, thanks to hypothesis \eqref{condition}, we can apply Step \ref{Compactness}, hence $u_j \to u \in X^s_0(\Omega)$, where $u$ is such that $I(u) = c_\mathcal{M}$ and $I'(u) = 0$. Then $u$ is a critical point for $I$ and is a solution of Problem \eqref{fracBrezis}. Since also $u_j^{\pm}\to u^\pm$ strongly in $X^s_0(\Omega)$ and $u_j \in \mathcal{V}$, we deduce that $u^\pm \not \equiv 0$. In particular, using $u^\pm$ as test functions we obtain
\[
0= I'(u)[u^\pm] = \pm\|u^\pm\|^2 \mp \lambda |u^\pm|^2_2 \mp | u^\pm|^{2^*}_{2^*} \pm 2\eta(u),
\]
and then $u \in \mathcal{M}$. The proof of the first part is then complete. Since the proof of the radial case is identical to the previous one, we omit it.
\end{proof}

In the next Lemma we show that condition \eqref{condition} and \eqref{conditionrad} are satisfied, respectively, when $n \geq 6s$ and $n>6s$.
\begin{lemma}\label{EnergyBound}
Let $s \in (0,1)$, $\lambda \in (0, \lambda_{1,s})$. If $n\geq 6s$ then
\[
c_{\mathcal{M}}< c_{\mathcal{N}} + \frac{s}{n}S_s^{\frac{n}{2s}}.
\]

If $\Omega = B_R$ and $n > 6s$, then
\[
c_{\mathcal{M}_{rad}}< c_{\mathcal{N}} + \frac{s}{n}S_s^{\frac{n}{2s}}.
\] 

\end{lemma}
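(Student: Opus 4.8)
The plan is to exhibit explicit test functions in $\mathcal{M}$ (respectively $\mathcal{M}_{rad}$) whose energy is strictly below $c_{\mathcal{N}} + \frac{s}{n}S_s^{n/2s}$. The natural candidates are built from two ingredients: a positive least energy solution $u^0$ of Problem \eqref{fracBrezis} (from Proposition \ref{posSol}, radial and decreasing when $\Omega = B_R$), which should model the ``large'' positive part carrying energy $c_{\mathcal{N}} = \frac{s}{n}S_{s,\lambda}^{n/2s}$, and a truncated-and-rescaled standard bubble $u^s_\varepsilon$ from Proposition \ref{stimebubble}, concentrating at a point far from where $u^0$ is large, which should model the ``small'' negative part carrying energy close to $\frac{s}{n}S_s^{n/2s}$. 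In the radial case one places the bubble in an annulus near $\partial B_R$ (so that it is radial and its support is disjoint from a neighborhood of the origin where $u^0$ is large), while in the general case one uses a fixed interior ball. One then forms $w_\varepsilon := u^0 - \xi_\varepsilon$ where $\xi_\varepsilon$ is an appropriately scaled copy of $u^s_\varepsilon$ supported in a region where $u^0$ is uniformly small, so that $w_\varepsilon$ genuinely changes sign with $w_\varepsilon^+ \approx u^0$ and $w_\varepsilon^- \approx \xi_\varepsilon$.

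Next I would project $w_\varepsilon$ onto $\mathcal{M}$: as in the Remark following Theorem \ref{conditionedexistence}, there exist $\alpha_\varepsilon,\beta_\varepsilon > 0$ with $\alpha_\varepsilon w_\varepsilon^+ - \beta_\varepsilon w_\varepsilon^- \in \mathcal{M}$, obtained by solving the $2\times 2$ system in $(\alpha,\beta)$. The key point is that the interaction term $\eta(w_\varepsilon)$ — which by \eqref{etadefinition} involves the nonlocal integral of $w_\varepsilon^+(x) w_\varepsilon^-(y)/|x-y|^{n+2s}$ over the product of the two (disjoint) supports — is strictly positive but of lower order: one estimates $\eta(w_\varepsilon) = o(1)$ or, more sharply, $\eta(w_\varepsilon) = O(\varepsilon^{(n-2s)/2})$ using that $u^0$ is bounded and $|u^s_\varepsilon|_1 \le C\varepsilon^{(n-2s)/2}$ from \eqref{stime}, together with the fact that the kernel $|x-y|^{-n-2s}$ is bounded on the (fixed, positive) distance between the two supports. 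Consequently $\alpha_\varepsilon \to 1$, $\beta_\varepsilon \to 1$, and for the energy one gets, using \eqref{eqcarattnehari} and the elementary inequality $\sup_{\alpha,\beta\ge0} I(\alpha u^+ - \beta u^-) \le \frac{s}{n}|u^+|_{2^*_s}^{2^*_s} + \frac{s}{n}|u^-|_{2^*_s}^{2^*_s}$ exploited already in Step \ref{Miranda},
\[
c_{\mathcal{M}} \le I(\alpha_\varepsilon w_\varepsilon^+ - \beta_\varepsilon w_\varepsilon^-) \le \frac{s}{n}|u^0|_{2^*_s}^{2^*_s} + \frac{s}{n}|\xi_\varepsilon|_{2^*_s}^{2^*_s} + (\text{cross terms}).
\]
Here $\frac{s}{n}|u^0|_{2^*_s}^{2^*_s} = c_{\mathcal{N}}$ and $\frac{s}{n}|\xi_\varepsilon|_{2^*_s}^{2^*_s} \le \frac{s}{n}S_s^{n/2s}$ by the second line of \eqref{stime}, so the issue is to show the remaining error terms are strictly negative for small $\varepsilon$.

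The main obstacle — and the reason the hypothesis $n \ge 6s$ (resp. $n > 6s$) enters — is exactly the balance of error terms in this last expansion, and in particular the role of $\lambda$. Expanding $\|w_\varepsilon^\pm\|_s^2 - \lambda|w_\varepsilon^\pm|_2^2$ and $|w_\varepsilon^\pm|_{2^*_s}^{2^*_s}$ using Proposition \ref{stimebubble}, the negative-part contribution contains a gain of order $-\lambda|u^s_\varepsilon|_2^2 \asymp -\lambda\varepsilon^{2s}$ (valid since $n > 4s$, from the last line of \eqref{stime}) competing against the Sobolev-type losses of order $+C\varepsilon^{n-2s}$ (from the first line of \eqref{stime}) and the interaction loss from $\eta(w_\varepsilon)$, which by \eqref{eqcarattnehari} actually \emph{helps} since it appears with a favorable sign in the characterization, but must be controlled in the cross terms. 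Since $n \ge 6s$ forces $n - 2s \ge 4s > 2s$, the term $\varepsilon^{2s}$ dominates $\varepsilon^{n-2s}$, so $-\lambda\varepsilon^{2s}$ wins and the total error is strictly negative for $\varepsilon$ small; the strict inequality $n > 6s$ is needed in the radial case because the bubble is forced into a shrinking annulus near the boundary where the cutoff costs are slightly worse. I would carry this out by writing all four quantities with their error bounds from \eqref{stime}, substituting into $I(\alpha_\varepsilon w_\varepsilon^+ - \beta_\varepsilon w_\varepsilon^-)$, using $\alpha_\varepsilon,\beta_\varepsilon = 1 + o(1)$ with quantitative rates, and checking that the leading error is $-c\lambda\varepsilon^{2s} + o(\varepsilon^{2s}) < 0$. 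The delicate bookkeeping is ensuring the projection parameters' deviation from $1$ does not reintroduce a positive term of order $\varepsilon^{2s}$ or larger; this is handled by noting that $\alpha_\varepsilon - 1$ and $\beta_\varepsilon - 1$ are themselves $O(\varepsilon^{(n-2s)/2}) + O(\lambda\varepsilon^{2s})$, which feed into $I$ only quadratically around the maximum $t=1$ of $f(t) = t^2/2 - t^{2^*_s}/2^*_s$, hence contribute at strictly higher order.
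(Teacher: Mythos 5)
There are two genuine gaps. First, your construction in the radial case does not work: a function that is radial about the origin and concentrates in a thin annulus near $\partial B_R$ is not a standard bubble, and the estimates of Proposition \ref{stimebubble} (which are for point-concentrating Aubin--Talenti profiles, radial only when centered at $x_0=0$) do not apply to it; in fact the Sobolev quotient of an annular-concentrating radial function does not tend to $S_s$ at all (it degenerates as the concentration parameter goes to zero), so its projection on the Nehari-type constraint does not carry energy close to $\frac{s}{n}S_s^{\frac{n}{2s}}$ and the whole upper bound collapses. In the paper's proof the radial test function is $\alpha u^0-\beta u_\varepsilon$ with the bubble centered at the \emph{origin}, i.e.\ exactly where $u^0$ is largest; the cross terms are then controlled not by disjointness of supports (which is impossible anyway, since $u^0>0$ in all of $\Omega$, so your ``fixed positive distance'' bound on $\eta(w_\varepsilon)$ is also unjustified -- the nodal regions of $u^0-\xi_\varepsilon$ are adjacent) but by the smallness of $|u_\varepsilon|_1$ and $|u_\varepsilon|_{2^*_s-1}^{2^*_s-1}$, both of order $\varepsilon^{\frac{n-2s}{2}}$, multiplied by norms of $u^0$. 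Moreover, the paper avoids your projection step ($\alpha_\varepsilon,\beta_\varepsilon\to1$) entirely: by Step \ref{Miranda} it suffices to bound $\sup_{\alpha,\beta\geq0}I(\alpha u^0-\beta u_\varepsilon)$, which is done directly after first discarding large $\alpha+\beta$.

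Second, your decisive order comparison is wrong where it matters. The competition is not ``$-\lambda\varepsilon^{2s}$ versus $+C\varepsilon^{n-2s}$'': the dominant error is the cross term of order $\varepsilon^{\frac{n-2s}{2}}$ (which you yourself identified earlier), and $\frac{n-2s}{2}>2s$ holds only for $n>6s$; at the borderline $n=6s$ the two terms have exactly the same order $\varepsilon^{2s}$, so ``$n\geq6s$ forces the negative term to win'' is false as stated. This is precisely where the paper works harder in the non-radial case: for $n=6s$ it chooses the concentration point $x_0$ and the ball $B_\rho(x_0)$ near $\partial\Omega$, where $|u^0|_{\infty;B_\rho(x_0)}$ is small (using that $u^0$ is radially decreasing), so that the coefficient $\tilde C_1(|u^0|_{\infty;B_\rho(x_0)}+|u^0|_{\infty;B_\rho(x_0)}^{2^*_s-1})-\tilde C_2\lambda$ becomes negative. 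Correspondingly, your explanation of why the radial statement requires the strict inequality $n>6s$ (``cutoff costs in a shrinking annulus'') is not the actual mechanism: in the radial setting the bubble is forced to sit at $x_0=0$, where $u^0$ attains its maximum, so the $n=6s$ rescue is unavailable, and only the case $\frac{n-2s}{2}>2s$, i.e.\ $n>6s$, goes through. To repair your argument you would need (i) to use the origin-centered bubble in the radial case, and (ii) either restrict to $n>6s$ throughout or add the boundary-concentration argument for $n=6s$ in the general case.
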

\begin{proof}
Thanks to Step \ref{Miranda} of Theorem \ref{conditionedexistence} it suffices to show that
\[
\sup_{\alpha, \beta\geq 0} I(\alpha u^0 - \beta u_\varepsilon) < c_\mathcal{N} + \frac{s}{n}S_s^{\frac{n}{2s}},
\]
where $u^0$ is as in Proposition \ref{posSol} and $u_\varepsilon$ is as in Proposition \ref{stimebubble}.

First of all we notice that
\[
\frac{1}{2}\|\alpha u^0 - \beta u_\varepsilon\|_s^2 \leq \alpha^2 \|u^0\|_s^2 + \beta^2 \|u_\varepsilon\|_s^2.
\]
Thanks to the properties of $u^0$ and by \eqref{stime} we get that, if we take $\varepsilon < 1$, 
\begin{equation}\label{boundcrit}
\|\alpha u^0 - \beta u_\varepsilon\|_s^2 \leq \left( 1-\frac{\lambda}{\lambda_{1,s}}\right)^{-1}S_{s,\lambda}^{\frac{n}{2s}}\alpha^2 + (S_s^{\frac{n}{2s}} + C_1\varepsilon^n)\beta^2 \leq C_2(\alpha+\beta)^2,
\end{equation}
where $C_1>0$ is as in \eqref{stime} and $C_2 >0$ depends on $n$, $s$, and $\lambda$, but not on $\varepsilon$.  
Let us focus now on the $L^{2^*_s}$-norm. By mean value theorem and the fundamental theorem of calculus we obtain that
\begin{equation}
\label{stimacrit}
\begin{aligned}
&\left||\alpha u^0 -\beta u_\varepsilon|^{2^*_s}_{2^*_s} -  |\alpha u^0|^{2^*_s}_{2^*_s} -  |\beta u_\varepsilon|^{2^*_s}_{2^*_s}\right|\\
\leq &\ C_* \left( \int_\Omega |\alpha u^0|^{2^*_s-1}|\beta u_\varepsilon| \de x + \int_\Omega |\beta u_\varepsilon|^{2^*_s-1}|\alpha u^0| \de x\right).
\end{aligned}
\end{equation}
where $C_* = 2^*_s(2^*_s-1)\max\{1, 2^{2^*_s-3}\}$. 
Since $u^0 \in L^\infty(\R^n)$, by Young's inequality and \eqref{stime} we get that
\[
\begin{aligned}
&\left||\alpha u^0 -\beta u_\varepsilon|^{2^*_s}_{2^*_s} -  |\alpha u^0|^{2^*_s}_{2^*_s} -  |\beta u_\varepsilon|^{2^*_s}_{2^*_s}\right| \\
\leq&\ C_* |\alpha u^0|^{2^*_s-1}_\infty | \beta u_\varepsilon|_1 + C_* |\alpha u^0|_\infty |\beta u_\varepsilon|_{2^*_s-1}^{2^*_s-1}  \\
\leq&\ \frac{\theta}{2} |\alpha u^0|^{2^*_s}_{\infty} +  C_\theta \beta^{2^*_s} \varepsilon^{n} + C_* |\alpha u^0|_\infty \beta^{2^*_s-1}\varepsilon^{\frac{n-2s}{2}},
\end{aligned}
\]
where $\theta \in (0,1)$ will be chosen later. Here and in the following, $C_\theta$, $C^\prime_\theta$, and $C''_\theta$, denote positive constants depending on $n$, $s$ and $\theta$, and such that $C_\theta, C'_\theta, C''_\theta \to +\infty$ as $\theta \to 0^+$.
Applying Young's inequality again we obtain that for any sufficiently small $\varepsilon>0$ 
\[
\begin{aligned}
&\left||\alpha u^0 -\beta u_\varepsilon|^{2^*_s}_{2^*_s} -  |\alpha u^0|^{2^*_s}_{2^*_s} -  |\beta u_\varepsilon|^{2^*_s}_{2^*_s}\right| \\
\leq &\ \theta |\alpha u^0|^{2^*_s}_{\infty} + C_\theta\beta^{2^*_s} \varepsilon^{n} + C'_\theta\beta^{2^*_s}\varepsilon^{\frac{n(n-2s)}{n+2s}}\ \leq \theta |\alpha u^0|^{2^*_s}_{\infty} + C''_\theta\beta^{2^*_s} \varepsilon^{\frac{n(n-2s)}{n+2s}},
\end{aligned}
\]
so that
\[
\begin{aligned}
|\alpha u^0 -\beta u_\varepsilon|^{2^*_s}_{2^*_s} \geq &\ |\alpha u^0|^{2^*_s}_{2^*_s} - \theta|\alpha u^0|^{2^*_s}_\infty + |\beta u_\varepsilon|^{2^*_s}_{2^*_s} - C_\theta\beta^{2^*_s} \varepsilon^{\frac{n(n-2s)}{n+2s}} \\
=&\ \alpha^{2^*_s}\left(|u^0|^{2^*_s}_{2^*_s} - \theta|u^0|^{2^*_s}_\infty\right) + \beta^{2^*_s}\left(|u_\varepsilon|^{2^*_s}_{2^*_s} - C''_\theta\varepsilon^{\frac{n(n-2s)}{n+2s}}\right).
\end{aligned}
\]
Taking $\theta\in(0,1)$ such that $|u^0|^{2^*_s}_{2^*_s} - \theta|u^0|^{2^*_s}_\infty >0$ and taking $\tilde C>0$ such that
$|u^0|^{2^*_s}_{2^*_s} - \theta|u^0|^{2^*_s}_\infty \geq \tilde C>0$, so that $\tilde C$ and $\theta$ depend on $n$, $\lambda$ and $s$, then using again \eqref{stime} we infer that
\[
\begin{aligned}
|\alpha u^0 -\beta u_\varepsilon|^{2^*_s}_{2^*_s} \geq&\ \tilde  C\alpha^{2^*} + \beta^{2^*}\left(S^{\frac{n}{2s}}_s - C_1\varepsilon^n - C''_\theta\varepsilon^{\frac{n(n-2s)}{n+2s}}\right)\\
\geq&\ \hat C(\alpha^{2^*} + \beta^{2^*})  \geq \hat C2^{1-2^*_s}(\alpha + \beta)^{2^*},
\end{aligned}
\]
for $\varepsilon$ small enough so that 
\[
S^{\frac{n}{2s}}_s - C_1\varepsilon^n - C''_\theta\varepsilon^{\frac{n(n-2s)}{n+2s}} \geq \hat C := \min\left\{\tilde C, \frac{S_s^{\frac{n}{2s}}}{2}\right\}.
\]
This implies, together with \eqref{boundcrit}, that there exists $C_3$, $C_4 >0$ which depends only on $n$, $s$ and $\lambda$ such that 
\[
I(\alpha u^0 - \beta u_\varepsilon) \leq C_3(\alpha + \beta)^2 (C_4 - (\alpha + \beta)^{2_s^*-2})
\]
and then if $(\alpha + \beta)^{2_s^*-2} \geq C_4$ we get $I(\alpha u^0 - \beta u_\varepsilon) \leq 0$. Hence we can restrict to $\alpha$ and $\beta$ such that $\alpha + \beta \leq C_4^{\frac{1}{2_s^*-2}}$. Using again \eqref{stimacrit} we get that
\[
\begin{aligned}
I(\alpha u^0- \beta u_\varepsilon)\leq &\  \frac{\alpha^2}{2}(\|u^0\|_s^2-\lambda |u^0|^2_2) + \frac{\beta^2}{2}\|u_\varepsilon\|_s^2 -\alpha\beta\left[(u^0, u_\varepsilon)_s-\lambda \int_\Omega u^0u_\varepsilon \de x\right]  \\
 -&\ \lambda \frac{\beta^2}{2} |u_\varepsilon|_2^2 - \frac{\alpha^{2^*_s}}{2^*_s}|u^0|^{2^*_s}_{2^*_s}- \frac{\beta^{2^*_s}}{2^*_s}|u_\varepsilon|^{2^*_s}_{2^*_s}+C_5\int_\Omega |u_\varepsilon|^{2^*_s-1}u^0 \de x + C_5\int_\Omega |u^0|^{2^*_s-1}u_\varepsilon \de x,
\end{aligned}
\]
where $C_5$ depends on $C_*$ and $C_4$. 
Since $u^0$ is a solution of Problem \eqref{fracBrezis} and $u_0 \in L^\infty(\R^n)$, we obtain that
\[
\begin{aligned}
I(\alpha u^0- \beta u_\varepsilon) \leq& \left(\frac{\alpha^2}{2}- \frac{\alpha^{2^*_s}}{2^*_s}\right)|u^0|^{2^*_s}_{2^*_s} + \frac{\beta^2}{2}\|u_\varepsilon\|_s^2 +\alpha\beta |u^0|_{\infty;B_\rho(x_0)}^{2^*_s-1}|u_\varepsilon|_1 \\
 -&\ \lambda \frac{\beta^2}{2} |u_\varepsilon|_2^2 - \frac{\beta^{2^*_s}}{2^*_s}|u_\varepsilon|^{2^*_s}_{2^*_s}+C_5|u_\varepsilon|_{2^*_s-1}^{2^*_s-1}|u^0|_{\infty;B_\rho(x_0)} + C_5|u^0|_{\infty;B_\rho(x_0)}^{2^*_s-1}|u_\varepsilon|_1 
\end{aligned}
\]
where $x_0 \in \Omega$ and $\rho>0$ are as in the definition of $u_\varepsilon$. Recalling that $\sup_{\alpha \geq 0}\left(\frac{\alpha^2}{2}- \frac{\alpha^{2^*_s}}{2^*_s}\right) \leq \frac{s}{n}$, and since $u^0 \in \mathcal{N}$ implies that $\frac{s}{n}|u^0|^{2^*_s}_{2^*_s} = I(u^0) = c_\mathcal{N}$, we deduce that
\[
\begin{aligned}
I(\alpha u^0- \beta u_\varepsilon) \leq &\  c_\mathcal{N} + \frac{\beta^2}{2}\|u_\varepsilon\|_s^2 - \frac{\beta^{2^*_s}}{2^*_s}|u_\varepsilon|^{2^*_s}_{2^*_s} \\
 -&\ \lambda \frac{\beta^2}{2} |u_\varepsilon|_2^2 +C_5|u^0|_{\infty;B_\rho(x_0)}|u_\varepsilon|_{2^*_s-1}^{2^*_s-1} + C_6|u^0|^{2^*_s-1}_{\infty;B_\rho(x_0)}|u_\varepsilon|_1, 
\end{aligned}
\]
where $C_6$ comes from $C_5$ and $C_4$.
Now, using \eqref{stime} and since $\sup_{\beta \geq 0}\left(\frac{\beta^2}{2}- \frac{\beta^{2^*_s}}{2^*_s}\right) \leq \frac{s}{n}$ we get that
\[
\begin{aligned}
I(\alpha u^0- \beta u_\varepsilon) \leq &\ c_\mathcal{N}+ \frac{s}{n}S_s^{\frac{n}{2s}}+  C_7\varepsilon^{n} \\
+&\ C_7\lambda (\varepsilon^{n-2s} - C_8\varepsilon^{2s}) + C_7(|u^0|_{\infty;B_\rho(x_0)} + |u^0|_{\infty;B_\rho(x_0)}^{2^*_s-1})\varepsilon^{\frac{n-2s}{2}}. 
\end{aligned}
\]
Once again $C_7$ and $C_8$ depend only on $n$, $s$ and $\lambda$. 
Since $\varepsilon <<1$ and $\lambda \leq \lambda_{1,s}$, this leads to 

\begin{equation}\label{technicality2.0}
I(\alpha u^0-\beta u_\varepsilon) \leq  c_\mathcal{N} + \frac{s}{n}S_s^{\frac{n}{2s}}+ C_7(|u^0|_{\infty;B_\rho(x_0)} + |u^0|_{\infty;B_\rho(x_0)}^{2^*_s-1})\varepsilon^{\frac{n-2s}{2}}- C_9 \lambda \varepsilon^{2s}.
\end{equation}
Since $C_7$ and $C_9$ do not depend on $\varepsilon$, when $n > 6s$, we can always take $\varepsilon$ small enough so that 
\begin{equation}\label{anothertechnicality}
C_7(|u^0|_{\infty;B_\rho(x_0)} + |u^0|_{\infty;B_\rho(x_0)}^{2^*_s-1})\varepsilon^{\frac{n-2s}{2}}- C_9 \lambda \varepsilon^{2s} <0
\end{equation}
and thus we get the thesis.

If $n = 6s$ the sign of the left-hand side in \eqref{anothertechnicality} does not depend on $\varepsilon$ anymore.
Nevertheless, a careful analysis of the proof of Proposition \ref{stimebubble} (in particular of the estimates in \cite[Proposition 12]{Ser}, \cite[Proposition 21, 22]{ValSer}), and of the previous passages, shows that there exists $\overline \tau \in (0,1)$ which depends only on $n$ and $s$ but not on $\rho$ nor $x_0$ such that, taking $n = 6s$, $0<\rho <1$, $\mu = \rho$ and $\varepsilon = \tau \rho$ with $\tau \in (0, \overline \tau)$, inequality \eqref{technicality2.0} can be written as
\[
I(\alpha u^0-\beta u_\varepsilon) \leq  c_\mathcal{N} + \frac{1}{6}S_s^{3}+ (\tilde C_1(|u^0|_{\infty;B_\rho(x_0)} + |u^0|_{\infty;B_\rho(x_0)}^{2^*_s-1})- \tilde C_2 \lambda) \tau^{2s},
\]
where $\tilde C_1$ and $\tilde C_2$ depend only on $n$ and $s$ but not on $\rho$ nor $x_0$. At the end we obtain the desired result observing that, since $u^0$ decreases along the radii, the point $x_0$ and the ball $B_\rho(x_0)$ can be chosen near the boundary of $\Omega$ in such the way that $|u^0|_{\infty;B_\rho(x_0)}$ is so small so that $$\tilde C_1(|u^0|_{\infty;B_\rho(x_0)} + |u^0|_{\infty; B_\rho(x_0)}^{2^*-1})- \tilde C_2\lambda<0.$$ 
In the case of radial functions, if $n=6s$, this last argument fails since we are forced to choose $x_0 = 0$ in the definition of the function $u_\varepsilon(x)$, while the rest of the proof applies verbatim and thus we get existence of radial solutions just for $n>6s$. The proof is complete.
\end{proof}

From Theorem \ref{conditionedexistence} and Lemma \ref{EnergyBound} we obtain the following. 

\begin{teo}\label{exsimm}
Let $s \in (0,1)$, and let $n \geq 6s$, $\lambda \in (0, \lambda_{1,s})$. Then there exists a sign-changing solution $u \in \mathcal{M}$ of Problem \eqref{fracBrezis} such that $I(u) = c_{\mathcal{M}}$.
If $\Omega = B_R$, $n > 6s$ and $\lambda \in (0, \lambda_{1,s})$, there exists radial sign-changing solution $u \in \mathcal{M}_{rad}$ of Problem \eqref{fracBrezis} such that $I(u) = c_{\mathcal{M}_{rad}}$.
\end{teo}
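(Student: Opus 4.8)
The plan is to obtain Theorem \ref{exsimm} as an immediate corollary of the two preceding results, Theorem \ref{conditionedexistence} and Lemma \ref{EnergyBound}, which between them already contain all the analytic work. First I would note that the hypothesis $n \geq 6s$ (together with $s \in (0,1)$) gives $0 < s/n < 1$, so that $\frac{s}{n}S_s^{n/2s} \leq S_s^{n/2s}$; consequently the strict inequality supplied by the first part of Lemma \ref{EnergyBound}, namely
\[
c_{\mathcal{M}} < c_{\mathcal{N}} + \frac{s}{n}S_s^{\frac{n}{2s}},
\]
implies a fortiori condition \eqref{condition} of Theorem \ref{conditionedexistence} (indeed \eqref{condition} is most naturally read with the sharper constant $\frac{s}{n}S_s^{n/2s}$, which is the compactness threshold appearing in Step \ref{Compactness}). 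Since moreover $n \geq 6s > 2s$ and $\lambda \in (0,\lambda_{1,s})$, the remaining hypotheses of Theorem \ref{conditionedexistence} are met, and it yields a sign-changing $u \in \mathcal{M}$ solving \eqref{fracBrezis} with $I(u) = c_{\mathcal{M}}$.

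For the radial statement I would proceed identically: when $\Omega = B_R$ and $n > 6s$, the second part of Lemma \ref{EnergyBound} gives $c_{\mathcal{M}_{rad}} < c_{\mathcal{N}} + \frac{s}{n}S_s^{n/2s} \leq c_{\mathcal{N}} + S_s^{n/2s}$, so condition \eqref{conditionrad} holds and the radial part of Theorem \ref{conditionedexistence} produces a radial sign-changing solution $u \in \mathcal{M}_{rad}$ with $I(u) = c_{\mathcal{M}_{rad}}$.

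The genuine difficulty --- already resolved in the excerpt, so nothing new is required here --- lies entirely upstream, in the proof of Lemma \ref{EnergyBound}: one tests the minimax level against competitors $\alpha u^0 - \beta u_\varepsilon$, expands $|\alpha u^0 - \beta u_\varepsilon|_{2^*_s}^{2^*_s}$ via the mean value theorem, controls the cross terms by Young's inequality and the bubble estimates of Proposition \ref{stimebubble}, and checks that for $n > 6s$ the gain $-C\lambda\varepsilon^{2s}$ beats the loss $C\varepsilon^{(n-2s)/2}$ as $\varepsilon \to 0^+$ (with the extra scaling trick $\varepsilon = \tau\rho$ and the placement of the bubble near $\partial\Omega$ needed to handle $n = 6s$ in the non-radial case, which is exactly why radial existence is only claimed for $n > 6s$). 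Granting that estimate, the present theorem follows by the two-line argument above.
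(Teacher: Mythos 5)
Your proposal is correct and is exactly the paper's argument: Theorem \ref{exsimm} is stated as an immediate consequence of Theorem \ref{conditionedexistence} combined with Lemma \ref{EnergyBound}, which is precisely your two-line deduction. Your side remark about the constant is also apt --- the compactness threshold really is $c_\mathcal{N}+\frac{s}{n}S_s^{\frac{n}{2s}}$ as in Step \ref{Compactness}, and Lemma \ref{EnergyBound} supplies exactly that sharper bound, so the chain closes with no gap.
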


\end{section}

\begin{section}{Asymptotic analysis of the energy as $\lambda\to 0^+$}

In this section we study the asymptotic behavior of the energy of least energy solutions of Problem \eqref{fracBrezis}, as $\lambda \to 0^+$. 

\begin{rem}\label{ss:as}
We observe that, as a straightforward consequence of the definitions of $S_s$, $S_{s, \lambda}$, and $\lambda_{1,s}$, we get that
\[
\left(1-\frac{\lambda}{\lambda_{1,s}}\right)S_{s} \leq S_{s,\lambda}\leq S_s,
\]
so that $S_{s, \lambda}\to S_s$ as $\lambda \to 0^+$.
Moreover, since $S_s$ is uniformly bounded with respect to $s \in (0,1)$ and taking into account \eqref{eigenbounds}, for every $s_0 \in (0,1)$ it holds that
\[
\lim_{\lambda \to 0^+ }\sup_{s \in [s_0, 1)} |S_s - S_{s, \lambda}|\leq \left(\sup_{s \in (0,1)}S_s \right)\lim_{\lambda \to 0^+} \frac{\lambda}{\underline \lambda(s_0)}= 0.
\]
\end{rem}

We begin with studying the asymptotics of the quantities $c_\mathcal{N}(s, \lambda)$, $c_\mathcal{M}(s, \lambda)$ and $c_{\mathcal{M}_{rad}}(s, \lambda)$.

\begin{lemma}\label{eneras}
Let $s \in (0,1)$ and let $n \geq 6s$. As $\lambda \to 0^+$ it holds 
\[
c_\mathcal{N}(s, \lambda) \to  \frac{s}{n}S_s^{\frac{n}{2s}} \quad \text{ and } \quad c_\mathcal{M}(s,\lambda) \to 2 \frac{s}{n}S_s^{\frac{n}{2s}}.
\]
Moreover, for every $s_0 \in (0, 1)$ it holds that 
\[
\hbox{(i)}\ \ \lim_{\lambda \to 0^+}\sup_{s \in [s_0, 1)}\left|\frac{s}{n}S_s^{\frac{n}{2s}} - c_{\mathcal{N}}(s, \lambda)\right| = 0 \quad \text{and}\quad \hbox{(ii)} \ \
\lim_{\lambda \to 0^+}\sup_{s \in [s_0, 1)}\left|2\frac{s}{n}S_s^{\frac{n}{2s}} - c_{\mathcal{M}}(s, \lambda)\right| = 0
\]
If $\Omega = B_R$ and $n>6s$ the same results hold for $c_{\mathcal{M}_{rad}}(s, \lambda)$.
\end{lemma}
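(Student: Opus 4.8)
The plan is to treat the three quantities $c_{\mathcal N}$, $c_{\mathcal M}$, $c_{\mathcal M_{rad}}$ in sequence, deriving the asymptotics from the explicit identifications already available in the excerpt together with the uniform control on $S_{s,\lambda}$ provided by Remark~\ref{ss:as}. For $c_{\mathcal N}$ this is immediate: Proposition~\ref{posSol} gives $c_{\mathcal N}(s,\lambda)=\frac{s}{n}S_{s,\lambda}^{n/2s}$, so the pointwise limit follows from $S_{s,\lambda}\to S_s$, and the uniform statement (i) follows from the uniform convergence $\sup_{s\in[s_0,1)}|S_s-S_{s,\lambda}|\to0$ in Remark~\ref{ss:as} together with the fact that $x\mapsto \frac{s}{n}x^{n/2s}$ is Lipschitz on the bounded range in which $S_{s,\lambda}$ lives (recall $S_s$ is uniformly bounded above and below in $s$), uniformly in $s\in[s_0,1)$ since $n/2s$ stays in a bounded interval.

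For $c_{\mathcal M}$ the strategy is the standard two-sided estimate. For the upper bound, I would revisit the computation in Lemma~\ref{EnergyBound}: that proof shows $c_{\mathcal M}\le\sup_{\alpha,\beta\ge0}I(\alpha u^0-\beta u_\varepsilon)$, and the chain of inequalities there (in particular \eqref{technicality2.0}) gives, after optimizing $\varepsilon=\varepsilon(\lambda)$, a bound of the form $c_{\mathcal M}(s,\lambda)\le 2\frac{s}{n}S_s^{n/2s}+o(1)$ as $\lambda\to0^+$, with the $o(1)$ controlled uniformly in $s\in[s_0,1)$ using the remark after Proposition~\ref{stimebubble} on the uniform boundedness of the constants and using that $u^0$ can be taken with $|u^0|_{\infty;B_\rho(x_0)}$ small near $\partial\Omega$. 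One has to be slightly careful that the term $-C_9\lambda\varepsilon^{2s}$ is negative and the positive error is $\varepsilon^{(n-2s)/2}$, so choosing e.g.\ $\varepsilon=\lambda^{\gamma}$ for a suitable $\gamma>0$ forces both error terms to $0$; since $c_{\mathcal M}\le c_{\mathcal N}+\frac{s}{n}S_s^{n/2s}+(\text{error})$ and $c_{\mathcal N}\to\frac{s}{n}S_s^{n/2s}$, this yields the upper bound.

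For the lower bound I would use the characterization $c_{\mathcal M}=\inf_{\mathcal M}I$ together with the fact that for $u\in\mathcal M$ one has $I(u)=\frac{s}{n}(|u^+|_{2^*_s}^{2^*_s}+|u^-|_{2^*_s}^{2^*_s})$ and $\eta(u)>0$, whence from \eqref{eqcarattnehari} and the Sobolev inequality \eqref{nonlocSob} combined with the Poincar\'e-type bound $\lambda|u^\pm|_2^2\le\frac{\lambda}{\lambda_{1,s}}\|u^\pm\|_s^2$ one gets $\|u^\pm\|_s^2\bigl(1-\frac{\lambda}{\lambda_{1,s}}\bigr)\le|u^\pm|_{2^*_s}^{2^*_s}$ and $S_s|u^\pm|_{2^*_s}^2\le\|u^\pm\|_s^2$, so that $|u^\pm|_{2^*_s}^{2^*_s}\ge S_s^{n/2s}\bigl(1-\frac{\lambda}{\lambda_{1,s}}\bigr)^{n/2s}$; summing gives $I(u)\ge 2\frac{s}{n}S_s^{n/2s}\bigl(1-\frac{\lambda}{\lambda_{1,s}}\bigr)^{n/2s}$, hence $\liminf_{\lambda\to0^+}c_{\mathcal M}\ge 2\frac{s}{n}S_s^{n/2s}$, and using \eqref{eigenbounds} this $\liminf$ estimate is uniform in $s\in[s_0,1)$. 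Combining the two bounds gives (ii); the radial case is identical, replacing $\mathcal M$ by $\mathcal M_{rad}$, $u_\varepsilon$ by the radial bubble centered at $0$, and invoking the $n>6s$ part of Lemma~\ref{EnergyBound}.

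The main obstacle, and the step deserving the most care, is the \emph{uniform-in-$s$} upper bound for $c_{\mathcal M}$: one must track that every constant produced in the proof of Lemma~\ref{EnergyBound} (the $C_i$'s, $\tilde C$, $\hat C$, $C_\theta$, and $|u^0|_{\infty;B_\rho(x_0)}$) can be bounded independently of $s\in[s_0,1)$ — this relies on the remark after Proposition~\ref{stimebubble}, on \eqref{eigenbounds}, and on the uniform bounds for $S_s$ and $C_{n,s}$ — and then to pick the scale $\varepsilon=\varepsilon(\lambda)$ (independent of $s$) making the residual terms $o(1)$ uniformly. Once this bookkeeping is done the convergence statements follow by a routine squeeze.
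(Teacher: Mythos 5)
Your argument is correct and follows the same overall squeeze strategy as the paper: identify $c_\mathcal{N}(s,\lambda)=\frac{s}{n}S_{s,\lambda}^{\frac{n}{2s}}$ via Proposition \ref{posSol} and Remark \ref{ss:as}, bound $c_\mathcal{M}$ from above using Lemma \ref{EnergyBound} and from below by (essentially) $2\frac{s}{n}\bigl[(1-\lambda/\lambda_{1,s})S_s\bigr]^{\frac{n}{2s}}$, and get uniformity in $s$ from \eqref{eigenbounds} and the uniform bounds on $S_s$. Two points of comparison. For the lower bound the paper takes a minimizer $u_{s,\lambda}$ of $c_\mathcal{M}$, scales $u^\pm_{s,\lambda}$ onto $\mathcal{N}$ and uses the Step \ref{Miranda} computation together with $\eta_s(u_{s,\lambda})>0$ to obtain $c_\mathcal{M}>2c_\mathcal{N}$; your route estimates $|u^\pm|_{2^*_s}^{2^*_s}$ directly from \eqref{eqcarattnehari}, the Poincar\'e-type bound and \eqref{nonlocSob} for an arbitrary $u\in\mathcal{M}$, which yields the same quantitative bound and has the small advantage of not requiring a minimizer, only $\mathcal{M}\neq\emptyset$. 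Second, what you single out as the main obstacle --- a uniform-in-$s$ upper bound with $\varepsilon$-dependent error terms --- is not needed: Lemma \ref{EnergyBound} already gives the exact inequality $c_\mathcal{M}(s,\lambda)<c_\mathcal{N}(s,\lambda)+\frac{s}{n}S_s^{\frac{n}{2s}}\le 2\frac{s}{n}S_s^{\frac{n}{2s}}$ for every admissible $(s,\lambda)$ with no remainder, so all the uniformity in (ii) comes from the lower bound combined with (i); this is precisely how the paper rewrites \eqref{energyrelation}. If you did re-run the bubble computation with $\varepsilon=\varepsilon(\lambda)$ and tried to control $|u^0_{s,\lambda}|_{\infty;B_\rho(x_0)}$ uniformly in $s$, you would in effect be reproving Corollary \ref{ImprEnerBound}, whose proof rests on the uniform $L^\infty$ bound of Proposition \ref{u0bound}, which appears only after this lemma and uses part of it --- so that detour is both unnecessary here and delicate to order without circularity.
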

\begin{proof} 
By Proposition \ref{posSol} we have $c_\mathcal{N}(s, \lambda) = \frac{s}{n}S_{s, \lambda}^{\frac{n}{2s}}$, and thus $(i)$ is a consequence of Remark \ref{ss:as}. 
In fact, recalling that $S_s$ is uniformly bounded with respect to $s \in (0,1)$ and thanks to \eqref{eigenbounds}, we get that 
\begin{equation}\label{uniformasintenergy}
0 \leq \frac{s}{n}S_s^{\frac{n}{2s}} - c_{\mathcal{N}}(s, \lambda) \leq \frac{s}{n}\left(1 - \left(1- \frac{\lambda}{\lambda_{1,s}}\right)^{\frac{n}{2s}}\right)S_s^{\frac{n}{2s}} \leq C(s_0, \lambda)
\end{equation}
where $C(s_0, \lambda) >0$ is such that $C(s_0,\lambda) \to 0$ as $\lambda \to 0^+$. 
 
For $(ii)$, let us recall that by Lemma \ref{EnergyBound} it holds $c_\mathcal{M}(s, \lambda) < c_\mathcal{N}(s, \lambda) + \frac{s}{n}S_s^{\frac{n}{2s}}$.
Let $u_{s,\lambda}$ be a minimizer of $c_\mathcal{M}(s, \lambda)$. As seen in the proof of Step \ref{Miranda} of Theorem \ref{conditionedexistence}, we have that, for every $\alpha$, $\beta \in \R_+$ it holds
\[
I_{s,\lambda}(\alpha u^+_{s,\lambda} - \beta u^-_{s,\lambda}) \leq I_{s,\lambda}(u_{s,\lambda}) = c_\mathcal{M}(s,\lambda).
\]
On the other hand, we can always choose $\alpha$ and $\beta$ such that $\alpha u^+_{s,\lambda}$, $\beta u^-_{s,\lambda} \in \mathcal{N}_{s,\lambda}$, and since $\eta_s(u_{s, \lambda}) >0$ we get that
\[
I_{s,\lambda}(\alpha u^+_{s,\lambda} - \beta u^-_{s,\lambda}) > I_{s,\lambda}(\alpha u^+_{s,\lambda}) + I_{s,\lambda}(\beta u^-_{s,\lambda}) \geq 2 c_\mathcal{N}(s, \lambda).
\]
At the end we obtain
\begin{equation}\label{energyrelation}
2c_\mathcal{N}(s, \lambda) < c_\mathcal{M}(s, \lambda)< c_\mathcal{N}(s, \lambda) + \frac{s}{n}S_s^{\frac{n}{2s}},
\end{equation}
and the result easily follows. Indeed, since \eqref{energyrelation} can be rewritten as 
\[
0 < 2\frac{s}{n}S_s^{\frac{n}{2s}} - c_{\mathcal{M}}(s, \lambda) < 2 \left(\frac{s}{n}S_s^{\frac{n}{2s}} - c_{\mathcal{N}}(s, \lambda) \right),
\]
the limit is uniform with respect to $s \in [s_0, 1)$ thanks to \eqref{uniformasintenergy}. In the radial case the proof is identical and we omit it. 
\end{proof}

\begin{lemma}\label{As:energas}
Let $s \in (0,1)$ and let $n \geq 6s$. Let $(u_{s, \lambda}) \subset \mathcal{M}_{s, \lambda}$ be a family of solutions of Problem \eqref{fracBrezis} such that $I_{s,\lambda}(u_{s,\lambda}) = c_\mathcal{M}(s, \lambda)$ and set $M_{s, \lambda, \pm} := |u^\pm_{s, \lambda}|_\infty$. As $\lambda \to 0^+$ we have:
\begin{enumerate}[(i)]
\item $\|u_{s,\lambda}^\pm\|_s^2 \to S_s^{\frac{n}{2s}} $;
\item $|u_{s,\lambda}^\pm|_{2_s^*}^{2_s^*} \to S_s^{\frac{n}{2s}} $;
\item $\lambda|u_{s, \lambda}^\pm|_2^2 \to 0$;
\item $\eta_s(u_{s,\lambda}) \to 0$ ;
\item $u_{s,\lambda} \rightharpoonup 0$ in $X^s_0(\Omega)$;
\item $M_{s, \lambda,\pm} \to + \infty$. 
\end{enumerate}
where $\eta_s$ is as in \eqref{etadefinition}.
When $n>6s$ the same results hold for a family $(u_{s, \lambda}) \subset \mathcal{M}_{s, \lambda; rad}$ of radial solutions of Problem \eqref{fracBrezis} such that $I_{s, \lambda}(u_{s,\lambda}) = c_{\mathcal{M}_{rad}}(s, \lambda)$.
Moreover, for every $s_0 \in (0,1)$ the limits $(i)-(iv)$ are uniform with respect to $s \in [s_0,1)$. 
\end{lemma}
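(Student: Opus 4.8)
The plan is to first establish that $\|u_{s,\lambda}\|_s^2 \to 2\frac{n}{s}\cdot\frac{s}{n}S_s^{n/2s}=2S_s^{n/2s}$ and to control each piece separately. Since $u_{s,\lambda}\in\mathcal{M}_{s,\lambda}$, the characterization \eqref{eqcarattnehari} gives $\|u_{s,\lambda}^\pm\|_s^2-\lambda|u_{s,\lambda}^\pm|_2^2 = |u_{s,\lambda}^\pm|_{2_s^*}^{2_s^*}-2\eta_s(u_{s,\lambda})$, and summing over $\pm$ and using $I_{s,\lambda}(u_{s,\lambda})=c_{\mathcal{M}}(s,\lambda)$ together with Lemma \ref{eneras} yields $|u_{s,\lambda}^+|_{2_s^*}^{2_s^*}+|u_{s,\lambda}^-|_{2_s^*}^{2_s^*}-2\eta_s(u_{s,\lambda})\to 2\frac{n}{s}c_{\mathcal{M}}\to 2S_s^{n/2s}$ after rewriting $I$ on $\mathcal{M}$ as $\frac{s}{n}(|u^+|_{2_s^*}^{2_s^*}+|u^-|_{2_s^*}^{2_s^*}) - (\text{term in }\eta)$; more precisely from $I'(u)[u^\pm]=0$ one has $I_{s,\lambda}(u_{s,\lambda})=\frac{s}{n}\big(|u_{s,\lambda}^+|_{2_s^*}^{2_s^*}+|u_{s,\lambda}^-|_{2_s^*}^{2_s^*}\big)-\eta_s(u_{s,\lambda})\cdot\frac{n-2s}{n}$ (the exact combinatorial constant should be recomputed, but its sign is what matters).

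Next I would exploit the two lower bounds coming from the Sobolev inequality applied to $u_{s,\lambda}^\pm$ separately: since $\lambda\in(0,\lambda_{1,s})$, from \eqref{eqcarattnehari} and $\eta_s\ge 0$ we get $\|u_{s,\lambda}^\pm\|_s^2\big(1-\tfrac{\lambda}{\lambda_{1,s}}\big)\le \|u_{s,\lambda}^\pm\|_s^2-\lambda|u_{s,\lambda}^\pm|_2^2\le|u_{s,\lambda}^\pm|_{2_s^*}^{2_s^*}\le S_s^{-2_s^*/2}\|u_{s,\lambda}^\pm\|_s^{2_s^*}$, which forces $\|u_{s,\lambda}^\pm\|_s^2\ge (1-o(1))S_s^{n/2s}$ once we know $u_{s,\lambda}^\pm\not\equiv 0$ with a quantitative lower bound on $|u^\pm|_{2_s^*}$ (which holds on $\mathcal{V}$, and in fact on $\mathcal{M}$). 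Combining this pair of lower bounds with the upper bound $\|u_{s,\lambda}^+\|_s^2+\|u_{s,\lambda}^-\|_s^2\le\|u_{s,\lambda}\|_s^2+2\eta_s(u_{s,\lambda})$ — wait, here one must be careful: $\|u\|_s^2 = \|u^+\|_s^2+\|u^-\|_s^2-2(u^+,u^-)_s = \|u^+\|_s^2+\|u^-\|_s^2+2\eta_s(u)$ — so $\|u_{s,\lambda}^+\|_s^2+\|u_{s,\lambda}^-\|_s^2 = \|u_{s,\lambda}\|_s^2 - 2\eta_s(u_{s,\lambda})\le\|u_{s,\lambda}\|_s^2$. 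Then the two lower bounds $\ge(1-o(1))S_s^{n/2s}$ each, summing to at most $\|u_{s,\lambda}\|_s^2$, together with the energy identity pinning $\|u_{s,\lambda}\|_s^2$ near $2S_s^{n/2s}$, squeeze everything: we conclude $\eta_s(u_{s,\lambda})\to 0$, $\|u_{s,\lambda}^\pm\|_s^2\to S_s^{n/2s}$, hence via \eqref{eqcarattnehari} and $\lambda|u^\pm|_2^2\to 0$ (from $\lambda|u^\pm|_2^2\le\tfrac{\lambda}{\lambda_{1,s}}\|u^\pm\|_s^2$) also $|u_{s,\lambda}^\pm|_{2_s^*}^{2_s^*}\to S_s^{n/2s}$, giving (i)--(iv).

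For (v), $u_{s,\lambda}\rightharpoonup 0$: the family is bounded in $X^s_0(\Omega)$ by (i), so up to a subsequence $u_{s,\lambda}\rightharpoonup u_*$; testing the equation against fixed $\varphi\in C_c^\infty$ and using that $u_{s,\lambda}$ solves \eqref{fracBrezis} with $\lambda\to 0$, the limit $u_*$ solves $(-\Delta)^s u_* = |u_*|^{2_s^*-2}u_*$ in $\Omega$ with $u_*=0$ outside; by Pohozaev (or the strict inequality $c_{\mathcal M}<c_{\mathcal N}+\tfrac{s}{n}S_s^{n/2s}$ and concentration-compactness preventing a nontrivial limit) $u_*\equiv 0$; since the weak limit is independent of the subsequence, the whole family converges weakly to $0$. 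Statement (vi), $M_{s,\lambda,\pm}\to+\infty$: if $M_{s,\lambda,\pm}$ stayed bounded along a subsequence, then by the Rellich--Kondrachov embedding $u_{s,\lambda}^\pm\to 0$ in $L^{2_s^*}$ (using $u_{s,\lambda}\rightharpoonup 0$ and $L^\infty$-boundedness plus the bounded domain), contradicting $|u_{s,\lambda}^\pm|_{2_s^*}^{2_s^*}\to S_s^{n/2s}>0$. The radial case is identical. Finally, for the uniformity in $s\in[s_0,1)$ of (i)--(iv): every estimate above is driven by $\lambda/\lambda_{1,s}\le \lambda/\underline\lambda(s_0)\to 0$ (by \eqref{eigenbounds}), by the uniform bounds on $S_s$, and by the uniform version of Lemma \ref{eneras}; tracking constants shows each $o(1)$ is bounded by a function of $(s_0,\lambda)$ vanishing as $\lambda\to 0^+$.

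\medskip

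The main obstacle I anticipate is making the ``squeeze'' rigorous and uniform in $s$: one needs the two Sobolev lower bounds for $u^\pm$ to be genuinely $(1-o(1))S_s^{n/2s}$ rather than merely positive, which requires a uniform-in-$\lambda$ (and in $s$) positive lower bound on $|u_{s,\lambda}^\pm|_{2_s^*}$ — this is where the membership in $\mathcal{V}$ (the condition $|f^\pm-1|<1/2$, equivalently the bound $|u^\pm|_{2_s^*}^{2_s^*-2}\ge\frac{S_s}{2}(1-\lambda/\lambda_{1,s})$ derived just after the definition of $\mathcal{V}$) must be invoked, together with the fact that the minimizer lies in $\mathcal{M}\subset\mathcal{V}$. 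Handling the weak-limit-is-zero step without circular reasoning (it should not secretly use (vi)) also needs a little care; the cleanest route is the energy/concentration-compactness dichotomy using the strict inequality from Lemma \ref{EnergyBound}.
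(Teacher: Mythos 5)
Your treatment of (i)--(iv), (vi) and of the uniformity in $s$ follows essentially the same route as the paper: lower bounds $\|u^\pm_{s,\lambda}\|_s^2\ge S_s^{\frac{n}{2s}}+o(1)$ from the Nehari relation \eqref{eqcarattnehari} together with Sobolev/Poincar\'e, the pinning $\|u_{s,\lambda}\|_s^2\to 2S_s^{\frac{n}{2s}}$ via Lemma \ref{eneras} and the identity $I_{s,\lambda}(u)=\frac{s}{n}|u|_{2_s^*}^{2_s^*}$ on $\mathcal{N}_{s,\lambda}$, and the squeeze through the splitting of $\|u\|_s^2$. Three bookkeeping points: since $(u^+,u^-)_s=-2\eta_s(u)$, the correct splitting is $\|u\|_s^2=\|u^+\|_s^2+\|u^-\|_s^2+4\eta_s(u)$ (you wrote $+2\eta_s$); on $\mathcal{N}_{s,\lambda}$ the functional is exactly $\frac{s}{n}|u|_{2_s^*}^{2_s^*}$ with no $\eta_s$-correction, so the ``combinatorial constant'' you leave open is zero; and the quantitative lower bound on $|u^\pm|_{2_s^*}$ via $\mathcal{V}$ is unnecessary --- since $u^\pm\not\equiv0$ by definition of $\mathcal{M}_{s,\lambda}$, one simply divides $\left(1-\frac{\lambda}{\lambda_{1,s}}\right)\|u^\pm\|_s^2\le S_s^{-\frac{2_s^*}{2}}\|u^\pm\|_s^{2_s^*}$ by $\|u^\pm\|_s^2>0$, which is what the paper does. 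None of this affects the validity of your squeeze, and deriving (iii) from the Poincar\'e inequality rather than from $\|u\|_s^2-\lambda|u|_2^2=|u|_{2_s^*}^{2_s^*}$ is a harmless variant.

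The genuine gap is in (v). The paper does not apply the Pohozaev identity to the weak limit directly: it first shows that the limit $u_s$ must be of constant sign, by combining the Fatou bound $|u_s|_{2_s^*}^{2_s^*}\le\liminf_{\lambda\to0^+}|u_{s,\lambda}|_{2_s^*}^{2_s^*}=2S_s^{\frac{n}{2s}}$ with the test-function computation $\|u_s^\pm\|_s^2+2\eta_s(u_s)=|u_s^\pm|_{2_s^*}^{2_s^*}$, which for a sign-changing limit forces $|u_s|_{2_s^*}^{2_s^*}>2S_s^{\frac{n}{2s}}$, a contradiction; only then does it invoke \cite[Corollary 1.3]{Pohozaev} for the resulting nonnegative bounded solution. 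This reduction is not decorative: the nonexistence statement used there for the critical pure power concerns solutions of one sign (positivity enters through the boundary term $u/\delta^s$), so ``by Pohozaev $u_*\equiv0$'' applied to a possibly sign-changing bounded limit is not justified as written, and your parenthetical alternative via ``strict inequality plus concentration-compactness'' is too vague to substitute for it. The fix costs nothing beyond what you have already established (Fatou plus (ii)); include it, and also state how the nonlinearity passes to the limit in the weak formulation (a.e. convergence of $|u_{s,\lambda}|^{2_s^*-2}u_{s,\lambda}$ plus boundedness in the dual exponent), so that (v) is fully closed before it is used in (vi).
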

\begin{proof}
Let $u_{s,\lambda} \in \mathcal{M}_{s, \lambda}$. From the definition of $\mathcal{M}_{s, \lambda}$ (see also \eqref{eqcarattnehari}), \eqref{nonlocSob} and the variational characterization of the eigenvalues, we get that
\[
\begin{aligned}
0 =&\ \|u^\pm_{s,\lambda}\|_s^2+2\eta_s (u_{s,\lambda}) - \lambda |u^\pm_{s,\lambda}|^2_2-|u^\pm_{s,\lambda}|_{2_s^*}^{2_s^*}\\
\geq&\ \|u^\pm_{s,\lambda}\|_s^2 \left(\left(1-\frac{\lambda}{\lambda_{1,s}}\right)- S_s^{-\frac{2_s^*}{2}} \|u^\pm_{s,\lambda}\|_s^{2_s^*-2}\right),
\end{aligned}
\]
so that
\begin{equation}\label{energyext6}
\liminf_{\lambda \to 0^+} \|u^\pm_{s,\lambda}\|_s^2 \geq S_s^{\frac{n}{2s}}.
\end{equation}
Since
\begin{equation}\label{energysplit}
\|u_{s,\lambda}\|^2_s = \|u^+_{s,\lambda}\|^2_s + \|u^-_{s,\lambda}\|^2_s + 4\eta_s(u_{s,\lambda}) \geq \|u^+_{s,\lambda}\|^2_s + \|u^-_{s,\lambda}\|^2_s
\end{equation}
it follows that
\begin{equation}\label{energyext1}
\liminf_{\lambda \to 0^+} \|u_{s,\lambda}\|_s^2 \geq 2S_s^{\frac{n}{2s}}.
\end{equation}
On the other hand, since $u_{s,\lambda} \in \mathcal{N}_{s, \lambda}$ and $I_{s, \lambda}(u_{s, \lambda}) = c_\mathcal{M}(s, \lambda)$, thanks to Lemma \ref{eneras} we have
\[
\lim_{\lambda \to 0^+}\frac{s}{n}|u_{s,\lambda}|_{2_s^*}^{2_s^*} = \lim_{\lambda \to 0^+}I_{s, \lambda}(u_{s,\lambda}) =\lim_{\lambda \to 0^+} c_\mathcal{M}(s,\lambda) = 2 \frac{s}{n}  S_s^{\frac{n}{2s}}, 
\]
and then
\begin{equation}\label{energyext2}
\lim_{\lambda \to 0^+}|u_{s,\lambda}|_{2_s^*}^{2_s^*} = 2S_s^{\frac{n}{2s}}.
\end{equation}
Using again that $u_{s, \lambda} \in \mathcal{N}_{s, \lambda}$ and the characterization of the eigenvalues we get that
\[
\left(1-\frac{\lambda}{\lambda_{1,s}}\right) \|u_{s,\lambda}\|_s^2 \leq |u_{s,\lambda}|_{2_s^*}^{2_s^*}.
\]
From the previous inequality, \eqref{energyext1} and \eqref{energyext2} it follows that
\begin{equation}\label{energyext3}
\lim_{\lambda \to 0^+}\|u_{s,\lambda} \|_s^2 = 2S_s^{\frac{n}{2s}}.
\end{equation}
Therefore, from \eqref{energyext2}, \eqref{energyext3} and since $u_{s,\lambda} \in \mathcal{N}_{s,\lambda}$ we deduce $(iii)$.

Now observe that, in view of \eqref{energyext6} and \eqref{energysplit}, we have
\[
\begin{aligned}
2S_s^{\frac{n}{2s}} &=& \lim_{\lambda \to 0^+} \|u_{s,\lambda} \|_s^2 \geq \limsup_{\lambda \to 0^+} \left(\|u^+_{s,\lambda}\|_s^2 + \|u^-_{s,\lambda}\|_s^2\right)\\
&\geq & \liminf_{\lambda \to 0^+} \|u^+_{s,\lambda}\|_s^2+ \liminf_{\lambda \to 0^+} \|u^-_{s,\lambda}\|_s^2 \geq 2S_s^{\frac{n}{2s}}.
\end{aligned}
\]
Hence we obtain that
\[
\lim_{\lambda \to 0^+} \left(\|u^+_{s,\lambda}\|_s^2 + \|u^-_{s,\lambda}\|_s^2 \right) = 2S_s^{\frac{n}{2s}}, 
\]
and, in view of \eqref{energyext6}, we deduce that
\[
\lim_{\lambda \to 0^+} \|u^\pm_{s,\lambda}\|_s^2 = S_s^{\frac{n}{2s}},
\]
which proves $(i)$, and $(iv)$ follows from \eqref{energysplit} and \eqref{energyext3}. Then, the relation $(ii)$ is a consequence of $(i)$, $(iii)$, $(iv)$, and the definition of $\mathcal{M}_{s,\lambda}$. 

For $(v)$, from \eqref{energyext3} we get that, up to a subsequence, there exists $u_s \in X^s_0(\Omega)$ such that $u_{s, \lambda} \rightharpoonup u_s$ in $X^s_0(\Omega)$ and $u_{s,\lambda} \to u_s$ a.e. as $\lambda \to 0^+$. Moreover, $u_s$ is a weak solution of the equation
\begin{equation}\label{eqsob}
(-\Delta)^s u_s = |u_s|^{2^*_s-2}u_s \quad \text{in }\Omega.
\end{equation}
In addiction, by $(ii)$ and Fatou's Lemma we have
\begin{equation}\label{fatouext}
|u_s|^{2^*_s}_{2^*_s} \leq \liminf_{\lambda \to 0^+}|u_{s,\lambda}|^{2^*_s}_{2^*_s} = 2 S_s^{\frac{n}{2s}}.
\end{equation}
Suppose that both $u^+_s$ and $u^-_s$ are not trivial. Then, using $u^\pm_s$ as test functions in \eqref{eqsob} we get
\[
\|u_s^\pm\|^2_s + 2 \eta_s (u_s) = |u^\pm_s|^{2^*_s}_{2^*_s},
\]
and then, by definition of $S_s$, we deduce that
\[
S_s \leq \frac{\|u^{\pm}_s\|^2_s}{|u^\pm_{s}|^{2}_{2^*_s}} = |u^{\pm}_s|^{2^*_s-2}_{2^*_s}-2\frac{\eta_s(u_s)}{|u^\pm_s|^2_{2^*_s}} < |u^{\pm}_s|^{2^*_s-2}_{2^*_s}.
\]
This implies that
\[
2S_s^{\frac{n}{2s}}< |u_s|^{2^*_s}_{2^*_s},
\]
which contradicts \eqref{fatouext}. As a consequence, either $u^+_{s} \equiv 0$ or $u^-_s \equiv 0$ i.e. $u_s$ is of constant sign. Assume for instance that $u_s \geq 0$. Hence, being $u_s \in L^\infty(\R^n)$ thanks to \cite[Theorem 3.2]{IanMosSqua}, non-negative, and a solution of \eqref{eqsob}, then $(v)$ is a consequence of the fractional Pohozaev identity (see \cite[Corollary 1.3]{Pohozaev}). 

To prove the last point of the Lemma, we argue again by contradiction. Let $C>0$ be such that $M_{s,\lambda,+} \leq C$ for all $\lambda$. Then $|u_{s,\lambda}^+|^{2_s^*} \leq (M_{s,\lambda,+})^{2_s^*} \leq C^{2_s^*}$. Since by the previous point we have also that $u_{s,\lambda}^+ \to 0$ a.e, we can apply Lebesgue's convergence theorem to obtain that, as $\lambda \to 0^+$, 
\[
|u_{s,\lambda}^+|_{2_s^*}^{2_s^*} = \int_\Omega |u^+_{s,\lambda}|^{2_s^*}\de x \to 0,
\]  
which contradicts $(ii)$. The same proof holds for $M_{s,\lambda,-}$. 

As for the radial case, the proof is identical. Finally, since $S_s$ is uniformly bounded with respect to $s \in (0,1)$, and in view of \eqref{eigenbounds}, \eqref{eigenbounds2}, and Lemma \ref{eneras}, the limits $(i)-(iv)$ are uniform with respect to $s \in [s_0, 1)$.
\end{proof}

\section{Nodal components of the extension and nodal bounds}
In this section we study the nodal set of the extension of least energy sign-changing solutions of Problem \eqref{fracBrezis}. Let $u_{s, \lambda}$ be such a solution and let $W_{s, \lambda} = E_s u_{s,\lambda}$ be the extension of $u_{s, \lambda}$ (see Sect. \ref{SectionIntro}). Since $W_{s, \lambda}$ is continuous up to the boundary (see Lemma \ref{extreg}) and its restriction to $\R^n$ is $u_{s,\lambda}$, then also $W_{s, \lambda}$ changes sign. Next result states that the number of nodal regions of $W_{s, \lambda}$, i.e., the number of the connected components of $\left\{x \in \R^{n+1}_+ \ | \ W_{s, \lambda}(x) \neq 0\right\}$, is two.

\begin{teo}\label{3nodal}
Let $s \in (0,1)$, $n\geq 6s$ and let $\Omega \subset \R^n$ be a smooth bounded domain. Then, there exists $0<\hat \lambda_s \leq \lambda_{1,s}$ such that for every $\lambda \in (0, \hat \lambda_s)$ the function $W_{s,\lambda}$ has exactly two nodal regions. Moreover, for every $s_0 \in (0,1)$ there exists $\hat \lambda(s_0)>0$ which depends on $n$ and $s_0$ but not on $s$ such that for every $\lambda \in (0, \hat \lambda(s_0))$ and $s \in [s_0, 1)$, previous result holds.  

\end{teo}
\begin{proof}
Let $\{\Omega_i\}$ be the set of the nodal regions of $W_{s,\lambda}$ in $\R^{n+1}_+$ and for each of them let us set $W^i_{s, \lambda} := W_{s, \lambda} \mathbbm{1}_{\overline{\Omega_i}}$, where $\mathbbm{1}_{\overline{\Omega_i}}$ is the characteristic function of $\overline{\Omega_i}$. 
First of all we notice that it cannot happen that $W^i_{s,\lambda}(x, 0 ) = 0$ for all $x \in \R^n$. Indeed, by \eqref{traceinequality} we have
\[
\begin{aligned}
D^2_s(W_{s,\lambda}) =&\ d_s \int_{\R^{n+1}_+}y^{1-2s} |\nabla W^i_{s,\lambda}|^2 \de x \de y + d_s \int_{\R^{n+1}_+}y^{1-2s} |\nabla (W_{s,\lambda}-W_{s, \lambda}^i)|^2 \de x \de y  \\
 \geq&\ d_s \int_{\R^{n+1}_+}y^{1-2s} |\nabla (W_{s,\lambda}-W_{s,\lambda}^i)|^2 \de x \de y \geq \|(W_{s,\lambda}-W^i_{s,\lambda})(x, 0)\|_s^2\\
= &\ \|W_{s,\lambda}(x, 0)\|_s^2,  
\end{aligned}
\]
so that, thanks to \eqref{energyext}, $\|u_{s,\lambda}\|_s^2 = d_s \int_{\R^{n+1}_+}y^{1-2s} |\nabla (W_{s,\lambda}-W^i_{s,\lambda})|^2 \de x \de y$. Since the extension is unique, this implies that $W_{s,\lambda} = W_{s,\lambda} - W_{s,\lambda}^i$ and then $W_{s, \lambda}^i \equiv 0$ in $\R^{n+1}_+$, which contradicts the definition of $\Omega_i$ and proves the claim.

As a consequence, we have that there is no nodal region such that $\overline{\Omega_i} \cap \Omega = \emptyset$. Using also \eqref{traceinequality}, we get that $W^i_{s, \lambda}(x, 0)$ is a non trivial function in $X^s_0(\Omega)$. Moreover, thanks the continuity of $W_{s, \lambda}$, the support of $W^i_{s, \lambda}(x, 0)$ turns out to be a non empty union of subsets of $\Omega$ where $u_{s, \lambda}$ has the same sign. In addiction, for every $i, j$ the intersection between the supports of $W_{s, \lambda}^i(x, 0)$ and $W^j_{s, \lambda}(x, 0)$ consists of a set of null measure.
 
Since $u_{s,\lambda}$ is a solution of Problem \eqref{fracBrezis}, from $(iv)$ of Lemma \ref{extreg}, we obtain that for every $\phi \in \mathcal{D}^{1,s}(\R^{n+1}_+)$ such that $\phi(x, 0) \in X^s_0(\Omega)$
\begin{equation}\label{weakextension}
\begin{aligned}
&d_s\int_{\R^{n+1}_+} y^{1-2s} \nabla W_{s,\lambda}(x,y) \cdot \nabla \phi (x,y) \de x \de y = (u_{s,\lambda} (x), \phi(x, 0))_s \\
=&\ \lambda \int_\Omega u_{s,\lambda}(x) \phi(x, 0) \de x + \int_\Omega |u_{s,\lambda}(x)|^{2_s^*-2}u_{s,\lambda}(x) \phi (x, 0) \de x.
\end{aligned}
\end{equation}

Then, using $W^i_{s,\lambda}$ as a test function in \eqref{weakextension}, we have
\[
d_s\int_{\R^{n+1}_+} y^{1-2s} |\nabla W^i_{s,\lambda}|^2 \de x \de y = \lambda \int_\Omega |W^i_{s,\lambda}(x,0)|^2 \de x + \int_\Omega |W^i_{s,\lambda}(x,0)|^{2_s^*}\de x.
\]
Therefore, by \eqref{traceinequality}, the Sobolev inequality and the variational characterization of $\lambda_{1,s}$ we obtain 
\[
0 \leq D^2_s(W^i_{s,\lambda}) \left[ - \left(1 - \frac{\lambda}{\lambda_{1,s}}\right) + S_s^{-\frac{2_s^*}{2}}D^{\frac{2s}{n-2s}}_s (W^i_{s,\lambda})\right],
\]
and, as $\lambda \to 0^+$, we get that
\[
D^2_s(W^i_{s,\lambda}) \geq S_s^{\frac{n}{2s}}(1 + o(1)).
\]

At the end, let $K$ be the number of nodal regions of $W_{s,\lambda}$, and assume that $K>2$. Thus by Lemma \ref{As:energas} and Proposition \ref{extension} we obtain that 
\begin{equation}\label{3nodaltecnic}
2 S_s^{\frac{n}{2s}} + o(1) = \|u_{s,\lambda}\|^2_s = D^2_s(W_{s,\lambda}) = \sum_{i=1}^K D^2_s (W_{s,\lambda}^i) \geq K S_s^{\frac{n}{2s}}(1 + o(1)),
\end{equation}
which gives a contradiction.

For the last point of the Theorem, let us fix $s_0 \in (0, 1)$. As seen in \eqref{eigenbounds}, there exists $\underline \lambda(s_0)$ such that $\underline \lambda(s_0) \leq \lambda_{1,s}$ for every $s \in [s_0, 1)$. Then, when $\lambda \in \left( 0, \underline \lambda(s_0)\right)$, existence of solutions is ensured by Theorem \ref{exsimm}.
Moreover, as stated in Lemma \ref{As:energas} we have that 
\[
\sup_{s \in [s_0, 1)}\left|S_s^{\frac{n}{2s}}- \|u^\pm_{s, \lambda}\|_s^2\right| \leq C_1(\lambda) \quad \text{ and } \sup_{s \in [s_0, 1)}\eta_s(u_{s, \lambda}) \leq C_2(\lambda),
\]
where the functions $C_1,$ $C_2$ depend on $n$ and $s_0$ but not on $s$, and are such that $C_1(\lambda)$, $C_2(\lambda) \to 0$ as $\lambda \to 0^+$.  
Then, when $\lambda < \underline \lambda(s_0)$, from \eqref{3nodaltecnic} we deduce that   
\[
2 S_s^{\frac{n}{2s}} + C_3(\lambda) > K \left(1- \frac{\lambda}{\lambda_{1,s}}\right) S_s^{\frac{n}{2s}} \geq K \left(1- \frac{\lambda}{\underline \lambda(s_0)}\right) S_s^{\frac{n}{2s}}
\]
where $ C_3(\lambda)$ still depends only on $n$, $s_0$ and $\lambda$. Then, recalling that $S_s$ is uniformly bounded from below by a positive constant when $s \in (0,1)$, we obtain that
\[
2 +  2 o(\lambda) > K \left(1- \frac{\lambda}{\underline \lambda(s_0)}\right)
\]
where $o(\lambda)$ does not depend on $s$. Clearly, if $K >2$, there exists a sufficiently small $\tilde \lambda(s_0)$ such that a contradiction holds. Therefore the only possibility is that $K=2$ for all $\lambda \in (0,\hat\lambda(s_0))$, where $0<\hat\lambda(s_0)<\min\{\tilde \lambda(s_0), \underline \lambda(s_0)\}$. The proof is complete.

\end{proof}

The previous result holds true for least energy sign-changing solutions of Problem \eqref{fracBrezis} in general domains, but gives information just for the nodal set of their extensions. For radial solutions we can say more. 

\begin{teo}\label{twicechange}
Let $n>6s$, $s \in (0,1)$, and $R>0$. Let $u_{s,\lambda}$ be a least energy radial sign-changing solution for Problem \eqref{fracBrezis} in $B_R$. If $\lambda \in (0, \hat \lambda_s)$ where $\hat \lambda_s>0$ is the number given by Theorem \ref{3nodal}, then $u_{s, \lambda} = u_{s,\lambda}(r)$ changes sign at most twice.
Let $s_0 \in (0,1)$. Then the same result holds for every $s \in [s_0, 1)$ and $\lambda \in (0, \hat \lambda(s_0))$ where  $\hat \lambda(s_0)$ is the number given by Theorem \ref{3nodal}.
\end{teo}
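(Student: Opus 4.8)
The plan is to transfer the information of Theorem \ref{3nodal} (the extension has exactly two nodal regions) to a two-dimensional picture by means of cylindrical symmetry, and then to derive a contradiction through the Jordan curve theorem. Write $W := W_{s,\lambda} = E_s u_{s,\lambda}$. By Lemma \ref{extreg} it is continuous on $\overline{\R^{n+1}_+}$ with $W(\cdot,0)=u_{s,\lambda}$, hence $W$ changes sign; by Theorem \ref{3nodal} it has exactly two nodal regions, so $\{W>0\}$ and $\{W<0\}$ are each connected in $\R^{n+1}_+$. Since $u_{s,\lambda}$ is radial, $W$ is cylindrically symmetric and descends to a continuous function, still denoted $W=W(r,y)$, on the closed quarter-plane $\mathcal{Q}:=\{(r,y)\in\R^2 : r\geq 0,\ y\geq 0\}$, with $W(r,0)=u_{s,\lambda}(r)$. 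The first point I would establish is that $\mathcal{O}^{\pm}:=\{(r,y)\in\mathcal{Q} : \pm W(r,y)>0\}$ are each connected, and that $\mathcal{O}^{\pm}\cap\{r>0,\ y>0\}$ is connected as well: indeed $\mathcal{O}^{\pm}$ is, up to boundary points on $\{y=0\}$, the image of the connected set $\{\pm W>0\}$ under $(x,y)\mapsto(|x|,y)$, and the part with $r>0$ is the image of $\{\pm W>0\}$ with the rotation axis $\{0\}\times\R_+$ removed, a set which remains connected because the axis has codimension $n\geq 2$ inside $\R^{n+1}_+$.

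Then I would argue by contradiction. Suppose $u_{s,\lambda}$ changes sign at least three times in $(0,R)$: then there are $0<a_0<a_1<a_2<a_3<R$ with, say, $u_{s,\lambda}(a_0)>0$, $u_{s,\lambda}(a_1)<0$, $u_{s,\lambda}(a_2)>0$, $u_{s,\lambda}(a_3)<0$, so that $(a_0,0),(a_2,0)\in\mathcal{O}^{+}$ and $(a_1,0),(a_3,0)\in\mathcal{O}^{-}$. Using the connectedness from the previous step, I would join $(a_0,0)$ to $(a_2,0)$ by a \emph{simple} arc $\gamma^{+}\subset\mathcal{O}^{+}$ that meets $\partial\mathcal{Q}$ only at its endpoints (move vertically off $\{y=0\}$, then travel inside the connected set $\mathcal{O}^{+}\cap\{r>0,\ y>0\}$), and likewise $(a_1,0)$ to $(a_3,0)$ by a simple arc $\gamma^{-}\subset\mathcal{O}^{-}$ of the same type; note $\gamma^{+}\cap\gamma^{-}=\emptyset$ since $W$ has opposite signs on the two. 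After a homeomorphism of $\mathcal{Q}$ onto a closed half-plane carrying the edge $\{y=0\}$ onto the boundary line with $a_0,a_1,a_2,a_3$ occurring in this order, the arc $\gamma^{+}$ together with the boundary segment joining its endpoints bounds a Jordan domain $D$ which has $(a_1,0)$ on its boundary but does not contain $(a_3,0)$ in its closure; since $\gamma^{-}$ leaves $(a_1,0)$ into $D$, stays in the open half-plane afterwards, and avoids $\gamma^{+}$, it can never reach $(a_3,0)$ — a contradiction. Hence $u_{s,\lambda}$ changes sign at most twice.

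The uniform statement for $s\in[s_0,1)$ and $\lambda\in(0,\hat\lambda(s_0))$ follows by the identical argument, now invoking the uniform part of Theorem \ref{3nodal}, whose threshold depends only on $n$ and $s_0$.

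I expect the delicate part to be the reduction to the planar picture in the first step: one must check that ``exactly two nodal regions of $W$ in $\R^{n+1}_+$'' genuinely forces $\mathcal{O}^{+}$ and $\mathcal{O}^{-}$ to be connected subsets of $\mathcal{Q}$ (this is precisely where the codimension bound $n\geq 2$ is used, which in the present setting is contained in $n>6s$), and that the arcs $\gamma^{\pm}$ can indeed be taken simple and internally disjoint from $\partial\mathcal{Q}$. Once this planar reduction is secured, the final contradiction is a routine application of the Jordan curve theorem.
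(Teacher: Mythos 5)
Your argument is essentially the paper's: the paper proves this theorem by simply invoking \cite[Proposition 5.3]{FrLe2} (together with \cite[Lemma D.1]{FrLe1} and the Jordan curve theorem), and what you write out — cylindrical symmetry reduces the statement to the quarter-plane picture, the two nodal regions of $W_{s,\lambda}$ give connected sets $\mathcal{O}^{\pm}$, and an interlaced sign pattern $+,-,+,-$ on the $\{y=0\}$ edge is excluded by a Jordan-curve separation argument — is precisely that proof, and the separation argument itself is carried out correctly (including the uniform-in-$s$ part, which only uses the uniform threshold of Theorem \ref{3nodal}).

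One inaccuracy worth noting: your parenthetical claim that the codimension bound $n\geq 2$ ``is contained in $n>6s$'' is false, since the theorem as stated allows $n=1$ (any $s<\tfrac16$) and $n=2$ (any $s<\tfrac13$); for $n=1$ the rotation axis has codimension $1$ in $\R^{2}_+$ and removing it may disconnect a nodal region, so connectedness of $\mathcal{O}^{\pm}\cap\{r>0,\,y>0\}$ is not justified this way. The blemish is harmless, however, because avoiding the edge $\{r=0\}$ is not actually needed: it suffices that the interiors of $\gamma^{\pm}$ avoid $\{y=0\}$, which only requires connectedness of $\mathcal{O}^{\pm}\cap\{y>0\}$ (the image of the connected nodal region, with no set removed). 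Indeed, after the homeomorphism onto the half-plane, $\gamma^{+}\cup([a_0,a_2]\times\{0\})$ is still a Jordan curve even if $\gamma^{+}$ meets the image of the axis, the local analysis at $(a_1,0)$ and $(a_3,0)$ is unaffected (those points lie on the $\{y=0\}$ edge with $r>0$, away from $\gamma^{+}$ by the sign of $W$), and $\gamma^{-}$ still cannot cross the curve since its interior lies in $\{y>0\}$ and carries the opposite sign; alternatively, for $n=1$ one can dispense with the quotient altogether and run the same argument in the half-plane $\R^2_+$. With that adjustment (or with the observation that all the paper's main theorems anyway have $s>\tfrac12$, hence $n\geq 4$), your proof is complete.
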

\begin{proof}
The proof is the same as in \cite[Proposition 5.3]{FrLe2}, and is based on a known topological result (see \cite[Lemma D.1]{FrLe1}) and the Jordan's curve Theorem.
\end{proof}

Another crucial preliminary result is the following:

\begin{lemma}\label{lemmatecnico}
Let $s_0 \in (0,1)$ and let $\hat\lambda(s_0)>0$ be the number given by Theorem \ref{twicechange}. Let $s \in (s_0,1)$, $n>6s$, $R>0$ and let $u_{s,\lambda}$ be a least energy radial sign-changing solution of Problem \eqref{fracBrezis} in $B_R$, being such that $u_{s,\lambda}$ changes sign exactly twice and $u_{s,\lambda}\geq 0$ in a neighborhood of the origin. Let us denote by $0<r_{s}^1<r_s^2<R$ the nodes of $u_{s,\lambda}$. Let $W_{s, \lambda} = E_s u_{s,\lambda}$ be extension of $u_{s,\lambda}$. Then, for every $\overline \rho \in (r^2_s, R)$ such that $u_{s, \lambda}(\overline \rho)>0$, there exists $\overline \delta = \overline \delta(\overline \rho)>0$ such that 
\begin{equation}\label{asintotre1}
W_{s, \lambda} (x, y) \geq 0 \quad \forall |x| > \overline \rho, \quad \forall y \in (0, \overline \delta).
\end{equation}
\end{lemma}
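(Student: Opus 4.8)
The plan is to work with the Poisson representation of the extension,
\[
W_{s,\lambda}(x,y)=E_su_{s,\lambda}(x,y)=\int_{B_R}P_{n,s}(x-\xi,y)\,u_{s,\lambda}(\xi)\de\xi
\]
(recall $u_{s,\lambda}\equiv0$ outside $B_R$), and to discuss the sign of $W_{s,\lambda}(x,y)$ for $|x|>\overline\rho$ according to the size of $|x|$. Throughout one uses that $W_{s,\lambda}(x,0)=u_{s,\lambda}(x)\ge0$ whenever $|x|>\overline\rho\ge r_s^2$, that by radiality $u_{s,\lambda}>0$ on $[0,r_s^1)\cup(r_s^2,R)$ and $u_{s,\lambda}<0$ on $(r_s^1,r_s^2)$, and that $W_{s,\lambda}\in C(\overline{\R^{n+1}_+})$ since $u_{s,\lambda}\in C^{0,s}(\R^n)$ (Lemma \ref{extreg}). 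Since $u_{s,\lambda}(\overline\rho)>0$ and $\{|x|=\overline\rho\}$ is compact, there is $\overline\delta_1>0$ with $W_{s,\lambda}>0$ on $\{|x|=\overline\rho\}\times[0,\overline\delta_1]$; the final $\overline\delta=\overline\delta(\overline\rho)$ will be the minimum of $\overline\delta_1$ and the further thresholds produced below.

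First I would treat the range $\overline\rho<|x|<R+\delta_0$ for a suitable small fixed $\delta_0>0$. On the compact annulus $\{\overline\rho\le|x|\le R-\varepsilon\}$ one has $u_{s,\lambda}\ge c(\varepsilon)>0$, so uniform continuity of $W_{s,\lambda}$ up to $\{y=0\}$ gives $W_{s,\lambda}\ge\tfrac12c(\varepsilon)>0$ there for $y$ small. Near $\partial B_R$ I would split $W_{s,\lambda}=E_su_{s,\lambda}^+-E_su_{s,\lambda}^-$. The negative part $u_{s,\lambda}^-$ is supported in $\overline{B_{r_s^2}}$, hence at distance at least $\overline\rho-r_s^2>0$ from $\{|x|\ge\overline\rho\}$, so $0\le E_su_{s,\lambda}^-(x,y)\le C(\overline\rho)\,y^{2s}$ for all $|x|\ge\overline\rho$. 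On the other hand, by Theorem \ref{boundregRO} the quotient $u_{s,\lambda}/\delta^s$, with $\delta=\text{dist}(\cdot,\partial B_R)$, extends continuously and positively to $\partial B_R$, so that $u_{s,\lambda}^+$ vanishes on $\partial B_R$ only at the $\delta^s$-rate; a Poisson-kernel estimate near $\partial B_R$ then shows that, for $|x|$ in a fixed neighbourhood of $\partial B_R$ and $y$ small, $E_su_{s,\lambda}^+(x,y)$ dominates $C(\overline\rho)\,y^{2s}$ because $s<2s$. Hence $W_{s,\lambda}(x,y)>0$ on $\overline\rho<|x|<R+\delta_0$ for $y$ small.

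The core of the proof is the range $|x|\ge R+\delta_0$. Writing $x=|x|\,\omega$, $\omega\in\Sf^{n-1}$, one has by radiality
\[
W_{s,\lambda}(x,y)=\int_0^R G(|x|,y,t)\,u_{s,\lambda}(t)\,t^{n-1}\de t,\qquad G(\rho,y,t):=\int_{\Sf^{n-1}}P_{n,s}(\rho\omega-t\sigma,y)\de\sigma .
\]
A direct computation gives $\Delta_z P_{n,s}(x-z,y)=(n+2s)\,p_{n,s}\,y^{2s}\,(y^2+|x-z|^2)^{-\frac{n+2s}{2}-2}\big[(2+2s)|x-z|^2-n\,y^2\big]$, so $z\mapsto P_{n,s}(x-z,y)$ is subharmonic wherever $|x-z|\ge y\sqrt{n/(2+2s)}$, in particular on $B_R$ once $y\le\delta_0\sqrt{(2+2s)/n}$; being spherical means of a subharmonic function, $t\mapsto G(|x|,y,t)$ is then nondecreasing on $(0,R)$. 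Integrating by parts against $F(t):=\int_t^R u_{s,\lambda}(\tau)\,\tau^{n-1}\de\tau$, and using $F(R)=0$,
\[
W_{s,\lambda}(x,y)=G(|x|,y,0)\,F(0)+\int_0^R\partial_tG(|x|,y,t)\,F(t)\,\de t ,
\]
so, as $G(|x|,y,0)>0$ and $\partial_tG\ge0$, we get $W_{s,\lambda}(x,y)\ge0$ as soon as $F\ge0$ on $[0,R]$. Now $F$ is monotone on each of $(0,r_s^1)$, $(r_s^1,r_s^2)$, $(r_s^2,R)$ and $F(R)=0$, whence $\min_{[0,R]}F=\min\{F(r_s^1),0\}$; moreover
\[
\omega_{n-1}\,F(r_s^1)=\int_{\{r_s^2<|\xi|<R\}}u_{s,\lambda}^+\de\xi-\int_{\{r_s^1<|\xi|<r_s^2\}}u_{s,\lambda}^-\de\xi .
\]
Thus the whole lemma reduces to the single inequality $F(r_s^1)\ge0$ — that the outer positive mass of $u_{s,\lambda}$ dominates its negative mass; note this also yields $\int_{\R^n}u_{s,\lambda}>0$, which is exactly what is needed to control the sign as $|x|\to\infty$.

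The main obstacle is establishing $F(r_s^1)\ge0$. Integrating the equation over $\R^n$ and using $\int_{\R^n}(-\Delta)^su_{s,\lambda}=0$ gives $\lambda\int_{\R^n}u_{s,\lambda}=|u_{s,\lambda}^-|_{2^*_s-1}^{2^*_s-1}-|u_{s,\lambda}^+|_{2^*_s-1}^{2^*_s-1}$, so that $F(r_s^1)\ge0$ becomes equivalent to
\[
|u_{s,\lambda}^-|_{2^*_s-1}^{2^*_s-1}-|u_{s,\lambda}^+|_{2^*_s-1}^{2^*_s-1}\ \ge\ \lambda\int_{B_{r_s^1}}u_{s,\lambda}^+\de\xi ,
\]
which I would try to prove, for $\lambda$ small, from the asymptotic analysis of Section~4: by Lemma \ref{As:energas} the solution concentrates and $\lambda|u_{s,\lambda}|_2^2\to0$, and — after checking that the nodes $r_s^1,r_s^2$ tend to $0$ — the fractional Strauss inequality (Proposition \ref{strauss}) makes $\int_{B_{r_s^2}}|u_{s,\lambda}|$ and the associated $L^{2^*_s-1}$ quantities negligible while the outer positive tail retains a controlled amount. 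Turning this balance into the displayed inequality is the delicate technical point, and I expect it to carry the bulk of the work.
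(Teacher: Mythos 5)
Your proposal does not close the argument: it explicitly reduces the lemma, in the region $|x|\ge R+\delta_0$, to the single inequality $F(r_s^1)=\int_{r_s^1}^{R}u_{s,\lambda}(\tau)\tau^{n-1}\de\tau\ge 0$ and then leaves that inequality unproven (``I expect it to carry the bulk of the work''). This is precisely the nontrivial content in that range, and it is far from clear that it holds: nothing in the hypotheses gives a comparison between the outer positive mass and the inner negative mass of $u_{s,\lambda}$ (for concentrating solutions the two masses are governed by different scales, so the sign of this difference cannot be read off from Lemma \ref{As:energas} without a genuinely new estimate). Worse, the identity you propose as the starting point is false: the equation $(-\Delta)^s u_{s,\lambda}=\lambda u_{s,\lambda}+|u_{s,\lambda}|^{2^*_s-2}u_{s,\lambda}$ holds only in $B_R$, while for $|x|>R$ one has $u_{s,\lambda}(x)=0$ but $(-\Delta)^s u_{s,\lambda}(x)=-C_{n,s}\int_{B_R}u_{s,\lambda}(\xi)|x-\xi|^{-n-2s}\de\xi\not\equiv 0$; hence ``integrating the equation over $\R^n$'' and using $\int_{\R^n}(-\Delta)^s u_{s,\lambda}=0$ does not give $\lambda\int u_{s,\lambda}=|u^-_{s,\lambda}|^{2^*_s-1}_{2^*_s-1}-|u^+_{s,\lambda}|^{2^*_s-1}_{2^*_s-1}$: the exterior nonlocal contribution is missing. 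In addition, your treatment of the range near $\partial B_R$ asserts that $u_{s,\lambda}/\delta^s$ has a positive limit on $\partial B_R$; Theorem \ref{boundregRO} only gives continuity of this quotient, and a fractional Hopf-type lower bound for a sign-changing solution is not justified in your sketch. Finally, an argument that only works ``for $\lambda$ small along subsequences'' would not even match the statement, which must hold for every fixed $\lambda\in(0,\hat\lambda(s_0))$ under the sole hypothesis that $u_{s,\lambda}$ changes sign twice.

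Note also that your plan never uses the structural fact that drives the paper's proof and for which the threshold $\hat\lambda(s_0)$ exists at all: by Theorem \ref{3nodal} the extension $W_{s,\lambda}$ has exactly two nodal regions. The paper exploits this through a purely topological argument in the half-plane of the cylindrical variables $(r,y)$: one connects $(\rho_1,0)$ (with $\rho_1<r_s^1$, $u_{s,\lambda}(\rho_1)>0$) to $(\overline\rho,0)$ by a curve lying in the positive set $P$, applies the Jordan curve theorem to conclude that the connected negative set $N$ is trapped in the bounded component determined by this curve and the segment $[\rho_1,\overline\rho]\times\{0\}$, and takes $\overline\delta$ to be the positive distance from the curve to $N$; no Poisson-kernel estimates, boundary behavior, or mass comparison are needed. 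If you want to salvage your kernel-based approach you would have to prove the mass inequality $F(r_s^1)\ge0$ by some independent means, which at present is an open gap of the same order of difficulty as the lemma itself.
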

\begin{proof}
Let $W_{s, \lambda} = E_s u_{s,\lambda}$ be the extension of $u_{s,\lambda}$. Thanks to Theorem \ref{3nodal} the function $W_{s, \lambda}$ has exactly two nodal regions,
\begin{equation}\label{nodaldom1}
\begin{aligned}
&\Omega ^+ := \{ (x, y) \in {\R^{n+1}_+} \ | \ W_{s, \lambda}(x, y) >0 \},\\
&\Omega ^- := \{ (x, y) \in {\R^{n+1}_+} \ | \ W_{s, \lambda}(x ,y) <0 \}.
\end{aligned}
\end{equation}
Moreover, since $W_{s,\lambda}$ is cylindically symmetric, we can define the sets
\begin{equation}\label{nodaldom2}
\begin{aligned}
P &:= \{(|x|, y) \in \{r\geq 0\} \times \{y > 0\} \ | \ (x, y) \in \Omega^+ \} \\
N &:= \{(|x|, y) \in \{r\geq 0\} \times \{y > 0\} \ | \ (x, y) \in \Omega^- \}. 
\end{aligned}
\end{equation}

Since we are assuming that $u_{s,\lambda} = u_{s,\lambda}(r)$ changes sign twice, there exist $\rho_1$, $\rho_2 >0$ such that 
\[
0 < \rho_1 < r^1_s<\rho_2 < r^2_s <\overline \rho <  R
\]
and $u_{s, \lambda}(\rho_1)>0$ while $u_{s,\lambda}(\rho_2)<0$.  
Thanks to the continuity of $W_{s, \lambda}$ we get that $(\rho_1, \varepsilon), (\overline \rho, \varepsilon) \in P$, for all sufficiently small $\varepsilon>0$. 
Fixing $\varepsilon>0$, since $P$ is arcwise connected, there exists a continuous curves $\gamma^\varepsilon_+ \in C^0([0,1]; \{r \geq 0\} \times \{y > 0\})$ such that 
\[
\begin{aligned}
&\gamma^\varepsilon_+(0) = (\rho_1, \varepsilon), \quad \gamma_+(1) = (\overline \rho, \varepsilon), \quad \gamma_+(t) \in P \ \forall t \in [0,1].
\end{aligned}
\]
Moreover, since $W_{s, \lambda}$ is continuous up to the boundary, and since $W_{s, \lambda}(\rho_1), W_{s,\lambda}(\overline \rho) \neq 0$, we can always modify $\gamma^\varepsilon_+$ in order to obtain a curve $\gamma_+ \in C^0([0, 1]; \{r \geq 0\} \times \{y\geq0\})$ such that it it injective and satisfies
\[
\begin{aligned}
&\gamma_+(0) = (\rho_1, 0), \quad \gamma_+(1) = (\overline \rho, 0), \quad \gamma_+(t) \in P \ \forall t \in (0,1).
\end{aligned}
\]
In addition, without loss of generality, we can assume that $\gamma_+([0,1]) \cap \{r=0\}=\emptyset$.
We notice that since $W_{s, \lambda}$ is continuous up to the boundary and  $\gamma_+([0,1])$ is a compact subset of $ \{r\geq0\} \times \{y \geq 0\}$ there exists $\overline\delta>0$ such that $ \text{dist}(\gamma^+([0,1]),N)>\overline\delta>0$.  

Now, by Jordan's curve theorem the closed and simple curve whose support is $\gamma^+([0,1]) \cup ([\rho_1, \overline \rho]\times \{0\})$ divides the set $\{r \geq 0\} \times \{y \geq 0\}$ in two regions, a bounded one which we call $A_b$, and unbounded one $A_u$. Since $u_{s, \lambda}(\rho_2) < 0$ and $\rho_2 \in (\rho_1, \overline \rho)$, by continuity and since $W_{s, \lambda}$ possesses exactly two nodal regions, we deduce that $N \cap A_b \neq \emptyset$. This, together with Jordan's curve theorem implies that $N \subset A_b$. 

Let $(r, y) \in [\overline \rho, +\infty) \times (0, \overline \delta)$, we claim that $W_{s, \lambda}(r, y) \geq 0$. Indeed suppose that there exits a point $(r_0, y_0) \in [\overline \rho, +\infty) \times (0, \overline \delta)$ such that $W_{s, \lambda}(r_0, y_0) <0$. This implies that $(r_0, y_0) \in N \subset A_b$. On the other hand, since $\gamma^+(t) \not \in \{r \geq 0\} \times \{0\}$ when $t \neq 0, 1$, we have that $(r_0, 0) \in A_u$, and thus, as a further consequence of the Jordan  curve theorem, $\gamma^+$ intersects any curve $\gamma_*$ connecting $(r_0, y_0)$ and $(r_0, 0)$, whose support $\gamma_*([0,1])$ intersects $\{y=0\}$ just in $(r_0, 0)$. In particular, choosing as $\gamma_*$ the segment joining $(r_0, y_0)$ and $(r_0, 0)$, there exists $t_0$ such that $\gamma^+(t_0)$  lies in the interior of that segment. But this implies that $\text{dist}(\gamma^+(t_0), (r_0, y_0)) < \overline \delta$, and by the definition of $\overline\delta$ we deduce that $(r_0, y_0)$ cannot belong to $N$, which gives a contradiction. The proof is complete. 
\end{proof}

\section{Uniform bounds with respect to $s$ and pre-compactness}

We begin this section by recalling a general result of approximation for the fractional Laplacian that will be useful in the sequel. For the statement to be meaningful, we remark that the space $H^s(\R^n)$ and the operator $(-\Delta)^s$ can be defined via the Fourier transform also for $s \geq 1$.

\begin{lemma}[{\cite[Lemma 2.4]{uniquenondeg}}]
Let $s, \sigma \in (0,1]$ and $\delta > 2|\sigma -s|$. Then, for any $\varphi \in H^{2(\sigma + \delta)}(\R^n)$, it holds that
\[
|(-\Delta)^\sigma \varphi - (-\Delta)^s \varphi|_2 \leq C |\sigma -s| \|\varphi\|_{2(\sigma+\delta)},
\] 
for some $C = C(\sigma, \delta) >0$. 
\end{lemma}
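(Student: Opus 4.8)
The natural route is via the Fourier transform, reducing the statement to an elementary pointwise estimate on the Fourier multiplier. Recall that $(-\Delta)^\sigma$ and $(-\Delta)^s$ act as multiplication by $|\xi|^{2\sigma}$ and $|\xi|^{2s}$ on the Fourier side, and that $\|\varphi\|_{2(\sigma+\delta)}^2 = \int_{\R^n}(1+|\xi|^2)^{2(\sigma+\delta)}|\widehat\varphi(\xi)|^2\de\xi$. Hence, by Plancherel's theorem,
\[
|(-\Delta)^\sigma \varphi - (-\Delta)^s \varphi|_2^2 = \int_{\R^n} \big| |\xi|^{2\sigma} - |\xi|^{2s} \big|^2 \, |\widehat\varphi(\xi)|^2 \de \xi,
\]
so the inequality follows at once if we prove the multiplier bound
\[
\big| |\xi|^{2\sigma} - |\xi|^{2s} \big| \le C\,|\sigma - s| \, (1+|\xi|^2)^{\sigma+\delta} \qquad \text{for every } \xi \in \R^n,
\]
with $C=C(\sigma,\delta)$: squaring this and integrating against $|\widehat\varphi|^2$ gives exactly $|(-\Delta)^\sigma\varphi-(-\Delta)^s\varphi|_2^2 \le C^2|\sigma-s|^2\|\varphi\|_{2(\sigma+\delta)}^2$.

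To establish the multiplier bound I set $t:=|\xi|^2\ge0$ and write, by the fundamental theorem of calculus applied to $\tau\mapsto t^\tau$,
\[
t^\sigma - t^s = \int_s^\sigma t^\tau \ln t \,\de\tau, \qquad\text{so}\qquad \big|t^\sigma-t^s\big| \le |\sigma-s|\sup_{\tau\in[\,\min(\sigma,s),\,\max(\sigma,s)\,]} t^\tau|\ln t|.
\]
Then I would split into the ranges $t\le1$ and $t\ge1$. For $t\le1$ one uses $\sup_{0<t\le1}t^m|\ln t| = (em)^{-1}$ with $m=\min(\sigma,s)$, together with $(1+t)^{\sigma+\delta}\ge1$; here the hypothesis $\delta>2|\sigma-s|$ keeps $m$ bounded below in terms of $\sigma$ and $\delta$. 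For $t\ge1$ one uses the elementary inequality $\ln t\le(e\varepsilon)^{-1}t^\varepsilon$, valid for every $\varepsilon>0$, so that $t^\tau|\ln t|\le(e\varepsilon)^{-1}t^{\max(\sigma,s)+\varepsilon}\le(e\varepsilon)^{-1}(1+t)^{\max(\sigma,s)+\varepsilon}$; the condition $\delta>2|\sigma-s|$ lets one choose $\varepsilon$ (for instance $\varepsilon=\delta/2$) so that $\max(\sigma,s)+\varepsilon\le\sigma+\delta$, which is precisely the room required for the weight on the right-hand side.

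Putting the two regimes together yields the multiplier estimate with a constant depending only on $\sigma$ and $\delta$, and then the Plancherel identity of the first paragraph finishes the proof. The only point requiring care is the bookkeeping of exponents: one must check that the single polynomial weight $(1+|\xi|^2)^{\sigma+\delta}$ simultaneously dominates the contribution of the larger of $\sigma,s$ at infinity (through the logarithmic gain $\varepsilon$) and the contribution of the smaller of the two near the origin; this is exactly where the quantitative gap $\delta>2|\sigma-s|$ enters, and everything else is a routine one-variable calculus estimate.
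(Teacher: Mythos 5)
The paper itself does not prove this lemma: it is quoted from Fall--Valdinoci \cite[Lemma 2.4]{uniquenondeg}, whose argument is precisely your Fourier-multiplier strategy, so your overall route is the intended one. Your Plancherel reduction is correct, and so is the regime $t=|\xi|^2\ge 1$: with $\varepsilon=\delta/2$ one has $\max\{\sigma,s\}+\varepsilon\le\sigma+\delta$ because the hypothesis gives $s<\sigma+\delta/2$, and the resulting constant $2/(e\delta)$ depends only on $\delta$.

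The regime $t\le1$, however, contains a genuine flaw. You bound $\sup_{\tau}t^{\tau}|\ln t|$ by $\sup_{0<t\le1}t^{m}|\ln t|=(em)^{-1}$ with $m=\min\{\sigma,s\}$ and assert that the hypothesis $\delta>2|\sigma-s|$ bounds $m$ from below in terms of $(\sigma,\delta)$. This is false whenever $\delta\ge 2\sigma$: the condition $\delta>2|\sigma-s|$ becomes \emph{weaker} as $\delta$ grows and only yields $s>\sigma-\delta/2$, which is vacuous in that range, so $s$ (hence $m$) may be arbitrarily close to $0$ and $(em)^{-1}$ is not a constant of the form $C(\sigma,\delta)$ — recall the constant must not depend on $s$. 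The lemma is nevertheless true, and the fix is a sharper one-variable estimate replacing the crude mean-value bound: writing $m=\min\{\sigma,s\}$, $M=\max\{\sigma,s\}$, the function $t\mapsto t^{m}-t^{M}$ on $(0,1]$ attains its maximum at $t_*=(m/M)^{1/(M-m)}$, with value $(m/M)^{m/(M-m)}\tfrac{M-m}{M}\le \tfrac{|\sigma-s|}{\max\{\sigma,s\}}\le \tfrac{|\sigma-s|}{\sigma}$; combined with $(1+t)^{\sigma+\delta}\ge1$ this gives the multiplier bound on $t\le1$ with constant $1/\sigma$, depending only on $\sigma$. (In the paper's actual use of the lemma, where $s\to\sigma$ with $\sigma$ fixed, one eventually has $|\sigma-s|<\sigma/2$ and your cruder bound would suffice, but as a proof of the statement as written the constant must be repaired as above.)
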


\begin{rem}\label{valdi}
Let $\varphi \in C^\infty_c(\R^n)$. Since $C^\infty_c(\R^n) \subset H^s (\R^n)$ for all $s \geq 0$ as a consequence of previous Lemma we obtain that for all $\sigma \in (0,1]$,
\[
|(-\Delta)^\sigma \varphi - (-\Delta)^s \varphi|_2  \to 0 \quad \text{when } s \to \sigma.
\]
\end{rem}

In the following lemma we refine the estimate stated in Remark \ref{valdinergia}.

\begin{lemma}\label{asSobcost}
Let $0 < s_0 < s_1 \leq 1$. Let $n>4s_1$ and $\lambda \in (0, \underline \lambda(s_0))$, where $\underline \lambda(s_0)$ is the number given by \eqref{eigenbounds}. Then, for every $s \in (s_0, s_1)$, it holds
\[
S_{s,\lambda} \leq S_s - q(\lambda)
\]
where $q(\lambda) = q(\lambda, s_0, s_1, n, \Omega) >0 $ for $\lambda \in (0, \underline \lambda(s_0))$ and $q(\lambda) \to 0$ as $\lambda \to 0^+$. 
\end{lemma}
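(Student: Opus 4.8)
The plan is to test the Rayleigh quotient $J_{s,\lambda}$ with a suitably truncated standard bubble and to keep every constant uniform in $s$. Fix once and for all $\mu=1$, a point $x_0\in\Omega$ and $\rho>0$ with $B_{4\rho}(x_0)\subset\Omega$, and the cut-off $\varphi$; let $u^s_\varepsilon$ be the associated truncated bubble of Proposition \ref{stimebubble}. Since $\operatorname{supp} u^s_\varepsilon\subset B_{2\rho}(x_0)\subset\Omega$, we have $u^s_\varepsilon\in X^s_0(\Omega)\setminus\{0\}$, hence $S_{s,\lambda}\le J_{s,\lambda}(u^s_\varepsilon)$. Because $n>4s_1$, the Remark following Proposition \ref{stimebubble} guarantees that all constants in \eqref{stime} — as well as $\underline\lambda(s_0)$ from \eqref{eigenbounds} and the two-sided bounds on $S_s$ — can be chosen independent of $s\in(s_0,s_1)$, and this uniformity is what I will exploit throughout.

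First I would estimate the numerator and the denominator of $J_{s,\lambda}(u^s_\varepsilon)$ separately. For $s<s_1$ we have $n>4s$, so the first and the fifth line of \eqref{stime} give
\[
\|u^s_\varepsilon\|_s^2-\lambda|u^s_\varepsilon|_2^2\le S_s^{\frac n{2s}}+C\varepsilon^{n-2s}-\lambda\bigl(C\varepsilon^{2s}-C\varepsilon^{n-2s}\bigr).
\]
For the denominator, the second line of \eqref{stime} together with the concavity and monotonicity of $t\mapsto t^{\frac{n-2s}{n}}$ — legitimate since $S_s$ stays bounded above and below away from $0$ uniformly in $s$ — yields $|u^s_\varepsilon|^2_{2^*_s}\ge \bigl(S_s^{\frac n{2s}}\bigr)^{\frac{n-2s}{n}}-C\varepsilon^n$ for $\varepsilon$ small. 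Dividing, expanding $(A-C\varepsilon^n)^{-1}=A^{-1}(1+O(\varepsilon^n))$, using $\bigl(S_s^{n/2s}\bigr)^{2s/n}=S_s$, and absorbing $\lambda\varepsilon^{n-2s}$ and $\varepsilon^n$ into $\varepsilon^{n-2s}$ via $\lambda<\underline\lambda(s_0)$ and $\varepsilon<1$, I obtain a threshold $\varepsilon_0=\varepsilon_0(n,s_0,s_1,\Omega)\in(0,1)$ and constants $C_*,C_{**}>0$ depending only on $n,s_0,s_1,\Omega$ (and the fixed $x_0,\rho$) such that
\[
J_{s,\lambda}(u^s_\varepsilon)\le S_s+C_*\varepsilon^{n-2s}-\lambda C_{**}\varepsilon^{2s}\qquad\text{for all }s\in(s_0,s_1),\ \lambda\in(0,\underline\lambda(s_0)),\ \varepsilon\le\varepsilon_0.
\]

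Finally I would choose $\varepsilon$ as a function of $\lambda$, uniformly in $s$. Since $n-4s_1>0$, set $\varepsilon(\lambda):=\min\bigl\{\varepsilon_0,\ (\lambda C_{**}/(2C_*))^{1/(n-4s_1)}\bigr\}$, so that $\varepsilon(\lambda)\to0^+$ as $\lambda\to0^+$. For $\varepsilon<1$ and $s\in(s_0,s_1)$ one has $\varepsilon^{n-2s}\le\varepsilon^{n-2s_1}$ and $\varepsilon^{2s}\ge\varepsilon^{2s_1}$, while the choice of $\varepsilon$ forces $C_*\varepsilon(\lambda)^{n-2s_1}\le\tfrac12\lambda C_{**}\varepsilon(\lambda)^{2s_1}$; hence
\[
C_*\varepsilon^{n-2s}-\lambda C_{**}\varepsilon^{2s}\le C_*\varepsilon^{n-2s_1}-\lambda C_{**}\varepsilon^{2s_1}\le-\tfrac12\lambda C_{**}\varepsilon^{2s_1}.
\]
Therefore $S_{s,\lambda}\le S_s-q(\lambda)$ with $q(\lambda):=\tfrac12\lambda C_{**}\varepsilon(\lambda)^{2s_1}$, which is positive for $\lambda\in(0,\underline\lambda(s_0))$ and tends to $0$ as $\lambda\to0^+$, uniformly in $s\in(s_0,s_1)$. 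This is just a quantitative, $s$-uniform refinement of the classical Brezis–Nirenberg estimate recalled in Remark \ref{valdinergia}, so there is no genuine obstacle; the only points requiring care are keeping every constant independent of $s$ — provided by the Remark after Proposition \ref{stimebubble} and by \eqref{eigenbounds}–\eqref{eigenbounds2} — and making the single choice $\varepsilon=\varepsilon(\lambda)$ work for all $s\in(s_0,s_1)$ simultaneously, which is precisely why the endpoint exponents $2s_1$ and $n-2s_1$ must be used.
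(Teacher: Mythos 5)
Your proposal is correct and follows essentially the same route as the paper: test $J_{s,\lambda}$ with the truncated bubble $u^s_\varepsilon$, use the $s$-uniform estimates of Proposition \ref{stimebubble} to get $S_{s,\lambda}\le S_s+C\varepsilon^{n-2s}-\lambda C'\varepsilon^{2s}$ with constants independent of $s\in(s_0,s_1)$, and then pick a single $\varepsilon=\varepsilon(\lambda)$ (using $n>4s_1$ and the endpoint exponents $n-2s_1$, $2s_1$) to produce $q(\lambda)>0$ with $q(\lambda)\to0$. The only cosmetic difference is that you make the choice $\varepsilon(\lambda)$ explicit, whereas the paper just fixes $\varepsilon_0$ small enough that $C_6\varepsilon_0^{n-4s_1}<\lambda$; both yield the same uniform conclusion.
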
 

\begin{proof}
Let $u^s_\varepsilon$ be as in Proposition \ref{stimebubble}. For every $s \in (s_0, s_1)$, by definition of $S_{s, \lambda}$ and \eqref{stime}, for $\varepsilon <1$, we have that 
\[
\begin{aligned}
S_{s,\lambda} \leq & \ \frac{\|u^s_\varepsilon\|^2_s - \lambda|u^s_\varepsilon|^2_2}{|u^2_\varepsilon|^{2}_{2^*_s}} \leq \frac{S_s^{\frac{n}{2s}} + C_1\varepsilon^{n-2s} - \lambda C_2 \varepsilon^{2s}}{\left(S_s^{\frac{n}{2s}}-C_3\varepsilon^{n}\right)^{\frac{2}{2^*_s}}} \\
\leq&\ S_s + C_4\varepsilon^{n-2s}-\lambda C_5 \varepsilon^{2s} \leq S_s + C_5\varepsilon^{2s_1}(C_6\varepsilon^{n-4s_1} - \lambda),
\end{aligned}
\]
where the constants do not depend neither on $s$ nor on $\varepsilon$. 
Then taking a fixed $\varepsilon_0$ small enough so that $C_6\varepsilon_0^{n-4s_1} - \lambda < 0$, we obtain the desired result with $q(\lambda) = C_5\varepsilon_0^{2s_1}(\lambda- C_6\varepsilon_0^{n-4s_1})$.
\end{proof}

In view of the previous results we obtain a uniform $L^\infty$-bound for least energy positive solutions of Problem \eqref{fracBrezis}.

\begin{prop}\label{u0bound}
Let $0<s_0<s_1 \leq 1$ and $n>4s_1$. For every $s \in [s_0, s_1)$ and for any fixed $\lambda \in (0, \underline \lambda(s_0))$, where $\underline \lambda(s_0)$ is the number given by \eqref{eigenbounds}, let $u^0_{s,\lambda} \in \mathcal{N}_{s,\lambda}$ be such that $I_{s, \lambda}(u^0_{s, \lambda}) = c_\mathcal{N}(s, \lambda)$. It holds that 
\[
0 < \sup_{s \in [s_0,s_1)}|u^0_{s, \lambda}|_\infty< + \infty.
\]
\end{prop}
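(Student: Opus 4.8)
The plan is to argue by contradiction. Since each $u^0_{s,\lambda}$ is a nontrivial critical point on the Nehari manifold, $0<\sup_{s}|u^0_{s,\lambda}|_\infty$ is immediate, so assume the supremum is $+\infty$: there are $s_j\in[s_0,s_1)$, which we may assume converge to some $\bar s\in[s_0,s_1]$, and $u_j:=u^0_{s_j,\lambda}$ with $M_j:=|u_j|_\infty\to+\infty$. By Proposition \ref{posSol} we have $c_\mathcal{N}(s_j,\lambda)=\frac{s_j}{n}S_{s_j,\lambda}^{\frac{n}{2s_j}}$, which stays bounded because $S_s$ and $\frac{n}{2s}$ are bounded on $[s_0,1)$; hence, using $u_j\in\mathcal{N}_{s_j,\lambda}$ and the fractional Poincar\'e inequality (uniform for $s\ge s_0$, cf. \eqref{eigenbounds}), one gets a uniform bound $\|u_j\|_{s_j}^2\le C$ together with the identity $|u_j|_{2^*_{s_j}}^{2^*_{s_j}}=S_{s_j,\lambda}^{\frac{n}{2s_j}}$ (since on $\mathcal{N}_{s_j,\lambda}$ one has $I_{s_j,\lambda}(u)=\frac{s_j}{n}|u|_{2^*_{s_j}}^{2^*_{s_j}}$).

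Next I would blow up at a point $x_j\in\Omega$ with $u_j(x_j)=M_j$ (interior, since $u_j=0$ on $\partial\Omega$): set $\mu_j:=M_j^{-\frac{2}{n-2s_j}}\to0$ and $v_j(x):=M_j^{-1}u_j(x_j+\mu_j x)$ on $\Omega_j:=\mu_j^{-1}(\Omega-x_j)$. A direct computation gives $(-\Delta)^{s_j}v_j=\lambda\mu_j^{2s_j}v_j+v_j^{2^*_{s_j}-1}$ in $\Omega_j$, with $0\le v_j\le1$ and $v_j(0)=1$; moreover the critical scaling is precisely such that $\|v_j\|_{s_j}^2=\|u_j\|_{s_j}^2$ and $|v_j|_{2^*_{s_j}}^{2^*_{s_j}}=|u_j|_{2^*_{s_j}}^{2^*_{s_j}}=S_{s_j,\lambda}^{\frac{n}{2s_j}}$. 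One first checks that $x_j$ does not concentrate at the boundary, i.e. $\text{dist}(x_j,\partial\Omega)/\mu_j\to+\infty$: otherwise the $\Omega_j$ (after a translation) would invade a half-space and the limit of $v_j$ would be a nontrivial nonnegative solution of the critical equation in a half-space vanishing on the complementary half-space, which is impossible by the fractional Pohozaev identity (cf. \cite[Corollary 1.3]{Pohozaev} and the boundary regularity in Theorem \ref{boundregRO}). Hence $\Omega_j$ invades $\R^n$; since $\mu_j^{2s_j}\to0$ the right-hand sides $\lambda\mu_j^{2s_j}v_j+v_j^{2^*_{s_j}-1}$ are uniformly bounded in $L^\infty$, so by Remark \ref{gilbcont} we get, up to a subsequence, $v_j\to v$ in $C^{0,\alpha}_{loc}(\R^n)$ for some $\alpha<s_0$, with $0\le v\le1$ and $v(0)=1$.

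To identify the limit I would test the equation for $v_j$ against $\varphi\in C^\infty_c(\R^n)$, rewrite the left-hand side as $\int_{\R^n}v_j(-\Delta)^{s_j}\varphi\,\de x$ via \eqref{veryweak}, and pass to the limit: the uniform decay estimate \eqref{bogdy} and Remark \ref{valdi} yield $(-\Delta)^{s_j}\varphi\to(-\Delta)^{\bar s}\varphi$ in $L^1(\R^n)$, so by dominated convergence on both sides $v$ solves $(-\Delta)^{\bar s}v=v^{2^*_{\bar s}-1}$ in $\R^n$ in the distributional, hence by regularity classical, sense. By Fatou $v\in L^{2^*_{\bar s}}(\R^n)$; then either by lower semicontinuity of the $s$-Gagliardo seminorms (which gives $v\in\mathcal{D}^{\bar s}(\R^n)$ with $\|v\|_{\bar s}^2\le\liminf_j\|v_j\|_{s_j}^2\le C$, and, testing the limit equation with $v$ and using \eqref{nonlocSob}, $|v|_{2^*_{\bar s}}^{2^*_{\bar s}}=\|v\|_{\bar s}^2\ge S_{\bar s}^{\frac{n}{2\bar s}}$ since $|v|_{2^*_{\bar s}}>0$), or by invoking the classification of nonnegative solutions of the critical equation (the case $\bar s=1$ being classical), one concludes $|v|_{2^*_{\bar s}}^{2^*_{\bar s}}\ge S_{\bar s}^{\frac{n}{2\bar s}}$.

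Finally, a Fatou argument — legitimate with the varying exponent because $0\le v_j\le1$ and $2^*_{s_j}\to2^*_{\bar s}$ — gives $\liminf_j S_{s_j,\lambda}^{\frac{n}{2s_j}}=\liminf_j|v_j|_{2^*_{s_j}}^{2^*_{s_j}}\ge|v|_{2^*_{\bar s}}^{2^*_{\bar s}}\ge S_{\bar s}^{\frac{n}{2\bar s}}$. On the other hand, in Lemma \ref{asSobcost} the constant $q(\lambda)>0$ does not depend on $s$, so $S_{s_j,\lambda}\le S_{s_j}-q(\lambda)$ for all $j$, and by continuity of $s\mapsto S_s$ we get $S_{s_j,\lambda}^{\frac{n}{2s_j}}\le(S_{s_j}-q(\lambda))^{\frac{n}{2s_j}}\to(S_{\bar s}-q(\lambda))^{\frac{n}{2\bar s}}<S_{\bar s}^{\frac{n}{2\bar s}}$, a contradiction. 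The step I expect to be the main obstacle is ruling out concentration at $\partial\Omega$ — which needs boundary regularity together with the nonexistence of positive solutions of the critical equation on a half-space — and, to a lesser extent, making the passage to the limit as $s_j\to\bar s$ rigorous, in particular the lower semicontinuity (resp. classification) in the borderline case $\bar s=1$; all the ingredients, however, are already available in Section \ref{SectionIntro}.
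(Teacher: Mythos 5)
Your overall strategy coincides with the paper's: contradiction, critical rescaling, uniform energy bounds from the Nehari characterization, passage to the limit in the very weak formulation via \eqref{veryweak}, \eqref{bogdy} and Remark \ref{valdi}, the uniform strict upper bound from Lemma \ref{asSobcost} (with $q(\lambda)$ independent of $s$), and the contradiction with the lower bound $S_{\bar s}^{\frac{n}{2\bar s}}$ for nontrivial entire solutions. The genuine gap is precisely the step you flag as the main obstacle: excluding concentration of the maximum points $x_j$ at $\partial\Omega$. You propose to pass to a half-space limit and rule it out by the fractional Pohozaev identity, citing \cite[Corollary 1.3]{Pohozaev}; but that nonexistence result concerns bounded star-shaped domains, and what your argument actually needs is a Liouville theorem for bounded nonnegative solutions of the critical equation on a half-space vanishing on its complement, together with boundary regularity estimates for the rescaled functions that are scale-invariant and uniform in $s$ (recall the paper's remark that the constant in Theorem \ref{boundregRO} is in general not bounded as $s \to 1^-$, and your sequence $s_j$ may well tend to $1$). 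None of these ingredients is available in the paper, and they are not routine. The obstacle is, however, self-inflicted: by Proposition \ref{posSol} the minimizer $u^0_{s,\lambda}$ (the paper's proof is written for $\Omega = B_R$) is radial and decreasing, so its maximum is attained at the origin; the paper therefore rescales around the origin, gets $v_{s,\lambda}(0)=1$, and the rescaled domains $B_{\delta_s^{\beta_s}R}$ automatically invade $\R^n$, so no boundary analysis is needed. With this observation your blow-up centered at $x_j$ reduces to the paper's, and the rest of your argument goes through.

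A smaller caveat concerns the borderline case $\bar s = 1$: your first alternative (Fatou/lower semicontinuity of the Gagliardo seminorms to conclude $v \in \mathcal{D}^{\bar s}(\R^n)$) fails there, exactly because $C_{n,s} \to 0$ as $s \to 1^-$; the paper makes this point explicitly and instead upgrades the convergence to $C^{2,\alpha}_{loc}$ via Remark \ref{gilbcont} (using $s_0 > \frac{2}{3}$) and applies \cite{farina} to recover $v \in \mathcal{D}^1(\R^n)$ with $\|v\|_1^2 = |v|_{2^*_1}^{2^*_1}$. Your fallback via the classical classification of nonnegative entire solutions of $-\Delta v = v^{2^*_1-1}$ (after the strong maximum principle gives $v>0$) is a legitimate substitute, so this is a matter of choosing the viable one of your two alternatives rather than a gap.
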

\begin{proof}
The first inequality is trivial. For the other inequality we argue by contradiction. Let us set $\delta_s := |u^0_{s, \lambda}|_\infty$ and assume that there exists $\sigma \in [s_0, s_1]$ and a sequence $(s_k) \subset (s_0, s_1)$ such that $\delta_{s_k} \to +\infty$ when $s_k \to \sigma$. From now on, in order to simplify the notation, we omit the subscript $k$.

Let consider the rescaled function
\[
v_{s, \lambda}(x) := \frac{1}{\delta_s}u^0_{s, \lambda}\left( \frac{x}{\delta_s^{\beta_s}}\right), \quad x \in \R^n,
\]
where $\beta_s := \frac{2}{n-2s}$. Notice that $v_{s, \lambda} \in X_0^s\left(B_{\delta_s^{\beta_s} R}\right)$.

A simple computation shows that
\begin{equation}\label{As:rescaling}
\begin{aligned}
&\|u^0_{s, \lambda}\|^2_s = \| v_{s,\lambda}\|^2_s;\\
&|u^0_{s,\lambda}|^{2^*_s}_{2^*_s} = |v_{s,\lambda}|^{2^*_s}_{2^*_s}.
\end{aligned}
\end{equation}

Since $u^0_{s, \lambda} \in \mathcal{N}_{s, \lambda}$ we have that 
\[
\begin{aligned}
c_\mathcal{N}(s, \lambda) =&\ \frac{1}{2}(\|u^0_{s,\lambda}\|_s^2 - \lambda|u^0_{s,\lambda}|^2_2) - \frac {1}{2^*_s}|u^0_{s,\lambda}|^{2^*_s}_{2^*_s} = \frac{s}{n} (\|u^0_{s,\lambda}\|_s^2 - \lambda|u^0_{s,\lambda}|^2_2)\\
\geq &\ \frac{s}{n}\left (1- \frac{\lambda}{\lambda_{1,s}}\right) \|u^0_{s,\lambda}\|^2_s \geq \frac{s_0}{n}\left (1- \frac{\lambda}{\underline \lambda(s_0)}\right) \|u^0_{s,\lambda}\|^2_s,
\end{aligned}
\] 
where $\underline \lambda(s_0)$ is as in \eqref{eigenbounds}.
Hence, thanks to Lemma \ref{eneras} and \eqref{As:rescaling}, together with the fact that $S_s$ is uniformly bounded with respect to $s \in (0,1)$, we get that there exists $\tilde C >0$ such that 
\begin{equation}\label{Hsbound}
0 < \sup_{s \in [s_0, s_1)} \|v_{s, \lambda}\|^2_s = \sup_{s \in [s_0, s_1)} \|u^0_{s, \lambda}\|^2_s \leq \tilde C. 
\end{equation}

An easy computation shows that $v_{s, \lambda}$ is a weak solution of
\begin{equation}\label{ministry}
(-\Delta)^s v_{s, \lambda} = \frac{\lambda}{\delta_s^{2s \beta_s}}v_{s, \lambda} + |v_{s, \lambda}|^{2^*_s-2}v_{s, \lambda} \qquad \text{in }B_{\delta_s^{\beta_s} R}.
\end{equation}

As a consequence of that and since $|v_{s,\lambda}|_\infty = 1$, thanks to Remark \ref{gilbcont}, there exists $v_\lambda$ such that $v_{s, \lambda} \to v_\lambda$ in $C^{0, \alpha}_{loc}(\R^n)$ for any fixed $\alpha < s_0$ as $s \to \sigma$. Moreover, the convergence on compact subsets of $\R^n$ implies that $v_\lambda \not \equiv 0$. Indeed, recall that, as seen in Proposition \ref{posSol}, $u^0_{s,\lambda}$ is radial and achieves its maximum at the origin, hence $v_{s,\lambda}(0) = 1$ and thus $v_\lambda(0)=1$. 

Coming back to the original sequence $u^0_{s, \lambda}$, thanks to Lemma \ref{asSobcost} and being $ u^0_{s, \lambda} \in \mathcal{N}_{s, \lambda}$, we have
\[
\frac{s}{n}|u^0_{s, \lambda}|_{2^*_s}^{2^*_s} = I_{s, \lambda}(u^0_{s, \lambda}) = c_{\mathcal{N}}(s, \lambda) \leq \frac{s}{n} S_s^{\frac{n}{2s}}- q(\lambda).
\]
Therefore, by Fatou's Lemma and \eqref{As:rescaling} we obtain

\begin{equation}\label{upextlinfproof}
|v_\lambda|_{2^*_{\sigma}}^{2^*_{\sigma}} \leq \liminf_{s \to \sigma}|v_{s, \lambda}|_{2^*_{s}}^{2^*_{s}} = \liminf_{s \to \sigma}|u^0_{s, \lambda}|_{2^*_{s}}^{2^*_{s}} \leq S_{\sigma}^{\frac{n}{2\sigma}} - \frac{n}{\sigma}q(\lambda).
\end{equation}

To reach a contradiction we need to obtain also a lower bound for the energy $|v_\lambda|_{2^*_{\sigma}}^{2^*_{\sigma}}$. 
To this end, let us fix $\varphi \in C^\infty_c(\R^n)$. We claim that, as $s \to \sigma$,
\begin{equation}\label{equazionelimite}
\begin{aligned}
(v_{s,\lambda}, \varphi)_s &= \int_{\R^n} v_{s, \lambda} (-\Delta)^s \varphi \de x \\
&= \int_{\R^n} v_{s, \lambda} ((-\Delta)^s \varphi- (-\Delta)^{\sigma} \varphi) \de x + \int_{\R^n} (v_{s, \lambda}- v_\lambda) (-\Delta)^{\sigma} \varphi \de x \\
&+\int_{\R^n} v_{\lambda} (-\Delta)^{\sigma} \varphi \de x = \int_{\R^n}  v_{\lambda} (-\Delta)^{\sigma} \varphi \de x + o(1).
\end{aligned}
\end{equation}

First of all, we point out that since $v_{s, \lambda} \in X^s_0(B_{\delta_s^{\beta_s}R}) \subset \mathcal{D}^s(\R^n)$, the first equality follows from \eqref{veryweak}. Moreover, as a consequence of Remark \ref{valdi}, we have that $(-\Delta)^s \varphi - (-\Delta)^\sigma \varphi \to 0$ a.e. in $\R^n$ as $s \to \sigma$. Furthermore, thanks to \eqref{bogdy} we have that, since $s \in [s_0, 1)$ and $\sigma \in [s_0,1]$, 
\[
\begin{aligned}
\left| (-\Delta)^s \varphi - (-\Delta)^\sigma \varphi  \right| \leq \frac{C}{(1 + |x|)^{n+2s}} +&  \frac{C}{(1 + |x|)^{n+2\sigma}} \leq 2C \frac{1}{(1 + |x|)^{n+2s_0}} \in L^1(\R^n),
\end{aligned}
\]
where $C>0$ depends on $n$ and $\varphi$ but not on $s$. 
Applying Lebesgue's dominated convergence theorem we get that
\[
\left|\int_{\R^n} v_{s, \lambda} ((-\Delta)^s \varphi- (-\Delta)^{\sigma} \varphi) \de x\right| \leq \int_{\R^n} \left|(-\Delta)^s \varphi- (-\Delta)^{\sigma} \varphi\right| \de x \to 0. 
\]
In a similar way, considering that $v_{s, \lambda} \to v_\lambda$ a.e., we prove that
\[
\left|\int_{\R^n} (v_{s, \lambda}- v_\lambda) (-\Delta)^{\sigma} \varphi \de x \right| \to 0
\]
and the claim is proved. In view of \eqref{ministry} and \eqref{equazionelimite} we obtain the relation
\begin{equation}\label{lbound:equ}
\int_{\R^n}  v_\lambda (-\Delta)^{\sigma} \varphi \de x = \int_{\R^n} |v_{\lambda}|^{2^*_{\sigma}-2}v_{\lambda}\varphi \de x \quad \forall \varphi \in C^\infty_c(\R^n).
\end{equation}

Now we have to consider two different cases: when $\sigma <1$, we easily deduce that $v_\lambda \in \mathcal{D}^{\sigma}(\R^n)$, as a straightforward consequence of Fatou's Lemma (recall that $C_{n,s}$ is continuous on $s \in [0,1]$) and \eqref{Hsbound}. Indeed
\[
\begin{aligned}
\|v_\lambda\|_\sigma^2 =&\ \frac{C_{n,\sigma}}{2}\int_{\R^{2n}}\frac{|v_{\lambda}(x)-v_{\lambda}(y)|^2}{|x-y|^{n+2\sigma}}\de x \de y  \\
\leq &\ \liminf_{s \to \sigma} \frac{C_{n,s}}{2}\int_{\R^{2n}}\frac{|v_{s,\lambda}(x)-v_{s,\lambda}(y)|^2}{|x-y|^{n+2s}}\de x   = \liminf_{s \to \sigma} \|v_{s, \lambda}\|_s^2 \leq C,
\end{aligned}
\]
so that $v_\lambda \in \mathcal{D}^\sigma(\R^n)$. Then we can apply \eqref{veryweak} again, and by density, we obtain that $v_\lambda$ weakly satisfies the equation
\[
(-\Delta)^{\sigma} v_\lambda = |v_\lambda|^{2^*_{\sigma}-2}v_\lambda \quad \text{in}\ \R^n.
\]

Therefore, using $v_\lambda \in \mathcal{D}^\sigma(\R^n)$ as a test function and since $v_\lambda \not \equiv 0$, we obtain
\[
S_\sigma \leq \frac{\|v_\lambda\|_\sigma^2}{|v_\lambda|_{2^*_{\sigma}}^2} = |v_\lambda|_{2^*_\sigma}^{2^*_\sigma-2},
\]
i.e., $S_\sigma^{\frac{n}{2\sigma}} \leq |v_\lambda|_{2^*_\sigma}^{2^*_\sigma}$, which, in view of \eqref{upextlinfproof}, readily gives a contradiction. 

When $\sigma =1$ the argument via Fatou's Lemma fails since $C_{n,s} \to 0$ as $s \to 1^-$, and a more careful approach is needed.  
First of all notice that since in this case $s \to 1^-$, then, passing if necessary to a subsequence, we can assume that $\frac{2}{3}<  s_0  < s <1$. Thus, by Remark \ref{gilbcont}, we get that $v_{s, \lambda}\to v_\lambda$ in $C^{2,\gamma}_{loc}(\R^n)$ for $\gamma < 3 s_0-2$. In particular $v_\lambda \in C^2(\R^n)$. This allows us to integrate by parts in \eqref{lbound:equ} so that we obtain that $v_\lambda$ weakly satisfies the equation
\[
-\Delta v_\lambda = |v_\lambda|^{2^*_{1}-2}v_\lambda \quad \text{in }\ \R^n.
\]
Since $v_\lambda \in C^2(\R^n)$, this actually implies that $v_{\lambda}$ satisfies $-\Delta v_{\lambda} = |v_{\lambda}|^{2^*_1-2}$ in the classical sense. 
Moreover, by \eqref{upextlinfproof} we have that $v_\lambda \in L^{2^*_1}(\R^n)$. Therefore we can apply \cite[Theorem 2, Corollary 3]{farina}, obtaining that $v_\lambda \in \mathcal{D}^1(\R^n)$ and $\|v_\lambda\|^2_1 = |v_\lambda|^{2^*_1}_{2^*_1}$. Hence, also in this case, we recover the estimate $S_1^{\frac{n}{2}} \leq |v_\lambda|_{2^*_1}^{2^*_1}$ and as before we get a contradiction.
\end{proof}

Thanks to Proposition \ref{u0bound} we can improve the inequality obtained in Lemma \ref{EnergyBound}. More precisely, the following result holds. 
\begin{cor}\label{ImprEnerBound}
Let $0 < s_0< s_1 \leq 1$, $n > 6s_1$ and $\lambda \in (0, \underline \lambda(s_0))$, where $\underline \lambda(s_0)$ is given by \eqref{eigenbounds}. Then there exists $Q=Q(s_0,s_1,\lambda) >0$ such that for every $s \in [s_0, s_1)$ we have
\[
c_\mathcal{M}(s, \lambda) \leq c_\mathcal{N}(s, \lambda) + \frac{s}{n}S_s^{\frac{n}{2s}} - Q.
\]
\end{cor}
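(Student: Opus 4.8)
The plan is to re-run the proof of Lemma \ref{EnergyBound}, the point being that Proposition \ref{u0bound} now makes all the constants appearing there uniform in $s \in [s_0, s_1)$, and that the hypothesis $n > 6s_1$ makes the relevant exponent gap uniform as well. Recall that in that proof one estimates $\sup_{\alpha, \beta \ge 0} I_{s,\lambda}(\alpha u^0_{s,\lambda} - \beta u^s_\varepsilon)$, with $u^0_{s,\lambda} \in \mathcal{N}_{s,\lambda}$ a minimizer of $c_\mathcal{N}(s,\lambda)$ and $u^s_\varepsilon$ the truncated bubble of Proposition \ref{stimebubble}, and arrives at \eqref{technicality2.0}:
\[
I_{s,\lambda}(\alpha u^0_{s,\lambda} - \beta u^s_\varepsilon) \le c_\mathcal{N}(s,\lambda) + \frac{s}{n}S_s^{\frac{n}{2s}} + C_7\bigl(|u^0_{s,\lambda}|_{\infty} + |u^0_{s,\lambda}|_{\infty}^{2^*_s-1}\bigr)\varepsilon^{\frac{n-2s}{2}} - C_9 \lambda \varepsilon^{2s}.
\]
In the radial case $x_0 = 0$, and since $u^0_{s,\lambda}$ is radially decreasing by Proposition \ref{posSol}, $|u^0_{s,\lambda}|_{\infty;B_\rho(0)} = |u^0_{s,\lambda}|_\infty$; moreover, since $n > 6s_1$, the $n=6s$ device used at the end of Lemma \ref{EnergyBound} is not needed here.

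The first step is to verify that, as $s$ ranges over $[s_0, s_1)$, every constant in the chain leading to \eqref{technicality2.0} can be taken independent of $s$. This uses \eqref{eigenbounds}, \eqref{eigenbounds2} and the two-sided $s$-uniform bounds on $S_s$; the remark following Proposition \ref{stimebubble}, which provides $s$-uniform bubble estimates \eqref{stime} once $n > 4s_1$; and, as the new ingredient, Proposition \ref{u0bound}, which gives $|u^0_{s,\lambda}|_\infty \le \Lambda(s_0,s_1,\lambda)$ for all $s \in [s_0, s_1)$; since $2^*_s - 1 \in [2^*_{s_1}-1, 2^*_{s_0}-1]$, also $|u^0_{s,\lambda}|_\infty^{2^*_s-1}$ is bounded uniformly in $s$. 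One must also check that the parameter $\theta \in (0,1)$ of Lemma \ref{EnergyBound} — required to satisfy $|u^0_{s,\lambda}|_{2^*_s}^{2^*_s} - \theta|u^0_{s,\lambda}|_\infty^{2^*_s} > 0$ — and the derived constant $\tilde C$ can be chosen uniformly: this holds because $|u^0_{s,\lambda}|_{2^*_s}^{2^*_s} = S_{s,\lambda}^{n/2s}$ (as $u^0_{s,\lambda} \in \mathcal{N}_{s,\lambda}$ and $I_{s,\lambda}(u^0_{s,\lambda}) = c_\mathcal{N}(s,\lambda) = \frac{s}{n}S_{s,\lambda}^{n/2s}$), and $S_{s,\lambda} \ge (1 - \lambda/\underline\lambda(s_0))S_s$ is bounded away from $0$ uniformly in $s$. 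Likewise all the thresholds ``$\varepsilon$ small enough'' used in that proof become $s$-uniform. Consequently \eqref{technicality2.0} reads, for all $\alpha, \beta \ge 0$ and all small enough $\varepsilon > 0$,
\[
I_{s,\lambda}(\alpha u^0_{s,\lambda} - \beta u^s_\varepsilon) \le c_\mathcal{N}(s,\lambda) + \frac{s}{n}S_s^{\frac{n}{2s}} + A\,\varepsilon^{\frac{n-2s}{2}} - B\,\lambda\,\varepsilon^{2s},
\]
with $A, B > 0$ depending only on $s_0, s_1, \lambda, n$.

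The second step exploits $n > 6s_1$, which yields the uniform gap $\frac{n-2s}{2} - 2s = \frac{n-6s}{2} \ge \frac{n-6s_1}{2} =: \gamma_0 > 0$; hence for $\varepsilon \in (0,1)$,
\[
A\,\varepsilon^{\frac{n-2s}{2}} - B\,\lambda\,\varepsilon^{2s} = \varepsilon^{2s}\bigl(A\,\varepsilon^{\frac{n-6s}{2}} - B\,\lambda\bigr) \le \varepsilon^{2s}\bigl(A\,\varepsilon^{\gamma_0} - B\,\lambda\bigr).
\]
Fix $\varepsilon_0 \in (0,1)$ depending only on $s_0, s_1, \lambda, n$, below the uniform smallness threshold and small enough that $A\,\varepsilon_0^{\gamma_0} \le \tfrac12 B\lambda$; using $\varepsilon_0^{2s} \ge \varepsilon_0^{2s_1}$ one gets, for every $s \in [s_0, s_1)$,
\[
\sup_{\alpha,\beta \ge 0} I_{s,\lambda}(\alpha u^0_{s,\lambda} - \beta u^s_{\varepsilon_0}) \le c_\mathcal{N}(s,\lambda) + \frac{s}{n}S_s^{\frac{n}{2s}} - Q, \qquad Q := \tfrac12 B\lambda\,\varepsilon_0^{2s_1} > 0.
\]
Finally, exactly as in Lemma \ref{EnergyBound}, Step \ref{Miranda} of Theorem \ref{conditionedexistence}, applied to a path contained in $\{\alpha u^0_{s,\lambda} - \beta u^s_{\varepsilon_0} : \alpha, \beta \ge 0\}$, gives $c_\mathcal{M}(s,\lambda) \le \sup_{\alpha,\beta \ge 0} I_{s,\lambda}(\alpha u^0_{s,\lambda} - \beta u^s_{\varepsilon_0})$, which is the claim (the radial case is identical, with $c_{\mathcal{M}_{rad}}$ in place of $c_\mathcal{M}$).

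The only delicate point is the first step: it means going carefully through the long sequence of Young- and mean-value-type estimates of Lemma \ref{EnergyBound} and confirming that no constant — most notably the choice of $\theta$ — hides an uncontrolled dependence on $s$. Once the uniform lower bound $|u^0_{s,\lambda}|_{2^*_s}^{2^*_s} = S_{s,\lambda}^{n/2s} \ge c(s_0,\lambda) > 0$ and the uniform upper bound of Proposition \ref{u0bound} are in place, this bookkeeping is routine, and everything afterwards is a one-parameter optimization in $\varepsilon$.
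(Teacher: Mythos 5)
Your proposal is correct and follows essentially the same route as the paper: revisit \eqref{technicality2.0} in Lemma \ref{EnergyBound}, use Proposition \ref{u0bound} (together with the $s$-uniform bounds on $S_s$, $\lambda_{1,s}$ and the bubble estimates) to make all constants uniform on $[s_0,s_1)$, and then fix a single small $\varepsilon$ using the exponent gap $\frac{n-2s}{2}>2s$ guaranteed uniformly by $n>6s_1$, which yields a uniform $Q>0$. Your extra checks (uniform choice of $\theta$ via $|u^0_{s,\lambda}|_{2^*_s}^{2^*_s}=S_{s,\lambda}^{n/2s}\geq c(s_0,\lambda)>0$, and the Step \ref{Miranda} reduction of $c_\mathcal{M}$ to $\sup_{\alpha,\beta}I$) are exactly the ``careful inspection'' the paper appeals to.
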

\begin{proof}
At the end of the proof of Lemma \ref{EnergyBound} (see \eqref{technicality2.0}, \eqref{anothertechnicality}) we have proved that for every $s \in (0,1)$
\[
c_\mathcal{M}(s, \lambda) \leq  c_\mathcal{N}(s, \lambda) + \frac{s}{n}S_s^{\frac{n}{2s}}+ C_1(s, \lambda)|u^0_{s, \lambda}|_{L^\infty(B_\rho(x_0))}\varepsilon^{\frac{n-2s}{2}}- \lambda C_2(s)\varepsilon^{2s},
\]
for any sufficiently small (depending on $s$) $\varepsilon>0$, and for fixed $x_0 \in \Omega$, $\rho>0$.
Now, by the previous proposition we have that $\sup_{s \in [s_0,s_1)}|u^0_{s, \lambda}|_\infty< C$, and, by a careful inspection of the proof of Lemma \ref{EnergyBound} we get that the $C_1$, $C_2$ are uniformly bounded with respect to $s \in [s_0,s_1)$ and $C_2$ is far from zero. If we set $\overline{C}_1(\lambda):=\sup_{s \in [s_0,s_1)} C_1(s,\lambda)$, $\underline{C}_2:=\inf_{s \in [s_0,s_1)} C_2(s)>0$, then, $\varepsilon$ can be taken sufficiently small (depending only on $\lambda$, $s_0$, $s_1$) in such the way that
\[
c_\mathcal{M}(s, \lambda) \leq  c_\mathcal{N}(s, \lambda) + \frac{s}{n}S_s^{\frac{n}{2s}}+ \overline{C}_1(\lambda)\varepsilon^{\frac{n-2s_1}{2}}- \lambda \underline{C}_2\varepsilon^{2s_1}, \ \forall s \in [s_0,s_1)
\]
and $Q(s_0,s_1,\lambda):= \lambda \underline{C}_2\varepsilon^{2s_1} - \overline{C}_1(\lambda)\varepsilon^{\frac{n-2s_1}{2}}>0$ (where we have used that $n>6s_1$).
The proof is then complete.
\end{proof}

We can now prove a $L^\infty$-bound for the sequence of radial sign-changing solutions. 

\begin{lemma}\label{Linftyboundsc}
Let $\frac{1}{2}<s_0<s_1 \leq 1$. Let $n > 6s_1$, $R>0$ and let us fix $\lambda \in \left(0, \underline \lambda(s_0)\right)$, where $\underline\lambda(s_0)$ is the number given by \eqref{eigenbounds}.
For every $s \in (s_0,s_1)$ let $u_{s,\lambda} \in \mathcal{M}_{s,\lambda; rad}$ be a radial solution of Problem \eqref{fracBrezis} in $B_R$ such that $I_{s, \lambda}(u_{s, \lambda}) = c_{\mathcal{M}_{rad}}(s, \lambda)$. It holds that 
\[
0 < \sup_{s \in [s_0,s_1)}|u_{s, \lambda}|_\infty< + \infty.
\]
\end{lemma}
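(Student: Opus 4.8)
The plan is to argue by contradiction through a blow-up/rescaling analysis, in the spirit of the proof of Proposition~\ref{u0bound}, the new ingredients being the fractional Strauss inequality (Proposition~\ref{strauss}), the radial symmetry of the solutions, and the quantitative energy gap of Corollary~\ref{ImprEnerBound}. The first inequality is trivial. For the second, suppose $\delta_s:=|u_{s,\lambda}|_\infty$ is unbounded; then there is a sequence $(s_k)\subset(s_0,s_1)$ with $s_k\to\sigma\in[s_0,s_1]$ and $\delta_{s_k}\to+\infty$, and we drop the index $k$. Since $u_{s,\lambda}\in\mathcal{N}_{s,\lambda}$, $I_{s,\lambda}(u_{s,\lambda})=\frac{s}{n}(\|u_{s,\lambda}\|_s^2-\lambda|u_{s,\lambda}|_2^2)$, and since $\lambda<\underline\lambda(s_0)\leq\lambda_{1,s}$, this together with Lemma~\ref{eneras} gives a uniform bound $\|u_{s,\lambda}\|_s^2\leq\tilde C$ (hence also $\|u_{s,\lambda}^\pm\|_s^2\leq\tilde C$); moreover, Corollary~\ref{ImprEnerBound} (whose radial version holds verbatim, since $n>6s_1$ allows the choice $x_0=0$ in Lemma~\ref{EnergyBound}) together with Proposition~\ref{posSol} yields $|u_{s,\lambda}|_{2^*_s}^{2^*_s}=\frac{n}{s}c_{\mathcal{M}_{rad}}(s,\lambda)\leq S_{s,\lambda}^{n/2s}+S_s^{n/2s}-\frac{n}{s}Q<2S_s^{n/2s}-\frac{n}{s}Q$ with $Q=Q(s_0,s_1,\lambda)>0$. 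Using the (upper semi-)continuity of $s\mapsto S_s$ and $s\mapsto S_{s,\lambda}$, these bounds pass to $s\to\sigma$. Finally, from $u_{s,\lambda}\in\mathcal{M}_{s,\lambda}$ and $\eta_s\geq0$ one also records $|u_{s,\lambda}^-|_{2^*_s}^{2^*_s}\geq((1-\lambda/\lambda_{1,s})S_s)^{n/2s}>0$ uniformly, and similarly for $u_{s,\lambda}^+$.

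Since $s\geq s_0>\tfrac12$, the Strauss inequality applies to $u_{s,\lambda}$ (and, because $[u^\pm]_s\leq[u]_s$, to $u_{s,\lambda}^\pm$), with $K_{n,s}$ uniformly bounded on $[s_0,s_1]$; hence $|u_{s,\lambda}(x)|\leq K_{n,s}\tilde C\,|x|^{-(n-2s)/2}\leq C(\rho)$ for all $|x|\geq\rho>0$, uniformly in $s$, so the maximum $\delta_s$ is attained at points $y_s$ with $r_s:=|y_s|\to0$. Setting $\beta_s:=\frac{2}{n-2s}$ and $\tilde u_{s,\lambda}(x):=\delta_s^{-1}u_{s,\lambda}(\delta_s^{-\beta_s}x+y_s)$ we get $|\tilde u_{s,\lambda}|_\infty=1=|\tilde u_{s,\lambda}(0)|$, with $\tilde u_{s,\lambda}$ solving $(-\Delta)^s\tilde u_{s,\lambda}=\lambda\delta_s^{-2s\beta_s}\tilde u_{s,\lambda}+|\tilde u_{s,\lambda}|^{2^*_s-2}\tilde u_{s,\lambda}$ on balls invading $\R^n$. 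By Remark~\ref{gilbcont} a subsequence converges, $\tilde u_{s,\lambda}\to\tilde u$ in $C^{0,\alpha}_{loc}(\R^n)$ with $|\tilde u(0)|=1$; and, exactly as in the proof of Proposition~\ref{u0bound} (using Remark~\ref{valdi}, \eqref{bogdy}, dominated convergence, and Fatou on the Gagliardo seminorm when $\sigma<1$, or \cite{farina} when $\sigma=1$), $\tilde u\in\mathcal{D}^\sigma(\R^n)$ weakly solves $(-\Delta)^\sigma\tilde u=|\tilde u|^{2^*_\sigma-2}\tilde u$ in $\R^n$. Moreover $\tilde u$ is single-signed: if $\tilde u^+\not\equiv0\not\equiv\tilde u^-$, testing with $\tilde u^\pm$ and using $\eta_\sigma(\tilde u)>0$ as in the proof of Lemma~\ref{As:energas} gives $|\tilde u|_{2^*_\sigma}^{2^*_\sigma}>2S_\sigma^{n/2\sigma}$, while Fatou and the bound above give $|\tilde u|_{2^*_\sigma}^{2^*_\sigma}\leq\liminf|u_{s,\lambda}|_{2^*_s}^{2^*_s}\leq2S_\sigma^{n/2\sigma}-\frac{n}{\sigma}Q<2S_\sigma^{n/2\sigma}$, a contradiction. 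Hence $\tilde u=\pm U_{x_0,\mu}$ and $|\tilde u|_{2^*_\sigma}^{2^*_\sigma}=S_\sigma^{n/2\sigma}$.

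Now I exploit radiality. If $\delta_s^{\beta_s}r_s\to+\infty$ along a subsequence, then by radial symmetry one may place $N_s\sim(\delta_s^{\beta_s}r_s)^{n-1}\to+\infty$ pairwise disjoint balls $B_{M\delta_s^{-\beta_s}}(y_s^{(j)})$ with $|y_s^{(j)}|=r_s$, each carrying (by radiality) energy $\int_{B_{M\delta_s^{-\beta_s}}(y_s^{(j)})}|u_{s,\lambda}|^{2^*_s}=\int_{B_M}|\tilde u_{s,\lambda}|^{2^*_s}\geq\tfrac12S_\sigma^{n/2\sigma}$ for $M,s$ large; summing contradicts the uniform bound on $|u_{s,\lambda}|_{2^*_s}^{2^*_s}$. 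Hence $\delta_s^{\beta_s}r_s$ is bounded, so recentering at the origin, $\hat u_{s,\lambda}(x):=\delta_s^{-1}u_{s,\lambda}(\delta_s^{-\beta_s}x)$, the same reasoning shows that the radial limit $\hat u$ is a nontrivial single-signed radial $\mathcal{D}^\sigma$-solution of the critical equation, i.e.\ $\hat u=\pm U_{0,\mu}$. Recalling that $u_{s,\lambda}$ changes sign at most twice (Theorem~\ref{twicechange}), it has a definite sign on some $(0,\varepsilon)$, which after relabelling $\pm u_{s,\lambda}$ we take to be positive; thus the concentrating bubble lies in $\{u_{s,\lambda}>0\}$ and $\liminf_{s\to\sigma}|u_{s,\lambda}^+|_{2^*_s}^{2^*_s}\geq S_\sigma^{n/2\sigma}$. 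It remains to analyse $u_{s,\lambda}^-$. If $|u_{s,\lambda}^-|_\infty\to+\infty$ (along a further subsequence), running the same blow-up analysis on $u_{s,\lambda}^-$ with its own scale (radiality again excluding blow-up at a positive radius, and the nonlocal interaction produced by $u^+$ in the equation satisfied by $u^-$ on $\{u_{s,\lambda}<0\}$ remaining bounded because the operator is nonnegative at a maximum point of $u^-$) shows that $u_{s,\lambda}^-$ also develops a bubble with energy $S_\sigma^{n/2\sigma}$, so $\liminf|u_{s,\lambda}|_{2^*_s}^{2^*_s}\geq2S_\sigma^{n/2\sigma}$, contradicting $|u_{s,\lambda}|_{2^*_s}^{2^*_s}<2S_s^{n/2s}-\frac{n}{s}Q$. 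If instead $|u_{s,\lambda}^-|_\infty$ stays bounded, then by interior regularity away from the origin (Remark~\ref{gilbcont}), passage to the limit as before, and a removable-singularity argument at the origin (legitimate since $u_\sigma\in L^{2^*_\sigma}(B_R)$ by Fatou), $u_{s,\lambda}\to u_\sigma$ in $C^0_{loc}(\overline{B_R}\setminus\{0\})$, where $u_\sigma$ is a nontrivial solution of \eqref{fracBrezis} for $s=\sigma$ in $B_R$ (nontrivial since $|u_{s,\lambda}^-|_{2^*_s}^{2^*_s}\geq((1-\lambda/\lambda_{1,s})S_s)^{n/2s}>0$ passes to the limit by dominated convergence); hence $|u_\sigma|_{2^*_\sigma}^{2^*_\sigma}=\frac{n}{\sigma}I_{\sigma,\lambda}(u_\sigma)\geq\frac{n}{\sigma}c_\mathcal{N}(\sigma,\lambda)=S_{\sigma,\lambda}^{n/2\sigma}$, and since the bubble at the origin is energetically separated from $u_\sigma$, $\liminf|u_{s,\lambda}|_{2^*_s}^{2^*_s}\geq S_\sigma^{n/2\sigma}+S_{\sigma,\lambda}^{n/2\sigma}$, contradicting $|u_{s,\lambda}|_{2^*_s}^{2^*_s}<S_{s,\lambda}^{n/2s}+S_s^{n/2s}-\frac{n}{s}Q$. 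This contradiction proves the Lemma.

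The main obstacle is the last step: both alternatives for $u_{s,\lambda}^-$ need the (essentially standard) profile-decomposition bookkeeping, here complicated by nonlocality. In the "unbounded" branch one must genuinely control the nonlocal interaction term produced by the positive part in the equation satisfied by $u_{s,\lambda}^-$ and verify the separation of the two blow-up scales in the energy accounting; in the "bounded" branch one must pass to the limit across distinct blow-up scales away from the origin and justify that the limit extends across the origin to a true solution of \eqref{fracBrezis}, in order to invoke the sharp Nehari bound $I_{\sigma,\lambda}(u_\sigma)\geq c_\mathcal{N}(\sigma,\lambda)$, which is exactly what makes the energy gap $Q$ of Corollary~\ref{ImprEnerBound} decisive.
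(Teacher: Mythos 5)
Your blow-up scheme coincides with the paper's up to the point where the limit profile is shown to be a nontrivial constant-sign solution of the critical equation: contradiction hypothesis, rescaling by $\beta_s=\frac{2}{n-2s}$, the Strauss inequality, uniform energy bounds, and the gap $Q$ of Corollary~\ref{ImprEnerBound} (your parenthetical remark that its radial version holds because $x_0=0$ is admissible when $n>6s_1$ is correct and indeed needed). Two remarks on that part: since $(\delta_s^{\beta_s}|x_s|)^{\frac{n-2s}{2}}=|x_s|^{\frac{n-2s}{2}}|u_{s,\lambda}(x_s)|\leq K_{n,s}\|u_{s,\lambda}\|_s^2$, the Strauss inequality already confines the rescaled maximum points to a compact set, so your many-disjoint-balls argument and the recentering detour are superfluous (the paper rescales at the origin from the start); and identifying the limit with $\pm U_{x_0,\mu}$ would require a classification of constant-sign entire solutions that is neither invoked in the paper nor needed. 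The genuine gap is your final step. The dichotomy on $|u^-_{s,\lambda}|_\infty$ is not actually carried out: in the unbounded branch, $u^-_{s,\lambda}$ solves no autonomous equation (this is exactly the nonlocal obstruction the paper emphasizes in the Introduction), and the remark that ``the operator is nonnegative at a maximum point'' provides neither the uniform local estimates needed for a second blow-up nor the energy quantization $|u^-_{s,\lambda}|^{2^*_s}_{2^*_s}\gtrsim S_\sigma^{\frac{n}{2\sigma}}$ for its profile; in the bounded branch, the removable-singularity claim (that the limit away from the origin extends to a genuine nontrivial solution of \eqref{fracBrezis} in $B_R$, hence lies on the Nehari manifold) and the Brezis--Lieb-type separation $\liminf|u_{s,\lambda}|^{2^*_s}_{2^*_s}\geq S_\sigma^{\frac{n}{2\sigma}}+S_{\sigma,\lambda}^{\frac{n}{2\sigma}}$ are asserted, not proved --- as you yourself acknowledge. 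As written, the contradiction is therefore not established.

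The missing idea is much simpler and requires no analysis of $u^-_{s,\lambda}$ beyond the nodal Nehari constraint. Since $u_{s,\lambda}\in\mathcal{M}_{s,\lambda;rad}$, testing with $u^-_{s,\lambda}$ and discarding the positive nonlocal term $\eta_s(u_{s,\lambda})$ gives $S_{s,\lambda}\leq\big(\|u^-_{s,\lambda}\|_s^2-\lambda|u^-_{s,\lambda}|_2^2\big)/|u^-_{s,\lambda}|^2_{2^*_s}<|u^-_{s,\lambda}|^{2^*_s-2}_{2^*_s}=|v^-_{s,\lambda}|^{2^*_s-2}_{2^*_s}$, i.e.\ the rescaled negative part always retains at least $S_{s,\lambda}^{\frac{n}{2s}}$ of the $L^{2^*_s}$ mass, with no blow-up analysis at all. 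Since the limit $v_\lambda$ is (say) nonnegative, $v^+_{s,\lambda}\to v_\lambda$ a.e., and Fatou's lemma yields $|v_\lambda|^{2^*_\sigma}_{2^*_\sigma}+\liminf_{s\to\sigma}S_{s,\lambda}^{\frac{n}{2s}}\leq\liminf_{s\to\sigma}|v_{s,\lambda}|^{2^*_s}_{2^*_s}$, while Corollary~\ref{ImprEnerBound} gives $|v_{s,\lambda}|^{2^*_s}_{2^*_s}=|u_{s,\lambda}|^{2^*_s}_{2^*_s}\leq S_{s,\lambda}^{\frac{n}{2s}}+S_s^{\frac{n}{2s}}-\frac{n}{s}Q$, so that $|v_\lambda|^{2^*_\sigma}_{2^*_\sigma}<S_\sigma^{\frac{n}{2\sigma}}$. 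This contradicts the fact that every nontrivial solution of the limiting critical equation satisfies $|v_\lambda|^{2^*_\sigma}_{2^*_\sigma}\geq S_\sigma^{\frac{n}{2\sigma}}$, and is exactly how the paper concludes; replacing your final dichotomy by this observation repairs the proof.
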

\begin{proof}
The first inequality is trivial. For the other inequality, we argue by contradiction as in the proof of Proposition \ref{u0bound}. Let us set $\delta_s := |u_{s, \lambda}|_\infty$ and suppose that there exists $\sigma \in [s_0, s_1]$ and a sequence $s \to \sigma$ such that $\delta_s \to +\infty$. Consider the rescaled function
\[
v_{s, \lambda}(x) := \frac{1}{\delta_s}u_{s, \lambda}\left( \frac{x}{\delta_s^{\beta_s}}\right), \quad x \in \R^n, 
\]
where $\beta_s = \frac{2}{n-2s}$. Clearly $v_s \in X_0^s\left(B_{\delta_s^{\beta_s} R}\right)$.
Arguing as in the proof of formula \eqref{Hsbound}, we get that there exists $C>0$ such that  
\begin{equation}\label{energybound17}
0 < \sup_{s \in [s_0, s_1)} \|v_{s, \lambda}\|^2_s = \sup_{s \in [s_0, s_1)} \|u_{s, \lambda}\|^2_s \leq C. 
\end{equation}

An easy computation shows that $v_{s, \lambda}$ weakly satisfies the equation
\[
(-\Delta)^s v_{s, \lambda} = \frac{\lambda}{\delta_s^{2s\beta_s}}v_{s, \lambda} + |v_{s, \lambda}|^{2^*_s-2}v_{s, \lambda} \qquad \text{in }B_{\delta_s^\beta R}.
\]

As a consequence of that, since $|v_{s_\lambda}|_\infty = 1$, by Remark \ref{gilbcont} it follows that as $s \to \sigma$ we have that $v_{s, \lambda} \to v_\lambda$ in $C^{0, \alpha}_{loc}(\R^n)$ for any fixed $\alpha < s_0$. 
Moreover, let us observe that thanks to Proposition \ref{strauss} and \eqref{energybound17}, for any $x_s$ such that $|u(x_s)| = \delta_s$ it holds 
\[
(\delta_s^{\beta_s} |x_s|)^{\frac{n-2s}{2}} = |x_s|^{\frac{n-2s}{2}} \leq |u_{s,\lambda}(x_s)| \leq K_{n,s}\|u_{s, \lambda}\|^2_s \leq \hat C,  
\] 
where $\hat C$ depends only on $s_0$. 
This implies that there exists a compact set $K \subset \subset \R^n$ such that $\delta^{\beta_s}_s x_s \in K$ for all $s$ sufficiently close to $\sigma$, and then by the $C^{0,\alpha}_{loc}$-convergence there exists $\hat x \in K$ such that $v_\lambda(\hat x) = 1$, and thus $v_\lambda \not \equiv 0$. 

As in proof of Proposition \ref{u0bound}, we obtain that $v_\lambda$ is a weak solution of the equation
\begin{equation}\label{tecn:sobolev}
(-\Delta)^{\sigma} v_\lambda = |v_\lambda|^{2^*_{\sigma}-2}v_\lambda \qquad \text{in }\ \R^n,
\end{equation}
and using Corollary \ref{ImprEnerBound} we get that
\[
|v_\lambda|_{2^*_{\sigma}}^{2^*_{\sigma}} \leq \liminf_{s \to \sigma}S_{s, \lambda}^{\frac{n}{2s}} +S_{\sigma}^{\frac{n}{2\sigma}} - \frac{n}{\sigma} Q(\lambda)< {2} S_{\sigma}^{\frac{n}{2\sigma}}.
\]

On the other hand, for every $\sigma \in (0,1]$ if $u$ is a non trivial sign-changing solution of \eqref{tecn:sobolev} then $|u|^{2^*_\sigma}_{2^*_\sigma} > 2S_\sigma$. This is known when $\sigma =1$. When $\sigma < 1$, by definition of the Sobolev constant and testing \eqref{tecn:sobolev} with $u^\pm$ we obtain
\begin{equation}\label{nodalenerbound}
S_\sigma \leq \frac{\|u^\pm \|^2_\sigma}{|u^\pm|^2_{2^*_\sigma}} \leq |u^\pm|^{2^*_\sigma-2}_{2^*_\sigma} - \frac{\eta_\sigma(u)}{|u^\pm|^{2}_{2^*_\sigma}} < |u^\pm|^{2^*_\sigma-2}_{2^*_\sigma}.
\end{equation}
Therefore since $v_\lambda \not \equiv 0$, the only possibility is that $v_\lambda$ is of constant sign. 
Assume for instance that $v_\lambda \geq 0$. Then $v_{s, \lambda}^+ \to v_\lambda$ a.e. and by Fatou's Lemma we get that

\begin{equation}\label{fatoulemma12}
|v_\lambda|^{2^*_\sigma}_{2^*_s} \leq \liminf_{s \to \sigma}|v_{s, \lambda}^+|^{2^*_s}_{2^*_s}.
\end{equation}

Since $u_{s, \lambda} \in \mathcal{M}_{s, \lambda}$ and by definition of $S_{s, \lambda}$ we have
 
\begin{equation}\label{fatoulemma22}
S_{s, \lambda} \leq \frac{\|u^-_{s,\lambda}\|_s^2 - \lambda |u^-_{s, \lambda}|^2_2}{|u^-_{s, \lambda}|_{2^*_s}^2} = |u^-_{s,\lambda}|^{2^*_s-2}_{2^*_s} - \frac{\eta_s (u_{s,\lambda})}{|u^-_{s, \lambda}|_{2^*_s}^2} < |u^-_{s,\lambda}|^{2^*_s-2}_{2^*_s} = |v^-_{s, \lambda}|^{2^*_s-2}_{2^*_s}.
\end{equation}

which together with \eqref{fatoulemma12} implies

\begin{equation}\label{fatoulemma15}
|v_\lambda|^{2^*_\sigma}_{2^*_\sigma} + \liminf_{s \to \sigma}S_{s, \lambda}^{\frac{n}{2s}}  \leq \liminf_{s \to \sigma}\left( |v_{s,\lambda}^+|^{2^*_s}_{2^*_s} + |v_{s,\lambda}^-|^{2^*_s}_{2^*_s}\right) = \liminf_{s \to \sigma}|v_{s,\lambda}|^{2^*_s}_{2^*_s}.
\end{equation}
On the other hand by Corollary \ref{ImprEnerBound} we have that
\[
|v_{s, \lambda}|^{2^*_s}_{2^*_s} = |u_{s,\lambda}|^{2^*_s}_{2^*_s}\leq S_{s, \lambda}^{\frac{n}{2s}} +S_{s}^{\frac{n}{2\sigma}} - Q(\lambda),
\]
and recalling that $Q$ does not depend on $s$, and $S_s$ is continuous with respect to $s$, we deduce that
\begin{equation}\label{fatoulemma16}
\liminf_{s \to \sigma}|v_{s, \lambda}|^{2^*_s}_{2^*_s} < S_{\sigma}^{\frac{n}{2\sigma}}+ \liminf_{s \to \sigma}S_{s,\lambda}^{\frac{n}{2s}}.
\end{equation}

From \eqref{fatoulemma15} and \eqref{fatoulemma16} we obtain that $|v_\lambda|^{2^*_\sigma}_{2^*_\sigma} < S_{\sigma}^{\frac{n}{2\sigma}}$, and this is a contradiction because every non trivial solution $u$ of \eqref{tecn:sobolev} must satisfy $|u|^{2^*_\sigma}_{2^*_\sigma} \geq S^\frac{n}{2\sigma}_\sigma $.
\end{proof}

Thanks to this uniform $L^\infty$-bound on sign-changing solutions of Problem \eqref{fracBrezis}, we have the following result.

\begin{teo}\label{sconverg}
Let $\frac{1}{2}<s_0<s_1 \leq 1$. Let $n > 6s_1$, $R>0$ and let $\hat \lambda(s_0) $ be the number given by Theorem \ref{twicechange}. For any fixed $\lambda \in (0, \hat \lambda(s_0))$, let $(u_{s,\lambda})_s$ be a family, $s \in [s_0, s_1)$, of radial sign-changing solutions of Problem \eqref{fracBrezis} with $I_{s, \lambda}(u_{s, \lambda}) = c_{\mathcal{M}_{rad}}(s, \lambda)$. 
Assume that $s \to \sigma$, for some $\sigma \in [s_0, s_1]$. Then, for any fixed $\alpha \in (0, s_0)$, we have that $u_{s, \lambda} \to u_{\sigma, \lambda}$ in $C^{0,\alpha}_{loc}(B_R)$ up to a subsequences, as $s \to \sigma$. Moreover $u_{\sigma, \lambda} \in X^\sigma_0(B_R)$ and is a weak non trivial solution of
\[
\begin{cases}
(-\Delta)^\sigma u_{\sigma,\lambda} = \lambda u_{\sigma, \lambda} + |u_{\sigma, \lambda}|^{2^*_\sigma-2}u_{\sigma, \lambda} & \text{in }B_R\\
u_{\sigma, \lambda} = 0 & \text{in }\R^n \setminus B_R
\end{cases}
\]
In addition
\[
\lim_{s \to \sigma} I_{s, \lambda}(u_{s, \lambda}) = I_{\sigma, \lambda}(u_{\sigma, \lambda}).
\]
\end{teo}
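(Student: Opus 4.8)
The plan is to upgrade the uniform $L^{\infty}$- and energy-bounds of Section~6 into a compactness statement, to pass to the limit in the weak formulation using the approximation lemma for $(-\Delta)^{s}$ (Remark~\ref{valdi}), and finally to identify the limit as a nontrivial weak solution of the $\sigma$-problem; the skeleton is the one already used in the proof of Proposition~\ref{u0bound}. First I would record that, by Lemma~\ref{Linftyboundsc}, there is $C_{\infty}>0$ independent of $s\in[s_{0},s_{1})$ with $|u_{s,\lambda}|_{\infty}\le C_{\infty}$, so that $u_{s,\lambda}$ solves $(-\Delta)^{s}u_{s,\lambda}=g_{s,\lambda}:=\lambda u_{s,\lambda}+|u_{s,\lambda}|^{2^{*}_{s}-2}u_{s,\lambda}$ in $B_{R}$ with $(g_{s,\lambda})$ uniformly bounded in $L^{\infty}(B_{R})$. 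Given any sequence $s\to\sigma$, the interior estimate \eqref{soave1} together with \cite[Lemma 6.36]{giltru} (exactly as in Remark~\ref{gilbcont}) produces, along a subsequence, $u_{s,\lambda}\to u_{\sigma,\lambda}$ in $C^{0,\alpha}_{loc}(B_{R})$ for every $\alpha\in(0,s_{0})$, and, when $\sigma$ is close to $1$ so that $s_{0}>\tfrac23$, even $u_{s,\lambda}\to u_{\sigma,\lambda}$ in $C^{2}_{loc}(B_{R})$, since $g_{s,\lambda}$ is a Lipschitz function of the uniformly H\"older-bounded $u_{s,\lambda}$ and hence uniformly bounded in $C^{0,s}_{loc}(B_{R})$. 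In particular $u_{s,\lambda}\to u_{\sigma,\lambda}$ pointwise in $B_{R}$, and since each $u_{s,\lambda}$ vanishes outside $B_{R}$ with $|u_{s,\lambda}|\le C_{\infty}$, dominated convergence gives $u_{s,\lambda}\to u_{\sigma,\lambda}$ in $L^{p}(\R^{n})$ for every $p\in[1,\infty)$ and $|u_{s,\lambda}|^{2^{*}_{s}-2}u_{s,\lambda}\to|u_{\sigma,\lambda}|^{2^{*}_{\sigma}-2}u_{\sigma,\lambda}$ in $L^{1}(B_{R})$ (here one uses $2^{*}_{s}\to2^{*}_{\sigma}$ and $2^{*}_{s}-1\ge\tfrac{n+2s_{0}}{n-2s_{0}}>1$).

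Next, nontriviality: if $u_{\sigma,\lambda}\equiv0$ then the last line forces $|u_{s,\lambda}|^{2^{*}_{s}}_{2^{*}_{s}}\to0$, whereas $u_{s,\lambda}\in\mathcal{N}_{s,\lambda}$ gives $|u_{s,\lambda}|^{2^{*}_{s}}_{2^{*}_{s}}=\tfrac{n}{s}I_{s,\lambda}(u_{s,\lambda})=\tfrac{n}{s}c_{\mathcal{M}_{rad}}(s,\lambda)>2\,c_{\mathcal N}(s,\lambda)\tfrac{n}{s}=2S_{s,\lambda}^{n/2s}$ by \eqref{energyrelation} and Proposition~\ref{posSol}, and $S_{s,\lambda}\ge\bigl(1-\lambda/\underline{\lambda}(s_{0})\bigr)\inf_{[s_{0},s_{1}]}S_{s}>0$ uniformly in $s$, a contradiction. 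Then I would identify the limiting equation. Fix $\varphi\in C^{\infty}_{c}(B_{R})$; by \eqref{veryweak} and the splitting of \eqref{equazionelimite},
\[
(u_{s,\lambda},\varphi)_{s}=\int_{\R^{n}}u_{s,\lambda}\bigl((-\Delta)^{s}\varphi-(-\Delta)^{\sigma}\varphi\bigr)\de x+\int_{\R^{n}}(u_{s,\lambda}-u_{\sigma,\lambda})(-\Delta)^{\sigma}\varphi\de x+\int_{\R^{n}}u_{\sigma,\lambda}(-\Delta)^{\sigma}\varphi\de x .
\]
The first integral tends to $0$ by Cauchy--Schwarz, since $u_{s,\lambda}$ is supported in $B_{R}$ with $|u_{s,\lambda}|_{2}$ bounded and $|(-\Delta)^{s}\varphi-(-\Delta)^{\sigma}\varphi|_{2}\to0$ (Remark~\ref{valdi}); the second tends to $0$ because $u_{s,\lambda}\to u_{\sigma,\lambda}$ in $L^{2}(B_{R})$ while $(-\Delta)^{\sigma}\varphi\in L^{2}(\R^{n})$ by \eqref{bogdy}. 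Using that $u_{s,\lambda}$ solves \eqref{fracBrezis} and the $L^{1}$-convergences above, I pass to the limit on both sides and obtain
\[
\int_{\R^{n}}u_{\sigma,\lambda}(-\Delta)^{\sigma}\varphi\de x=\lambda\int_{B_{R}}u_{\sigma,\lambda}\varphi\de x+\int_{B_{R}}|u_{\sigma,\lambda}|^{2^{*}_{\sigma}-2}u_{\sigma,\lambda}\varphi\de x\qquad\forall\,\varphi\in C^{\infty}_{c}(B_{R}).
\]

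To see $u_{\sigma,\lambda}\in X^{\sigma}_{0}(B_{R})$ I would split into cases. If $\sigma<1$, Fatou's lemma applied to the Gagliardo double integral (its integrand converges a.e. and $C_{n,s}$ is continuous in $s$) gives $\|u_{\sigma,\lambda}\|_{\sigma}^{2}\le\liminf_{s}\|u_{s,\lambda}\|_{s}^{2}<\infty$, the uniform bound coming from $\|u_{s,\lambda}\|_{s}^{2}=\lambda|u_{s,\lambda}|_{2}^{2}+|u_{s,\lambda}|_{2^{*}_{s}}^{2^{*}_{s}}$; since $u_{\sigma,\lambda}$ vanishes outside $B_{R}$ it lies in $X^{\sigma}_{0}(B_{R})$, and then by density of $C^{\infty}_{c}(B_{R})$ and \eqref{veryweak} the identity above makes it a weak solution of the Dirichlet problem. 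If $\sigma=1$, the $C^{2}_{loc}$-convergence lets me integrate by parts in that identity, so $-\Delta u_{1,\lambda}=f:=\lambda u_{1,\lambda}+|u_{1,\lambda}|^{2^{*}_{1}-2}u_{1,\lambda}\in L^{\infty}(B_{R})$ classically in $B_{R}$; comparing $u_{s,\lambda}$ with the $s$-torsion function of $B_{R}$, whose explicit form converges uniformly on $\overline{B_{R}}$ to the classical torsion function as $s\to1^{-}$, one gets $|u_{s,\lambda}(x)|\le C(R^{2}-|x|^{2})^{s}_{+}$ with $C$ uniform, hence $|u_{1,\lambda}(x)|\le C(R^{2}-|x|^{2})_{+}$ in the limit, so $u_{1,\lambda}$ extends continuously to $\overline{B_{R}}$ with zero boundary values; since the difference between $u_{1,\lambda}$ and the $H^{1}_{0}(B_{R})$-solution of $-\Delta w=f$ is a bounded harmonic function vanishing continuously on $\partial B_{R}$, it vanishes identically, so $u_{1,\lambda}\in X^{1}_{0}(B_{R})=H^{1}_{0}(B_{R})$ is a weak $H^{1}_{0}$-solution. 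Finally, $|u_{s,\lambda}|_{2}^{2}\to|u_{\sigma,\lambda}|_{2}^{2}$, $|u_{s,\lambda}|_{2^{*}_{s}}^{2^{*}_{s}}\to|u_{\sigma,\lambda}|_{2^{*}_{\sigma}}^{2^{*}_{\sigma}}$, and, using $u_{s,\lambda}\in\mathcal{N}_{s,\lambda}$ and that $u_{\sigma,\lambda}$ solves the $\sigma$-problem, $\|u_{s,\lambda}\|_{s}^{2}=\lambda|u_{s,\lambda}|_{2}^{2}+|u_{s,\lambda}|_{2^{*}_{s}}^{2^{*}_{s}}\to\lambda|u_{\sigma,\lambda}|_{2}^{2}+|u_{\sigma,\lambda}|_{2^{*}_{\sigma}}^{2^{*}_{\sigma}}=\|u_{\sigma,\lambda}\|_{\sigma}^{2}$; together with $\tfrac{1}{2^{*}_{s}}\to\tfrac{1}{2^{*}_{\sigma}}$ this yields $I_{s,\lambda}(u_{s,\lambda})\to I_{\sigma,\lambda}(u_{\sigma,\lambda})$.

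The hard part will be the borderline case $\sigma=1$ in the step above: the constant $C_{n,s}$ degenerates as $s\to1^{-}$, so Fatou applied to the Gagliardo seminorm no longer delivers membership in $H^{1}_{0}(B_{R})$, and one must supply an extra ingredient — a uniform boundary barrier, equivalently a control of $u_{s,\lambda}$ up to $\partial B_{R}$ that survives the limit — in order to identify $u_{1,\lambda}$ with the $H^{1}_{0}$-solution. All the remaining steps are routine once the uniform bounds of Lemma~\ref{Linftyboundsc} and Corollary~\ref{ImprEnerBound} and the approximation Remark~\ref{valdi} are in hand.
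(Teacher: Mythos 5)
Your proposal is correct and, for most of its steps (uniform $L^\infty$ bound from Lemma \ref{Linftyboundsc}, local H\"older compactness as in Remark \ref{gilbcont}, passage to the limit in the very weak formulation via \eqref{veryweak}, Remark \ref{valdi} and \eqref{bogdy}, nontriviality from $c_{\mathcal{M}_{rad}}\geq 2c_{\mathcal N}$, Fatou on the Gagliardo seminorm when $\sigma<1$, and the final energy identity obtained by testing the limit equation with $u_{\sigma,\lambda}$), it coincides with the paper's argument. The genuine difference is the borderline case $\sigma=1$, which you correctly single out as the delicate point: the paper bypasses the degeneration of $C_{n,s}$ by switching to the Fourier-transform characterization of $H^\sigma(\R^n)$ and applying Fatou's lemma there (using the strong $L^2(\R^n)$ convergence), a short argument that in fact unifies $\sigma<1$ and $\sigma=1$ and immediately gives $u_{\sigma,\lambda}\in X^\sigma_0(B_R)$; you instead exploit the $C^{2}_{loc}$ convergence (legitimate, since along a sequence $s\to1^-$ one eventually has $s>\frac23$, exactly as in Proposition \ref{u0bound}), a uniform boundary barrier $|u_{s,\lambda}|\leq C(R^2-|x|^2)^s_+$ obtained by comparison with the explicit $s$-torsion function of the ball (whose constant stays bounded as $s\to1^-$, so this is independent of Theorem \ref{boundregRO}, whose constant the paper warns is not uniform in $s$), and then classical elliptic theory plus uniqueness for bounded solutions vanishing continuously on $\partial B_R$ to identify $u_{1,\lambda}$ with the $H^1_0$-solution. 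Both routes are valid: the Fourier/Fatou argument is shorter and needs no boundary analysis, while your barrier argument requires the weak comparison principle and Getoor's torsion formula but yields, as a byproduct, uniform boundary decay of $u_{s,\lambda}$ up to $\partial B_R$; if you adopt it, you should state explicitly the comparison principle step and the uniform boundedness of the torsion constant, which you currently only implicitly invoke.
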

\begin{proof}
Let us fix $\frac{1}{2}<s_0<s_1 \leq 1$ and $\lambda \in (0, \hat \lambda(s_0))$. Let $(u_{s, \lambda})_s$ be a family of least energy radial sign-changing solutions of Problem \eqref{fracBrezis}, where $s \in [s_0, s_1)$. 
By Corollary \ref{ImprEnerBound} we have that $(u_{s, \lambda})$ is a bounded family in $X^{s_0}_0(B_R)$, and thus up to a subsequence, there exists $u_{\sigma, \lambda} \in X_0^{s_0}(B_R)$ such that:
\[
\begin{aligned}
&u_{s, \lambda} \rightharpoonup u_{\sigma, \lambda} & &\text{in } X^{s_0}_0(B_R)\\
&u_{s, \lambda} \to u_{\sigma, \lambda} & &\text{in } L^{p}(B_R), \forall p \in (1, 2^*_{s_0}),
&u_{s, \lambda} \to u_{\sigma, \lambda} & &\text{a.e. in } \R^n.
\end{aligned}
\] 

On the other hand, thanks Lemma \ref{Linftyboundsc}, we can argue as in Remark \ref{gilbcont}, and obtain that, up to a subsequence 
\[
\begin{aligned}
&u_{s, \lambda} \to u_{\sigma, \lambda} & &\text{in } C^{0, \alpha}_{loc}(B_R),\\
\end{aligned}
\]
for every fixed $0<\alpha < s_0$. 

Exploiting the uniform $L^\infty$-bound given by Lemma \ref{Linftyboundsc} and since $u_{s, \lambda} \equiv 0$ in $\R^n \setminus B_R$, we obtain that
\begin{equation}\label{sconverg1}
\begin{aligned}
&u_{s, \lambda} \to u_{\sigma, \lambda} \text{ in } L^{p}(\R^n), \forall p >1 \\
&|u_{s, \lambda}|_{2^*_s}^{2^*_s} \to |u_{\sigma,\lambda}|_{2^*_\sigma}^{2^*_\sigma}
\end{aligned}
\end{equation}

Arguing as in the proof of Proposition \ref{u0bound} we obtain that $u_{\sigma, \lambda}\in \mathcal{D}^\sigma(\R^n)$, both when $\sigma <1$ or $\sigma =1$, and then, since $u_{\sigma, \lambda} \in L^{2}(\R^n)$, $u_{\sigma, \lambda} \equiv 0$ in $\R^n \setminus B_R$ (because $u_{\sigma, \lambda} \in X^{s_0}_0(B_R)$) we conclude that $u_{\sigma,\lambda} \in X^\sigma_0(B_R)$. Alternatively, a simpler way of proving that $u_{\sigma,\lambda} \in X^\sigma_0(B_R)$ is to use the Fourier transform definition of $\|\cdot\|_s$, unifying both cases. In fact, since $u_{s, \lambda}\to u_{\sigma,\lambda}$ in $L^2(\R^n)$, using the characterization via the Fourier transform of the Sobolev spaces and Fatou's Lemma we get that $u \in H^\sigma(\R^n)$ and, as seen before, we have $u_{\sigma, \lambda} \equiv 0$ in $\R^n \setminus B_R$. Therefore $u_{\sigma, \lambda} \in X^{\sigma}_0(B_R)$ and,  as in the proof of Proposition \ref{u0bound}, we get that $u_{\sigma, \lambda}$ weakly satisfies
\begin{equation}\label{technicaleq27}
\begin{cases}
(-\Delta)^\sigma u_{\sigma,\lambda} = \lambda u_{\sigma, \lambda} + |u_{\sigma,\lambda}|^{2^*_\sigma -2}u_{\sigma, \lambda} & \hbox{in}\ B_R \\
u_{\sigma, \lambda} = 0 & \hbox{in}\  \R^n \setminus B_R.
\end{cases}
\end{equation}

Thanks to our choice of $s_0$ it holds that
\[
\frac{s}{n}|u_{s,\lambda}|^{2^*_s}_{2^*_s} = c_{\mathcal{M}_{rad}}(s, \lambda) \geq 2c_\mathcal{N}(s, \lambda) \geq 2\frac{s}{n}\left(1-\frac{\lambda}{\lambda_{1,s}}\right) S_s \geq C,
\]
for some $C >0$ not depending on $s$, and thus, using \eqref{sconverg1} we obtain that $|u_{\sigma,\lambda}|^{2^*_\sigma}_{2^*_\sigma} \geq C$. This implies that $u_{\sigma, \lambda}$ is not trivial and the first part of the proof is complete. 

For the second part, using $u_{\sigma,\lambda}$ as test function in the equation \eqref{technicaleq27}, and using \eqref{sconverg1} we get that 
\[
\|u_{s, \lambda}\|^2_s = \lambda|u_{s, \lambda}|_2^2 + |u_{s, \lambda}|_{2^*_s}^{2^*_s} \to \lambda|u_{\sigma, \lambda}|_2^2 + |u_{\sigma, \lambda}|_{2^*_\sigma}^{2^*_\sigma} = \|u_{\sigma, \lambda}\|^2_\sigma
\] 
which readily implies that
\[
I_{s, \lambda}(u_{s, \lambda}) \to I_{\sigma, \lambda}(u_{\sigma, \lambda}).
\]
The proof is complete.
\end{proof}

\section{Proof of Theorem \ref{mainteoproba}}
Theorem \ref{mainteoproba} is a consequence of the following result.

\begin{prop}\label{nozeroorigin}
Let $s \in \left(\frac{1}{2}, 1\right)$, $n> 6s$. There exist $\overline \lambda \in (0, \lambda_{1,s}]$ and $C>0$ such that for every $\lambda \in (0, \overline \lambda)$ if $u_{s, \lambda}\subset \mathcal{M}_{s, \lambda; rad}$ is a least energy radial sign-changing solution of Problem \eqref{fracBrezis}, then $|u_{s, \lambda}(0)| >C$.
\end{prop}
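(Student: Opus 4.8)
The strategy is to argue by contradiction via a blow-up / rescaling argument, exactly as announced in the sketch following Theorem \ref{mainteoproba}. Suppose the conclusion fails: then there is a sequence $\lambda_j \to 0^+$ and least energy radial sign-changing solutions $u_j := u_{s,\lambda_j}$ with $u_j(0) \to 0$ (we may keep $s$ fixed here, since the statement is for fixed $s$; without loss of generality $u_j(0) \ge 0$ in a neighbourhood of the origin). Set $\delta_j := |u_j^+|_\infty =: M_{\lambda_j,+}$, which by Lemma \ref{As:energas}(vi) satisfies $\delta_j \to +\infty$. Consider the rescaled functions $\tilde u_j(x) := \delta_j^{-1} u_j(\delta_j^{-\beta} x)$ with $\beta = \frac{2}{n-2s}$; then $\tilde u_j \in X^s_0(B_{\delta_j^\beta R})$, $|\tilde u_j^+|_\infty = 1$, $\tilde u_j(0) = \delta_j^{-1} u_j(0) \to 0$, and $\tilde u_j$ weakly solves $(-\Delta)^s \tilde u_j = \lambda_j \delta_j^{-2s\beta}\tilde u_j + |\tilde u_j|^{2^*_s-2}\tilde u_j$ in the expanding ball $B_{\delta_j^\beta R}$. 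As in the proof of Proposition \ref{u0bound} and Lemma \ref{Linftyboundsc}, the energy bounds of Lemma \ref{As:energas} give $\|\tilde u_j\|_s^2 = \|u_j\|_s^2 \le C$ uniformly; combined with the uniform $L^\infty$-bound of Lemma \ref{Linftyboundsc} (using $s > \tfrac12$), Remark \ref{gilbcont} yields, up to a subsequence, $\tilde u_j \to \tilde u$ in $C^{0,\alpha}_{loc}(\R^n)$ for any $\alpha < s$, with $\tilde u \in \mathcal{D}^s(\R^n)$ (via Fatou on the Gagliardo seminorm, as in Proposition \ref{u0bound}) a weak solution of the critical equation $(-\Delta)^s \tilde u = |\tilde u|^{2^*_s-2}\tilde u$ in $\R^n$.

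Next I must show $\tilde u \not\equiv 0$ and that $\tilde u$ has constant sign. Nontriviality: since $u_j$ is radial and $|u_j^+|_\infty = \delta_j$ is attained at some point $x_j$, the fractional Strauss inequality (Proposition \ref{strauss}, valid for $s > \tfrac12$) gives $|\delta_j^\beta x_j|^{\frac{n-2s}{2}} = |x_j|^{\frac{n-2s}{2}} \le |u_j(x_j)| \le K_{n,s}\|u_j\|_s^2 \le \hat C$, so the rescaled maximum points $\delta_j^\beta x_j$ stay in a fixed compact set; by $C^{0,\alpha}_{loc}$-convergence, $\tilde u$ attains the value $1$ somewhere, hence $\tilde u \not\equiv 0$. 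Constant sign: if both $\tilde u^+ \not\equiv 0$ and $\tilde u^- \not\equiv 0$, testing the limiting equation with $\tilde u^\pm$ and using the interaction term $\eta_s(\tilde u) > 0$ gives, as in \eqref{nodalenerbound}, $S_s \le |\tilde u^\pm|^{2^*_s-2}_{2^*_s} - \eta_s(\tilde u)/|\tilde u^\pm|_{2^*_s}^2 < |\tilde u^\pm|^{2^*_s-2}_{2^*_s}$, so $|\tilde u|^{2^*_s}_{2^*_s} > 2S_s^{n/2s}$; but Fatou together with Lemma \ref{As:energas}(ii) gives $|\tilde u^+|^{2^*_s}_{2^*_s} \le \liminf |\tilde u_j^+|^{2^*_s}_{2^*_s} = \liminf |u_j^+|^{2^*_s}_{2^*_s} = S_s^{n/2s}$, and likewise for $\tilde u^-$ — wait, this requires care since $\tilde u^-$ need not be the rescaling of $u_j^-$ (the negative part lives on a shrinking annulus after rescaling by $\delta_j = M_{\lambda_j,+}$). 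Instead one argues directly: $|\tilde u|^{2^*_s}_{2^*_s} \le \liminf |\tilde u_j|^{2^*_s}_{2^*_s} = \liminf |u_j|^{2^*_s}_{2^*_s} = 2S_s^{n/2s}$ by Lemma \ref{eneras}, contradicting $|\tilde u|^{2^*_s}_{2^*_s} > 2S_s^{n/2s}$. Hence $\tilde u$ has constant sign; since $|\tilde u^+|_\infty = 1 = \lim |\tilde u_j^+|_\infty$ and $\tilde u_j^+ \to \tilde u^+$ locally uniformly with $\tilde u^+(0)$ possibly zero, the surviving part is $\tilde u = \tilde u^+ \ge 0$, i.e. $\tilde u \ge 0$, $\tilde u \not\equiv 0$.

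Finally, I invoke the fractional strong maximum principle. Since $\tilde u \ge 0$ solves $(-\Delta)^s \tilde u = |\tilde u|^{2^*_s-2}\tilde u = \tilde u^{2^*_s-1} \ge 0$ in $\R^n$ and $\tilde u \not\equiv 0$, the strong maximum principle — e.g. the extension version Proposition \ref{yannick} applied to the Caffarelli--Silvestre extension $E_s\tilde u$ (which is $\ge 0$ on $\R^{n+1}_+$ because $\tilde u \ge 0$ and $P_{n,s} > 0$, is nonnegative $y^{1-2s}$-harmonic, and has $-y^{1-2s}\partial_y E_s\tilde u \to d_s^{-1}(-\Delta)^s\tilde u \ge 0$) — forces $\tilde u > 0$ everywhere on $\R^n$, in particular $\tilde u(0) > 0$. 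This contradicts $\tilde u(0) = \lim \delta_j^{-1}u_j(0) = 0$. The contradiction shows the claimed uniform lower bound $|u_{s,\lambda}(0)| > C$ holds for all $\lambda$ small. The main obstacle is the careful bookkeeping in the blow-up step: verifying that the rescaled sequence stays bounded in $\mathcal{D}^s$ with limit still in $\mathcal{D}^s$, that the domain expansion does not lose the equation in the limit (handled as in Proposition \ref{u0bound} via $C^\infty_c$ test functions), and above all the nontriviality of $\tilde u$, which is exactly where the Strauss inequality — and hence the restriction $s > \tfrac12$ — is indispensable.
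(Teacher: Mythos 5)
Your overall strategy is exactly the paper's: contradiction, rescaling, Strauss inequality for nontriviality of the blow-up limit, the energy bound $|\tilde u|_{2^*_s}^{2^*_s}\le 2S_s^{n/2s}$ versus the strict inequality $>2S_s^{n/2s}$ for sign-changing entire solutions to rule out a nodal limit, and the fractional strong maximum principle to contradict $\tilde u(0)=0$ (the paper uses the Musina--Nazarov version, or alternatively identifies $\tilde u$ as a bubble; your extension-based variant is equivalent). However, as written there is one genuine gap, in the compactness step. You normalize by $\delta_j=|u_j^+|_\infty=M_{\lambda_j,+}$, so you only get $|\tilde u_j^+|_\infty=1$, while $|\tilde u_j|_\infty=\max\{1,\,M_{\lambda_j,-}/M_{\lambda_j,+}\}$. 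At this stage of the paper nothing is known about the ratio $M_{\lambda,-}/M_{\lambda,+}$ (that information only comes in Theorem \ref{mainteo2}, which is proved later and, moreover, only for solutions with exactly one sign change); a priori the negative part could dominate and $|\tilde u_j|_\infty$ could be unbounded. Then the appeal to Remark \ref{gilbcont} fails, since both the sequence and the right-hand side $\lambda_j\delta_j^{-2s\beta}\tilde u_j+|\tilde u_j|^{2^*_s-2}\tilde u_j$ must be uniformly bounded in $L^\infty$ to obtain the local H\"older estimates and the $C^{0,\alpha}_{loc}$ convergence on which the rest of your argument (nontriviality at the rescaled maximum point, $\tilde u(0)=0$, the maximum-principle contradiction) relies.

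The citation you use to paper over this, Lemma \ref{Linftyboundsc}, does not help: that lemma gives an $L^\infty$ bound uniform in $s$ for a \emph{fixed} $\lambda$, whereas here $s$ is fixed and $\lambda\to 0^+$, and indeed $|u_\lambda|_\infty\to+\infty$ by Lemma \ref{As:energas}\,(vi), so no $\lambda$-uniform bound on the unrescaled solutions exists. The fix is simply the paper's choice of normalization: rescale by the full sup norm $M_j:=|u_j|_\infty$, so that $|\tilde u_j|_\infty=1$ automatically; the scaling invariances $\|\tilde u_j\|_s=\|u_j\|_s$ and $|\tilde u_j|_{2^*_s}=|u_j|_{2^*_s}$ hold for any normalization constant, the Strauss argument locating the point where $|\tilde u_j|=1$ in a fixed compact set is unchanged, and the remainder of your proof then goes through verbatim (with the limit attaining the value $\pm1$, and without loss of generality $\tilde u\ge 0$ after the constant-sign step). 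With that correction your argument coincides with the paper's proof.
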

\begin{proof}
Let $s \in \left(\frac{1}{2}, 1\right)$, $n> 6s$. Assume that the thesis is false. Then, there exist two sequences $\lambda_k \to 0^+$, $C_k \to 0^+$ and a sequence $u_k := u_{s, \lambda_k}\subset \mathcal{M}_{s, \lambda_k; rad}$ of solutions such that $0 \leq |u_k(0)| \leq C_k$, so that $u_k(0) \to 0$ as $k \to +\infty$. 
Let us set $M_k := |u_k|_\infty$, then, by Lemma \ref{As:energas} $(vi)$, up to a further subsequence, we have $M_k \to \infty$ as $k \to \infty$.

Now consider the rescaled functions
\[
v_k(x) := \frac{1}{M_k} u_k\left( \frac{x}{M_k^{\beta}}\right), \quad x \in \R^n,
\] 
where $\beta = \frac{2}{n-2s}$. By construction we observe that $v_k(0) \to 0$ as $k \to +\infty$. Moreover, if $\tilde x_k  \in M_k^{\beta_k}B_R$ is such that $|v_k(\tilde x_k)| = 1$, then by Proposition \ref{strauss} we obtain that $\tilde x_k$ stays in a compact subset of $\R^n$. Arguing as in Proposition \ref{u0bound} we obtain that there exists $v \in \mathcal{D}^s(\R^n)$ such that $v_k \rightharpoonup v$ in $\mathcal{D}^s(\R^n)$, where $v$ weakly solves
\begin{equation}\label{nozeroorigin17}
(-\Delta)^s v = |v|^{2^*_s-2}v \quad \text{in }\R^n.
\end{equation}

As we have seen in \eqref{nodalenerbound}, if $v$ is a sign-changing solution of \eqref{nozeroorigin17} it must satisfy $|v|^{2^*_s}_{2^*_s} > 2 S_s^{\frac{n}{2s}}$. On the other hand by Fatou's Lemma and Lemma \ref{As:energas} we get that 
\[
|v|^{2^*_s}_{2^*_s} \leq \liminf_{k \to \infty}|v_k|^{2^*_s}_{2^*_s} =  2 S_{s}^{\frac{n}{2s}},
\]
and then the only possibilities are that $v$ is trivial or of constant sign. 

By a standard argument (as seen in Remark \ref{gilbcont}, but here $s$ is fixed),  since $|v_k|_\infty \leq 1$, up to a subsequence, we get that $v_k \to v$ in $C^{0,\alpha}_{loc}(\R^n)$ for some $\alpha < s$. In particular, since we have seen that $ \tilde x_k$ stays in a compact subset of $\R^n$, then, up to a subsequence, setting $\tilde x = \lim_{k \to \infty} \tilde x_k$ it follows that $|v(\tilde x)| = 1$. Hence $v$ is not trivial. Moreover we observe that by construction it holds that $v(0)=0$. 

Therefore $v$ is of constant sign, and without loss of generality we can assume that $v \geq 0$. Since $v$ solves \eqref{nozeroorigin17}, by the strong maximum principle, as stated in \cite[Corollary 4.2]{Musina}, we deduce that $v>0$ in $\R^n$ which gives a contradiction since $v(0) = 0$.

Alternatively, one can argue as follows: since $v \geq0$, we get that that $v_k^+ \to v$ a.e. and then, by Fatou's Lemma and Lemma \ref{As:energas} $(ii)$, and recalling that the norms of $v_k$ and $u_k$ are related as in \eqref{As:rescaling}, we can infer that 
\[
S_s = \frac{\|v\|^2_s}{|v|^2_{2^*_s}},
\]
i.e. $v$ achieves the infimum in the fractional Sobolev inequality. Hence $v$ is in the form \eqref{eq:bubble} so that $v>0$, which once again contradicts the fact that $v(0) = 0$. The proof is complete.
\end{proof}
\end{section}

\section{Proof of Theorem \ref{mainteocc}}
\begin{proof}[Proof of Theorem \ref{mainteocc}]
Let $u_{s, \lambda}$ be a least energy radial sign-changing solution of Problem \eqref{fracBrezis}. The existence of a number $\hat \lambda_s>0$ satisfying the first part of the theorem has been proved in Theorem \ref{twicechange}. Therefore for $0<\lambda < \hat \lambda_s$ we have that $u_{s, \lambda}$ changes sign either once or twice. For the second part of the theorem we begin with proving the following preliminary fact:\\[6pt]
\textbf{Claim:} if there exists $r', r''>0$ such that $u_{s, \lambda}(r') \cdot u_{s, \lambda}(r'')> 0$ and there is no change of sign between $r'$ and $r''$, then $u_{s, \lambda}(r) \neq 0$ for all $r \in [r', r'']$.\\[2pt]  

Indeed, assume without loss of generality that $u_{s, \lambda}(r'), u_{s, \lambda}(r'') > 0$ and $u_{s, \lambda}\geq 0$ in $(r', r'')$.  Let  $W_{s, \lambda}$ be the extension of $u_{s,\lambda}$, let $\Omega ^+$, $\Omega ^-$, $P$ and $N$ as in \eqref{nodaldom1}, \eqref{nodaldom2}. In addition let us recall that under our assumptions, $W_{s, \lambda}$ possesses exactly two nodal domains.

Arguing as in Lemma \ref{lemmatecnico} we get that there exists a Jordan curve $\gamma^+: [0,1] \to \{r\geq 0\} \times \{y \geq 0\}$ which connects $(r', 0)$ and $(r'', 0)$, such that $\gamma^+(t) \in P $ for all $t \in (0, 1)$ and without loss of generality we can assume that $\gamma_+([0,1]) \cap \{r=0\}=\emptyset$.

Then, by Jordan's curve theorem, the curve whose support is $\gamma^+([0,1]) \cup ([r', r'']\times \{0\})$ divides $\{r\geq 0\} \times \{y \geq 0\}$ in two connected regions: a bounded one which we call $A_b$ and a unbounded one $A_u$.

Assume by contradiction that there exists $r_0 \in (r', r'')$ such that $u_{s, \lambda}(r_0) = 0$, and let $B^+_\delta(r_0): = \{(r, y) \in \{r\geq0\} \times \{y > 0\}\ |  \ |(r, y)- (r_0, 0)| < \delta\}$. We claim that $B^+_\delta(r_0) \cap N \neq \emptyset$ for every $\delta >0$. Indeed, assume that this is not the case. Then there exists $\delta>0$ such that $B_\delta^+(r_0) \cap N = \emptyset$, and thus for every $(r, y) \in \overline B_\delta^+(r_0)$ it holds that $W_{s, \lambda}(r, y) \geq 0$. As a consequence of the strong maximum principle (see Proposition \ref{yannick}) we conclude that $u_{s,\lambda}(r)>0$ for every $|r-r_0| < \delta$, and in particular $u_{s, \lambda}(r_0) >0$, which contradicts the assumption on $r_0$.

Therefore, for every $\delta >0$ it holds that $B^+_\delta(r_0) \cap N \neq \emptyset$. On one hand, since $\gamma^+(t) \in P$ for all $t \in (0,1)$ and $r_0 \in (r', r'')$, there exists $\delta$ small enough such that $B^+_\delta(r_0) \subset A_b$. This implies that there exists a point $(r'_-, y'_-) \in A_b \cap N$. 
On the other hand, since $u_{s, \lambda}$ changes sign and $W_{s, \lambda} \geq 0$ in $[r', r''] \times \{0\}$, there exists $r''_- \in \{r \geq 0\} \setminus [r', r'']$ such that $u_{s, \lambda}(r''_-) <0$. Using once again the continuity of $W_{s, \lambda}$ and that $\gamma^+(t) \in P$ for all $t \in (0, 1)$, we get there exists $y''_-$ such that $(r''_-, y''_-) \in N \cap A_u$.

Therefore, being $W_{s, \lambda}$ continuous and $N$ connected, there exists a continuous path joining $(r'_-, y'_-)$ and $(r''_-, y''_-)$ which lies completely in $N$. As a consequence of the Jordan curve theorem, such path must intersect $\gamma^+$, so that $P \cap N$ is not empty, which is absurd. The claim is then proved\\

Now let us prove $(a)$. Assume that $u_{s,\lambda}$ changes sign exactly twice, and denote by $0<r_1<r_2<R$ its nodes. In order to prove the result we must show that $u_{s,\lambda}$ cannot vanish in any other point $r \in [0,R)$ different from the nodes. To this end we argue by contradiction. Assume that there exists $r_0 \in [0,R)$, $r_0\neq r_1,r_2$ such that $u_{s,\lambda}(r_0)=0$. Then, there are only three possibilities: $r_0 \in [0,r_1)$, $r_0 \in (r_1,r_2)$ or $r_0 \in (r_2,R)$. Let us show that $r_0=0$ cannot happen.

Indeed, assume by contradiction that $u_{s, \lambda} (0) = 0$. Then there exist $r'$, $r''$ such that $0 < r' <r_1 < r_2 < r'' <R$ and satisfying $u_{s, \lambda}(r') \cdot u_{s, \lambda}(r'') >0$. Without loss of generality we assume that $u_{s, \lambda}(r')>0$. This implies that there exists a simple continuous curve $\gamma^+$ which connects $(r', 0)$ and $(r'', 0)$, lying completely in $P$ except for its ending point $(r', 0)$ and $(r'', 0)$. In addition, without loss of generality, we can assume that $\gamma_+([0,1]) \cap \{r=0\}=\emptyset$. Hence, the closed simple curve whose support is given by $\gamma^+([0,1]) \cup ([r', r'']\times \{0\})$ divides $\{r \geq 0\}\times \{y \geq 0 \}$ in two regions, a bounded one which we call $A_b$ and a unbounded one $A_u$. Since there exists $r_-$ in $(r_1, r_2)$ such that $u_{s, \lambda}(r_-) <0$, thanks to the continuity of $W_{s, \lambda}$ and since $\gamma^+(t) \not \in \{r\geq0\}\times \{0\}$ when $t \in (0, 1)$, we have that there exists $y_->0$ such that $(r_-, y_-) \in A_b \cap N$. But then, since $N$ is connected, the Jordan curve theorem implies that $N \subset A_b$. 

Since $(0, 0) \in A_u$, this implies that there exists $\delta >0$ such that $\overline{B^+_\delta} \subset \R^{n+1}_+$ does not intersect $N$, i.e. $W_{s, \lambda}(x, y) \geq 0$ in $\overline B_\delta^+$. Then, we reach a contradiction as a consequence of the strong maximum principle (see Proposition \ref{yannick}). Therefore $r_0=0$ cannot happen.\\

If $r_0 \in (0,r_1)$ we can find two points  $0<r' < r_0<r''<r_1$ such that $u_{s, \lambda}(r') u_{s, \lambda}(r'')>0$ and there is no change of sign between $r'$ and $r''$. In fact, $u_{s,\lambda}$ does not change sign in $(0,r_1)$, and in addition if $u_{s, \lambda}$ were identically zero on a subset of positive measure of $(0,r_1)$, then, from \cite[Theorem 1.4]{fellifall}, we would have that $u_{s,\lambda}$ is zero everywhere. Therefore we can find $r'$ and $r''$ satisfying the above  properties and then, by using the Claim, we deduce that $u_{s,\lambda}$ cannot vanish in $(r',r'')$ which leads to a contradiction.\\

The proof of the other cases $r_0 \in (r_1,r_2)$ and $r_0 \in (r_2,R)$ is identical, and thus the proof of $(a)$ is complete.\\

For the proof of $(b)$, let $r_0 \in [0,R)$ be a zero of $u_{s,\lambda}$ different from the node $r_1$. If $r_0 \neq 0$, then  by using the Claim and arguing as before we get a contradiction and we are done. If $u_{s,\lambda}(0)=0$ we observe that since we are assuming that $u_{s,\lambda}$ changes sign exactly once we cannot exclude this possibility by using a merely topological argument as before. 
The proof of $(b)$ is then complete. 
\end{proof}

\begin{section}{Proof of Theorem \ref{mainteo}}
\setcounter{cla}{0}
\begin{proof}[Proof of Theorem \ref{mainteo}]
Let $n \geq 7$ and $R>0$. Let $\hat \lambda\left(\frac{1}{2}\right)$ be the number given by Theorem \ref{twicechange} for $s_0=\frac{1}{2}$, and let $\tilde\lambda>0$  be such that both $\tilde\lambda \leq \hat \lambda\left(\frac{1}{2}\right)$, and 
\[
\sup_{s \in \left(\frac{1}{2}, 1 \right)}\left| c_{\mathcal{M}_{s, \lambda; rad}} - 2\frac{s}{n}S^{\frac{n}{2s}}_s\right|< \frac{1}{n}S^{\frac{n}{2}}_1 \quad \forall \lambda \in (0, \tilde\lambda),
\] 
are satisfied. The existence of such a number $\tilde\lambda$ is ensured by Lemma \ref{As:energas} by taking $s_0=\frac{1}{2}$.

Let us fix $\lambda \in (0,\tilde\lambda)$. We want to prove that there exists $\overline s \in \left(\frac{1}{2}, 1\right)$ such that for every $s \in (\overline s, 1)$ any least energy radial sign-changing solutions of Problem \eqref{fracBrezis} in $B_R$, changes sign exactly once. Indeed assume by contradiction that this is not the case. Then there exists a sequence $s_k\to 1^-$ and a sequence $(u_{s_k,\lambda})_{s_k}$ of least energy radial solutions which change sign at least twice, for any $k$. For brevity we omit the subscript $k$ in the above sequences. Thanks to definition of $\tilde \lambda$, then Theorem \ref{twicechange} holds and thus $u_{s,\lambda}$ changes sign exactly twice, for any $s \in (\frac{1}{2},1)$. By Theorem \ref{sconverg} we have that $(u_{s,\lambda})_s$ converges in $C^{0, \alpha}_{loc}(B_R)$ to $u_{1,\lambda}$ for every fixed $0<\alpha < \frac{1}{2}$, where $u_{1, \lambda}$ is a weak non trivial solution of
\[
\begin{cases}
-\Delta u_{1, \lambda} = \lambda u_{1, \lambda} + |u_{1, \lambda}|^{2^*_1-2}u_{1, \lambda} &\hbox{in}\ B_R,\\
u_{1, \lambda} = 0 &\hbox{in}\ \R^n \setminus B_R.
\end{cases}
\] 

On one hand, the definition of $\tilde \lambda$ imply that
\[
I(u_{s,\lambda}) = c_\mathcal{M}(s,\lambda) < \frac{2s}{n}S^{\frac{n}{2s}}_s + \frac{1}{n}S_1^{\frac{n}{2}}
\] 
so that, passing to the limit as $s \to 1^-$ we obtain that
\[
I(u_{1,\lambda}) < \frac{3}{n}S^{\frac{n}{2}}_1.
\]
This implies (by arguing as in \cite[Theorem 1.1]{Pacella}) that $u_{1,\lambda}$ changes sign once. 
On the other hand, denoting by $r^1_s$ and $r^2_s$ the nodes of $u_{s, \lambda}$, as $s \to 1^-$, the following holds:
\begin{enumerate}[(i)]
\item $r^1_s \not \to 0$;
\item $r^1_s - r^2_s \not \to 0$;
\item $r^2_s \not \to R$.
\end{enumerate}
This, together with the $C^{0, \alpha}$-convergence in compact subsets of $B_R$, implies that $u_{1, \lambda}$ changes sign at least twice, a contradiction. Let us prove $(i)-(iii)$. Without loss of generality, we can assume that $u_{s, \lambda} \geq 0$ in a neighborhood of the origin, for any $s$. 

Property $(ii)$ is a consequence of an energetic argument. Indeed, suppose that $r^1_s -r^2_s\to 0$. Then, we readily obtain a contradiction because by Remark \ref{ss:as} and \eqref{fatoulemma22} we have 
\[
0<C \leq \left( 1 - \frac{\lambda}{ \lambda_{1,s}}\right) S_s \leq S_{s, \lambda} \leq \frac{\|u^-_{s, \lambda}\|^2_s-\lambda|u^-_{s,\lambda}|^2_2}{|u^-_{s, \lambda}|^2_{2^*_s}}\leq |u^-_{s,\lambda}|^{2^*_s-2}_{2^*_s}
\]
and then, using also Lemma \ref{Linftyboundsc}, we obtain
\[
0<C \leq \int_{B_R}|u^-_{s,\lambda}|^{2^*_s}\de x \leq \omega_n|u_{s, \lambda}|^{2^*_s}_\infty\int_{r^1_s}^{r^2_s}r^{n-1}\de r \leq C\left( (r^2_s)^n-(r_s^1)^n\right) \to 0,
\]
which is absurd. We also observe that with the same proof $r^1_s \to 0$ and $r^2_s \to R$ cannot happen at the same time.

For $(i)$, since $u_{s, \lambda} \to u_{1, \lambda}$ in $C^{0, \alpha}_{loc}(B_R)$ for any fixed $0<\alpha < \frac{1}{2}$, it holds that, for a suitable compact $K \subset B_R$ containing the origin as interior point we have
\begin{equation}\label{holdercont}
|(u_{s,\lambda}-u_{1,{\lambda}})(x) - (u_{s,\lambda}-u_{1,{\lambda}})(y)| \leq C_K|x-y|^\alpha
\end{equation}
where $C_K$ is uniformly bounded with respect to $s$ and depends on $K$. 
If we suppose that $r^1_s \to 0$ and evaluate \eqref{holdercont} in $x = r^1_s$ and $y = 0$, we obtain
\[
\left|u_{1,\lambda}(0) - u_{1, \lambda}(r^1_s) - u_{s,\lambda}(0)\right| \to 0 \quad as \quad s \to 1^-.
\]
Since $u_{1, \lambda} \in C^{0,s_0}(K)$ we get $u_{s,\lambda}(0) \to 0$. In addition, since $u_{s,\lambda} \to u_{1,\lambda}$ a.e., this implies also that $u_{1,\lambda}(0) = 0$. As a consequence of \cite[Proposition 2]{Iacopetti}, we get that $0 = |u_{1,\lambda}(0)| = |u_{1, \lambda}|_\infty$ i.e., $u_{1, \lambda} \equiv 0$. This contradicts the non triviality of $u_{1,\lambda}$, which is ensured by Theorem \ref{sconverg}.

To conclude to proof it remains to show that $r_s^2 \to R$ cannot happen. 
Since we are assuming that $u_{s,\lambda}$ changes sign twice, by Theorem \ref{subsol} in the Appendix it follows that $u_{s,\lambda} \in C^{0,s}(\R^n)$ is a weak sub-solution of
\[
(-\Delta)^s u_{s,\lambda}\leq \lambda u_{s,\lambda} + |u_{s,\lambda}|^{2^*_s-2}u_{s,\lambda} \quad \text{in }\R^n.
\]

Then, arguing as in the proof of Theorem \ref{sconverg}, taking the limit as $s \to 1^-$, there exists $u_{1,\lambda} \in X_0^1(B_R)$ such that $u_{s,\lambda} \to u_{1,\lambda}$ in $L^2(\R^n)$, $u_{s,\lambda} \to u_{1,\lambda}$ in $C^{0, \alpha}_{loc}(B_R)$, and for every $\varphi \in C^\infty_c(\R^n)$ such that $\varphi \geq 0$, 
\begin{equation}\label{relazsubsol}
\int_{\R^n} \nabla u_{1,\lambda}\cdot \nabla \varphi \de x \leq \int_{\R^n}(\lambda u_{1,\lambda} + |u_{1,\lambda}|^{2^*_1-2}u_{1,\lambda}) \varphi \de x.
\end{equation}

Suppose now that $r^2_s \to R$ as $s \to 1^-$. Then, in view of $(ii)$, there exists $\delta >0$ such that on the set $\mathcal{I} = \{ x \in \R^n \ |\ R-\delta \leq |x| \leq R+\delta\}$ it holds that $u_{1,\lambda} \leq 0$.
Taking $\varphi \in C^\infty_c(\mathcal{I})$, $\varphi \geq 0$, from \eqref{relazsubsol} we readily get 
\[
\begin{cases}
(-\Delta) u_{1,\lambda} \leq 0 & \hbox{in}\ \mathcal I, \\
u \leq 0 & \hbox{on}\ \partial \mathcal{I},
\end{cases}
\]   
and then, by the strong maximum principle either $u <0$ or $u \equiv 0$ in $\mathcal{I}$, but this is absurd since it holds that $u_{1,\lambda}< 0$ in $R-\delta \leq |x|<R$ and $u_{1,\lambda} \equiv 0 $ in $R < |x| \leq R+\delta$. The proof is then complete. 
\end{proof}
\end{section}

\setcounter{cla}{0}

\begin{section}{Proof of Theorem \ref{mainteo2}}
In this section we study the asymptotic behavior of least energy radial sign-changing solutions of Problem \eqref{fracBrezis} in $B_R$ as $\lambda\to 0^+$. Theorem \ref{mainteo2} will be a consequence of the following results. 
 
Under the hypotheses of Theorem \ref{mainteo2} we set $M_{\lambda, \pm} := |u^\pm_\lambda|_\infty$, $\beta := \frac{2}{n-2s}$, and we denote by
\[
\begin{aligned}
t_\lambda&: = \max\{ t \in [0, R) \ | \ u_\lambda(t) = M_{\lambda, +}\}, \\
 r_\lambda &:=\hbox{the node of} \ u_\lambda,\\
\tau_\lambda& := \max\{ t \in (0, R) \ | \ u_\lambda(t) = -M_{\lambda, -}\}.
\end{aligned}
\]
Let us observe that since $u_\lambda$ changes sign once it holds that $t_\lambda < r_\lambda <\tau_\lambda$. Let us consider also the following quantities:
\[
Q_\lambda := \frac{M_{\lambda,+}}{M_{\lambda,-}}, \quad \sigma_\lambda := M_{\lambda,+}^\beta r_\lambda
\]

By Lemma \ref{As:energas} we already know that as $\lambda \to 0^+$, we have $M_{\lambda, \pm} \to +\infty$. The following result states the asymptotic behavior of the quantities $t_\lambda$, $r_\lambda$, $\tau_\lambda$, as $\lambda \to 0^+$. 
\begin{lemma}
We have that $t_\lambda$, $r_\lambda$, $\tau_\lambda \to 0$ as $\lambda \to 0^+$. 
\end{lemma}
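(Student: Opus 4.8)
The plan is to argue by contradiction, assuming that along some sequence $\lambda \to 0^+$ one of the three points $t_\lambda$, $r_\lambda$, $\tau_\lambda$ stays bounded away from zero, and to derive a contradiction with the energy asymptotics of Lemma \ref{As:energas}. Since $t_\lambda < r_\lambda < \tau_\lambda$, it suffices to rule out $t_\lambda \not\to 0$ in the ``innermost'' argument, but in fact the cleanest route is to show directly that $r_\lambda \to 0$ (which forces $t_\lambda \to 0$ since $t_\lambda < r_\lambda$) and then handle $\tau_\lambda$ separately.

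First I would show $t_\lambda \to 0$. Suppose not; then $t_\lambda \geq \delta_0 > 0$ along a subsequence. Consider the rescaled functions $\tilde u_\lambda^+(x) := M_{\lambda,+}^{-1} u_\lambda^+\!\left(x/M_{\lambda,+}^\beta\right)$, which are supported in $B_{M_{\lambda,+}^\beta R}$ and satisfy $|\tilde u_\lambda^+|_\infty = 1$, attained at $|x| = M_{\lambda,+}^\beta t_\lambda \geq M_{\lambda,+}^\beta \delta_0 \to +\infty$ (using $M_{\lambda,+} \to +\infty$ from Lemma \ref{As:energas}). Using the uniform bound $\|\tilde u_\lambda^+\|_s^2 = \|u_\lambda^+\|_s^2 \to S_s^{n/2s}$ (by \eqref{As:rescaling} and Lemma \ref{As:energas}(i)) together with the fractional Strauss inequality (Proposition \ref{strauss}, valid since $s > \tfrac12$), the point where the maximum is attained satisfies $(M_{\lambda,+}^\beta t_\lambda)^{(n-2s)/2} \leq K_{n,s}\|u_\lambda^+\|_s^2 \leq C$, contradicting $M_{\lambda,+}^\beta t_\lambda \to +\infty$. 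Hence $t_\lambda \to 0$.

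Next, $r_\lambda \to 0$. Suppose $r_\lambda \geq \delta_1 > 0$ along a subsequence. Since $u_\lambda \geq 0$ on $(0, r_\lambda) \supset (0, \delta_1)$ and $u_\lambda$ changes sign exactly once, the positive part $u_\lambda^+$ is supported (radially) in $[0, r_\lambda)$. Rescale as above: $\tilde u_\lambda^+$ is supported in $B_{M_{\lambda,+}^\beta r_\lambda}$ with $M_{\lambda,+}^\beta r_\lambda \to +\infty$. By the $C^{0,\alpha}_{loc}$ convergence argument as in Proposition \ref{u0bound} (here $s$ is fixed), $\tilde u_\lambda^+ \to v$ in $C^{0,\alpha}_{loc}(\R^n)$ where $v \geq 0$ weakly solves $(-\Delta)^s v = v^{2_s^*-1}$ in $\R^n$, and $v \not\equiv 0$ because the maximum $1$ is attained at a point staying in a compact set (again by Strauss). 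By Fatou and Lemma \ref{As:energas}(ii), $|v|_{2_s^*}^{2_s^*} \leq \liminf |\tilde u_\lambda^+|_{2_s^*}^{2_s^*} = S_s^{n/2s}$; since $v$ is a nontrivial nonnegative entire solution it must equal a standard bubble $U_{0,\mu}$ (by the classification recalled after \eqref{nonlocSob}), so $|v|_{2_s^*}^{2_s^*} = S_s^{n/2s}$ and $\|v\|_s^2 = S_s^{n/2s}$; in particular $\tilde u_\lambda^+ \to v$ strongly in $\mathcal D^s(\R^n)$. This forces all the ``mass'' of $u_\lambda^+$ to concentrate at a single bubble located where the maximum is; but the constraint $M_{\lambda,+}^\beta r_\lambda \to +\infty$ is then compatible only if one checks the corresponding statement for $u_\lambda^-$ fails — more directly: with $r_\lambda \geq \delta_1$ fixed, rescale $u_\lambda^-$ by $M_{\lambda,-}$ around $\tau_\lambda$; the negative part is supported in $(r_\lambda, R)$, an annulus of fixed inner radius $\geq \delta_1$, so one obtains $\int_{r_\lambda}^{R} r^{n-1}\,dr \geq c > 0$, hence $|u_\lambda^-|_{2_s^*}^{2_s^*} \leq \omega_n |u_\lambda^-|_\infty^{2_s^*}(R^n - \delta_1^n)$, which does NOT immediately contradict anything. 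I expect the cleanest contradiction here to instead use that strong $\mathcal D^s$-convergence of $\tilde u_\lambda^+$ to a bubble, combined with $\|u_\lambda^+\|_s^2 + \|u_\lambda^-\|_s^2 + 4\eta_s(u_\lambda) = \|u_\lambda\|_s^2 \to 2S_s^{n/2s}$, pins the bubble's scale $\mu$ and forces $M_{\lambda,+}^\beta r_\lambda$ to converge to the (finite) radius at which the rescaled node sits — contradicting $M_{\lambda,+}^\beta r_\lambda \to +\infty$ unless $r_\lambda \to 0$.

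Finally, $\tau_\lambda \to 0$. Suppose $\tau_\lambda \geq \delta_2 > 0$ along a subsequence. Now rescale the negative part by $M_{\lambda,-}$: $\tilde u_\lambda^-(x) := M_{\lambda,-}^{-1} u_\lambda^-\!\left(x/M_{\lambda,-}^{\beta'}\right)$ with $\beta' = \tfrac{2}{n-2s}$; this has $L^\infty$-norm $1$ attained at $|x| = M_{\lambda,-}^{\beta'}\tau_\lambda \to +\infty$ (since $M_{\lambda,-}\to+\infty$), and $\|\tilde u_\lambda^-\|_s^2 = \|u_\lambda^-\|_s^2 \to S_s^{n/2s}$; the Strauss inequality then gives $(M_{\lambda,-}^{\beta'}\tau_\lambda)^{(n-2s)/2} \leq K_{n,s}\|u_\lambda^-\|_s^2 \leq C$, a contradiction. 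Hence $\tau_\lambda \to 0$, which completes the proof. The main obstacle is the middle step ($r_\lambda \to 0$): unlike $t_\lambda$ and $\tau_\lambda$, the node $r_\lambda$ is not a point where a rescaled function achieves its sup-norm, so one cannot invoke Strauss directly; one must instead combine the concentration/bubble-classification of $u_\lambda^+$ with the energy splitting to locate the rescaled node and derive the contradiction, being careful that the nonlocal interaction term $\eta_s(u_\lambda) \to 0$ (Lemma \ref{As:energas}(iv)) so it does not interfere with the mass accounting.
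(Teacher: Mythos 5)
Your last step is, in substance, the paper's entire proof: evaluating the Strauss inequality \eqref{straussineq} at a point $x_0$ with $|x_0|=\tau_\lambda$, where $|u_\lambda(x_0)|=M_{\lambda,-}$, gives $\tau_\lambda^{\frac{n-2s}{2}}M_{\lambda,-}\leq K_{n,s}\|u_\lambda\|_s^2\leq C$ (the uniform energy bound coming from Lemma \ref{As:energas}), and since $M_{\lambda,-}\to+\infty$ this forces $\tau_\lambda\to 0$; your rescaled formulation is identical, because with $\beta=\frac{2}{n-2s}$ one has $(M_{\lambda,-}^{\beta}\tau_\lambda)^{\frac{n-2s}{2}}=M_{\lambda,-}\,\tau_\lambda^{\frac{n-2s}{2}}$. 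What you missed is that this step alone finishes the lemma: since $0\leq t_\lambda<r_\lambda<\tau_\lambda$, the convergence $\tau_\lambda\to0$ immediately yields $t_\lambda,r_\lambda\to0$, which is exactly how the paper argues (``it suffices to prove that $\tau_\lambda\to0$'').

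Because of this, your first step (for $t_\lambda$) is correct but redundant, and your middle step for $r_\lambda$ is both unnecessary and, as written, not a proof. You acknowledge the gap yourself, and in fact the strategy you sketch there cannot be repaired: you hope to contradict $M_{\lambda,+}^\beta r_\lambda\to+\infty$ by showing that the rescaled node must sit at a finite radius of the limiting bubble, but the rescaled node $\sigma_\lambda=M_{\lambda,+}^\beta r_\lambda$ does tend to $+\infty$ as $\lambda\to0^+$ (this is precisely point $ii)$ of Lemma \ref{asintoticaraggi}, proved later in the same section), so no contradiction can be extracted from that quantity alone; the bubble concentrates at a scale much smaller than $r_\lambda$, and the correct control of $r_\lambda$ is simply the monotone comparison $r_\lambda<\tau_\lambda$. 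Once the superfluous and flawed middle step is removed, your argument coincides with the paper's.
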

\begin{proof}
Since $0\leq t_\lambda < r_\lambda< \tau_\lambda$, it suffices to prove that $\tau_\lambda\to 0$ as $\lambda \to 0^+$. Evaluating inequality \eqref{straussineq} in a point $x_0$ such that $|x_0| = \tau_\lambda$ we get that
\[
M_{\lambda,-} \leq C\|u_\lambda\|^2\frac{1}{\tau_\lambda^{\frac{n-2s}{2}}} \leq C\frac{1}{\tau_\lambda^{\frac{n-2s}{2}}},
\]
where the uniform bound on the Gagliardo norm is a consequence of Lemma \ref{As:energas}.
Since $M_{\lambda,-}\to + \infty$ we obtain the desired result.
\end{proof}

The following result concerns the asymptotic behavior of $Q_\lambda$ and $\sigma_\lambda$. 

\begin{lemma}\label{asintoticaraggi}
Up to a subsequence, as $\lambda \to 0^+$, we have
\begin{enumerate}[i)]
\item $ Q_\lambda \to +\infty $,
\item $ \sigma_\lambda \to +\infty$. 
\end{enumerate}

\end{lemma}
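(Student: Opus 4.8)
The plan is to prove each claim by contradiction, rescaling $u_\lambda$ around the origin at the rates dictated by $M_{\lambda,-}$ (for item~i)) and by $M_{\lambda,+}$ (for item~ii)), and passing to a limiting entire solution $U$ of the critical equation $(-\Delta)^s U=|U|^{2^*_s-2}U$ in $\R^n$. The structural fact that drives everything is that, by the energy asymptotics of Lemma \ref{As:energas} together with the strict inequality \eqref{nodalenerbound}, such a limit cannot be sign-changing; being nontrivial and of constant sign, it is then strictly signed — either by the fractional strong maximum principle \cite[Corollary 4.2]{Musina} or because it realizes the equality in \eqref{straussineq}, hence is a standard bubble — and in particular never vanishes.

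For item~i): suppose $Q_\lambda\not\to+\infty$, so that $Q_\lambda\le C$ along a subsequence. Set $z_\lambda(x):=M_{\lambda,-}^{-1}u_\lambda(M_{\lambda,-}^{-\beta}x)$; then $|z_\lambda|_\infty=\max\{Q_\lambda,1\}$ is uniformly bounded and $z_\lambda$ weakly solves $(-\Delta)^s z_\lambda=\lambda M_{\lambda,-}^{-2s\beta}z_\lambda+|z_\lambda|^{2^*_s-2}z_\lambda$ in $B_{M_{\lambda,-}^\beta R}$, with uniformly bounded right-hand side. By Remark \ref{gilbcont} (here $s$ is fixed), up to a subsequence $z_\lambda\to z$ in $C^{0,\alpha}_{loc}(\R^n)$, and, exactly as in Proposition \ref{u0bound}, $z\in\mathcal{D}^s(\R^n)$ weakly solves the critical equation. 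Evaluating \eqref{straussineq} at a point of modulus $\tau_\lambda$ and using the uniform bound on $\|u_\lambda\|_s$ from Lemma \ref{As:energas}, the rescaled radii $M_{\lambda,-}^\beta\tau_\lambda$ stay in a compact set, so $z$ attains the value $-1$; hence $z\not\equiv0$ and $z$ is negative somewhere. By Fatou's Lemma and Lemma \ref{As:energas}, $|z|^{2^*_s}_{2^*_s}\le 2S_s^{\frac{n}{2s}}$, which by \eqref{nodalenerbound} forbids $z$ from being sign-changing; thus $z\le0$, $z\not\equiv0$, and the strong maximum principle yields $z<0$ in $\R^n$, in particular $z(0)<0$. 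But $z_\lambda(0)=M_{\lambda,-}^{-1}u_\lambda(0)\ge0$ (recall $u_\lambda\ge0$ near the origin) and $z_\lambda(0)\to z(0)$, a contradiction. Hence $Q_\lambda\to+\infty$.

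For item~ii): consider $\tilde u_\lambda(x):=M_{\lambda,+}^{-1}u_\lambda(M_{\lambda,+}^{-\beta}x)$, whose positive part is the function $\tilde u^+_\lambda$ of Theorem \ref{mainteo2}. By item~i), $M_{\lambda,-}/M_{\lambda,+}=Q_\lambda^{-1}\to0$, so $|\tilde u_\lambda|_\infty=\max\{1,Q_\lambda^{-1}\}\to1$ and the family is uniformly bounded; arguing as above, $\tilde u_\lambda\to U$ in $C^{0,\alpha}_{loc}(\R^n)$ with $U\in\mathcal{D}^s(\R^n)$ a weak solution of the critical equation, and \eqref{straussineq} evaluated at the modulus $t_\lambda$ forces $U$ to attain the value $1$, so $U\not\equiv0$. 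As before $U$ cannot change sign, hence $U\ge0$, $U\not\equiv0$, and $U>0$ in $\R^n$. Now assume $\sigma_\lambda\not\to+\infty$; along a subsequence $\sigma_\lambda\to\sigma_0\in[0,+\infty)$. Since $\tilde u_\lambda$ vanishes at radius $\sigma_\lambda$ (because $u_\lambda(r_\lambda)=0$) and $\tilde u_\lambda\to U$ locally uniformly, $U$ vanishes at radius $\sigma_0$, contradicting $U>0$. Hence $\sigma_\lambda\to+\infty$.

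The routine ingredients — the rescaling identities, the passage to the limit in the weak formulation, and the membership of the limit in $\mathcal{D}^s(\R^n)$ (via Fatou applied to the Gagliardo seminorm, with $C_{n,s}$ fixed since $s$ is) — are exactly those already used in Propositions \ref{u0bound} and \ref{nozeroorigin}, and I would only sketch them. The main obstacle is essentially organizational: item~i) must be proved first, since it is what keeps the $M_{\lambda,+}$-rescaling used for item~ii) uniformly bounded (otherwise the negative part of $u_\lambda$ could blow up in that scaling and the compactness argument would collapse); and the contradiction in item~i) relies on combining the sign information $u_\lambda(0)\ge0$ with the strict energy gap \eqref{nodalenerbound} that excludes a sign-changing limit.
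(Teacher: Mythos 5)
Your proposal is correct, but it takes a genuinely different route from the paper's proof. The paper never rescales at the scale of the negative part and never invokes a maximum principle here: it first rules out $\sigma_\lambda\to 0$ by integrating $|u^+_\lambda|^{2^*_s}$ over $B_{r_\lambda}$, then rules out $\sigma_\lambda\to L\in(0,+\infty)$ (when $Q_\lambda$ has a nonzero, possibly infinite, limit) by showing that the limit of the rescaled positive parts $\tilde u^+_\lambda$ would be a minimizer for $S_s$ supported in the bounded ball $B_L$, contradicting the non-attainment of the Sobolev constant on bounded domains \cite[Theorem 1.1]{CotTav}; it then uses the Strauss bounds (the same ones you use) to keep $M^\beta_{\lambda,-}\tau_\lambda$ and $M^\beta_{\lambda,+}t_\lambda$ bounded, and concludes i) and ii) by a short case analysis on the possible limits of $Q_\lambda$. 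You instead obtain i) in one stroke by blowing up at the scale $M_{\lambda,-}$: the limit $z$ is nontrivial because it attains $-1$ at a point localized by \eqref{straussineq}, it cannot change sign by Fatou together with the strict gap \eqref{nodalenerbound} and Lemma \ref{As:energas}, hence it is strictly negative by the fractional strong maximum principle, which is incompatible with $z_\lambda(0)=M_{\lambda,-}^{-1}u_\lambda(0)\ge 0$; and you obtain ii) by transporting the node $r_\lambda$ to the limit of the full $M_{\lambda,+}$-rescaling, whose limit is strictly positive. What each approach buys: yours is shorter, avoids both the case analysis and the Cotsiolis--Tavoularis non-attainment argument, at the price of using the sign normalization at the origin and the strong maximum principle (tools the paper does employ elsewhere, e.g.\ in Proposition \ref{nozeroorigin}, so nothing new is needed); the paper's argument is purely energetic, makes no use of the value of $u_\lambda$ at the origin nor of any positivity of blow-up limits, and isolates the auxiliary facts on $M^\beta_{\lambda,\pm}t_\lambda,\tau_\lambda$ in a form that feeds directly into the subsequent bubble-profile proposition. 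Both proofs ultimately rest on the same two quantitative inputs, the energy splitting of Lemma \ref{As:energas} and the Strauss inequality \eqref{straussineq}, so your route is a legitimate and somewhat more economical alternative.
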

\begin{proof} The proof is divided in two steps. 
\begin{cla}\label{As:primocaso} The following facts hold:
\begin{enumerate}[(a)]
\item $\sigma_\lambda \to 0$ cannot happen;
\item if either $Q_\lambda \to l \in \R^+\setminus \{0\}$ or $Q_\lambda \to +\infty$, then $\sigma_\lambda \to L \in (0, +\infty)$ cannot happen.
\end{enumerate}
\end{cla}

Property $(a)$ is a straightforward consequence of Lemma \ref{As:energas}. Indeed, assume by contradiction that $\sigma_\lambda \to 0$, then
\[
\begin{aligned}
|u_\lambda^+|_{2^*_s}^{2^*_s} =&\ \int_{B_{r_\lambda}}|u^+_\lambda|^{2^*_s}\de x = \omega_n \int_0^{r_\lambda}|u^+_\lambda(\rho)|^{2^*_s}\rho^{n-1}\de \rho \\
\leq&\ \omega_n (M_{\lambda,+})^{n\beta} \int_0^{r_\lambda}\rho^{n-1}\de \rho = \frac{\omega_n}{n} (M_{\lambda,+}^\beta r_\lambda)^n \to 0,
\end{aligned}
\]
and this is absurd since by Lemma \ref{As:energas} we have $|u_\lambda^+|_{2^*_s}^{2^*_s} \to S_s^{\frac{n}{2s}}$, and $(a)$ is proved.

For $(b)$, let us consider the rescaled functions
\begin{equation}\label{riscalglob}
\tilde u_\lambda (x) = \frac{1}{M_{\lambda,+}} u_\lambda\left(\frac{x}{M_{\lambda,+}^\beta}\right).
\end{equation}
By the assumption on $Q_\lambda$, we have that $\tilde u_\lambda$ is definitely uniformly bounded in $L^\infty$. Moreover $\tilde u_\lambda$ weakly solves the problem
\[
\begin{cases}
(-\Delta)^s u = \frac{\lambda}{M_{\lambda,+}^{2^*_s-2}}u + |u|^{2^*_s-2}u & B_{M^\beta_{\lambda,+}R}\\
u=0 & \R^n \setminus B_{M^\beta_{\lambda,+}R}
\end{cases}
\]
and then, by Remark \ref{gilbcont}, we have that there exists $\tilde u_0$ such that $\tilde u_\lambda \to \tilde u_0$ in $C_{loc}^{0,\alpha}(\R^n)$ for every $\alpha <s$. Suppose by contradiction that $\sigma_\lambda \to L$. Since $M^\beta_{\lambda, +}t_\lambda \leq \sigma_\lambda$, this implies that up to a subsequence, there exists $\tilde x$ such that $|\tilde x| \leq L$ and $\tilde u(\tilde x) = 1$. Together with the $C^{0,\alpha}_{loc}$ convergence this implies that $\tilde u \not \equiv 0$.

Let now $\tilde u^+_\lambda$ be the rescaling of $u^+_\lambda$. Since $\sigma_\lambda \to L$, there exists $\bar R$ such that $\tilde u^+_\lambda \in X^s_0(B_{\bar R})$ for all sufficiently small $\lambda>0$. Moreover by Lemma \ref{As:energas} (noticing that also $\tilde u^+_\lambda$ and $u^+_\lambda$ satisfy relations in the form \eqref{As:rescaling}), we have that $\|\tilde u^+_\lambda \|_s^2 = \|u^+_\lambda\|_s^2 \to S_s^{\frac{n}{2s}}$ so that $(\tilde u_\lambda^+)$ is a bounded sequence in $X^s_0(B_{\bar R})$. Hence, there exists $u_* \in X^s_0(B_{\overline R})$ such that $\tilde u^+_\lambda \rightharpoonup u_*$ in $X^s_0(B_{\overline R})$ and $\tilde u^+_\lambda \to u_*$ almost everywhere.  But then $u_* = u_0^+ \geq 0$ and $u_* \in X^s_0(B_L)$. Moreover, there exists $\rho  \in [0, L)$ such that for every $|x| = \rho$ then $u_*(x) = 1 $, so that $u_* \not \equiv 0$.

Since $|\tilde u^+_\lambda|_\infty \leq 1 $ and $\text{supp }\tilde u^+_\lambda \subset B_{\overline R}$, by Lebesgue's dominated convergence theorem we get that $\tilde u^+_\lambda \to u_*$ strongly in $L^{2^*_s}$. Using this and Fatou's Lemma we obtain
\[
S_s \leq \frac{\|u_*\|_s^2}{|u_*|^2_{2^*_s}} \leq \liminf_{\lambda \to 0^+} \frac{\|\tilde u^+_\lambda\|_s^2}{|\tilde u^+_\lambda|^2_{2^*_s}} = S_s,
\]
i.e. $u_*$ realizes the infimum $S_s$ despite being supported on a bounded domain $B_L$ and thus contradicting \cite[Theorem 1.1]{CotTav}.

\begin{cla}\label{As:terzocaso}
The following holds:
\begin{enumerate}[(a)]
\setcounter{enumi}{2}
\item $M_{\lambda_-}^\beta \tau_\lambda \to +\infty$ cannot happen.
\item $M_{\lambda,+}^\beta t_\lambda \to +\infty$ cannot happen.
\end{enumerate}

Since the proofs are identical, we show only $(c)$. Since by Lemma \ref{As:energas} we have that $(u_\lambda)$ is a bounded sequence in $\mathcal{D}^s(\R^n)$, evaluating \eqref{straussineq} in a point $x_0$ such that $|x_0| = \tau_\lambda$ we get that
\[
(M^\beta_{\lambda_-}\tau_\lambda)^{\frac{n-2s}{2}} = \tau_\lambda^{\frac{n-2s}{2}}M_{\lambda,-} = |x_0|^{\frac{n-2s}{2}}|u_\lambda(x_0)|\leq \sup_{x \in \R^n \setminus\{0\}}|x|^{\frac{n-2s}{2}}|u_\lambda(x)|\leq K_{n,s}\|u_\lambda\|_s^2 \leq C,
\]
which proves the claim.
\end{cla}

Now we can prove i). Since $Q_\lambda >0$, up to a subsequence, as $\lambda \to 0^+$ we have that $Q_\lambda \to l \in [0, +\infty]$. Suppose that $Q_\lambda \to 0$. Since by Step \ref{As:terzocaso} we have that $M^\beta_{\lambda, -}\tau_\lambda \not \to +\infty$ we get that
\[
0 \leftarrow \left(Q_\lambda\right)^\beta M^\beta_{\lambda,-}\tau_\lambda = M^\beta_{\lambda,+}\tau_\lambda \geq  \sigma_\lambda \geq 0,
\]
which is impossible by $(a)$. Assume now $Q_\lambda \to l \in (0, +\infty)$. By Step \ref{As:primocaso} this implies that $\sigma_\lambda \to +\infty$, but then
\[
+\infty \leftarrow \left(\frac{1}{Q_\lambda}\right)^\beta \sigma_\lambda = M^\beta_{\lambda,-}r_\lambda \leq M^\beta_{\lambda,-}\tau_\lambda,
\]
and this is impossible by Step \ref{As:terzocaso}. Therefore the only possibility is $Q_\lambda \to +\infty$, and $i)$ is proved. For $ii)$, we observe that $i)$ and Step \ref{As:primocaso} imply that, up to a subsequence, the only possibility is $\sigma_\lambda \to +\infty$, as $\lambda \to 0^+$. The proof is complete.
\end{proof}

\begin{prop}
Under the hypotheses of Theorem \ref{mainteo2}, up to a subsequence, as $\lambda \to 0^+$ we have that the function
\[
\tilde u^+_\lambda (x) = \frac{1}{M_{\lambda,+}}u^+_\lambda\left( \frac{x}{M_{\lambda,+}^\beta}\right), 
\]
converges to $U(x) = k_{\hat \mu} \frac{\hat \mu^{n-2s}}{(\hat \mu^2 + |x|^2)^{\frac{n-2s}{2}}}$ in $C^{0, \alpha}_{loc}(\R^n)$, for every fixed $\alpha \in (0,s)$, and strongly in $\mathcal{D}^s(\R^n)$, where
\[
\hat \mu = S_s^{\frac{1}{2s}}\left(\int_{\R^n}\frac{1}{(1+|x|^2)^n}\de x\right)^{-\frac{1}{n}},
\]
and $k_{\hat \mu}$ is as in \eqref{optbubbcost}.
\end{prop}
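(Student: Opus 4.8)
The plan is to perform a blow-up analysis of the \emph{full} solution $u_\lambda$ at scale $M_{\lambda,+}^{-\beta}$, and then to identify the limit by matching the energy levels supplied by Lemma \ref{As:energas} and Lemma \ref{asintoticaraggi} with the classification of extremals of $S_s$ recalled in Section \ref{SectionIntro}. First I would set $\tilde u_\lambda(x):=\frac{1}{M_{\lambda,+}}u_\lambda\bigl(M_{\lambda,+}^{-\beta}x\bigr)$. As in the proof of Lemma \ref{Linftyboundsc}, $\tilde u_\lambda$ weakly solves $(-\Delta)^s\tilde u_\lambda=\lambda M_{\lambda,+}^{-(2^*_s-2)}\tilde u_\lambda+|\tilde u_\lambda|^{2^*_s-2}\tilde u_\lambda$ in $B_{M_{\lambda,+}^{\beta}R}$, vanishes outside this ball, and, since $\beta=\frac{2}{n-2s}$, satisfies the scale-invariance identities $\|\tilde u_\lambda\|_s^2=\|u_\lambda\|_s^2$, $\|\tilde u_\lambda^{\pm}\|_s^2=\|u_\lambda^{\pm}\|_s^2$ and $|\tilde u_\lambda^{\pm}|_{2^*_s}^{2^*_s}=|u_\lambda^{\pm}|_{2^*_s}^{2^*_s}$. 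Because $Q_\lambda\to+\infty$ by Lemma \ref{asintoticaraggi}, we have $|\tilde u_\lambda^-|_\infty=1/Q_\lambda\to 0$, hence $|\tilde u_\lambda|_\infty=1$ for $\lambda$ small; moreover $\lambda M_{\lambda,+}^{-(2^*_s-2)}\to 0$ and $M_{\lambda,+}^{\beta}R\to+\infty$ by Lemma \ref{As:energas}. Using this uniform $L^\infty$ bound and the local regularity/compactness argument of Remark \ref{gilbcont} (here $s$ is fixed, so this reduces to the elementary local version, exactly as in the proof of Proposition \ref{nozeroorigin}), up to a subsequence $\tilde u_\lambda\to\tilde U$ in $C^{0,\alpha}_{loc}(\R^n)$ for every $\alpha\in(0,s)$, where $\tilde U\in\mathcal{D}^s(\R^n)$ (the weak limit of a bounded family in $\mathcal{D}^s(\R^n)$) weakly solves $(-\Delta)^s\tilde U=|\tilde U|^{2^*_s-2}\tilde U$ in $\R^n$.

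From $|\tilde u_\lambda^-|_\infty\to 0$ we get $\tilde U\ge 0$, so $\tilde u_\lambda^+\to\tilde U$ locally uniformly; and $\tilde U\not\equiv 0$, because evaluating the Strauss inequality \eqref{straussineq} at a point where $\tilde u_\lambda^+$ attains its maximum $1$ and using that $\|\tilde u_\lambda\|_s^2=\|u_\lambda\|_s^2$ stays bounded shows that such a maximum point remains in a fixed compact set, so the local uniform convergence yields a point at which $\tilde U=1$. Next I would identify $\tilde U$. By the scaling identities and Lemma \ref{As:energas}(i)-(ii), $\|\tilde u_\lambda^+\|_s^2\to S_s^{n/2s}$ and $|\tilde u_\lambda^+|_{2^*_s}^{2^*_s}\to S_s^{n/2s}$; hence by weak lower semicontinuity of $\|\cdot\|_s$ and Fatou's lemma, $\|\tilde U\|_s^2\le S_s^{n/2s}$ and $|\tilde U|_{2^*_s}^{2^*_s}\le S_s^{n/2s}$. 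Testing the limit equation with $\tilde U$ gives $\|\tilde U\|_s^2=|\tilde U|_{2^*_s}^{2^*_s}$, so from the definition of $S_s$ we get $S_s\le \|\tilde U\|_s^2/|\tilde U|_{2^*_s}^2=|\tilde U|_{2^*_s}^{2^*_s-2}\le S_s$; therefore all these inequalities are equalities, $\|\tilde U\|_s^2=|\tilde U|_{2^*_s}^{2^*_s}=S_s^{n/2s}$, and $\tilde U$ realizes $S_s$.

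By the classification of extremals of $S_s$, $\tilde U$ coincides with a standard bubble $U_{x_0,\mu}$; the radial symmetry of $\tilde u_\lambda$ (hence of $\tilde U$) forces $x_0=0$, and since $\tilde U\le 1$ everywhere with $\tilde U(0)=\max_{\R^n}\tilde U=1$ (the radial bubble being strictly decreasing), the parameter $\mu$ is uniquely determined and equals the $\hat\mu$ in the statement, so $\tilde U=U$. Finally, $\tilde u_\lambda^+\rightharpoonup\tilde U$ in the Hilbert space $\mathcal{D}^s(\R^n)$ together with $\|\tilde u_\lambda^+\|_s\to\|\tilde U\|_s$ upgrades to strong convergence $\tilde u_\lambda^+\to U$ in $\mathcal{D}^s(\R^n)$, while the $C^{0,\alpha}_{loc}$ statement has already been obtained.

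I expect the genuine difficulty to lie in the compactness of the blow-up, i.e.\ in showing that $\tilde U$ is nontrivial and nonnegative so that no energy escapes to infinity or is split among several profiles: this is precisely where the a priori asymptotics $Q_\lambda\to+\infty$ and $\sigma_\lambda\to+\infty$ from Lemma \ref{asintoticaraggi} and the radial Strauss inequality (hence the standing hypothesis $s>\frac{1}{2}$) enter in an essential way. Once nontriviality and the sign are secured, the sharp energy level $S_s^{n/2s}$ forces the limit to be exactly one standard bubble, and the remaining steps are routine.
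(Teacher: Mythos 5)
Your proposal is correct and follows essentially the same route as the paper: rescaling of the full solution, sign and nontriviality of the limit via $Q_\lambda\to+\infty$ and the Strauss inequality, $C^{0,\alpha}_{loc}$ convergence by the uniform $L^\infty$ bound and local regularity, identification of the limit as an extremal of $S_s$ by matching the energy level $S_s^{n/2s}$ (Fatou plus testing the limit equation), and then the classification of extremals together with radiality and the normalization at the origin. The only cosmetic difference is that the paper invokes the Brezis--Lieb lemma to get strong $L^{2^*_s}$ convergence before concluding minimality, whereas you close the equality chain directly and make the strong $\mathcal{D}^s(\R^n)$ convergence explicit via weak convergence plus norm convergence, which is a welcome detail.
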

\begin{proof}
Let $\tilde u_\lambda$ be the rescaling defined in \eqref{riscalglob}. Notice that also in this case the norms of $\tilde u_\lambda$ and $u_\lambda$ are related as in \eqref{As:rescaling}. Then we get that $(\tilde u_\lambda)$ is a bounded sequence in $\mathcal{D}^s(\R^n)$ by Lemma \ref{As:energas}. Hence, up to a subsequence, $\tilde u_\lambda$ weakly converges to $\tilde u_0$ in $\mathcal{D}^s(\R^n)$, strongly in $L^p_{loc}(\R^n)$ for every $p \in (1, 2^*_s)$ and also almost everywhere in $\R^n$. The same holds for $\tilde u_\lambda^\pm$, and in particular $\tilde u_\lambda^\pm \to \tilde u^\pm_0$ a.e. As a consequence of Lemma \ref{asintoticaraggi} we have that $\tilde u^+_\lambda \to \tilde u_0$ and $\tilde u^-_\lambda \to 0$ almost everywhere, so that $\tilde u_0 \geq 0$. On the other hand the function $\tilde u_\lambda$ weakly satisfies 
\begin{equation}\label{profilobubble1}
\begin{cases}
(-\Delta)^s \tilde u_\lambda = \frac{\lambda}{M_{\lambda,+}^{2^*_s-2}}\tilde u_\lambda + |\tilde u_\lambda|^{2^*_s-2}\tilde u_\lambda & \hbox{in} \ B_{M^\beta_{\lambda,+}R},\\
\tilde u_\lambda =0 & \hbox{in} \ \R^n \setminus B_{M^\beta_{\lambda,+}R},
\end{cases}
\end{equation}
and  thanks to Proposition \ref{strauss} the point where the maximum of $\tilde u_\lambda$ is achieved stays in a compact subset $K \subset \subset \R^n$.
By Lemma \ref{asintoticaraggi} and the definition of $\tilde u_\lambda$ we have $|\tilde u_\lambda|_\infty \leq 1$, and hence by a standard argument (as seen in Remark \ref{gilbcont}, but here $s$ is fixed) 
we obtain that $\tilde u_\lambda \to u_*$ in $C^{0,\alpha}_{loc}(\R^n)$ for every fixed $\alpha \in (0,s)$, and $u_* = \tilde u_0$ thanks to the a.e. convergence. Therefore, since the maximum of $\tilde u_\lambda$ definitely stay in compact subset $K$ of $\R^n$, there exists $\overline x \in K$ such that $\tilde u_0(\overline x) =1$ so that $\tilde u_0$ is not trivial.
Passing to the limit in \eqref{profilobubble1} we deduce that $\tilde u_0$ weakly solves 
\begin{equation}\label{sasasa}
(-\Delta)^s \tilde u_0 = |\tilde u_0|^{2^*_s-2}\tilde u_0 \quad \hbox{in}\ \R^n.
\end{equation}
Since $\tilde u_0$ is a non trivial solution of \eqref{sasasa} we obtain 
\[
S_s \leq \frac{\|\tilde u_0\|_s^2}{|\tilde u_0|^2_{2^*_s}} = |\tilde u_0|^{2^*_s-2}_{2^*_s}.
\]
On the other hand by Fatou's lemma we have that $|\tilde u_0 |^{2^*_s}_{2^*_s} \leq \liminf_{\lambda \to 0^+}|\tilde u^+_\lambda|^{2^*_s}_{2^*_s} = S_s^{\frac{n}{2s}}$.

Therefore $|\tilde u_\lambda|^{2^*_s}_{2^*_s} \to |\tilde u_0|^{2^*_s}_{2^*_s}$ and then $\tilde u^+_\lambda \to \tilde u_0$ strongly in $L^{2^*_s}(\R^n)$ thanks to the Brezis-Lieb's Lemma. 
Hence we obtain that $\tilde u_0$ is a minimizer for $S_s$ and a solution of \eqref{sasasa}, thus it has to be of the form \eqref{eq:bubble}, for some $\mu \in \R$, $x_0 \in \R^n$. Since $\tilde u$ is the limit of radial functions, then  $\tilde u$ is radial, and thus it must be $x_0 = 0$. Then, by construction we get that $u(0) = 1$ and that $\mu = S_s^{\frac{1}{2s}}\left(\int_{\R^n}\frac{1}{(1+|x|^2)^n}\de x\right)^{-\frac{1}{n}} $, hence the proposition is proved.  
\end{proof}
\end{section}

\appendix

\begin{section}{Some technical results}

Let $s \in (\frac{2}{3}, 1)$, let $n >6s$, let $\lambda \in (0, \lambda_{1,s})$ and let $R>0$. Let $u_{s, \lambda}$ be a least energy radial sign-changing solution of Problem \eqref{fracBrezis} in $B_R$, and let $W_{s, \lambda} = E_{s, \lambda}u_{s, \lambda}$ be its extension (see \eqref{extensiondefinition}).  Let $\delta \in (0,R)$ and define $A_\delta$ as the set 
\begin{equation}\label{adelta}
A_\delta = \{ x \in \R^n \ |\ |R - |x||>\delta \}.
\end{equation}
A standard computation shows that there exists $C>0$ which depends on $n$, $s$, $\lambda_{1,s}$ and $\delta$ such that
\begin{equation}\label{app1}
\sup_{x \in A_\delta}|(-\Delta)^s u_{s, \lambda}(x)| \leq C. 
\end{equation}
Indeed this is trivial when $x \in A_\delta \cap (\R^n \setminus B_R)$, while, when $x \in A_\delta \cap B_R$ it is a consequence of \cite[Corollary 1.6, (a)]{HolderReg} and keeping in mind that the fractional Laplacian can be written in the alternative form 
\[
(-\Delta)^s u(x) = -\frac{C_{n,s}}{2}\int_{\R^n}\frac{u(x+y) + u(x-y) - 2u(x)}{|y|^{n+2s}}\de y.
\]

As a consequence, for any $x \in \R^n$ such that $|x| \neq R$, the following pointwise relations hold:
\begin{equation}\label{app2}
\lim_{\varepsilon \to 0^+}-d_s \varepsilon^{1-2s}\frac{\partial W_{s, \lambda}}{\partial y}(x, \varepsilon) = \lim_{\varepsilon \to 0^+}-2sd_s\frac{W_{s, \lambda}(x, \varepsilon) - W_{s, \lambda}(x, 0)}{\varepsilon^{2s}} = (-\Delta)^s u_{s, \lambda}(x).
\end{equation}

Moreover, being $u_{s, \lambda}$ a solution of Problem \eqref{fracBrezis} and thanks to \eqref{app1}, \eqref{app2}, taking into account of the relation $p_{n,s}\int_{\R^n}P_{n,s}(x- \xi, y) \de \xi = 1$, for any $y>0$, then some simple computations lead to the following standard result. 
\begin{lemma}\label{app3}
Let $\delta >0$ and $A_\delta$ be as in \eqref{adelta}. For every $\varphi \in L^2(A_\delta)$ such that $\text{supp } \varphi$ is bounded in $A_\delta$, it holds that 
\[
\begin{aligned}
&\lim_{\varepsilon \to 0^+}\int_{A_\delta}-2sd_s\frac{W_{s, \lambda}(x, \varepsilon) - W_{s, \lambda}(x, 0)}{\varepsilon^{2s}}\varphi(x) \de x \\
&=\lim_{\varepsilon \to 0^+}\int_{A_\delta} - d_s\varepsilon^{1-2s} \frac{\partial W_{s, \lambda}}{\partial y}(x, \varepsilon) \varphi(x) \de x = \int_{A_\delta}(-\Delta)^s u_{s, \lambda}(x) \varphi(x) \de x. 
\end{aligned}
\]

\end{lemma}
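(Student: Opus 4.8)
The plan is to reduce the two integrated statements to the pointwise convergences already recorded in \eqref{app2}, via dominated convergence, so that the whole content is a uniform-in-$\varepsilon$ bound for the two integrands on $\text{supp }\varphi$. Write $K:=\text{supp }\varphi$, which is compact and contained in $A_\delta$; since $A_\delta = B_{R-\delta}\cup(\R^n\setminus\overline{B_{R+\delta}})$ we have $K = K_1\cup K_2$ with $K_1\subset\subset B_R$ and $K_2\subset\R^n\setminus\overline{B_R}$, the set where $u_{s,\lambda}\equiv 0$. By \eqref{app2}, for every $x$ with $|x|\neq R$ both $-2sd_s\varepsilon^{-2s}(W_{s,\lambda}(x,\varepsilon)-W_{s,\lambda}(x,0))$ and $-d_s\varepsilon^{1-2s}\partial_y W_{s,\lambda}(x,\varepsilon)$ converge, as $\varepsilon\to 0^+$, to $(-\Delta)^s u_{s,\lambda}(x)$, which by \eqref{app1} is bounded on $A_\delta$; hence the two integrands converge a.e.\ on $A_\delta$ to $(-\Delta)^s u_{s,\lambda}\,\varphi\in L^1(A_\delta)$. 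It therefore suffices to find $C_K>0$ such that $|\varepsilon^{-2s}(W_{s,\lambda}(x,\varepsilon)-u_{s,\lambda}(x))| + |\varepsilon^{1-2s}\partial_y W_{s,\lambda}(x,\varepsilon)|\leq C_K$ for all $x\in K$ and $\varepsilon\in(0,1)$, and then conclude by the dominated convergence theorem; the same bound handles both limits at once.

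To obtain that uniform bound I would start from the Poisson representation \eqref{extensiondefinition}. Using $\int_{\R^n}P_{n,s}(x-\xi,\varepsilon)\de\xi = 1$, $\int_{\R^n}\partial_y P_{n,s}(x-\xi,\varepsilon)\de\xi = 0$ and $W_{s,\lambda}(x,0)=u_{s,\lambda}(x)$, both quantities become integrals of $u_{s,\lambda}(\xi)-u_{s,\lambda}(x)$ against the kernels $\varepsilon^{-2s}P_{n,s}(x-\xi,\varepsilon)$, respectively $\varepsilon^{1-2s}\partial_y P_{n,s}(x-\xi,\varepsilon)$; a short computation shows that these kernels are even in $x-\xi$ and dominated by $C(n,s)|x-\xi|^{-n-2s}$. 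I would then split each integral at $|x-\xi|=r_0$, for a fixed small $r_0=r_0(\delta)\in(0,\delta/2)$. On $\{|x-\xi|\geq r_0\}$ the bound follows from $|u_{s,\lambda}|_\infty<\infty$ and integrability of $|z|^{-n-2s}$ at infinity, uniformly in $\varepsilon$. On $\{|x-\xi|<r_0\}$, after symmetrizing, I would use interior regularity of $u_{s,\lambda}$: for $x\in K_1$ one has $u_{s,\lambda}\in C^{2,3s-2}$ on a fixed neighbourhood of $K_1$ --- this is where $s\in(\tfrac{2}{3},1)$ enters, through \eqref{soave2} applied with $g:=\lambda u_{s,\lambda}+|u_{s,\lambda}|^{2^*_s-2}u_{s,\lambda}\in C^{0,s}(\overline{B_R})$ --- so the second difference of $u_{s,\lambda}$ is $O(|z|^2)$ and the inner integral is controlled by $\int_{|z|<r_0}|z|^{2-n-2s}\de z<\infty$ because $s<1$; for $x\in K_2$, taking $r_0$ small enough, the inner integral simply vanishes. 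This gives $C_K$.

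The step I expect to be the real obstacle is exactly this near-diagonal estimate. The mere $C^{0,s}$ regularity of $u_{s,\lambda}$ is not enough: it produces an $\varepsilon$-dependent, in fact divergent, bound. One has to exploit simultaneously the interior $C^{2}$ bound --- available precisely because $s>\tfrac{2}{3}$ --- and the evenness of the Poisson kernel, which annihilates the first-order Taylor term and leaves a genuinely $\varepsilon$-independent remainder. The remaining ingredients (checking the two representation formulas by differentiating under the integral sign, verifying $g\in C^{0,s}(\overline{B_R})$, and the final application of dominated convergence) are routine, and nothing changes if one works with the two limits simultaneously.
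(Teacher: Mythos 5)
Your proposal is correct and follows essentially the route the paper intends: the paper only sketches this lemma as ``simple computations'' based on \eqref{app1}, \eqref{app2} and the normalization of the Poisson kernel, and your argument fleshes out exactly those ingredients (subtracting $u_{s,\lambda}(x)$ via the kernel identities, the bound $|{\rm kernel}|\leq C|z|^{-n-2s}$ with evenness, interior $C^{2,3s-2}$ regularity available since $s\in(\tfrac23,1)$ in the appendix, vanishing of $u_{s,\lambda}$ near $K_2$, and dominated convergence against $C_K|\varphi|\in L^1$). No gaps; the near-diagonal estimate you flag as the key point is handled correctly and is indeed where the $s>\tfrac23$ interior regularity is used.
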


The main result of the section is the following Theorem. 

\begin{teo}\label{subsol}
Let $n$, $s$, $R$, and $\lambda$ be as above. Let $u_{s, \lambda}$ be a least energy radial sign-changing solution of Problem \eqref{fracBrezis} in $B_R$ which changes sign exactly twice and such that $u_{s, \lambda} \geq 0$ in a neighborhood of the origin. Then $u_{s,\lambda}$ is a weak sub-solution of
\[
(-\Delta)^s u_{s, \lambda} \leq \lambda u_{s, \lambda} + |u_{s, \lambda}|^{2^*_s-2}u_{s, \lambda} \quad \text{ in }\R^n
\] 
i.e., for every $\varphi \in C^\infty_c(\R^n)$ such that $\varphi \geq 0$ it holds that
\[
(u_{s, \lambda}, \varphi)_s \leq \int_{\R^n}(\lambda u_{s, \lambda} + |u_{s, \lambda}|^{2^*_s-2}u_{s, \lambda}) \varphi \de x.
\]
\end{teo}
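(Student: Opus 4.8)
The plan is to reduce the statement to a pointwise differential inequality for $(-\Delta)^s u_{s,\lambda}$ and then to upgrade it to the weak formulation through the Caffarelli--Silvestre extension. First I would prove the pointwise inequality
\[
(-\Delta)^s u_{s,\lambda}(x)\le \lambda u_{s,\lambda}(x)+|u_{s,\lambda}(x)|^{2^*_s-2}u_{s,\lambda}(x)\qquad\text{for every }x\in\R^n\text{ with }|x|\ne R,
\]
where $(-\Delta)^s u_{s,\lambda}(x)$ denotes the pointwise value of the operator (an absolutely convergent integral for $|x|>R$, a principal value for $|x|<R$). For $|x|<R$ this is an equality: $u_{s,\lambda}$ solves \eqref{fracBrezis} in $B_R$ and, by the interior regularity recalled in Section~2, the equation holds pointwise there. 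For $|x|>R$ the right-hand side vanishes since $u_{s,\lambda}\equiv 0$ outside $B_R$, so I only need $(-\Delta)^s u_{s,\lambda}(x)\le 0$, and this is exactly where Lemma~\ref{lemmatecnico} enters. Since $u_{s,\lambda}$ genuinely changes sign at its second node $r^2_s$, one may fix $\overline\rho\in(r^2_s,R)$ with $u_{s,\lambda}(\overline\rho)>0$, obtaining $\overline\delta>0$ such that $W_{s,\lambda}(x,y)\ge 0$ for $|x|>\overline\rho$ and $0<y<\overline\delta$. For $|x|>R>\overline\rho$ we also have $W_{s,\lambda}(x,0)=u_{s,\lambda}(x)=0$, hence
\[
\frac{W_{s,\lambda}(x,\varepsilon)-W_{s,\lambda}(x,0)}{\varepsilon^{2s}}=\frac{W_{s,\lambda}(x,\varepsilon)}{\varepsilon^{2s}}\ge 0\qquad(0<\varepsilon<\overline\delta),
\]
and \eqref{app2} gives $(-\Delta)^s u_{s,\lambda}(x)=\lim_{\varepsilon\to 0^+}\big(-2sd_s\,\tfrac{W_{s,\lambda}(x,\varepsilon)-W_{s,\lambda}(x,0)}{\varepsilon^{2s}}\big)\le 0$.

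Next I would pass from this pointwise inequality to the weak one. Fix $\varphi\in C^\infty_c(\R^n)$ with $\varphi\ge 0$ and let $\Phi:=E_s\varphi\in\mathcal{D}^{1,s}(\R^{n+1}_+)$; by Lemma~\ref{extreg}(iv),
\[
(u_{s,\lambda},\varphi)_s=d_s\int_{\R^{n+1}_+}y^{1-2s}\,\nabla W_{s,\lambda}\cdot\nabla\Phi\,\de x\,\de y.
\]
I would then compute the right-hand side by integrating by parts separately over $B_R\times\R_+$ and over $(\R^n\setminus\overline{B_R})\times\R_+$: on each of these two regions $W_{s,\lambda}$ is a classical solution of $\text{div}(y^{1-2s}\nabla W_{s,\lambda})=0$, so after truncating at $\{y>\varepsilon\}$ and at large radius and letting $\varepsilon\to 0^+$ (using \eqref{app2} and Lemma~\ref{app3} for the convergence of the boundary fluxes on $\{y=0\}$), the fluxes through the common lateral boundary $\partial B_R\times\R_+$ cancel --- because $W_{s,\lambda}\in C^2(\R^{n+1}_+)$ across it --- and one is left with
\[
(u_{s,\lambda},\varphi)_s=\int_{B_R}(-\Delta)^s u_{s,\lambda}\,\varphi\,\de x+\int_{\R^n\setminus\overline{B_R}}(-\Delta)^s u_{s,\lambda}\,\varphi\,\de x.
\]
On $B_R$ we replace $(-\Delta)^s u_{s,\lambda}$ by $\lambda u_{s,\lambda}+|u_{s,\lambda}|^{2^*_s-2}u_{s,\lambda}$ and recall $u_{s,\lambda}\equiv 0$ outside $B_R$, while the second integral is $\le 0$ by the pointwise inequality above and $\varphi\ge 0$. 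This yields exactly $(u_{s,\lambda},\varphi)_s\le\int_{\R^n}(\lambda u_{s,\lambda}+|u_{s,\lambda}|^{2^*_s-2}u_{s,\lambda})\varphi\,\de x$, which is the claim.

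The delicate point --- and the main obstacle --- is the behaviour near the sphere $\partial B_R$. There $(-\Delta)^s u_{s,\lambda}$ is bounded (indeed continuous and vanishing) when approached from inside $B_R$, since it equals $\lambda u_{s,\lambda}+|u_{s,\lambda}|^{2^*_s-2}u_{s,\lambda}$, but it blows up like $\text{dist}(\cdot,\partial B_R)^{-s}$ when approached from $\{|x|>R\}$; this is however locally integrable because $s<1$, so the integrals above make sense. To make the limit $\varepsilon\to 0^+$ in the boundary fluxes rigorous I would split $\R^n$ into $A_\delta$ (where Lemma~\ref{app3} applies directly) and the thin shell $\{\,|R-|x||<\delta\,\}$: on the outer part of the shell the difference quotient $\varepsilon^{-2s}[W_{s,\lambda}(\cdot,\varepsilon)-W_{s,\lambda}(\cdot,0)]$ is already $\ge 0$ by Lemma~\ref{lemmatecnico}, so that contribution keeps the correct sign for every small $\varepsilon$ and is harmless in the limit $\delta\to 0^+$, while on the inner part one controls it by dominated convergence using the boundary regularity $u_{s,\lambda}/\delta^s\in C^\alpha(\overline{B_R})$ from Theorem~\ref{boundregRO} together with the interior bound \eqref{soave2}, which also justifies the cancellation of the lateral fluxes. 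A cleaner route, which I would attempt first, is to bypass the extension altogether by showing directly that the distributional $(-\Delta)^s u_{s,\lambda}$ coincides with the $L^1_{\mathrm{loc}}$ function given by the pointwise formula, i.e.\ $(u_{s,\lambda},\varphi)_s=\int_{\R^n}(-\Delta)^s u_{s,\lambda}\,\varphi\,\de x$ for all $\varphi\in C^\infty_c(\R^n)$; this again reduces to the same boundary analysis but avoids the flux bookkeeping, and then the conclusion follows at once from the pointwise inequality.
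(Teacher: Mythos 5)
Your reduction to a pointwise inequality away from the sphere is correct and parallels the paper's own ingredients: inside $B_R$ the equation holds pointwise, and for $|x|>R$ the inequality $(-\Delta)^s u_{s,\lambda}(x)\le 0$ follows from \eqref{app2} combined with the sign information \eqref{asintotre1} of Lemma~\ref{lemmatecnico}, which is exactly how the paper exploits \eqref{segno}. Your bookkeeping (splitting along the cylinder over $\partial B_R$, or equivalently into $A_\delta$ plus an inner and an outer shell, instead of excising the paper's semitoroidal neighbourhood of the circle $\{|x|=R,\,y=0\}$) is also viable, and the cancellation of the lateral fluxes across $\partial B_R\times\{y\ge\varepsilon\}$ is unproblematic because $W_{s,\lambda}$ is smooth in the open half-space. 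One caveat: the sign you get on the outer shell is a sign for the difference quotient $\varepsilon^{-2s}\left[W_{s,\lambda}(x,\varepsilon)-W_{s,\lambda}(x,0)\right]$, not for $-\varepsilon^{1-2s}\partial_y W_{s,\lambda}(x,\varepsilon)$; so that step is justified only in the difference-quotient formulation of Lemma~\ref{extreg}(ii) (your ``cleaner route''), not in the flux version as written, since Lemma~\ref{app3} converts fluxes into pointwise fractional Laplacians only on $A_\delta$.

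The genuine gap is the inner shell $\{R-\delta\le|x|<R\}$. There you invoke ``dominated convergence using $u_{s,\lambda}/\delta^s\in C^\alpha$ and \eqref{soave2}'', but dominated convergence needs an $\varepsilon$-uniform majorant, integrable on the shell, for $\varepsilon^{-2s}\left|W_{s,\lambda}(x,\varepsilon)-u_{s,\lambda}(x)\right|$ (equivalently for $\varepsilon^{1-2s}\left|\partial_y W_{s,\lambda}(x,\varepsilon)\right|$), and such a majorant is never constructed. The interior bounds \eqref{soave1}--\eqref{soave2} degenerate like negative powers of $\mathrm{dist}(x,\partial B_R)$, and Theorem~\ref{boundregRO} controls $u_{s,\lambda}/\delta^s$ but does not by itself control the vertical difference quotient of the extension near the corner $\{|x|=R,\,y=0\}$: what must be excluded is precisely a positive contribution concentrating on $\partial B_R$ as $\varepsilon\to0^+$, i.e.\ a singular layer in the distributional fractional Laplacian. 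This is the heart of the theorem, and in the paper it is the content of the long Lemma~\ref{stimaintegrale}, where the expansion $u_{s,\lambda}=g_{s,\lambda}\,\delta^s$ with $g_{s,\lambda}$ H\"older (Theorem~\ref{boundregRO}), the scaled interior estimate $[u_{s,\lambda}]_{0,\alpha;K}\le C\,\mathrm{dist}(K,\partial B_R)^{s-\alpha}$, and an explicit computation on the toroidal surface combine to give the $O(\rho^{1-s})$ bound on the boundary flux. An estimate of exactly that nature in your geometry (for instance $\sup_{0<\varepsilon<\varepsilon_0}\varepsilon^{-2s}\left|W_{s,\lambda}(x,\varepsilon)-u_{s,\lambda}(x)\right|\le C\,\mathrm{dist}(x,\partial B_R)^{-s}$ on the shell) would indeed complete your scheme --- and then, as your decomposition shows, no limit $\delta\to0^+$ is even required --- but asserting it via ``dominated convergence'' is not a proof, so the decisive quantitative step is left open.
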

\begin{proof}
Let $\varphi \in C^\infty_c(\R^n)$, $\varphi \geq 0$. We observe that there always exists a function $\phi \in C^\infty_c(\overline{\R^{n+1}_+})$ such that $\phi \geq 0$ and $\phi(x, 0) = \varphi (x)$. By Lemma \ref{extreg} we have that 
\begin{equation}\label{equazagg1}
(u_{s, \lambda}, \varphi)_s = \int_{\R^{n+1}_+} y^{1-2s} \nabla W_{s, \lambda}\cdot \nabla \phi \de x \de y. 
\end{equation}
Let $0<r_s^1<r_s^2<R$ be the nodes of $u_{s,\lambda}$. As proved in Lemma \ref{lemmatecnico} (see \eqref{asintotre1}), for every $\overline \rho  \in (r^2_s,R)$ such that $u_{s, \lambda}(\overline \rho) >0$ there exists $\overline \varepsilon>0$ such that
\begin{equation}\label{segno}
W_{s, \lambda}(x, y) \geq 0 \quad \text{ in } \{x \ |\ |x|>\overline\rho\} \times (0, \overline \varepsilon).
\end{equation}
Let us fix $\overline \rho \in (r_s^2,R)$ and let $\delta := R-\overline \rho$. For $\rho \in \left(0, \frac{\delta}{2}\right)$, let $T_\rho$ be the semitoroidal open set defined by
\[
T_\rho = \{ (x, y) \in \R^{n+1}_+ \ | \  \sqrt{(R-|x|)^2 + y^2} < \rho\}.
\]
Since $W_{s, \lambda}$, $\phi \in \mathcal{D}^{1,s}(\R^{n+1}_+)$, by the Lebesgue's dominated convergence theorem we deduce that
\begin{equation}\label{opiccolorho}
\lim_{\rho \to 0^+} \int_{T_\rho} y^{1-2s} \nabla W_{s, \lambda}\cdot \nabla \phi \de x \de y = 0.
\end{equation}
Let $\varepsilon < \min\left\{\frac{\rho}{2}, \overline \varepsilon\right\}$. We have that 
\[
\begin{aligned}
&\int_{\R^{n+1}_+\setminus T_\rho} y^{1-2s} \nabla W_{s, \lambda}\cdot \nabla \phi \de x \de y \\
=&\ \int_{(\R^{n+1}_+ \setminus T_\rho)\cap \{y\geq  \varepsilon\}} y^{1-2s} \nabla W_{s, \lambda}\cdot \nabla \phi \de x \de y + \int_{(\R^{n+1}_+ \setminus T_\rho)\cap \{y<\varepsilon\}} y^{1-2s} \nabla W_{s, \lambda}\cdot \nabla \phi \de x \de y \\
=&\ \int_{(\R^{n+1}_+ \setminus T_\rho)\cap \{y \geq \varepsilon\}} y^{1-2s} \nabla W_{s, \lambda}\cdot \nabla \phi \de x \de y + o_\rho(1)
\end{aligned}
\]
where the last equality is obtained arguing as in \eqref{opiccolorho}, and $o_\rho(1)$ is a function of $\rho$ and $\varepsilon$ such that $\lim_{\varepsilon \to 0^+}o_\rho(1)=0$ for any $\rho \in (0,\frac{\delta}{2})$. Integrating by parts, using that $\phi \in C^\infty_c(\overline{\R^{n+1}_+})$ we obtain
\[
\begin{aligned}
&\int_{\R^{n+1}_+\setminus T_\rho} y^{1-2s} \nabla W_{s, \lambda}\cdot \nabla \phi \de x \de y \\
=& \int_{|x|\leq R - \sqrt{\rho^2 - \varepsilon^2} } -\varepsilon^{1-2s} \frac{\partial W_{s, \lambda}}{\partial y}(x, \varepsilon) \phi(x, \varepsilon) \de x + \ \int_{\partial T_{\rho, \varepsilon}} y^{1-2s} \nabla W_{s, \lambda}\cdot \nu \phi \de \sigma  \\
+&\ \int_{|x|\geq R + \sqrt{\rho^2 - \varepsilon^2} } -\varepsilon^{1-2s} \frac{\partial W_{s, \lambda}}{\partial y}(x, \varepsilon) \phi(x, \varepsilon) \de x  + o_\rho(1), 
\end{aligned}
\] 
where $T_{\rho, \varepsilon} = T_{\rho}\cap \{y \geq \varepsilon\}$, $\nu$ is the exterior normal to $\partial T_{\rho, \varepsilon}$, 
and $d\sigma$ is the surface measure of $\partial T_{\rho, \varepsilon}$. Notice that thanks to the choice of $\varepsilon$ we have $T_{\rho, \varepsilon} \neq \emptyset$. Moreover, using the definition via Fourier transform of the Gagliardo semi-norm, we infer that
\[
\left|\int_{\R^n} -\varepsilon^{1-2s} \frac{\partial W_{s, \lambda}}{\partial y}(x, \varepsilon) (\phi(x, \varepsilon)-\varphi(x)) \de x\right| \leq \left\|-\varepsilon^{1-2s} \frac{\partial W_{s, \lambda}}{\partial y}(x, \varepsilon) \right\|_{-s}\|\phi(x, \varepsilon)- \varphi(x)\|_s, 
\] 
so that, by \eqref{dualconv} and since $\|\phi(x, \varepsilon)- \varphi(x)\|_s \to 0$ as $\varepsilon \to 0^+$, we get that
\[
\begin{aligned}
\int_{|x|\leq R - \sqrt{\rho^2 - \varepsilon^2} } -\varepsilon^{1-2s} \frac{\partial W_{s, \lambda}}{\partial y}(x, \varepsilon) \phi(x, \varepsilon) \de x = 
\int_{|x|\leq R - \sqrt{\rho^2 - \varepsilon^2} } -\varepsilon^{1-2s} \frac{\partial W_{s, \lambda}}{\partial y}(x, \varepsilon) \varphi(x) \de x + o_\rho(1), \\
 \int_{|x|\geq R + \sqrt{\rho^2 - \varepsilon^2} } -\varepsilon^{1-2s} \frac{\partial W_{s, \lambda}}{\partial y}(x, \varepsilon) \phi(x, \varepsilon) \de x =  \int_{|x|\geq R + \sqrt{\rho^2 - \varepsilon^2} } -\varepsilon^{1-2s} \frac{\partial W_{s, \lambda}}{\partial y}(x, \varepsilon) \varphi(x) \de x + o_\rho(1),
\end{aligned}
\]
and then
\begin{equation}\label{equazagg}
\begin{aligned}
&\int_{\R^{n+1}_+\setminus T_\rho} y^{1-2s} \nabla W_{s, \lambda}\cdot \nabla \phi \de x \de y \\
=&\ \int_{|x|\leq R - \sqrt{\rho^2 - \varepsilon^2} } -\varepsilon^{1-2s} \frac{\partial W_{s, \lambda}}{\partial y}(x, \varepsilon) \varphi(x) \de x + \int_{\partial T_{\rho, \varepsilon}} y^{1-2s} \nabla W_{s, \lambda}\cdot \nu \phi \de \sigma  \\
+&\ \int_{|x|\geq R + \sqrt{\rho^2 - \varepsilon^2} } -\varepsilon^{1-2s} \frac{\partial W_{s, \lambda}}{\partial y}(x, \varepsilon) \varphi(x) \de x  + o_\rho(1) \\
=&\ (I) + (II) + (III) + o_\rho(1).  
\end{aligned}
\end{equation}

For the term $(I)$, by Lemma \ref{app3}, and since $u_{s,\lambda}$ is a solution of Problem \eqref{fracBrezis}, we obtain that 
\begin{equation}\label{pezzoI}
\lim_{\varepsilon \to 0^+} (I) = \int_{B_{R-\rho}}(-\Delta)^s u_{s, \lambda}\varphi \de x = \int_{B_{R-\rho}}\left(\lambda u_{s, \lambda} + |u_{s, \lambda}|^{2^*_s-2}u_{s, \lambda}\right) \varphi \de x.
\end{equation}

For $(III)$, again by Lemma \ref{app3} we have
\[
\lim_{\varepsilon \to 0^+}(III) = \lim_{\varepsilon \to 0^+} \int_{\R^n \setminus B_{R+\sqrt{\rho^2-\varepsilon^2}}}-2sd_s \frac{W_{s, \lambda}(x, \varepsilon) - W_{s, \lambda}(x, 0)}{\varepsilon^{2s}}\varphi(x) \de x.
\]
We observe that, as a consequence of \eqref{segno}, when $x \in \R^n \setminus B_{R+\sqrt{\rho^2-\varepsilon^2}}$ and $\varepsilon < \overline \varepsilon$ we have $W_{s, \lambda}(x, \varepsilon) = W_{s, \lambda}(x, \varepsilon) - W_{s, \lambda}(x, 0) \geq 0$. Moreover, by \eqref{app1} and \eqref{app2}, we infer that the limit $\lim_{\varepsilon \to 0^+}(III)$ exists and it is finite. 
In particular we have
\begin{equation}\label{pezzoII}
\lim_{\varepsilon \to 0^+}(III) = h(\rho) \leq 0,
\end{equation}
where $h$ is a non-positive function which depends on $\rho$.

As a consequence of \eqref{equazagg}, \eqref{pezzoI} and \eqref{pezzoII} there exists $q = q(\rho)$ such that 
\begin{equation}\label{limfinito}
\lim_{\varepsilon \to 0^+}(II) = q(\rho).
\end{equation}
In particular, $q$ does not depends on $\varepsilon$. In addition, in Lemma \ref{stimaintegrale} we will show that there exists $C>0$ which does not depends on $\rho$ such that 
\begin{equation}\label{pezzoIII}
-C\rho^{1-s} \leq \liminf_{\varepsilon \to 0^+}(II) \leq \limsup_{\varepsilon \to 0^+}(II) \leq C \rho^{1-s}.
\end{equation}

We can now conclude the proof of the Theorem. Taking in account of \eqref{equazagg1}, \eqref{opiccolorho}, \eqref{pezzoI}, \eqref{pezzoII} and \eqref{pezzoIII} we get that
\begin{eqnarray*}
(u_{s, \lambda}, \varphi)_s &=& \limsup_{\varepsilon \to 0^+}((I) + (II) + (III) + o_\rho(1)) + o(1)\\
& \leq& \int_{B_{R-\rho}}\left(\lambda u_{s, \lambda} + |u_{s, \lambda}|^{2^*_s-2}u_{s, \lambda}\right) \varphi \de x + C\rho^{1-s}+o(1)\\
&\leq&  \int_{B_{R}}\left(\lambda u_{s, \lambda} + |u_{s, \lambda}|^{2^*_s-2}u_{s, \lambda}\right) \varphi \de x + C\rho^{1-s}+o(1),
\end{eqnarray*}
where we have used that $u_{s,\lambda}\geq0$ in $(r_s^2,R)$ and $\varphi\geq0$, and where $o(1)$ is the term coming from \eqref{opiccolorho} and thus depending on $\rho$ but not on $\varepsilon$, and such that $\lim_{\rho \to 0^+}o(1) = 0$. Hence, passing to the limit as $\rho \to 0^+$ we get the desired result and the proof is complete.

\end{proof}

\begin{lemma}\label{stimaintegrale}
There exists a constant $C>0$ such that for all sufficiently small $\rho>0$
\[
-C\rho^{1-s}\leq \liminf_{\varepsilon \to 0^+}(II) \leq \limsup_{\varepsilon \to 0^+}(II) \leq C \rho^{1-s}.
\]
\end{lemma}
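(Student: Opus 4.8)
The plan is to estimate $(II)$ pointwise by a sharp gradient bound for the extension and then integrate this bound over the toroidal surface, using crucially that $s<1$. Recall that
\[
(II)=\int_{\partial T_\rho\cap\{y\ge \varepsilon\}} y^{1-2s}\,\nabla W_{s,\lambda}\cdot\nu\,\phi\,\de\sigma ,
\]
so that $|(II)|\le \|\phi\|_\infty\int_{\partial T_\rho\cap\{y\ge\varepsilon\}} y^{1-2s}\,|\nabla W_{s,\lambda}|\,\de\sigma$, with $\|\phi\|_\infty$ a fixed finite constant. The core of the argument is the uniform estimate
\[
|\nabla W_{s,\lambda}(x,y)|\le C\,y^{s-1}\qquad\text{for every }(x,y)\in\R^{n+1}_+ ,
\]
with $C$ depending only on $n$, $s$ and $[u_{s,\lambda}]_{0,s}$; the latter is finite by Theorem \ref{boundregRO}, since $\lambda u_{s,\lambda}+|u_{s,\lambda}|^{2^*_s-2}u_{s,\lambda}\in L^\infty(\R^n)$.

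To prove this estimate I would start from the Poisson representation $W_{s,\lambda}(x,y)=\int_{\R^n}P_{n,s}(x-\xi,y)\,u_{s,\lambda}(\xi)\,\de\xi$ in \eqref{extensiondefinition}. Since $\int_{\R^n}P_{n,s}(x-\xi,y)\,\de\xi\equiv 1$, differentiating under the integral sign (legitimate for $y>0$, as $P_{n,s}(\cdot,y)$ is smooth and rapidly decaying) gives $\int_{\R^n}\nabla_x P_{n,s}(x-\xi,y)\,\de\xi=0$ and $\int_{\R^n}\partial_y P_{n,s}(x-\xi,y)\,\de\xi=0$, whence
\[
\nabla_x W_{s,\lambda}(x,y)=\int_{\R^n}\nabla_x P_{n,s}(x-\xi,y)\bigl(u_{s,\lambda}(\xi)-u_{s,\lambda}(x)\bigr)\,\de\xi ,
\]
and analogously for $\partial_y W_{s,\lambda}$. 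Combining the elementary pointwise bounds $|\nabla_x P_{n,s}(\eta,y)|\le C\,y^{2s}(y^2+|\eta|^2)^{-\frac{n+2s+1}{2}}$ and $|\partial_y P_{n,s}(\eta,y)|\le C\,y^{2s-1}(y^2+|\eta|^2)^{-\frac{n+2s}{2}}$ with $|u_{s,\lambda}(\xi)-u_{s,\lambda}(x)|\le [u_{s,\lambda}]_{0,s}\,|\xi-x|^{s}$ and rescaling $\eta=y\zeta$, one bounds both $|\nabla_x W_{s,\lambda}|$ and $|\partial_y W_{s,\lambda}|$ by $y^{s-1}$ times the convergent integrals $\int_{\R^n}|\zeta|^{s}(1+|\zeta|^2)^{-\frac{n+2s+1}{2}}\,\de\zeta$ and $\int_{\R^n}|\zeta|^{s}(1+|\zeta|^2)^{-\frac{n+2s}{2}}\,\de\zeta$, which proves the claim.

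For the surface integral I would parametrize $\partial T_\rho$ by $(\theta,\omega)\in(0,\pi)\times\Sf^{n-1}$ via $x=(R-\rho\cos\theta)\,\omega$, $y=\rho\sin\theta$; a direct computation gives $\de\sigma=\rho\,(R-\rho\cos\theta)^{n-1}\,\de\theta\,\de\omega$, where $\de\omega$ is the surface element of $\Sf^{n-1}$, while the constraint $y\ge\varepsilon$ becomes $\theta\in[\theta_0,\pi-\theta_0]$ with $\sin\theta_0=\varepsilon/\rho$. Since $y^{1-2s}y^{s-1}=y^{-s}$, inserting the gradient estimate gives
\[
|(II)|\le C\int_{\partial T_\rho\cap\{y\ge\varepsilon\}} y^{-s}\,\de\sigma
\le C\,\omega_{n-1}\,(2R)^{n-1}\int_{0}^{\pi}(\rho\sin\theta)^{-s}\,\rho\,\de\theta
= C'\,\rho^{1-s}\int_0^\pi\sin^{-s}\theta\,\de\theta ,
\]
where I used $(R-\rho\cos\theta)^{n-1}\le (2R)^{n-1}$ for $\rho<R/2$. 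Since $s<1$, the last integral is finite, so $|(II)|\le C\rho^{1-s}$ with $C$ independent of $\varepsilon$ and of $\rho$; taking $\limsup_{\varepsilon\to 0^+}$ (and using $\liminf_{\varepsilon\to 0^+}(II)\ge -C\rho^{1-s}$) yields the Lemma.

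The delicate point is obtaining the gradient decay $y^{s-1}$ rather than the cruder $y^{-1}$ that follows from $|W_{s,\lambda}|\le|u_{s,\lambda}|_\infty$ alone: with $y^{-1}$ one would be left with $\int_0^\pi\sin^{1-2s}\theta\,\de\theta$, which diverges precisely because $2s>1$. It is the boundary Hölder regularity of $u_{s,\lambda}$, exploited through the cancellation $\int_{\R^n}\nabla P_{n,s}\,\de\xi=0$, that upgrades the estimate to the integrable exponent $-s>-1$.
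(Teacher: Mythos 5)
Your proof is correct, but it follows a genuinely different and more elementary route than the paper's. You replace the paper's fine analysis — the explicit computation of $\partial_\rho W_{s,\lambda}$ on the torus, the splitting of the Poisson integral into regions near and far from the contact circle $\{|x|=R,\,y=0\}$, the use of the boundary behavior $u_{s,\lambda}=g_{s,\lambda}\gamma^s$ with $g_{s,\lambda}$ bounded, the interior estimate $[u_{s,\lambda}]_{0,1;K}\leq C\,\mathrm{dist}(K,\partial B_R)^{s-1}$, and the integrations by parts in $\theta$ needed to exploit cancellations — by a single global estimate $|\nabla W_{s,\lambda}(x,y)|\leq C\,[u_{s,\lambda}]_{0,s}\,y^{s-1}$, obtained from the kernel cancellations $\int_{\R^n}\nabla_x P_{n,s}\,\de\xi=\int_{\R^n}\partial_y P_{n,s}\,\de\xi=0$ together with the global $C^{0,s}(\R^n)$ regularity of $u_{s,\lambda}$ (Theorem \ref{boundregRO}), and your verification of this estimate (kernel bounds, rescaling $\eta=y\zeta$, convergence of the resulting integrals because the exponents exceed $n$) is sound, as is the surface-measure computation $\de\sigma=\rho\,(R-\rho\cos\theta)^{n-1}\de\theta\,\de\omega$ and the bound $(R-\rho\cos\theta)^{n-1}\leq (2R)^{n-1}$. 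Since $y^{1-2s}\cdot y^{s-1}=y^{-s}$ and $\int_0^\pi \sin^{-s}\theta\,\de\theta<\infty$ for $s<1$, you get the uniform-in-$\varepsilon$ absolute bound $|(II)|\leq C\rho^{1-s}$, which is in fact slightly stronger than the stated liminf/limsup inequalities, with $C$ depending only on $n$, $s$, $R$, $\|\phi\|_\infty$ and $[u_{s,\lambda}]_{0,s}$ — dependence on the solution is allowed, as in the paper. What your argument gives up, and what the paper's longer computation buys, is structural information: the paper isolates the leading term of $(II)$ as $\rho^{1-s}g_{s,\lambda}(\rho e_1)$ times an explicit angular integral, which is sharper than needed for the lemma but could matter if one wanted the precise boundary contribution rather than just its $O(\rho^{1-s})$ size; your correct observation that the crude bound $|\nabla W_{s,\lambda}|\lesssim y^{-1}$ would fail (producing the divergent $\int_0^\pi\sin^{1-2s}\theta\,\de\theta$ for $s>\tfrac12$) pinpoints exactly why the Hölder-regularity upgrade is the essential ingredient in both approaches.
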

\begin{proof}
Let $\Sf^{n-1}$ be the unit sphere in $\R^n$. For any $\mathbf{e} \in \Sf^{n-1}$, since $W_{s, \lambda}$ is cylindrical symmetric, we have $W_{s, \lambda}(x, y)  = W_{s, \lambda}(\mathbf{e}|x|, y),\ \hbox{for any}\ x \in \R^n, y\geq 0.$

In particular, without loss of generality, $W_{s, \lambda}(x, y)  = W_{s, \lambda}(e_1|x|, y),\ \hbox{for any}\ x \in \R^n, y\geq 0$, where $e_1= (1, 0, \ldots, 0)$.
Let $\delta >0$, $\rho \in \left(0, \frac{\delta}{2}\right)$ and $\varepsilon \in \left(0, \min\{\frac{\rho}{2}, \overline \varepsilon\}\right)$ as in the proof of Theorem \ref{subsol}.
Since $\rho < R$, we can express the set $\partial T_{\rho, \varepsilon}$ in the following way
\begin{equation}\label{parametrizT}
\partial T_{\rho, \varepsilon} = \left \{\left((R-\rho \cos \theta)\mathbf{e}, \rho \sin \theta \right) \ | \ \theta \in (\theta_\rho(\varepsilon), \pi - \theta_\rho(\varepsilon)), \mathbf{e} \in \Sf^{n-1}\right\},
\end{equation}
where $\theta_\rho(\varepsilon)= \arcsin \frac{\varepsilon}{\rho}$. We notice that since $\varepsilon \in \left(0, \frac{\rho}{2}\right)$, then $\theta_\rho(\varepsilon) \in \left(0, \frac{\pi}{6}\right)$. Moreover $\varepsilon\mapsto\theta_\rho(\varepsilon)$ is continuous, monotone and  $\lim_{\varepsilon\to 0^+}\theta_\rho(\varepsilon) = 0$, for any fixed $\rho \in (0,\frac{\delta}{2})$. Since all the estimates that we are going to prove will be uniform with respect to $\theta \in (0,\pi)$ we drop for brevity the subscript $\rho$ in $\theta_\rho(\epsilon)$.

Exploiting the cylindrical symmetry of $W_{s, \lambda}$ we notice that when $(x, y) \in \partial T_{\rho, \varepsilon}$ we can express $W_{s, \lambda}$ just using using the coordinates $\rho$, $\theta$, obtaining that
\[
W_{s, \lambda}(\rho, \theta) = \int_{\R^n}\frac{(\rho \sin \theta)^{2s}}{(\rho \sin \theta)^2 + |(R- \rho \cos \theta)e_1- \xi^2|)^{\frac{n+2s}{2}}}u_{s, \lambda}(\xi) \de \xi.
\]

Now, denoting by $\nu$ the exterior normal to the surface $\partial T_{\rho, \varepsilon}$, and taking account of the orientations, by a simple computation we have that 
\[
\nabla W_{s, \lambda}(x, y) \cdot \nu(x, y)\big|_{\partial T_{\rho, \varepsilon}} = - \frac{\partial W_{s, \lambda}}{\partial \rho}(\rho, \theta).
\]  

Therefore, by a slight abuse of notation, parametrizing the hypersurface $\partial T_{\rho, \varepsilon}$ as in \eqref{parametrizT} with the coordinates $(\theta, \mathbf{e})$, we obtain 
\[
(II) = \int_{\Sf^{n-1}} \int_{\theta(\varepsilon)}^{\pi - \theta(\varepsilon)}-(\rho \sin \theta)^{1-2s}\frac{\partial W_{s, \lambda}}{\partial \rho}(\rho, \theta) \phi(\rho, \theta, \mathbf{e})\rho \Psi(\rho, \theta, \mathbf{e}) \de \theta \de \mathbf{e},
\]
where $\rho \Psi(\rho, \theta, \mathbf{e})$ is a positive factor coming from the definition of surface measure. In particular, $\Psi$ is uniformly bounded when $\rho$ is small, and Lipschitz continuous with respect to $\theta$, uniformly in $\rho$ and $\mathbf{e}$.
 More precisely, using for $\mathbb{S}^{n-1}$ the atlas $\{(U_l,\psi_l^{-1})\}_{l=1,\ldots,n}$ whose charts are the spherical coordinates 
$\psi_l^{-1}:U_l \to V$ (see \cite{AbateTovena}, Example 2.1.29), where $V :=\{(\theta_1,...,\theta_{n-1})\in \R^{n-1} | \ 0<\theta_1<2\pi,\ 0<\theta_h <\pi\ \hbox{for}\  h=2,\ldots,n-1\}$, then, local parametrizations for $\partial T_{\rho, \varepsilon}$ are given by the maps
$\tilde \psi_l:V\times (\theta(\varepsilon),\pi-\theta(\varepsilon)) \to \partial T_{\rho, \varepsilon}$, $\tilde \psi_l (\theta_1,\ldots,\theta_{n-1},\theta):=((R-\rho\cos\theta)\psi_l(\theta_1,\ldots,\theta_{n-1}),\rho \sin \theta)$. Now, since the matrix $(g_{ij})_{i,j=1,\ldots,n-1}$ of the induced metric on $\mathbb{S}^{n-1}$ by the spherical coordinates is diagonal and given by $g_{ij}=\delta_{ij} (\sin \theta_{i+1}\cdots\sin \theta_{n-1})^2$ (see \cite{AbateTovena}, Example 6.5.22), then, for each parametrization $\tilde \psi_l$ the determinant of the matrix $(\tilde g_{ij})_{i,j=1,\ldots,n}$ of the induced metric on $ \partial T_{\rho, \varepsilon}$ is $\rho^2 (\cos\theta)^2 (R-\rho\cos\theta)^{2(n-1)}\Pi_{i=1,\ldots,n-1}  (\sin \theta_{i+1}\cdots\sin \theta_{n-1})^2$ and thus its square root is $\rho |(\cos\theta)| (R-\rho\cos\theta)^{(n-1)}\Pi_{i=1,\ldots,n-1}  |(\sin \theta_{i+1}\cdots\sin \theta_{n-1})|$. Therefore $\Psi(\rho, \theta,\theta_1,\ldots,\theta_{n-1})$ has the desired properties. For brevity we will use the more compact notation $\Psi(\rho, \theta, \mathbf{e})$.

Now, by an elementary computation we obtain that
\begin{equation}\label{derivrho}
\begin{aligned}
&-(\rho \sin \theta)^{1-2s}\frac{\partial W_{s, \lambda}}{\partial \rho}(\rho, \theta) \rho\\
=&\ -p_{n,s}2s \rho \sin \theta \int_{B_R} \frac{u_{s, \lambda}(\xi)}{((\rho \sin \theta)^2 + |(R- \rho \cos \theta)e_1- \xi|^2)^{\frac{n+2s}{2}}} \de \xi \\
+&\ p_{n,s}(n+2s) \rho^2 \sin \theta \int_{B_R} \frac{u_{s, \lambda}(\xi) ( \rho - R\cos \theta + \cos \theta (e_1, \xi))}{((\rho \sin \theta)^2 + |(R- \rho \cos \theta)e_1- \xi|^2)^{\frac{n+2s+2}{2}}}\de \xi,
\end{aligned}
\end{equation}
where we used that $u_{s, \lambda}(\xi)  = 0$ when $\xi \in \R^n \setminus B_R$. Let us define the set
\[
C_\delta = \{ \xi \in B_R \ |\  |\xi - Re_1| < \delta\}. 
\]
We can split the integrals appearing in \eqref{derivrho} taking as domains of integrations $C_\delta$ and $B_R \setminus C_\delta$. Since $\rho \in \left(0, \frac{\delta}{2} \right)$, when $\xi \in B_R \setminus C_\delta$ the relation $|(R-\rho \cos \theta)e_1 - \xi| > \frac{\delta}{2}$ holds, and thus all the quantities appearing in the integrals over $B_R \setminus C_\delta$ are bounded from above and below, respectively, by $\pm C$, where $C>0$ is a constant which depends only on $n$, $s$, $R$, $|u_{s, \lambda}|_\infty$ and $\delta$ (but not on $\rho$, $\theta$, and hence neither on $\varepsilon$). 

Taking this into account and performing the change of variable $\xi = \eta + Re_1$, after some computations we can write
\begin{equation}\label{integrale1}
\begin{aligned}
&-(\rho \sin \theta)^{1-2s}\frac{\partial W}{\partial \rho}(\rho, \theta) \rho\\
=&\ O (\rho) + p_{n,s}\rho \sin \theta \int_{C_\delta-Re_1} \frac{u_{s, \lambda}(\eta +Re_1)(n\rho^2 - 2s|\eta|^2 + (n-2s)\rho|\eta|\cos \theta (e_1, \hat \eta))}{(\rho^2 + |\eta|^2 + 2 \rho|\eta|\cos \theta(e_1, \hat \eta))^{\frac{n+2s+2}{2}}}\de \eta,
\end{aligned}
\end{equation}
where $\hat \eta$ is such that $\eta = |\eta|\hat \eta$, and $O(\rho)$ does not depends on $\theta$. 
We notice that if $\eta \in C_\delta - R e_1$ then $(e_1, \hat \eta) <0$, so that $(e_1, \hat \eta) = -|(e_1, \hat \eta)|$.  

Since $\rho < \frac{\delta}{2}$, it holds that for every fixed $\tau \in (0,1)$ we have $B_{\tau \rho}((R-\rho)e_1) \subset C_\delta$ or equivalently $B_{\tau \rho}(-\rho e_1) \subset C_\delta - Re_1$. Moreover, when $\eta \in B_{\tau \rho}(-\rho e_1)$ the following inequalities hold:
\begin{equation}\label{stimepalletta}
\begin{aligned}
&(1-\tau)\rho < |\eta| < (1+ \tau)\rho; \\
&|(e_1, \hat \eta)| \geq \frac{1-\tau}{1+\tau};\\
&|\eta+\rho e_1| \leq \sqrt{\rho^2 + |\eta|^2 -2\rho|\eta|\cos \theta|(e_1, \hat \rho)|}. 
\end{aligned}
\end{equation}

Writing  $u_{s, \lambda}(x) = \frac{u_{s, \lambda}(x)}{\gamma^s(x)}\gamma^s(x) =: g_{s, \lambda}(x) \gamma^s(x)$, where $\gamma(x): = \text{dist}(x, \partial B_R)$, by Theorem \ref{boundregRO} we have that  $g_{s, \lambda}(x)$ is bounded in $C_\delta$. Moreover we also have $g_{s, \lambda}\geq 0$ in $C_\delta$ because $u_{s, \lambda} \geq 0$ in $C_\delta$.  
As a consequence when $\eta \in C_\delta -Re_1$ we get that 
\begin{equation}\label{stimau}
0 \leq u_{s, \lambda}(\eta + Re_1) \leq \sup_{C_\delta}|g_{s, \lambda}| |\eta|^s.
\end{equation}


Let us estimate the integral in the right hand side of \eqref{integrale1}. To this end we divide the domain of integration $C_\delta- Re_1$ in two parts. Let us fix $\tau \in (0,1)$. In the set $(C_\delta - R e_1) \setminus B_{\tau \rho}(-\rho e_1)$ it holds that 
\[
\sqrt{\rho^2 + |\eta|^2 - 2 \rho|\eta|\cos \theta |(e_1, \hat \eta)|} \geq |\eta + \rho e_1| \geq \tau \rho.
\]
Hence, performing the change of variables $\eta = \rho k$, we get
\[
\begin{aligned}
&\left| p_{n,s}\rho \sin \theta \int_{(C_\delta-Re_1)\setminus B_{\tau \rho}(-\rho e_1)} \frac{u_{s, \lambda}(\eta +Re_1)(n\rho^2 - 2s|\eta|^2 - (n-2s)\rho|\eta|\cos \theta |(e_1, \hat \eta)|)}{(\rho^2 + |\eta|^2 - 2 \rho|\eta|\cos \theta|(e_1, \hat \eta)|)^{\frac{n+2s+2}{2}}}\de \eta \right| \\
\leq&\ C\rho \int_{\R^n\setminus B_{\tau \rho}(-\rho e_1)} \frac{|\eta|^s (n\rho^2 + 2s|\eta|^2 + (n-2s)\rho|\eta|)}{|\eta + \rho e_1|^{n+2s+2}}\de \eta \\
=& \ C \rho^{1-s} \int_{\R^n \setminus B_\tau(-e_1)}\frac{|k|^s(n +2s|k|^2 + (n-2s)|k|)}{|k + e_1|^{n+2s+2}}\de k \leq C \rho^{1-s},
\end{aligned}
\]
where $C>0$ depends on $n$, $s$, $|g_{s, \lambda}|$ and $\tau$, but does not depends on $\rho$ and $\theta$. Therefore we obtain

\[
\begin{aligned}
&-(\rho \sin \theta)^{1-2s}\frac{\partial W}{\partial \rho}(\rho, \theta) \rho \\
=&\ O (\rho^{1-s}) + p_{n,s}\rho^{1-s} \sin \theta \int_{|k + e_1|< \tau} \frac{\frac{u_{s, \lambda}(\rho k +Re_1)}{\rho^s}(n - 2s|k|^2 - (n-2s)|k|\cos \theta |(e_1, \hat k)|)}{(1 + |k|^2 - 2 |k|\cos \theta|(e_1, \hat k)|)^{\frac{n+2s+2}{2}}}\de k.
\end{aligned}
\]
Using the relation 
\[
\begin{aligned}
&n-2s |k|^2 - (n-2s)|k| \cos \theta |(e_1, \hat k)| \\
=&\ \frac{n+2s}{2}(1 - |k|)(1+|k|) + \frac{n-2s}{2}(1 + |k|^2 - 2|k| \cos \theta |(e_1, \hat k)|),
\end{aligned}
\]
we then deduce that 
\[
\begin{aligned}
&(II)=\\
&p_{n,s}\int_{\Sf^{n-1}} \int_{\theta(\varepsilon)}^{\pi - \theta(\varepsilon)} \int_{|k + e_1|< \tau} \frac{\frac{n+2s}{2}\rho^{1-s}\sin \theta \frac{u_{s, \lambda}(\rho k +Re_1)}{\rho^s}(1-|k|)(1+|k|)\phi(\rho, \theta, \mathbf{e}) \Psi(\rho, \theta, \mathbf{e})}{(1 + |k|^2 - 2 |k|\cos \theta|(e_1, \hat k)|)^{\frac{n+2s+2}{2}}} \de k\de \theta \de \mathbf{e}  \\
&+ p_{n,s}\int_{\Sf^{n-1}} \int_{\theta(\varepsilon)}^{\pi - \theta(\varepsilon)} \int_{|k + e_1|< \tau} \frac{\frac{n - 2s}{2}\rho^{1-s} \sin \theta\frac{u_{s, \lambda}(\rho k +Re_1)}{\rho^s}\phi(\rho, \theta, \mathbf{e}) \Psi(\rho, \theta, \mathbf{e}) }{(1 + |k|^2 - 2 |k|\cos \theta|(e_1, \hat k)|)^{\frac{n+2s}{2}}}\de k \de \theta \de \mathbf{e} +O(\rho^{1-s})  \\
&= (i) + (ii) + O(\rho^{1-s}), 
\end{aligned}
\]
where $O(\rho^{1-s})$ is uniform with respect to $\varepsilon$.
We start showing that $|(ii)| \leq C\rho^{1-s}$, where $C$ does not depends on $\rho$ and $\varepsilon$. Indeed, applying Fubini-Tonelli's theorem and integrating by parts we get that 
\[
\begin{aligned}
&\left|\int_{\theta(\varepsilon)}^{\pi - \theta(\varepsilon)}\frac{ \sin \theta\phi(\rho, \theta, \mathbf{e})\Psi(\rho, \theta, \mathbf{e})}{(1 + |k|^2 - 2 |k|\cos \theta|(e_1, \hat k)|)^{\frac{n+2s}{2}}} \de \theta \right| \\
=&\ \bigg|\left[ - \frac{2}{n+2s-2} \frac{1}{2|k||(e_1, \hat k)|}\frac{\phi(\rho, \theta, \mathbf{e})\Psi(\rho, \theta, \mathbf{e})}{(1 + |k|^2 - 2|k|\cos \theta|(e_1, \hat k)|)^{\frac{n+2s-2}{2}}}\right]^{\pi - \theta(\varepsilon)}_{\theta(\varepsilon)}  \\
+&\ \int_{\theta(\varepsilon)}^{\pi- \theta(\varepsilon)}\frac{2}{n+2s-2} \frac{1}{2|k||(e_1, \hat k)|}\frac{\partial_\theta[\phi(\rho, \theta, \mathbf{e})\Psi(\rho, \theta, \mathbf{e})]}{(1 + |k|^2 - 2|k|\cos \theta|(e_1, \hat k)|)^{\frac{n+2s-2}{2}}}\de \theta \bigg| \\
\leq&\ C \frac{1}{|k+e_1|^{n+2s-2}}
\end{aligned}
\]
where $C>0$ depends only on $n$, $s$, $\phi$, $\Psi$ and $\tau$ but not on $\rho$ and $\varepsilon$, and where we used the inequalities \eqref{stimepalletta} evaluated at $\eta = \rho k$ and the fact that $\frac{\partial (\phi \Psi)}{\partial \theta}$ is uniformly bounded. Recalling \eqref{stimau} we get that $\frac{u_{s, \lambda}(\rho k +Re_1)}{\rho^s}\leq \sup_{C_\delta}|g_{s,\lambda}||k|^s \leq \sup_{C_\delta}|g_{s,\lambda}|(1+ \tau)^s$ and then
\[
|(ii)| \leq C\rho^{1-s} \int_{|k+e_1|< \tau} \frac{1}{|k + e_1|^{n+2s-2}} \de k \leq C_1 \rho^{1-s}.
\]
where $C_1>0$ depends on $n$, $s$, $\phi$, $\Psi$ and $\tau$ but not on $\rho$ and $\varepsilon$. 

Now, we can write 
\begin{equation}\label{calcolo12}
\begin{aligned}
(i)=& \ \int_{\Sf^{n-1}} \int_{\theta(\varepsilon)}^{\pi - \theta(\varepsilon)}\bigg[p_{n,s}\frac{n+2s}{2}\rho^{1-s} \sin \theta \int_{|k + e_1|< \tau} \frac{\frac{u_{s, \lambda}((R-\rho) e_1)}{\rho^s}(1-|k|)(1+|k|)}{(1 + |k|^2 - 2 |k|\cos \theta|(e_1, \hat k)|)^{\frac{n+2s+2}{2}}}\de k \\
+&\ p_{n,s}\frac{n+2s}{2}\rho^{1-s} \sin \theta \int_{|k + e_1|< \tau} \frac{\frac{u_{s, \lambda}(\rho k +Re_1)- u_{s, \lambda}((R-\rho)e_1)}{\rho^s}(1-|k|)(1+|k|)}{(1 + |k|^2 - 2 |k|\cos \theta|(e_1, \hat k)|)^{\frac{n+2s+2}{2}}}\de k\bigg]\phi(\rho, \theta, \mathbf{e}) \Psi(\rho, \theta, \mathbf{e}) \de \theta \de \mathbf{e}
\end{aligned}
\end{equation}

Thanks to \cite[Corollary 1.6, (a)]{HolderReg} there exists $C>0$ which depends on $n$, $s$ and $u_{s, \lambda}$ such that for $\alpha \in [s,1+2s)$
\[
[u_{s, \lambda}]_{0, \alpha; K} \leq C \text{dist}(K, \partial B_R)^{s - \alpha}.
\] 
Taking $\alpha = 1$ and $K = B_{\tau \rho}((R-\rho) e_1)$, since $\text{dist }(K, \partial B_R) = \rho(1-\tau)$, we have 
\[
|u_{s, \lambda}(x) - u_{s, \lambda}(y)| \leq C \rho^{s-1}(1-\tau)^{s-1}|x-y|.
\]
for every $x, y$ in $B_{\tau \rho}((R-\rho)e_1)$. 
Therefore, when $x = \rho k + Re_1$ and $y = (R-\rho)e_1$ we obtain 
\[
|u_{s, \lambda}(\rho k +Re_1)- u_{s,\lambda}((R-\rho)e_1)| \leq C \rho^s |k + e_1|
\]
where $C>0$ depends on $n$, $s$ and $\tau$ but not on $\rho$ nor on $\theta$. As a consequence we get that
\[
\left| \frac{u_{s, \lambda}(\rho k +Re_1)- u_{s, \lambda}((R-\rho)e_1)}{\rho^s}\right|\leq C|k+e_1| \leq C\sqrt{1 + |k|^2 - 2|k| \cos \theta |(e_1, \hat k)|},
\] 
where $C>0$ does not depends on $\rho$ and $\theta$, but only on $n$, $s$ and $\tau$. 
Moreover, since 
\begin{equation}\label{calcolo2}
|(1-|k|)(1 + |k|)| \leq (2+\tau)\sqrt{1 + |k|^2 - 2|k| \cos \theta |(e_1, \hat k)|}, 
\end{equation}
then, arguing as we have done for $(ii)$, we deduce that the second term in \eqref{calcolo12} is $O(\rho^{1-s})$ uniformly in $\varepsilon$. Now we can further refine the estimate noticing that 
\[
\begin{aligned}
&|\phi(\rho, \theta, \mathbf{e}) - \phi(\rho, 0, \mathbf{e})|\\
=&\ |\phi((R-\rho \cos \theta)\mathbf{e}, \rho \sin \theta) - \phi ((R-\rho)\mathbf{e}, 0)| \\
\leq&\ C\sqrt{|\rho(1-\cos \theta)|^2 + \rho^2 (\sin \theta)^2} = C \rho \sqrt{2(1-\cos \theta)}.
\end{aligned}
\]
Moreover, since in $|k + e_1|< \tau$ it holds $|k| > (1-\tau)$ (see \eqref{stimepalletta}), we get that
\begin{equation}\label{stimavarphi}
\begin{aligned}
|\phi(\rho, \theta, \mathbf{e}) - \phi(\rho, 0, \mathbf{e})|& \leq C \sqrt{2|k|(1-\cos \theta)} \leq C\sqrt{(|k|-1)^2+ 2|k|(1-\cos \theta)}\\
& \leq C\sqrt{1 + |k|^2 - 2 |k| \cos \theta|(e_1, \hat k)|}.
\end{aligned}
\end{equation}
Arguing as before, using again \eqref{calcolo2}, we get that
\[
\begin{aligned}
&\left|\int_{\theta(\varepsilon)}^{\pi - \theta(\varepsilon)}\frac{ \sin \theta(\phi(\rho, \theta, \mathbf{e})- \phi (\rho, 0, \mathbf{e})) (1- |k|)(1+ |k|)\Psi(\rho, \theta, \mathbf{e})}{(1 + |k|^2 - 2 |k|\cos \theta|(e_1, \hat k)|)^{\frac{n+2s+2}{2}}}\de \theta\right|  \\
\leq&\ \int_{\theta(\varepsilon)}^{\pi - \theta(\varepsilon)}\frac{ \sin \theta|\phi(\rho, \theta, \mathbf{e})- \phi (\rho, 0, \mathbf{e})| |1- |k||(1+ |k|)\Psi(\rho, \theta, \mathbf{e})}{(1 + |k|^2 - 2 |k|\cos \theta|(e_1, \hat k)|)^{\frac{n+2s+2}{2}}}\de \theta\\
\leq &\ C \frac{1}{|k+e_1|^{n+2s-2}}.
\end{aligned}
\]

As a consequence we have a further negligible term, and thus
\[
\begin{aligned}
(II) = O(\rho^{1-s}) &+ p_{n,s}\frac{n+2s}{2}\rho^{1-s}g_{s,\lambda}(\rho e_1) \cdot \\
&\cdot\int_{\Sf^{n-1}}\int_{\theta(\varepsilon)}^{\pi - \theta(\varepsilon)} \int_{|k + e_1|< \tau} \frac{\sin \theta(1-|k|)(1+|k|)\phi (\rho, 0, \mathbf{e})\Psi(\rho, \theta, \mathbf{e})}{(1 + |k|^2 - 2 |k|\cos \theta|(e_1, \hat k)|)^{\frac{n+2s+2}{2}}} \de k \de \theta \de \mathbf{e}.
\end{aligned}
\]

To conclude we have to get rid of the dependence from $\theta$ in the function $\Psi$. Arguing as in \eqref{stimavarphi} and since $\Psi$ is Lipschitz continuous in $\theta \in [0, \pi]$, uniformly in $\rho$ and $\mathbf{e}$, we get that  
\[
|\Psi(\rho, \theta, \mathbf{e}) - \Psi(\rho, 0, \mathbf{e})| \leq C \theta \leq 4C \sqrt{1 - \cos \theta} \leq 4C\sqrt{1 + |k|^2 -2|k| \cos \theta |(e_1, \hat \eta)|},
\]
where $C$ does not depend on $\theta$ and $\rho$. Hence, arguing as before, we obtain 
\[
\begin{aligned}
(II) = O(\rho^{1-s}) + &p_{n,s}\frac{n+2s}{2}\rho^{1-s}g_{s,\lambda}(\rho e_1) \left(\int_{\Sf^{n-1}}\phi(\rho, 0, \mathbf{e})\Psi(\rho, 0, \mathbf{e})\de \mathbf{e}\right)\cdot \\
&\cdot\int_{\theta(\varepsilon)}^{\pi - \theta(\varepsilon)} \int_{|k + e_1|< \tau} \frac{\sin \theta(1-|k|)(1+|k|)}{(1 + |k|^2 - 2 |k|\cos \theta|(e_1, \hat k)|)^{\frac{n+2s+2}{2}}} \de k \de \theta,
\end{aligned}
\]
where $O(\rho^{1-s})$ is uniform with respect to $\varepsilon$. Let us set $F(\theta,k):=\frac{\sin \theta(1-|k|)(1+|k|)}{(1 + |k|^2 - 2 |k|\cos \theta|(e_1, \hat k)|)^{\frac{n+2s+2}{2}}}$.
Thanks to the previous equality and \eqref{limfinito} we infer that both $$ \liminf_{\varepsilon \to 0^+}\int_{\theta(\varepsilon)}^{\pi - \theta(\varepsilon)} \int_{|k + e_1|< \tau} F(\theta,k)\de k \de \theta =  \liminf_{t \to 0^+} \int_{t}^{\pi - t} \int_{|k + e_1|< \tau}F(\theta,k) \de k \de \theta,$$ and $$\limsup_{\varepsilon \to 0^+}\int_{\theta(\varepsilon)}^{\pi - \theta(\varepsilon)} \int_{|k + e_1|< \tau} F(\theta,k) \de k \de \theta =  \limsup_{t \to 0^+} \int_{t}^{\pi - t} \int_{|k + e_1|< \tau}F(\theta,k)\de k \de \theta,$$ 
are finite and they do not depend on $\rho$, where for the last equality we used the properties of $\theta(\varepsilon)=\theta_\rho(\varepsilon)$ and the definition of $\liminf$, $\limsup$.

 At the end, since the quantity $g_{s, \lambda}(\rho e_1) \left(\int_{\Sf^{n-1}}\phi(\rho, 0, \mathbf{e})\Psi(\rho, 0, \mathbf{e})\de \mathbf{e}\right)$ is uniformly bounded with respect to $\rho$, we conclude that
\[
-C\rho^{1-s}\leq \liminf_{\varepsilon \to 0^+}(II) \leq \limsup_{\varepsilon \to 0^+}(II) \leq C \rho^{1-s},
\]
for some constant $C>0$ which does not depends on $\rho$. The proof is complete.
\end{proof}
\end{section}

\section*{Acknowledgements} 
The authors wish to thank Prof. S. Terracini for the useful discussions.

\section*{References}

\end{document}